\let\amsrtimes=\rtimes
\theoremstyle{plain}
\newtheorem{thm}{Theorem}[section] 
\newtheorem*{thm*}{Theorem}
\newtheorem{prop}[thm]{Proposition}
\newtheorem{cor}[thm]{Corollary}
\newtheorem{lem}[thm]{Lemma}
\newtheorem{letthm}{Theorem}
\newtheorem*{question}{Question}
\theoremstyle{definition}
\newtheorem{defn}[thm]{Definition}
\newtheorem{exa}[thm]{Example}
\newtheorem{rem}[thm]{Remark}
\newtheorem{strat}[thm]{Strategy}
\newtheorem*{rem*}{Remark}
\newcommand*{\myproofname}{Proof of Theorem \ref{1thm:inertness}}
\newcommand*{\myproofnames}{Proof of Proposition \ref{prop:pdhmlgy}}
\newcommand{\Z}{\varmathbb{Z}}
\newcommand{\C}{\varmathbb{C}}
\renewcommand{\H}{\varmathbb{H}}
\renewcommand{\O}{\varmathbb{O}}
\renewcommand{\mathbb}{\varmathbb}
\newcounter{bean}
\newcommand{\namedright}[3]{\ensuremath{#1\stackrel{#2}
 {\longrightarrow}#3}}
\newcommand{\qqed}{\hfill\Box}
\title{Gyration Stability for Projective Planes}
\author{Sebastian Chenery}
\address{(Chenery) University of Bristol, School of Mathematics, Fry Building, Woodland Road, Bristol, BS8 1UG}
\email{seb.chenery@bristol.ac.uk}
\author{Stephen Theriault}
\address{(Theriault) University of Southampton, Mathematical Sciences, Building 54, Southampton, SO17 1BJ}
\email{s.d.theriault@soton.ac.uk}
\subjclass[2020]{Primary 57N65; Secondary 55P15, 57P10}
\keywords{Projective planes, manifold topology, homotopy groups of spheres.}
\begin{document}

\begin{abstract}
 Gyrations are operations on manifolds that arise in geometric topology, where a manifold~\(M\) may exhibit distinct gyrations depending on the chosen twisting. For a given \(M\), we ask a natural question: do all gyrations of \(M\) share the same homotopy type regardless of the twisting? A manifold with this property is said to have gyration stability. Inspired by recent work by Duan, which demonstrated that the quaternionic projective plane is not gyration stable with respect to diffeomorphism, we explore this question for projective planes in general. We obtain a complete description of gyration stability for the complex, quaternionic, and octonionic projective planes up to homotopy. 
\end{abstract}

\maketitle
\tableofcontents

\section*{Introduction}

The classification of manifolds up to a notion of equivalence, be it diffeomorphism, homeomorphism or homotopy equivalence, is a motivating problem in geometry. Classically, Milnor classified closed simply-connected 4-manifolds up to homotopy equivalence~\cite{milnor1958} and then Freedman famously classified them up to homeomorphism~\cite{freedman}. Their classification up to diffeomorphism remains one of the great outstanding problems in the subject (see \cite{milnorclass} for context). Among the other families of manifolds that have been classified are closed smooth simply-connected $5$-manifolds~\cites{smale, barden}, closed smooth simply-connected $6$-manifolds~\cites{wall6mfld, jupp, zhubr}, and closed smooth simply-connected spin $7$-manifolds~\cite{crowley_nordstrom}. Other work has also considered classifications of families of manifolds with specific properties, such as circle bundles over simply-connected $4$-manifolds \cite{dl}, and more recently, $5$-manifolds $M$ whose fundamental group is isomorphic to $\mathbb{Z}/2\mathbb{Z}$ and with $\pi_{2}(M)$ being free abelian and a trivial module over the group ring \cite{hambleton_su}.

A gyration is a surgery on the Cartesian product of a given manifold and a sphere. Originally defined by Gonz{\'a}lez Acu{\~n}a \cite{gonzalezacuna}, they have since appeared in at least three seemingly distinct contexts. One is in work of Bosio--Meersseman \cite{bosio_meersseman} and Gitler--Lopez-de-Medrano \cite{gitler-ldm} on intersections of quadrics, carrying with them deep links to the topology of polyhedral products and their underlying combinatorics. A second is in recent work of Duan \cite{duan} on circle actions on smooth manifolds, subsequently developed by Galaz-Garc\'ia--Reiser \cite{galaz-garcia--reiser}, and used to great effect to classify simply-connected 6-manifolds that admit circle actions. Third, there is work of Kasprowski--Land--Powell--Teichner \cite{klpt} on 4-manifolds with the fundamental group of an aspherical 3-manifold, which used 
gyrations of 3-manifolds and connected sums thereof (cf. \cite{klpt}*{Sections 7.2 and 7.3}) to show that two such 4-manifolds with isomorphic fundamental groups are stably diffeomorphic if they are stably homotopy equivalent. Recently, direct study of homotopy theoretic properties of gyrations has appeared in work of Huang and the second author \cite{huangtheriault}, Basu-Ghosh \cite{basu-ghosh} and \cite{huang_inertness24}. In this paper we give a (nearly) complete classification of the homotopy types of gyrations on the projective planes \(\C P^2\), \(\H P^2\) and \(\O P^2\).

Let $M$ be a closed simply-connected \(n\)-manifold. Such an \(M\) may be thought of as a Poincar\'{e} Duality complex with a single $n$-dimensional cell, so we let $\overline{M}$ be its $(n-1)$-skeleton, and there is a homotopy cofibration 
\[
    S^{n-1} \xrightarrow{f_M} \overline{M} \rightarrow M
\] 
where $f_M$ is the attaching map for the top-cell. Let $k\geq 2$ be an integer and take a class \(\tau\in\pi_{k-1}(\mathrm{SO}(n))\). Using the standard linear action of \(\mathrm{SO}(n)\) on \(S^{n-1}\), define the map 
\[
    t:S^{n-1}\times S^{k-1}\rightarrow S^{n-1}\times S^{k-1}
\] 
by \(t(a, x)=(\tau(x)\cdot a,x)\). The \textit{\(k\)-gyration of \(M\) by \(\tau\)} is defined to be the manifold given by the (strict) pushout
    \begin{equation*}
        \begin{tikzcd}[row sep=3em, column sep=3em]
            S^{n-1}\times S^{k-1} \arrow[r, "1\times \iota"] \arrow[d, "(f_M\times 1)\circ t"] & S^{n-1}\times D^k \arrow[d] \\
            \overline{M}\times S^{k-1} \arrow[r] & \mathcal{G}^k_\tau(M) 
        \end{tikzcd}
    \end{equation*} 
where $\iota$ is the inclusion of the boundary of the disc. When \(\tau\) is trivial the class \(t\) is homotopic to the identity and the above pushout constitutes a $(k-1,n)$-type surgery on $M\times S^{k-1}$ with respect to this trivial choice - we call this the \textit{trivial \(k\)-gyration} and write it as \(\mathcal{G}^k_0(M)\). Otherwise, the surgery is twisted by the action of $\tau$ considered as a diffeomorphism. For this reason the homotopy class \(\tau\) is referred to as a \textit{twisting} in the context of gyrations. This definition of a general \(k\)-gyration via pushouts was introduced by \cite{huangtheriault}, generalising the \(k=2\) case used in both \cite{duan} and \cite{gitler-ldm}, the second being in the context of the trivial 2-gyration. Via the alternative surgery definition (see for example \cite{huang_inertness24}*{Section 12}), it follows that a gyration is an $(n+k-1)$-manifold with an orientation inherited from that of \(M\). This more geometric formulation is written in our notation as
\[
    \mathcal{G}_\tau^k(M)=\left((M-Int(D^n))\times S^{k-1}\right)\cup_t\left(S^{n-1}\times D^k\right)
\]
where \(D^n\subset M\) is an embedded \(n\)-disc centred at a chosen base point of \(M\).

\begin{rem*}
    In previous literature, gyrations have been denoted by `\(\mathcal{G}^\tau(M)\)' which does not keep track of the index \(k\). Given the clarity necessary for the arguments in this paper, we have adopted `\(\mathcal{G}^k_\tau(M)\)' as our notation.
\end{rem*}

This leads us to the two central ``gyration stability" questions, referred to throughout this paper as GSI and GSII.

\begin{question}[GSI]
    For a given \(k\geq2\) and \(n\)-manifold \(M\), do we have \(\mathcal{G}^{k}_{\tau}(M)\simeq\mathcal{G}^{k}_{\omega}(M)\) for all twistings \(\tau,\omega\in\pi_{k-1}(\mathrm{SO}(n))\)? 
\end{question}
\noindent If the answer is yes, we say \(M\) is \textit{\(\mathcal{G}^k\)-stable} or that \(M\) has \textit{\(\mathcal{G}^k\)-stability}; when the context is clear this property is called \textit{gyration stability}. For a fixed \(k\), the property is equivalent to all \(k\)-gyrations having a single homotopy type whatever twisting \(\tau\) is taken. A refined version of the question asks for homotopy types to be distinguished. 

\begin{question}[GSII]
    For a given \(k\geq2\) and \(n\)-manifold \(M\), how many different homotopy types can \(\mathcal{G}^{k}_{\tau}(M)\) have as the homotopy class \(\tau\) is varied? 
\end{question}
\noindent Note that the answer to GSI is ``yes'' if and only if the answer to GSII is ``one'' and so enumerating the possible homotopy types of \(\mathcal{G}^{k}_{\tau}(M)\) is a valid strategy for answering GSI in either the negative or the positive. Moreover, GSII is the stronger version of the statement - one may think of it as asking that if we do not have \(\mathcal{G}^k\)-stability for a given \(k\), exactly how unstable are we?

Observe that gyration stability occurs in index \(k\) whenever \(\pi_{k-1}(\mathrm{SO}(n))\) is the trivial group, as there is only one (i.e.~the trivial) twisting to pick. If \(\pi_{k-1}(SO(n))\) is not trivial then there are distinct non-homotopic twistings, but they may nevertheless result in homotopy equivalent gyrations. For example, when \(M\) is a sphere it is straightforward to show that gyration stability occurs for all $k$ (cf. Example~\ref{ex:spheres}). Projective planes, on the other hand, behave much more delicately. Duan \cite{duan}*{Example 3.4} showed that the quaternionic projective plane \(\H P^2\) is not \(\mathcal{G}^2\)-stable up to diffeomorphism by invoking spin structures. This motivated us to reformulate his result homotopy theoretically in order to consider other projective planes. Our results are summarised in the following table, with indications of where in the paper the results are proved.

\begin{center}
\vspace{5pt}
\begin{tabularx}{\columnwidth}{|X|X|X|X|X|}
    \hline
     \(M\) & \(k\) & GSI? & GSII? & cf.  \\ \hline
    \(\C P^2\) & 2 & Yes & 1 & Theorem \ref{thm:g2cp2} \\ \hline
    \(\H P^2\) & 2 & No & 2 & Theorem \ref{thm:g2hp2} \\ 
     & 4 & Yes & 1 & Theorem \ref{thm:g4hp2} \\ \hline
     \(\O P^2\) & 2 & No & 2 & Theorem \ref{thm:g2op2} \\ 
     & 4 & No & 2, 3 or 5 & Theorem \ref{thm:g4op2}\\
     & 8 & Yes & 1 & Theorem \ref{thm:g8op2} \\
     & 9 & Yes & 1 & Theorem \ref{thm:g9op2} \\
     & 10 & Yes & 1 & Theorem \ref{thm:g10op2} \\
     & 12 & No & 4, 6 or 10 & Theorem \ref{thm:g12op2} \\ \hline
\end{tabularx}
\vspace{5pt}
\end{center}

Note in particular that we have \(\mathcal{G}^2\)-instability of \(\H P^2\) up to homotopy equivalence, so we are able to rule out stability with respect to homeomorphism, and therefore have a subtly stronger result than in~\cite{duan}. 

\begin{rem*}
    A comment should be made about the inexact answers to GSII for \(\O P^2\) when \(k=4\) and \(k=12\). These are the result of certain relations between compositions of elements in the homotopy groups of spheres, and depend on the values of two odd integers when taken modulo 8. These integers arose in the calculation of the $2$-primary homotopy groups of spheres by Toda~\cite{toda} and Oda~\cite{oda_unstable_spheres}. Their being odd sufficed for Toda and Oda's calculations. However, in our case, their being unspecified is an 
    obstruction to precisely enumerating the homotopy types of gyrations. Details are in Propositions \ref{prop:xi} and \ref{prop:theta}.
\end{rem*}
 
The arguments producing the GSII statements in the table above also give classification results. Write $\mathbb{F}$ for one of $\mathbb{C}$, $\mathbb{H}$ or $\mathbb{O}$. Note that if the answer to GSII is $1$ then the classification is that there is a homotopy equivalence $\mathcal{G}^{k}_{\tau}(\mathbb{F}P^{2})\simeq\mathcal{G}^{k}_{0}(\mathbb{F}P^{2})$ for all $\tau\in\pi_{k-1}(SO(n))$. 
The inexact cases when $k=4,12$ for $\mathbb{O}P^{2}$ imply no classification is yet possible. The $k=2$ case is complete.

\begin{letthm}\label{thm:classification}
    Let \(\tau,\omega\in\pi_1(\mathrm{SO}(2m))\) be twistings. Then:
    \begin{enumerate}
        \item[(i)] \(\mathcal{G}_\tau^2(\C P^2)\simeq\mathcal{G}_0^2(\C P^2)\) for all \(\tau\);
        \item [(ii)] \(\mathcal{G}_\tau^2(\H P^2)\simeq\mathcal{G}_\omega^2(\H P^2)\) if and only if \(\tau\simeq\omega\);
        \item [(iii)] \(\mathcal{G}_\tau^2(\O P^2)\simeq\mathcal{G}_\omega^2(\O P^2)\) if and only if \(\tau\simeq\omega\).
    \end{enumerate}
\end{letthm}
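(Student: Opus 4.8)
The plan is to build each gyration explicitly as a four-cell complex and then compare the two available twistings directly. For $m\in\{2,4,8\}$ we have $\pi_1(\mathrm{SO}(2m))\cong\Z/2\Z$, so in each case there are exactly two twistings: the trivial one and a generator $\tau_0$ (which, using $\pi_1(\mathrm{SO}(2m-1))\xrightarrow{\cong}\pi_1(\mathrm{SO}(2m))$, may be represented by a loop fixing a basepoint of $S^{2m-1}$). Thus (i) asks for $\mathcal{G}^2_{\tau_0}(\C P^2)\simeq\mathcal{G}^2_0(\C P^2)$, while (ii) and (iii) ask for $\mathcal{G}^2_{\tau_0}(\H P^2)\not\simeq\mathcal{G}^2_0(\H P^2)$ and $\mathcal{G}^2_{\tau_0}(\O P^2)\not\simeq\mathcal{G}^2_0(\O P^2)$. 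Throughout, write $\mathbb{F}P^2\simeq S^m\cup_h e^{2m}$ with $h\in\pi_{2m-1}(S^m)$ the Hopf map ($h=\eta_2,\nu_4,\sigma_8$ for $m=2,4,8$), so that $\overline{\mathbb{F}P^2}=S^m$ and the gyration pushout involves the spaces $S^{2m-1}\times S^1$, $S^{2m-1}\times D^2$ and $S^m\times S^1$.

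The first step is to fix the cell structure. Since $(S^{2m-1}\times D^2)/(S^{2m-1}\times S^1)\simeq S^2\vee S^{2m+1}$, the defining pushout yields a homotopy cofibration $S^m\times S^1\to\mathcal{G}^2_\tau(\mathbb{F}P^2)\to S^2\vee S^{2m+1}$, and a Mayer--Vietoris computation shows that $\mathcal{G}^2_\tau(\mathbb{F}P^2)$ has the integral homology of $S^m\times S^{m+1}$, concentrated in degrees $0,m,m+1,2m+1$. The $2$-cell is attached along $\{*\}\times S^1\subset S^m\times S^1$ by a map that is degree one on the circle factor and, since $\pi_1(S^m)=0$, null-homotopic on the $S^m$ factor; this identifies the $(m{+}1)$-skeleton with $S^m\vee S^{m+1}$, and with our choice of $\tau_0$ this $(m{+}1)$-skeleton is literally the same for both twistings, so
\[
    \mathcal{G}^2_\tau(\mathbb{F}P^2)\;\simeq\;(S^m\vee S^{m+1})\cup_{\theta_\tau}e^{2m+1},\qquad\theta_\tau\in\pi_{2m}(S^m\vee S^{m+1}).
\]
By Hilton's theorem $\pi_{2m}(S^m\vee S^{m+1})\cong\pi_{2m}(S^m)\oplus\pi_{2m}(S^{m+1})\oplus\Z\langle[\iota_m,\iota_{m+1}]\rangle$ (the remaining basic products lie in spheres of dimension at least $3m>2m$), and Poincar\'e duality for the closed orientable manifold $\mathcal{G}^2_\tau(\mathbb{F}P^2)$ forces the $[\iota_m,\iota_{m+1}]$-coordinate of $\theta_\tau$ to be a unit. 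So $\theta_\tau=[\iota_m,\iota_{m+1}]+a_\tau+b_\tau$ with $a_\tau\in\pi_{2m}(S^m)$ and $b_\tau\in\pi_{2m}(S^{m+1})$.

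The second step is to measure the twisting. The class $\tau$ enters only through the homeomorphism $t$, which fixes the circle coordinate and acts on $S^{2m-1}$ via the linear action; hence $(f_M\times 1)\circ t_{\tau_0}$ and $f_M\times 1$ agree up to homotopy on the skeleton $S^{2m-1}\vee S^1$ and differ on the top cell by $h\circ J(\tau_0)\in\pi_{2m}(S^m)$, where $J\colon\pi_1(\mathrm{SO}(2m))\to\pi_{2m}(S^{2m-1})\cong\Z/2\Z$ is the (surjective) $J$-homomorphism. Propagating this through the pushout gives $\theta_{\tau_0}=\theta_0+h\circ\eta_{2m-1}$, with the correction term living in the first Hilton summand; that is, $\theta_{\tau_0}-\theta_0$ equals $\eta_2\circ\eta_3=\eta_2^2$ for $\C P^2$, equals $\nu_4\circ\eta_7$ for $\H P^2$, and equals $\sigma_8\circ\eta_{15}$ for $\O P^2$, each of which is nonzero in its homotopy group of spheres.

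The last step is to compare orbits under the group $\mathcal{E}$ of based self-homotopy equivalences of $S^m\vee S^{m+1}$, using the standard fact that $(S^m\vee S^{m+1})\cup_\theta e^{2m+1}\simeq(S^m\vee S^{m+1})\cup_{\theta'}e^{2m+1}$ if and only if $\theta$ and $\theta'$ lie in the same $\mathcal{E}$-orbit, up to the sign from the degree $-1$ self-map of $e^{2m+1}$. For $\C P^2$: one first shows $b_0\neq 0$ --- equivalently that $Sq^2\colon H^3(\mathcal{G}^2_0(\C P^2);\Z/2)\to H^5(\mathcal{G}^2_0(\C P^2);\Z/2)$ is nonzero, which follows because $\C P^1\subset\C P^2$ has normal bundle with nontrivial $w_2$ and this persists into the gyration, so $\mathcal{G}^2_0(\C P^2)$ is not spin --- and then the self-equivalence of $S^2\vee S^3$ with $\iota_3\mapsto\iota_3+\eta_2$ sends $\theta_0$ to $\theta_0+b_0\eta_2^2=\theta_{\tau_0}$, giving $\mathcal{G}^2_{\tau_0}(\C P^2)\simeq\mathcal{G}^2_0(\C P^2)$. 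For $\H P^2$ and $\O P^2$: I would instead show $\nu_4\eta_7$, respectively $\sigma_8\eta_{15}$, cannot be absorbed; the action of $\mathcal{E}$ on the $\pi_{2m}(S^m)$-coordinate is governed by the Whitehead product $[\iota_m,\eta_m]$ and by left composition with $\eta_m$ on the $\pi_{2m}(S^{m+1})$-coordinate, and one checks that $h\circ\eta_{2m-1}$ lies outside the resulting subgroup, so $\theta_0$ and $\theta_{\tau_0}$ are in distinct $\mathcal{E}$-orbits. Equivalently, collapsing the bottom sphere $S^{m+1}$ carries $\theta_\tau$ to $a_\tau$ and yields the two-cell complexes $S^m\cup_{a_0}e^{2m+1}$ and $S^m\cup_{a_0+h\eta_{2m-1}}e^{2m+1}$, which one distinguishes after verifying that the ambiguity in the choice of $S^{m+1}$ is harmless (this reduces to a Whitehead product vanishing). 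Together with the two-element count of $\pi_1(\mathrm{SO}(2m))$, this gives exactly two homotopy types, proving (ii) and (iii). I expect the main obstacle to be precisely this fourth step: pinning down $\theta_0$, especially its $\pi_{2m}(S^{m+1})$-coordinate (which controls the $\C P^2$ absorption), and carrying out the orbit bookkeeping for $S^m\vee S^{m+1}$, both of which rest on several Toda-style composition computations.
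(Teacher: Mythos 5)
Your plan reproduces, in essence, the framework of the paper: express $\mathcal{G}^2_\tau(\mathbb{F}P^2)$ as $(S^m\vee S^{m+1})\cup_{\theta_\tau}e^{2m+1}$, measure the effect of the twisting as a correction term $h\circ\eta_{2m-1}$ in the $\pi_{2m}(S^m)$ coordinate, and compare attaching maps under the self-equivalence group of $S^m\vee S^{m+1}$. The paper reaches exactly this criterion through Theorem~\ref{thm:twistattach} together with Lemma~\ref{lem:blakersmasseyargument} and Proposition~\ref{prop:lambda_delta}. But your write-up has several genuine gaps that sit precisely at the points where the paper does real work.

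First, the ``standard fact'' that $(S^m\vee S^{m+1})\cup_\theta e^{2m+1}\simeq(S^m\vee S^{m+1})\cup_{\theta'}e^{2m+1}$ iff $\theta,\theta'$ lie in the same $\mathcal{E}$-orbit (up to sign) is not automatic in the needed direction: given a homotopy equivalence of the complexes, you must produce a commuting square on the level of attaching maps, and this requires an argument (the paper uses Blakers--Massey to compare the top cell with the homotopy fibre of the skeletal inclusion --- see Lemma~\ref{lem:blakersmasseyargument}). Second, your argument that $b_0\neq 0$ for $\mathcal{G}^2_0(\C P^2)$ is heuristic: ``the nontrivial $w_2$ persists into the gyration'' would need an actual Stiefel--Whitney computation. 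This is also unnecessary: the paper proves via a cup-product/Poincar\'e-duality argument (Proposition~\ref{devidHopf}) that the attaching map for the trivial gyration is exactly $j\circ\Sigma^{k-1}f+[i,j]$, so the $S^{m+1}$-coordinate $b_0$ is $\Sigma f$, which is $\eta_3\ne 0$ by inspection.

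Third, and most significantly, the instability results for $\H P^2$ and $\O P^2$ are acknowledged but not carried out. These are not bookkeeping: they rest on nontrivial Toda relations. For $\H P^2$ one must prove that $\eta_4\circ\nu_5+[\iota_4,\eta_4]$ is null homotopic (so that the unique nontrivial $\lambda\in\pi_5(S^4)$ is useless for absorbing $\nu_4\circ\eta_7$); this uses $\eta_4\circ\nu_5\simeq\Sigma\nu'\circ\eta_7\simeq[\iota_4,\eta_4]$ with both summands of order two --- see the paper's Lemma~\ref{lem:eta-nu}. For $\O P^2$ one must prove $\eta_8\circ\sigma_9+[\iota_8,\eta_8]\not\simeq\sigma_8\circ\eta_{15}$; the paper's argument (Lemma~\ref{lem:g2op2nonhomotopy}) exploits that the left side is a suspension while $\sigma_8\circ\eta_{15}$ is not, by Toda. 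Finally, your proposed ``collapse $S^{m+1}$'' shortcut is imprecise: the ambiguity you must control is not just a Whitehead product $[\iota_m,\lambda]$ but the full expression $\lambda\circ\Sigma f+[\iota_m,\lambda]$ (because the $S^{m+1}$-coordinate of the attaching map is nonzero), which is exactly the obstruction expression of Proposition~\ref{prop:lambda_delta}. So the plan is sound in outline, but the proof is incomplete at each of the steps that carry the actual mathematical content.
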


This paper is divided into two parts, with Part I containing the theoretical basis for later arguments and Part II being the site of computations. Part I describes a gyration as a certain homotopy cofibre, and in the case of a projective plane gives a careful analysis of the homotopy class of the attaching map for the top dimensional cell. Part II is a systematic study of gyration stability for projective planes, beginning with some general examples in Section \ref{sec:exgystab}. Though we do not present a uniform argument applicable to all cases from the above table, we lay out a general framework in Section \ref{sec:selfequiv}. Detailed computations are contained in Sections \ref{sec:G(hp2)}, \ref{sec:G(op2)-k=<8} and \ref{sec:G(op2)-9to14}, each case relying heavily on known compositions of elements in the homotopy groups of spheres in the relevant dimensional ranges. We conclude by proving Theorem \ref{thm:classification}. 

\subsubsection*{Acknowledgement} During preparation of this work, the first author was by supported EPSRC grant EP/W524621/1 and the Heilbronn Institute for Mathematical Research. The authors also wish to thank the reviewer for their insightful comments.  

\section*{\textbf{Part I: Deviations and Gyrations}}  

This paper works with maps between wedges of spaces in great detail; there are many ways to assemble such maps, so for the sake of clarity we set up the following notation before beginning in earnest. Given based maps \(f:A\rightarrow X\) and \(g:B\rightarrow Y\) we define the \textit{wedge} of \(f\) and \(g\) to be the map \[f\vee g:A\vee B\longrightarrow X \vee Y\] which is to say, \(f\) on the first summand and \(g\) on the second. Furthermore, if \(Y=X\) then we may define the \textit{wedge sum} of \(f\) and \(g\) to be the composite \[f\perp g: A\vee B\xlongrightarrow{f\vee g} X\vee X\xlongrightarrow{\nabla} X\] where \(\nabla\) denotes the fold map. If in addition we have \(A=B\) and \(A\) is a co-\(H\)-space with comultiplication~\(\sigma\), then the \textit{sum} of \(f\) and \(g\) is the composite \[f+g:A\xlongrightarrow{\sigma} A\vee A\xlongrightarrow{f\perp g} X.\]

\section{Maps Between Half-Smashes and a Deviation}  
\label{sec:deviation} 

Let $A$ and $B$ be path-connected spaces. The \emph{right half-smash} is the quotient space \[A\amsrtimes B=(A\times B)/\sim\] obtained by collapsing $B$ to the basepoint. There is a canonical inclusion, projection and quotient map  \[i:A \rightarrow A\amsrtimes B \qquad \pi: A\amsrtimes B \rightarrow A \qquad q:A\amsrtimes B \rightarrow A\wedge B\] where $q$ is given by collapsing $A$ to the basepoint. 

Suppose that there are maps \(f:A \rightarrow C\) and \(g:B \rightarrow D\)  
where $C$ and $D$ are path-connected spaces. Consider the map \[A\amsrtimes B \xlongrightarrow{f\amsrtimes g} C\amsrtimes D.\] The naturality of $\pi$ and $q$ imply that there are commutative diagrams 
\begin{equation} 
  \label{halfsmashnat} 
    \begin{tikzcd}[row sep=3em, column sep = 3em]
        A \arrow[d, "f"] \arrow[r, "i"] & A\amsrtimes B \arrow[d, "f\amsrtimes g"] \\
        C \arrow[r, "i"] & C\amsrtimes D
    \end{tikzcd}
    \qquad
    \begin{tikzcd}[row sep=3em, column sep = 3em]
        A\amsrtimes B \arrow[d, "f\amsrtimes g"] \arrow[r, "\pi"] & A \arrow[d, "f"] \\
        C\amsrtimes D \arrow[r, "\pi"] & C
    \end{tikzcd}
    \qquad
    \begin{tikzcd}[row sep=3em, column sep = 3em]
        A\amsrtimes B \arrow[d, "f\amsrtimes g"] \arrow[r, "q"] & A\wedge B \arrow[d, "f\wedge g"] \\
        C\amsrtimes D \arrow[r, "q"] & C\wedge D
    \end{tikzcd}
\end{equation}

Next, suppose that $A$ is a co-$H$-space with comultiplication $\sigma$. Then $A\amsrtimes B$ is a co-$H$-space with comultiplication 
\[\overline{\sigma}:A\amsrtimes B \xrightarrow{\sigma\amsrtimes 1} (A\vee A)\amsrtimes B \xrightarrow{\cong} (A\amsrtimes B)\vee (A\amsrtimes B).\] The following lemma is well known but we give its statement and proof to be explicit about the choices of maps involved. 
     
\begin{lem} 
  \label{halfsmashcoH} 
   If $A$ is a simply-connected co-$H$-space then the composite \[e:A\amsrtimes B \xrightarrow{\overline{\sigma}} (A\amsrtimes B)\vee(A\amsrtimes B) \xrightarrow{\pi\vee q} A\vee(A\wedge B)\] is a homotopy equivalence. This is natural for co-$H$-maps \(A \rightarrow C\) between simply-connected co-$H$-spaces and any map \(B \rightarrow D\). 
\end{lem}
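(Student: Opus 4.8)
The plan is to establish the equivalence $e$ by exhibiting a homotopy inverse built from the natural inclusion and quotient maps, and then to invoke a homology/Whitehead argument; naturality then comes almost for free because every map in sight is one of the canonical structural maps. First I would recall that for a simply-connected co-$H$-space $A$ there is a (right) inverse to the projection $\pi$ up to the comultiplication, and more usefully that $A \rtimes B$ already splits \emph{after one suspension} as $\Sigma(A \rtimes B) \simeq \Sigma A \vee \Sigma(A \wedge B)$ by the James-type splitting; the content of the lemma is that when $A$ is itself a co-$H$-space this splitting is already present unstably via the specific map $e$. So I would construct a candidate inverse
\[
    e' : A \vee (A \wedge B) \xrightarrow{\;i \perp j\;} A \rtimes B,
\]
where $i : A \to A \rtimes B$ is the canonical inclusion and $j : A \wedge B \to A \rtimes B$ is a chosen section of $q$ coming from the co-$H$ structure (concretely, $j$ is the composite $A \wedge B \to (A \rtimes B) \wedge B \to \dots$ or more simply the map induced by picking out the ``top'' wedge summand in $\overline{\sigma}$). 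The wedge sum $i \perp j$ uses the fold map on $A \rtimes B$, which is legitimate since $A \rtimes B$ is a co-$H$-space.

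Next I would check the two composites. For $e \circ e'$: precompose with each wedge summand. On the $A$ summand, $e \circ i = (\pi \vee q) \circ \overline{\sigma} \circ i$; using the naturality square for $\overline{\sigma}$ against $i$ and the fact that $\pi \circ i = \mathrm{id}_A$ while $q \circ i$ is null, this composite is the inclusion of $A$ into $A \vee (A\wedge B)$ up to homotopy. On the $A \wedge B$ summand one similarly computes $e \circ j$ using $\pi \circ j \simeq \ast$ (since $j$ factors through the fibre-like part) and $q \circ j \simeq \mathrm{id}$. Hence $e \circ e'$ is homotopic to the identity on the wedge. For the other composite, rather than computing $e' \circ e$ directly I would argue that $e$ induces an isomorphism on homology with any coefficients: this is the standard consequence of the algebraic splitting $\widetilde{H}_*(A \rtimes B) \cong \widetilde{H}_*(A) \oplus \widetilde{H}_*(A \wedge B)$ (which holds for any path-connected $A$, $B$ by the cofibration $A \to A \rtimes B \xrightarrow{q} A \wedge B$ being split by $i$) together with the identification of $\pi_*$ and $q_*$ on homology as the projections onto the two summands. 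Since both $A \rtimes B$ and $A \vee (A \wedge B)$ are simply-connected (here one uses that $A$ is simply-connected, so $A \wedge B$ is too, and half-smashes of connected spaces are connected with $\pi_1(A \rtimes B) = \pi_1(A)$), a homology isomorphism between simply-connected spaces is a homotopy equivalence by the Whitehead theorem, with homotopy inverse necessarily $e'$ up to homotopy.

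For naturality: given a co-$H$-map $\phi : A \to C$ between simply-connected co-$H$-spaces and any $\psi : B \to D$, I would show the square relating $e$ for $(A,B)$ and $e$ for $(C,D)$ via $\phi \rtimes \psi$ and $\phi \vee (\phi \wedge \psi)$ commutes up to homotopy. This reduces to two facts already recorded in diagram~\eqref{halfsmashnat}, namely the naturality of $\pi$ and $q$ under $f \rtimes g$, plus the compatibility of $\overline{\sigma}$ with $\phi \rtimes \psi$ — which is exactly where the hypothesis that $\phi$ is a co-$H$-map is used, since $\overline{\sigma}$ is defined from $\sigma$ and one needs $(\phi \vee \phi) \circ \sigma_A \simeq \sigma_C \circ \phi$. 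Assembling these, the square built from $(\pi \vee q) \circ \overline{\sigma}$ commutes up to homotopy on each side.

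The main obstacle I anticipate is not any single step but the bookkeeping around the chosen comultiplication and the induced maps: one must be careful that $\overline{\sigma}$ as defined (via $\sigma \rtimes 1$ and the distributivity equivalence $(A \vee A) \rtimes B \cong (A \rtimes B) \vee (A \rtimes B)$) really is a co-$H$ comultiplication and that $\pi$, $q$ restricted along its two legs behave as claimed up to \emph{coherent} homotopy, not merely up to unspecified homotopy — this matters because the lemma is used later for precise identifications of attaching maps. In practice this is handled by writing the distributivity equivalence explicitly and tracking the two projections $A \vee A \to A$ through it; the verification that $\pi \circ (\text{first leg}) \simeq \mathrm{id}$ and $q \circ (\text{first leg}) \simeq \ast$, with the roles reversed on the second leg, is elementary once the equivalence is pinned down, but it is the place where care is genuinely required.
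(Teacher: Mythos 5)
Your core argument is exactly the paper's: the map $e$ splits the cofibration $A \xrightarrow{i} A\amsrtimes B \xrightarrow{q} A\wedge B$ (since $\pi$ retracts $i$), hence induces an isomorphism on reduced homology, and since $A$ is simply-connected so is $A\amsrtimes B$, so Whitehead's Theorem gives the equivalence; naturality then follows from naturality of $\pi$, $q$ and the co-$H$-map hypothesis (needed to push $\overline{\sigma}$ through). All of that is correct and complete on its own.

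The problem is the opening move, the candidate inverse $e' = i\perp j$. You posit a section $j$ of $q$ ``coming from the co-$H$ structure'', but no such section exists a priori for a general simply-connected co-$H$-space $A$; having a left inverse $\pi$ for $i$ splits the cofibration on homology but does \emph{not} supply a right splitting on the space level. Neither of your sketched constructions produces a well-defined map: the composite $A\wedge B \to (A\amsrtimes B)\wedge B \to \cdots$ has no natural continuation back to $A\amsrtimes B$, and ``picking out the top wedge summand of $\overline{\sigma}$'' yields a self-map of $A\amsrtimes B$ (a pinch composed with $\overline{\sigma}$), not a map out of $A\wedge B$. In fact the existence of such a $j$ is essentially equivalent to the splitting you are trying to establish — it can be extracted from $e^{-1}$ once the lemma is proved, but not before. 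The paper only constructs a map $j$ of this kind in Lemma~\ref{ej}, in the special case $A=S^m$, $B=S^{k-1}$, using a fibration-theoretic argument and Blakers--Massey, and its co-$H$ property there requires the extra hypothesis $k\le m-1$. Since your homology-plus-Whitehead argument already does the whole job, the $e'$ detour should be deleted rather than repaired.
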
 

\begin{proof} 
Consider the homotopy cofibration \(A \xrightarrow{i} A\amsrtimes B \xrightarrow{q} A\wedge B\). Observe that $\pi$ is a left inverse for $i$. Thus the map $e$ splits this homotopy cofibration, implying that it induces an isomorphism in homology. As $A$ is simply-connected, so is $A\amsrtimes B$. Therefore Whitehead's Theorem implies that $e$ is a homotopy equivalence. 

As the co-$H$-structure on $A\amsrtimes B$ is induced by that from $A$, the naturality of $\pi$ and $q$ imply the naturality of $e$ for co-$H$-maps \(A \rightarrow C\) between simply-connected co-$H$-spaces and any map \(B \rightarrow D\). 
\end{proof} 

Now, suppose that $A$ and $C$ are simply-connected co-$H$-spaces and there is a map \(f:A \rightarrow C\) which is not necessarily a co-$H$-map, and consider the diagram 
\begin{equation} 
   \label{devdgrm} 
    \begin{tikzcd}[row sep=3em, column sep = 3em]
        A\amsrtimes B \arrow[d, "f\amsrtimes g"] \arrow[r, "e"] & A\vee(A\wedge B) \arrow[d, "f\vee(f\wedge g)"] \\
        C\amsrtimes D \arrow[r, "e"] & C\vee(C\wedge D)
    \end{tikzcd}
\end{equation}
If $f$ is a co-$H$-map then the naturality statement of Lemma~\ref{halfsmashcoH} implies that it homotopy commutes. However, if $f$ is not a co-$H$-map then it may not homotopy commute. Analogously to the co-$H$-deviation of a map, let \[\delta:A\amsrtimes B \rightarrow C\vee (C\wedge D)\] be the difference $\delta=e\circ(f\amsrtimes g)-((f\vee(f\wedge g))\circ e)$ of the two directions around the diagram. In particular, \(\delta\simeq\ast\) if and only if \(f\amsrtimes g\simeq f\vee(f\wedge g)\). Analysing this deviation $\delta$ is the objective of the rest of this section. 

\begin{lem} 
   \label{dev} 
   The composition \(A\amsrtimes B \xrightarrow{\delta} C\vee(C\wedge D)\xrightarrow{I} C\times (C\wedge D)\) is null homotopic, where~$I$ is the inclusion of the wedge into the product. 
\end{lem}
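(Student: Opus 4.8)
The plan is to show that the deviation $\delta$ becomes null-homotopic after including the wedge $C \vee (C \wedge D)$ into the product $C \times (C \wedge D)$, by analysing the two components of $\delta$ separately using the projections of $C \times (C \wedge D)$ onto its factors. Concretely, $\delta$ is the difference of the two composites around diagram~\eqref{devdgrm}, and since $C \vee (C \wedge D)$ is a co-$H$-space, this difference makes sense; post-composing with $I$ and then with each of the two projections $p_1 : C \times (C \wedge D) \to C$ and $p_2 : C \times (C \wedge D) \to C \wedge D$ detects $\delta$ up to the usual indeterminacy. Since a map into a product is null-homotopic if and only if both its projections are, it suffices to show $p_1 \circ I \circ \delta \simeq \ast$ and $p_2 \circ I \circ \delta \simeq \ast$.

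The key point is that $p_1 \circ I$ is (homotopic to) the pinch onto the first wedge summand, i.e.\ it kills $C \wedge D$ and is the identity on $C$; similarly $p_2 \circ I$ pinches onto the second summand. First I would post-compose the whole of diagram~\eqref{devdgrm} with the projection $C \vee (C \wedge D) \to C$. The definition of the equivalence $e$ from Lemma~\ref{halfsmashcoH} (built from $\overline{\sigma}$ followed by $\pi \vee q$) means that the composite $C \amsrtimes D \xrightarrow{e} C \vee (C\wedge D) \to C$ is just the projection $\pi : C \amsrtimes D \to C$, because $\overline{\sigma}$ is a comultiplication (so one leg of it composed with the relevant projection is the identity) and the other summand maps through $q$ into $C \wedge D$, which is collapsed. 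Then the left-hand square of~\eqref{halfsmashnat} — the naturality of $\pi$ — gives $\pi \circ (f \amsrtimes g) \simeq f \circ \pi$, while on the other side of~\eqref{devdgrm} the composite $A \vee (A \wedge B) \xrightarrow{f \vee (f \wedge g)} C \vee (C \wedge D) \to C$ is $f \circ (\text{projection to } A) = f \circ \pi \circ \dots$; so both directions around the diagram agree after projecting to $C$, forcing $p_1 \circ I \circ \delta \simeq \ast$.

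For the second component, I would post-compose~\eqref{devdgrm} with the projection $C \vee (C \wedge D) \to C \wedge D$. By the construction of $e$, the composite $C \amsrtimes D \xrightarrow{e} C \vee (C \wedge D) \to C \wedge D$ is exactly the quotient map $q : C \amsrtimes D \to C \wedge D$ (the $\pi$-summand is collapsed, and the $q$-summand is produced by the other leg of $\overline\sigma$ composed with $q$). Now the right-hand square of~\eqref{halfsmashnat} — naturality of $q$ — gives $q \circ (f \amsrtimes g) \simeq (f \wedge g) \circ q$. Going the other way, $A \vee (A\wedge B) \xrightarrow{f \vee (f \wedge g)} C \vee (C \wedge D) \to C \wedge D$ is $(f \wedge g)$ on the $A \wedge B$ summand and trivial on $A$, i.e.\ it is $(f \wedge g) \circ (\text{projection to } A \wedge B)$, and precomposing with $e$ recovers $(f \wedge g) \circ q$ as well. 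Hence the two directions agree after projecting to $C \wedge D$, so $p_2 \circ I \circ \delta \simeq \ast$, and therefore $I \circ \delta \simeq \ast$.

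The main obstacle is bookkeeping rather than any deep idea: one must be careful that the "difference" defining $\delta$ is taken with respect to the correct co-$H$ structure (that on $A \amsrtimes B$ from Lemma~\ref{halfsmashcoH}, pushed forward), and that post-composition with $I$ and the projections interacts correctly with this difference — post-composition is additive with respect to a co-$H$ structure on the source only when the target map respects sums, which is automatic here since $p_i \circ I$ are just pinch maps and we are comparing honest composites, not formal sums. In other words, the cleanest phrasing is: $I \circ \delta$ is the difference of $I \circ e \circ (f \amsrtimes g)$ and $I \circ (f \vee (f \wedge g)) \circ e$ as maps into $C \times (C \wedge D)$, and two maps into a product that have homotopic projections to each factor are homotopic, so their difference — computed using the co-$H$ structure on $A \amsrtimes B$ — is null. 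I would make sure to state this product-detection principle explicitly and cite that $A \amsrtimes B$ is simply-connected (so there is no $\pi_1$-action subtlety) before concluding.
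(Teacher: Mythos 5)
Your proposal is correct and follows essentially the same route as the paper. The paper works with the diagonal $\Delta$ and compares the two composites in $C\times(C\wedge D)$ directly via $I\circ e\simeq(\pi\times q)\circ\Delta$; you instead project to each factor of the product and check the two components separately, but this is the same computation in different packaging — both rely on the factorisation of $I\circ e$ through $\pi$ and $q$, the naturality squares in~(\ref{halfsmashnat}), and left-distributivity to bring the conclusion back to $I\circ\delta$.
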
 

\begin{proof} 
Since $\overline{\sigma}$ is a comultiplication, $I\circ\overline{\sigma}\simeq\Delta$, where $\Delta$ is the diagonal map. The naturality of~$I$ and the definition of $e$ then imply that $I\circ e=I\circ (\pi\vee q)\circ\overline{\sigma}=(\pi\times q)\circ I\circ\overline{\sigma}\simeq(\pi\times q)\circ\Delta$. Therefore the naturality of $\Delta$ and $I$ give homotopy commutative diagrams 
\begin{equation*}
    \begin{tikzcd}[row sep=3em, column sep=3.5em]
        A\amsrtimes B \arrow[d, "\Delta"] \arrow[r, "f\amsrtimes g"] & C\amsrtimes D \arrow[d, "\Delta"] \arrow[r, "e"] & C\vee(C\wedge D) \arrow[d, "I"] \\
        (A\amsrtimes B)\times(A\amsrtimes B) \arrow[r, "(f\amsrtimes g)\times(f\amsrtimes g)"] & (C\amsrtimes D)\times(C\amsrtimes D) \arrow[r, "\pi\times q"] & C\times(C\wedge D)
    \end{tikzcd}
\end{equation*}
\begin{equation*}
    \begin{tikzcd}[row sep=3em, column sep=3em]
        A\amsrtimes B \arrow[d, "\Delta"] \arrow[r, "e"] & A\vee(A\wedge B) \arrow[d, "I"] \arrow[r, "f\vee(f\wedge g)"] & C\vee(C\wedge D) \arrow[d, "I"] \\
        (A\amsrtimes B)\times(A\amsrtimes B) \arrow[r, "\pi\times q"] & A\times(A\wedge B) \arrow[r, "f\times(f\wedge g)"] & C\times(C\wedge D)
    \end{tikzcd}
\end{equation*}
In the first diagram, along the bottom row the naturality of $\pi$ and $q$ imply that 
$\pi\circ(f\amsrtimes g)\simeq f\circ\pi$ and $q\circ(f\amsrtimes g)\simeq (f\wedge g)\circ q$. Thus the homotopy commutativity of the diagram gives 
\[
I\circ e\circ (f\amsrtimes g)\simeq((f\circ\pi))\times((f\wedge g)\circ q)\circ\Delta.
\] 
Notice that the right side of this homotopy is exactly the counter-clockwise direction around the second diagram, which is homotopic to $I\circ(f\vee(f\wedge g))\circ e$. Thus $I\circ e\circ (f\amsrtimes g)\simeq I\circ(f\vee(f\wedge g))\circ e$. In general, for a co-\(H\)-space \(X\), left-distributivity holds for sums in $[X,Y]$ when composed with a map \(Y \rightarrow Z\), so in our case we obtain 
\[
I\circ\big(e\circ(f\amsrtimes g)-(f\vee(f\wedge g))\circ e\big)\simeq\ast.
\] 
But by definition, $\delta=e\circ(f\amsrtimes g)-(f\vee(f\wedge g))\circ e$, implying that $I\circ \delta\simeq\ast$, as asserted. 
\end{proof} 

In general, let $X$ and $Y$ be path-connected spaces. 
The \emph{join} of $X$ and $Y$ is the quotient space 
\[X\ast Y=(X\times I\times Y)/\sim\] 
where $I=[0,1]$ is the unit interval with basepoint $0$ and 
the relation is given by $(a,0,b)\sim (a,0,b)$, $(a,1,b)\sim (a,1,b')$ and $(\ast,t,\ast)\sim (\ast,0,\ast)$ for all $a,a'\in A$, $b,b'\in B$ and $t\in I$. It is well known that there is a homotopy equivalence $X\ast Y\simeq\Sigma X\wedge Y$. 
Let \(i_1:X \rightarrow X\vee Y\) 
and \(i_2:Y \rightarrow X\vee Y\) be the inclusions of the left and right wedge summands respectively. Define maps $ev_{1}$ and $ev_{2}$ by the composites 
\[ev_1:\Sigma\Omega X \xrightarrow{ev} X \xrightarrow{i_1} X\vee Y\] 
\[ev_2:\Sigma\Omega Y \xrightarrow{ev} Y \xrightarrow{i_2} X\vee Y\] 
where $ev$ is the canonical evaluation map. Ganea \cite{ganea65} showed that there is a homotopy fibration 
\[\Omega X\ast\Omega Y \xrightarrow{[ev_1,ev_2]} X\vee Y \xrightarrow{I} X\times Y\]  
where $I$ is the inclusion of the wedge into the product and $[ev_{1},ev_{2}]$ is the Whitehead product of~$ev_{1}$ and $ev_{2}$. In our case, the null homotopy for $\delta\circ I$ in Lemma~\ref{dev} implies that there is a lift 
\begin{equation} 
  \label{devlift} 
    \begin{tikzcd}[row sep=3em, column sep=3em]
        & \Omega C \ast \Omega(C\wedge D) \arrow[d, "{[ev_1,ev_2]}"] \\
        A\amsrtimes B \arrow[ur, "\lambda"] \arrow[r, "\delta"] & C\vee(C\wedge D)
    \end{tikzcd}
\end{equation}
for some map $\lambda$. In particular, $\delta$ factors through a Whitehead product. To go further we specialize. Suppose that $A=S^{2m-1}$, $C=S^{m}$ and 
\(f:S^{2m-1} \rightarrow S^{m}\) is some map. Suppose also that $B=D=S^{k-1}$ and $g$ is the identity map. Then~(\ref{devlift}) takes the form 
\begin{equation} 
  \label{devlift2} 
    \begin{tikzcd}[row sep=3em, column sep=3em]
        & \Omega S^{m}\ast\Omega S^{m+k-1} \arrow[d, "{[ev_1,ev_2]}"] \\
        S^{2m-1}\amsrtimes S^{k-1} \arrow[ur, "\lambda"] \arrow[r, "\delta"] & S^{m}\vee S^{m+k-1}.
    \end{tikzcd}
\end{equation}
As before, let \(i_1\) and \(i_2\) be the inclusions of the left and right wedge summands into \(S^{m}\vee S^{m+k-1}\), respectively. 

\begin{lem} 
   \label{devid} 
   The deviation $\delta$ is homotopic to the composite \[S^{2m-1}\amsrtimes S^{k-1} \xlongrightarrow{q} S^{2m+k-2} \xlongrightarrow{r} S^{2m+k-2} \xlongrightarrow{[i_1,i_2]} S^{m}\vee  S^{m+k-1}\] for some map of degree $r\in\Z$. 
\end{lem}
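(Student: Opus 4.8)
The plan is to pin down the deviation $\delta$ by combining the lift $\lambda$ from~(\ref{devlift2}) with dimension and connectivity bookkeeping. First I would analyse the domain: by Lemma~\ref{halfsmashcoH} applied with $A=S^{2m-1}$ and $B=S^{k-1}$, the space $S^{2m-1}\amsrtimes S^{k-1}$ splits as $S^{2m-1}\vee S^{2m+k-2}$, with the $S^{2m+k-2}$ summand detected by $q$. I would then argue that $\delta$ restricted to the bottom cell $S^{2m-1}$ is null homotopic, so $\delta$ factors through $q$ up to homotopy: indeed, the restriction of $\delta$ to $S^{2m-1}$ is exactly the co-$H$-deviation of $f$ composed into the wedge, and since $S^{2m-1}$ and $S^m$ are spheres, $f$ is automatically a co-$H$-map (suspension), so this restriction vanishes. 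Hence $\delta\simeq\delta'\circ q$ for some $\delta'\colon S^{2m+k-2}\to S^m\vee S^{m+k-1}$.

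Next I would identify $\delta'$. By Lemma~\ref{dev} the composite $I\circ\delta$ is null, hence $I\circ\delta'\circ q\simeq\ast$; since $q$ has a section after suspension (or simply because $\delta'$ is a map out of a sphere and $q$ is surjective on the relevant homotopy, via the splitting above), we get $I\circ\delta'\simeq\ast$, so $\delta'$ lifts through the Ganea fibration fibre $\Omega S^m\ast\Omega S^{m+k-1}$. Now I would use connectivity: $\Omega S^m$ is $(m-2)$-connected and $\Omega S^{m+k-1}$ is $(m+k-3)$-connected, so $\Omega S^m\ast\Omega S^{m+k-1}\simeq\Sigma(\Omega S^m\wedge\Omega S^{m+k-1})$ is $(2m+k-3)$-connected, with its bottom cell in dimension $2m+k-2$ equal to $S^{m-1}\ast S^{m+k-2}\simeq S^{2m+k-2}$, mapped in by the Whitehead product $[i_1,i_2]$ of the two bottom-cell inclusions (which agree with $ev_1,ev_2$ on bottom cells). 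Therefore any map $S^{2m+k-2}\to\Omega S^m\ast\Omega S^{m+k-1}$ factors through this bottom cell $S^{2m+k-2}$, i.e. is of the form (bottom-cell inclusion)$\circ\,r$ for some $r\in\pi_{2m+k-2}(S^{2m+k-2})=\Z$, a degree map. Composing with $[ev_1,ev_2]$ then gives exactly $[i_1,i_2]\circ r$, and assembling with the factorisation through $q$ yields the claimed description $\delta\simeq[i_1,i_2]\circ r\circ q$.

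The main obstacle I anticipate is the step asserting that $\delta$ kills the bottom cell $S^{2m-1}$ and hence factors through $q$. One has to be careful that the splitting $e$ of Lemma~\ref{halfsmashcoH} is being used consistently on both sides of diagram~(\ref{devlift2}), and that the "co-$H$-deviation of $f$" really is the obstruction to commutativity of~(\ref{devdgrm}) when restricted along $i\colon A\to A\amsrtimes B$; tracing the definitions of $\overline\sigma$, $\pi$, $q$ through the restriction to the bottom cell is the delicate bookkeeping. Once that factorisation through $q$ is established, the rest is a clean connectivity argument: the target of $\lambda$ is highly connected with a single bottom cell in the critical dimension $2m+k-2$, forcing $\lambda$ to be a degree map into that cell, and the Whitehead product $[ev_1,ev_2]$ restricts to $[i_1,i_2]$ there. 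A secondary point to check is that $\pi_{2m+k-2}$ of the join is indeed $\Z$ generated by the bottom-cell inclusion and has no extra contributions in that exact degree — this follows since the next cells of $\Omega S^m\wedge\Omega S^{m+k-1}$ appear in dimension at least $2m+k-1$, so there is no room for interference.
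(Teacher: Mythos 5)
Your overall strategy is sound and close to the paper's, but there is a genuine error in the step where you argue that $\delta$ vanishes when restricted to the bottom cell $S^{2m-1}$. You assert that ``since $S^{2m-1}$ and $S^m$ are spheres, $f$ is automatically a co-$H$-map (suspension)''. This is false: a map $S^{2m-1}\to S^{m}$ is generally \emph{not} a suspension, and the Hopf maps $\eta_2,\nu_4,\sigma_8$ — exactly the maps to which this lemma is later applied in Proposition~\ref{devidHopf} — are the standard examples of non-suspension, non-co-$H$-maps (their nontrivial Hopf invariant \emph{is} the co-$H$-deviation). Moreover, if $f$ were a co-$H$-map then the whole deviation $\delta$ would already be null by the naturality statement in Lemma~\ref{halfsmashcoH}, making the lemma and Proposition~\ref{devidHopf} vacuous. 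So that sentence cannot be repaired; it has to be replaced. Secondarily, the claim that ``the restriction of $\delta$ to $S^{2m-1}$ is exactly the co-$H$-deviation of $f$ composed into the wedge'' is also not right: restricting $\delta$ along the co-$H$-map $i$ and using the naturality square $(f\amsrtimes g)\circ i\simeq i\circ f$ together with $e\circ i\simeq i_1$ (Lemma~\ref{einv}(i)), both terms of the difference become $i_1\circ f$, so $\delta\circ i\simeq\ast$ regardless of whether $f$ is a co-$H$-map. That direct computation is the correct way to see the vanishing, and it salvages your factorisation $\delta\simeq\delta'\circ q$.

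The paper itself avoids this detour entirely: instead of first factoring $\delta$ through $q$ and then lifting $\delta'$ through the Ganea fibration, it starts from the lift $\lambda$ already produced in~(\ref{devlift2}) and observes that, since the source splits as $S^{2m-1}\vee S^{2m+k-2}$ while the target $\Omega S^{m}\ast\Omega S^{m+k-1}$ is $(2m+k-3)$-connected with $(2m+k-2)$-skeleton $S^{2m+k-2}$, connectivity kills the restriction to $S^{2m-1}$ (since $2m-1\le 2m+k-3$ for $k\ge 2$) and dimension forces the restriction to $S^{2m+k-2}$ into the bottom cell. That gives $\lambda\simeq\iota\circ r\circ q$ in one stroke, and composing with $[ev_1,ev_2]\circ\iota\simeq[i_1,i_2]$ finishes. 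Your connectivity and cell-count checks for the join are correct and match this; it is only the justification of the vanishing on the bottom cell that needs fixing.
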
 

\begin{proof} 
Observe that the $(2m+k-2)$-skeleton of $\Omega S^{m}\ast\Omega S^{m+k-1}$ is $S^{2m+k-2}$. Let 
\[\iota:S^{2m+k-2} \rightarrow \Omega S^{m}\ast\Omega S^{m+k-1}\] be the inclusion of the bottom cell. Since $S^{2m-1}\amsrtimes S^{k-1}$ is homotopy equivalent to $S^{2m-1}\vee S^{2m+k-2}$, for connectivity and dimension reasons the map \(S^{2m-1}\amsrtimes S^{k-1} \xrightarrow{\lambda} \Omega S^{m}\ast\Omega S^{m+k-1}\) factors as the composite \[S^{2m-1}\amsrtimes S^{k-1} \xlongrightarrow{q} S^{2m+k-2} \xlongrightarrow{r} S^{2m+k-2} \xlongrightarrow{\iota} \Omega S^{m}\ast\Omega S^{m+k-1}\] for some map of degree \(r\). On the other hand, the composite \[S^{2m+k-2} \xlongrightarrow{\iota} \Omega S^{m}\ast\Omega S^{m+k-1} \xlongrightarrow{[ev_{1},ev_{2}]} S^{m}\vee S^{m+k-1}\] is homotopic to the Whitehead product $[i_{1},i_{2}]$. Thus there is a sequence \[\delta\simeq [ev_{1},ev_{2}]\circ\lambda\simeq [ev_{1},ev_{2}]\circ\iota\circ r\circ q\simeq [i_{1},i_{2}]\circ r\circ q\] which gives the assertion. 
\end{proof} 

\begin{rem} 
The choices made for the specialization of the spaces $A$, $B$, $C$, $D$ and the maps $f$ and~$g$ is partly due to the applications in mind, and partly so that the deviation $\delta$ has the precise form in Lemma~\ref{devid}. Note that if $A$ is a sphere of dimension $d$ for $m<d<2m-1$ then $\lambda$ would be null homotopic for connectivity and dimension reasons. But in this range it is a classical result that $f:S^d\rightarrow S^n$ is a suspension, in which case it was already clear that $\delta$ is null homotopic, as suspensions are in particular co-\(H\)-maps. If $d>2m-1$ then $\lambda$ becomes more complex since it involves maps from spheres into the $(d+k-2)$-skeleton of $\Omega S^{m}\ast\Omega S^{m+k-1}$. If $d< 3m+k-3$ then this skeleton is still $S^{2m+k-2}$ but now~$\lambda$ involves a torsion homotopy group of that sphere, and if $d\geq 3m+k-3$ then the skeleton involves more cells. 
\end{rem} 

Summarizing the results in this section we obtain the following.  

\begin{prop} 
    \label{devidsphere} 
    Suppose that there is a map \(f:S^{2m-1} \rightarrow S^{m}\) and fix an integer $k\geq 2$. Then the deviation from the diagram 
    \begin{equation*}
        \begin{tikzcd}[row sep=3em, column sep = 3em]
        S^{2m-1}\amsrtimes S^{k-1} \arrow[d, "f\amsrtimes 1"] \arrow[r, "e"] & S^{2m-1}\vee S^{2m+k-2} \arrow[d, "f\vee\Sigma^{k-1}f"] \\
        S^{m}\amsrtimes S^{k-1} \arrow[r, "e"] & S^{m}\vee S^{m+k-1}
    \end{tikzcd}
    \end{equation*}
    homotopy commuting is homotopic to the composite \[S^{2m-1}\amsrtimes S^{k-1} \xlongrightarrow{q} S^{2m+k-2} \xlongrightarrow{r} S^{2m+k-2} \xlongrightarrow{[i_1,i_2]} S^{m}\vee  S^{m+k-1}\] for some integer $r$.~$\qqed$ 
\end{prop}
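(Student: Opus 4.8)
The plan is to specialize the general framework developed through Lemmas~\ref{halfsmashcoH}, \ref{dev} and~\ref{devid} and then assemble the pieces; there is essentially no new content, only careful bookkeeping. First I would set $A=S^{2m-1}$, $C=S^{m}$, $B=D=S^{k-1}$ and take $g$ to be the identity map of $S^{k-1}$, so that $A\wedge B=S^{2m+k-2}$ and $C\wedge D=S^{m+k-1}$. Lemma~\ref{halfsmashcoH} then supplies the homotopy equivalences $e\colon S^{2m-1}\amsrtimes S^{k-1}\xrightarrow{\simeq} S^{2m-1}\vee S^{2m+k-2}$ and $e\colon S^{m}\amsrtimes S^{k-1}\xrightarrow{\simeq} S^{m}\vee S^{m+k-1}$ forming the horizontal maps of the square, and these are the maps of Lemma~\ref{halfsmashcoH} precisely because the co-$H$-structure on $S^{2m-1}\amsrtimes S^{k-1}$ is the one induced from the standard co-$H$-structure on $S^{2m-1}$. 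The one translation needed is that, under the standard homeomorphisms identifying iterated smash products with spheres, the map $f\wedge 1_{S^{k-1}}$ is exactly the $(k-1)$-fold suspension $\Sigma^{k-1}f$, so the right-hand vertical map $f\vee(f\wedge g)$ of the general diagram~(\ref{devdgrm}) becomes the map $f\vee\Sigma^{k-1}f$ appearing in the statement.

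With the square identified as an instance of~(\ref{devdgrm}), its deviation from homotopy commuting is by definition the map $\delta = e\circ(f\amsrtimes 1)-(f\vee\Sigma^{k-1}f)\circ e$ analysed in this section. I would then invoke Lemma~\ref{dev}, which gives $I\circ\delta\simeq\ast$ for the inclusion $I\colon S^{m}\vee S^{m+k-1}\hookrightarrow S^{m}\times S^{m+k-1}$, so that Ganea's homotopy fibration $\Omega S^{m}\ast\Omega S^{m+k-1}\xrightarrow{[ev_1,ev_2]}S^{m}\vee S^{m+k-1}\xrightarrow{I}S^{m}\times S^{m+k-1}$ yields the lift $\lambda$ of $\delta$ as in diagram~(\ref{devlift2}). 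Finally Lemma~\ref{devid} identifies $\delta$ with the composite $S^{2m-1}\amsrtimes S^{k-1}\xrightarrow{q}S^{2m+k-2}\xrightarrow{r}S^{2m+k-2}\xrightarrow{[i_1,i_2]}S^{m}\vee S^{m+k-1}$ for some integer $r$, which is exactly the asserted conclusion. The proposition is thus proved by citing Lemmas~\ref{dev}, the Ganea lift~(\ref{devlift2}), and \ref{devid} in turn.

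The only point requiring genuine care—rather than routine matching of labels—is verifying that the naturality square for $e$ in the proposition is literally the square whose deviation Lemma~\ref{devid} computes: one must confirm that the two occurrences of $e$ (for $A=S^{2m-1}$ and for $A=S^{m}$) are compatible with $f$ in the sense demanded by~(\ref{devdgrm}), which rests on the observation already made that $f$ need not be a co-$H$-map and hence that the square genuinely need not commute. Once this is in place no further argument is needed, and the integer $r$ is deliberately left unspecified here; pinning down its value (or its value modulo the relevant homotopy group of a sphere) in each case of interest is precisely the task of the computational sections that follow.
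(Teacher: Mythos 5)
Your proposal is correct and follows the paper's implicit argument exactly: Proposition~\ref{devidsphere} is stated as a summary of Section~\ref{sec:deviation}, and its content is precisely Lemma~\ref{devid} applied in the specialization $A=S^{2m-1}$, $C=S^m$, $B=D=S^{k-1}$, $g=1$, together with the identification $f\wedge 1_{S^{k-1}}=\Sigma^{k-1}f$. The only minor redundancy is that re-invoking Lemma~\ref{dev} and the Ganea lift~(\ref{devlift2}) is unnecessary once you cite Lemma~\ref{devid}, since those steps are internal to that lemma's proof.
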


\section{Properties of the Homotopy Equivalence $e$} 
\label{sec:einverse} 

In this section we construct an explicit inverse for the homotopy equivalence 
\[e:S^{m}\amsrtimes S^{k-1}\rightarrow S^{m}\vee S^{m+k-1}\] for \(k\geq2\) and relate it to the deviation \(\delta\) and the map $f\amsrtimes 1$ of Section~\ref{sec:deviation}. An ingredient map will play a special role, which we call \(j\). 

\begin{lem} 
   \label{ej} 
   For $k\geq 2$ there is a map \(S^{m+k-1} \xrightarrow{j} S^{m}\amsrtimes S^{k-1}\) such that $q\circ j$ is homotopic to the identity map and $\pi\circ j$ is null homotopic. If $k\leq m-1$ then $j$ is a co-$H$-map. 
\end{lem}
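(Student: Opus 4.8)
The plan is to construct $j$ explicitly using the known decomposition $S^m \amsrtimes S^{k-1} \simeq S^m \vee S^{m+k-1}$ together with the coordinate structure of the half-smash. Recall that $S^m \amsrtimes S^{k-1} = (S^m \times S^{k-1})/(\ast \times S^{k-1})$, and its cells are an $m$-cell (from the inclusion $i \colon S^m \to S^m \amsrtimes S^{k-1}$), a $(k-1)$-cell image that is collapsed, and the top $(m+k-1)$-cell coming from the smash factor. Concretely, I would first take the standard inclusion of the top cell. Since $q \colon S^m \amsrtimes S^{k-1} \to S^m \wedge S^{k-1} \cong S^{m+k-1}$ is the projection onto the smash, and since $e = (\pi \vee q) \circ \overline{\sigma}$ is a homotopy equivalence by Lemma~\ref{halfsmashcoH}, I would define
\[
    j \colon S^{m+k-1} \xrightarrow{\ i_2\ } S^m \vee S^{m+k-1} \xrightarrow{\ e^{-1}\ } S^m \amsrtimes S^{k-1},
\]
where $i_2$ is the inclusion of the second wedge summand and $e^{-1}$ is a homotopy inverse of $e$. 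Then $q \circ j \simeq q \circ e^{-1} \circ i_2$; using the defining formula $e = (\pi \vee q)\circ\overline{\sigma}$ and the fact that $q$ restricted to the smash summand is (homotopic to) the identity — which follows from the homology splitting in the proof of Lemma~\ref{halfsmashcoH}, where $q \circ e^{-1}$ picks out the $S^{m+k-1}$ factor — one checks $q \circ j \simeq \mathrm{id}$. Similarly $\pi \circ j \simeq \pi \circ e^{-1} \circ i_2$ is null homotopic because $\pi$ kills the top cell: in the homology splitting $\pi$ corresponds to projection onto the $S^m$ summand, which annihilates $i_2$.

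For the first two claims the only real content is bookkeeping with the maps $\pi$, $q$, $i$, and the comultiplication $\overline{\sigma}$, all of which is controlled by the homology computation already carried out in Lemma~\ref{halfsmashcoH} and the naturality squares~(\ref{halfsmashnat}). I would phrase this cleanly by noting that $e$ fits into the split cofibration $S^m \xrightarrow{i} S^m \amsrtimes S^{k-1} \xrightarrow{q} S^{m+k-1}$ with $\pi$ a retraction of $i$, so $e^{-1}\circ i_2$ is by construction a section of $q$ mapping trivially under $\pi$; this is essentially a formal diagram chase once $e$ is identified.

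The substantive claim is the last sentence: if $k \leq m-1$ then $j$ is a co-$H$-map. The point of the dimension hypothesis is that it forces the relevant obstruction group to vanish. The co-$H$-deviation of $j$ is a map $S^{m+k-1} \to (S^m \amsrtimes S^{k-1}) \vee (S^m \amsrtimes S^{k-1})$ that, by the general theory of co-$H$-deviations (analogous to the discussion preceding Lemma~\ref{dev}), factors through the homotopy fibre of the inclusion of the wedge into the product, i.e.\ through a Whitehead product. By Ganea's theorem that fibre is $\Omega(S^m \amsrtimes S^{k-1}) \ast \Omega(S^m \amsrtimes S^{k-1})$, which is $(2\cdot\mathrm{conn}+1)$-connected; since $S^m \amsrtimes S^{k-1}$ is $(m-1)$-connected, the fibre is $(2m-1)$-connected. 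The deviation is a map from $S^{m+k-1}$, so it is automatically null homotopic as soon as $m+k-1 \leq 2m-1$, i.e.\ $k \leq m$. (The stated hypothesis $k\leq m-1$ gives a strict margin; I would check whether $k \le m$ in fact suffices, or whether the slightly stronger hypothesis is needed to also ensure that $S^{m+k-1}$ is in the stable/suspension range so that the relevant comultiplications behave, but in any case the argument is a connectivity estimate of exactly this flavour.) Thus the deviation vanishes and $j$ is a co-$H$-map.

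I expect the main obstacle to be purely expository rather than mathematical: one must be careful that the co-$H$-structure on $S^m \amsrtimes S^{k-1}$ used to define "co-$H$-map" is the induced one $\overline{\sigma}$ from Lemma~\ref{halfsmashcoH}, and that the deviation of $j$ is measured with respect to \emph{that} structure and the standard suspension comultiplication on $S^{m+k-1}$. Once the diagram identifying $j$ with $e^{-1} \circ i_2$ is in place, the co-$H$ property of $j$ reduces — via naturality of $e$ for co-$H$-maps, which is exactly the second assertion of Lemma~\ref{halfsmashcoH} — to the statement that $i_2 \colon S^{m+k-1} \to S^m \vee S^{m+k-1}$ is a co-$H$-map, which is standard, \emph{provided} $e^{-1}$ can be taken to be a co-$H$-map; and that last point is where the connectivity hypothesis $k \leq m-1$ genuinely enters, guaranteeing that the co-$H$-deviation of $e^{-1}$ (a map out of $S^m \vee S^{m+k-1}$, whose obstruction again lives in a $(2m-1)$-connected space evaluated in degrees $\leq m+k-1$) vanishes.
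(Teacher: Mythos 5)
Your approach differs genuinely from the paper's and is essentially correct, modulo two repairs. The paper constructs $j$ as the inclusion of the bottom cell of the homotopy fibre of $\pi$ (so $\pi\circ j\simeq\ast$ holds by construction) and uses the Blakers--Massey theorem to get $q\circ j\simeq\pm1$, adjusting $j$ by a sign if needed; you instead set $j:=e^{-1}\circ i_2$, which yields $q\circ j\simeq\mathrm{id}$ directly with no adjustment. For the co-$H$ claim, the paper appeals to Freudenthal to exhibit $j$ as an honest suspension, while you kill the co-$H$-deviation by a connectivity estimate on the Ganea fibre; both routes are valid and, once corrected, produce the same bound $k\le m-1$.

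Two points need attention. First, the homology argument does not establish $\pi\circ e^{-1}\simeq p_1$: the restriction of a map $S^m\vee S^{m+k-1}\to S^m$ to the top wedge summand lies in $\pi_{m+k-1}(S^m)$, which is generally nonzero but invisible to homology, so $\pi\circ j$ could a priori be a Hopf-type element. Instead, use the formal identity built into $e=(\pi\vee q)\circ\overline{\sigma}$: pinching the target to either wedge summand and using that $\overline{\sigma}$ is a comultiplication gives $p_1\circ e\simeq\pi$ and $p_2\circ e\simeq q$, hence $\pi\circ j\simeq p_1\circ i_2=\ast$ and $q\circ j\simeq p_2\circ i_2=\mathrm{id}$. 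Second, the connectivity of the Ganea fibre is off by one: for $Y=S^m\amsrtimes S^{k-1}$, which is $(m-1)$-connected, the loop space $\Omega Y$ is $(m-2)$-connected and $\Omega Y\ast\Omega Y\simeq\Sigma(\Omega Y\wedge\Omega Y)$ is $(2m-2)$-connected, not $(2m-1)$-connected. With the correct bound, the lift of the deviation out of $S^{m+k-1}$ vanishes precisely when $m+k-1\le 2m-2$, i.e.\ $k\le m-1$, so the paper's hypothesis is exactly what your argument requires and the potential slack you flagged ($k\le m$) does not exist.
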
 

\begin{proof} 
The definition of the homotopy equivalence $e$ implies that there is a homotopy commutative square 
\begin{equation*}
    \begin{tikzcd}[row sep=3em, column sep = 3em]
        S^m\amsrtimes S^{k-1} \arrow[d, "\pi"] \arrow[r, "e"] & S^m \vee S^{m+k-1} \arrow[d, "p_1"] \\
        S^m \arrow[r, equal] & S^m
    \end{tikzcd}
\end{equation*}
where $p_{1}$ is the pinch map to the first wedge summand. Thus the homotopy fibre of $\pi$ is homotopy equivalent to the homotopy fibre of $p_{1}$, which by~\cite{ganea65} is homotopy equivalent to $S^{m+k-1}\amsrtimes\Omega S^{m}$, which in turn is homotopy equivalent to $\Omega S^{m}\ast S^{k-1}$. Let~$j$ be the composite \[j:S^{m+k-1} \xlongrightarrow{\iota} \Omega S^{m}\ast S^{k-1} \longrightarrow S^{m}\amsrtimes S^{k-1}\] where the map \(\iota\) is again the inclusion of the bottom cell and the right map is from the fibre to the total space in the homotopy fibration for $\pi$. Then $\pi\circ j$ is null homotopic. 

By the Blakers-Massey Theorem, the homotopy fibration 
\[
    \Omega S^{m}\ast S^{k-1} \rightarrow S^{m}\amsrtimes S^{k-1} \xrightarrow{\pi} S^{m}
\] 
is a homotopy cofibration in dimensions~$\leq 2m+k-2$. In particular, as $m+k-2\leq 2m+k-2$ for any $m\geq 2$, the composite \(S^{m+k-1} \xrightarrow{j} S^{m}\amsrtimes S^{k-1} \xrightarrow{\pi} S^{m}\) is a homotopy cofibration in dimensions~$\leq m+k-1$. Consequently, as the spaces in this composite are of dimension~$\leq m+k-1$, the composite is a homotopy cofibration in all dimensions. Observe that $j_{\ast}$ induces an isomorphism on homology in degree $m+k-1$, as does $q_{\ast}$, so $q\circ j$ induces a homology isomorphism in all degrees and is therefore a homotopy equivalence. As $q\circ j$ is a self-map of $S^{m+k-1}$, being a homotopy equivalence implies that it is homotopic to a map of degree \(\pm1\). If $q\circ j\simeq -1$ then adjust $j$ by pre-composing it with the map of degree~$-1$. Then $q\circ j\simeq 1$ and this adjustment does not affect the fact that $\pi\circ j$ is null homotopic. 

Finally, observe that \(S^{m+k-1} \xrightarrow{j} S^{m}\amsrtimes S^{k-1} \simeq S^{m}\vee S^{m+k-1}\) is in the stable range if $k\leq m-1$, implying that it is the suspension of a map \(S^{m+k-2} \rightarrow S^{m-1}\amsrtimes S^{k-1} \simeq S^{m-1}\vee S^{m+k-2}\). Thus, if $k\leq m-1$, then $j$ is a co-$H$-map. 
\end{proof}  

Next, we relate $i$ and $j$ to the homotopy equivalence $e$ in Section~\ref{sec:deviation}. Write \(i_1:X\rightarrow X\vee Y\) and \(i_2:Y\rightarrow X\vee Y\) for the inclusions of the left and right wedge summands respectively. Recalling the notation for the wedge sum, observe that \(i_1 \perp i_2:X\vee Y\rightarrow X\vee Y\) is the identity map. 

\begin{lem} 
   \label{einv} 
   Let \(k\geq 2\). The following hold: 
   \begin{itemize} 
      \item[(i)] the composite \( S^{m} \xrightarrow{i} S^{m}\amsrtimes S^{k-1} \xrightarrow{e} S^{m}\vee S^{m+k-1}\) is homotopic to~$i_{1}$;  
      \item[(ii)] if \(k\leq m-1\) then the composite \(S^{m+k-1} \xrightarrow{j} S^{m}\amsrtimes S^{k-1} \xrightarrow{e} S^{m}\vee S^{m+k-1}\) is homotopic to~$i_{2}$;  
      \item[(iii)] if \(k\leq m-1\) then the map \(S^{m}\vee S^{m+k-1} \xrightarrow{i\perp j} S^{m}\amsrtimes S^{k-1}\) is a homotopy equivalence that is the inverse of $e$. 
   \end{itemize} 
\end{lem}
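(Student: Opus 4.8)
The plan is to establish (i) and (ii) by unwinding the definition $e=(\pi\vee q)\circ\overline{\sigma}$, using that $\overline{\sigma}$ is induced from the comultiplication $\sigma$ on $S^{m}$, and then to deduce (iii) formally. For (i), because the co-\(H\) structure $\overline{\sigma}$ on $S^{m}\amsrtimes S^{k-1}$ is induced from $\sigma$, the first naturality square in~\eqref{halfsmashnat} (with $f=\sigma\colon S^{m}\to S^{m}\vee S^{m}$ and $g=1$), together with naturality of the inclusion of a wedge into a half-smash, gives $\overline{\sigma}\circ i\simeq(i\vee i)\circ\sigma$; that is, $i$ is a co-\(H\)-map for $\overline{\sigma}$. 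Then
\[
e\circ i=(\pi\vee q)\circ\overline{\sigma}\circ i\simeq(\pi\vee q)\circ(i\vee i)\circ\sigma\simeq\bigl((\pi\circ i)\vee(q\circ i)\bigr)\circ\sigma,
\]
and since $\pi\circ i\simeq 1_{S^{m}}$ and $q\circ i\simeq\ast$ by~\eqref{halfsmashnat}, the right-hand side is $(i_{1}\perp\ast)\circ\sigma=i_{1}+\ast\simeq i_{1}$, which proves (i).

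For (ii) the hypothesis $k\leq m-1$ enters essentially: it is exactly what Lemma~\ref{ej} uses to conclude that $j$ is a co-\(H\)-map (indeed a suspension), so that $\overline{\sigma}\circ j\simeq(j\vee j)\circ\sigma'$, where $\sigma'$ is the comultiplication on $S^{m+k-1}$. Running the computation of (i) with $\pi\circ j\simeq\ast$ and $q\circ j\simeq 1_{S^{m+k-1}}$ from Lemma~\ref{ej} gives
\[
e\circ j\simeq\bigl((\pi\circ j)\vee(q\circ j)\bigr)\circ\sigma'\simeq(\ast\perp i_{2})\circ\sigma'=\ast+i_{2}\simeq i_{2}.
\]

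For (iii), observe that $i\perp j=\nabla\circ(i\vee j)$ restricts to $i$ on the first wedge summand $S^{m}$ and to $j$ on the second summand $S^{m+k-1}$, and that a map out of a wedge is determined up to homotopy by its restrictions to the summands. Hence $e\circ(i\perp j)$ restricts to $e\circ i\simeq i_{1}$ and to $e\circ j\simeq i_{2}$ by (i) and (ii), so $e\circ(i\perp j)\simeq i_{1}\perp i_{2}$, which is the identity on $S^{m}\vee S^{m+k-1}$ as noted before the statement. Since $e$ is a homotopy equivalence by Lemma~\ref{halfsmashcoH}, the relation $e\circ(i\perp j)\simeq 1$ forces $i\perp j$ to be a homotopy equivalence and a homotopy inverse of $e$.

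I expect the only delicate point to be the interchange $\overline{\sigma}\circ j\simeq(j\vee j)\circ\sigma'$ used in (ii): one has to be sure that ``co-\(H\)-map'' is meant with respect to $\overline{\sigma}$ on the half-smash and the standard comultiplication on $S^{m+k-1}$, which is precisely where one relies on the proof of Lemma~\ref{ej} that $j$ desuspends. If instead one wanted (ii) for all $k\geq 2$, one could observe that $e\circ j-i_{2}$ composes trivially with the pinch maps onto both wedge summands, hence with the inclusion $I$ of the wedge into the product, so by Ganea's fibration it factors through $\Omega S^{m}\ast\Omega S^{m+k-1}$; this space is $(2m+k-3)$-connected and $m+k-1\leq 2m+k-3$ for $m\geq 2$, so the factorisation is null and $e\circ j\simeq i_{2}$.
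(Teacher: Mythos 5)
Your proof follows essentially the same approach as the paper: parts (i) and (ii) are established by unwinding $e=(\pi\vee q)\circ\overline{\sigma}$ using that $i$ is a co-$H$-map always and $j$ is one when $k\leq m-1$ (Lemma~\ref{ej}), together with $\pi\circ i\simeq 1$, $q\circ i\simeq\ast$, $\pi\circ j\simeq\ast$, $q\circ j\simeq 1$, and (iii) is then formal. The paper presents the same computation as a commutative diagram, whereas you write it as a chain of homotopies, but the content is identical. Your closing remark — that (ii) in fact holds for all $k\geq 2$ via Ganea's fibration, since $e\circ j-i_{2}$ vanishes on both pinches and hence lifts to the $(2m+k-3)$-connected join $\Omega S^{m}\ast\Omega S^{m+k-1}$, which kills $\pi_{m+k-1}$ for $m\geq 2$ — is a correct and nice bonus observation not made in the paper, and it actually shows the hypothesis $k\leq m-1$ in (ii) and (iii) is stronger than necessary.
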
 

\begin{proof} 
Consider the diagram 
\begin{equation*}
    \begin{tikzcd}[row sep=3em, column sep = 3em]
        S^m\arrow[d, "i"] \arrow[r, "\sigma"] & S^m \vee S^m \arrow[d, "i\vee i"] \arrow[dr, "1\vee\ast"] & \\
        S^m\amsrtimes S^{k-1}  \arrow[r, "\overline{\sigma}"] & (S^m\amsrtimes S^{k-1})\vee(S^m\amsrtimes S^{k-1}) \arrow[r, "\pi\vee q"] & S^m \vee S^{m+k-1}
    \end{tikzcd}
\end{equation*}
Observe that as $i$ is the inclusion of the bottom cell, it is a co-$H$-map, so the left square homotopy commutes. The right-hand triangle homotopy commutes since $\pi$ is a left inverse for $i$ and $q\circ i$ is null homotopic for connectivity and dimension reasons. The lower row is the definition of $e$ while the upper composite $(1\vee\ast)\circ\sigma$ is the inclusion $i_{1}$ of the left wedge summand. The homotopy commutativity of the diagram therefore implies that $e\circ i\simeq i_{1}$. This proves (i).

Next, consider the diagram 
\begin{equation*}
    \begin{tikzcd}[row sep=3em, column sep = 3em]
        S^{m+k-1}\arrow[d, "j"] \arrow[r, "\sigma"] & S^{m+k-1} \vee S^{m+k-1} \arrow[d, "j\vee j"] \arrow[dr, "\ast\vee1"] & \\
        S^m\amsrtimes S^{k-1}  \arrow[r, "\overline{\sigma}"] & (S^m\amsrtimes S^{k-1})\vee(S^m\amsrtimes S^{k-1}) \arrow[r, "\pi\vee q"] & S^m \vee S^{m+k-1}
    \end{tikzcd}
\end{equation*}
Since $k\leq m-1$, Lemma~\ref{ej} implies that $j$ is a co-$H$-map, so the left square homotopy commutes. The right-hand triangle homotopy commutes by Lemma~\ref{ej}. Again, the lower row is the definition of $e$ while the upper composite $(\ast\vee 1)\circ\sigma$ is the inclusion $i_{2}$ of the right wedge summand. The homotopy commutativity of the diagram therefore implies that $e\circ j\simeq i_{2}$. This proves (ii).

For (iii), it follows from (i) and (ii) that $e\circ(i\perp j)\simeq(e\circ i)\perp(e\circ j)\simeq i_{1}\perp i_{2}$, which is precisely the identity map on $S^{m}\vee S^{m+k-1}$.  
\end{proof} 

To prevent confusion, we use \(j^\dagger\) to denote the version of \(j\) for \(S^{2m+k-2} \rightarrow S^{2m-1}\amsrtimes S^{k-1}\). The following two lemmas are instrumental in what follows; they relate $j^\dagger$ to the behaviour of the deviation $\delta$ from Lemma \ref{devid}. 

\begin{lem} 
   \label{Dj} 
   For $k\geq 2$, the composite \(S^{2m+k-2} \xrightarrow{j^\dagger} S^{2m-1}\amsrtimes S^{k-1} \xrightarrow{\delta} S^{m}\vee S^{m+k-2}\)  is homotopic to $r\cdot [i_{1},i_{2}]$, where $r$ is the integer appearing in Proposition~\ref{devidsphere}. 
\end{lem}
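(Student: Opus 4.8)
The plan is to chain together the identification of $\delta$ from Proposition~\ref{devidsphere} with the defining property of $j^\dagger$ from Lemma~\ref{ej}. First I would invoke Proposition~\ref{devidsphere} (equivalently Lemma~\ref{devid}) to replace $\delta$ by the composite
\[
S^{2m-1}\amsrtimes S^{k-1} \xlongrightarrow{q} S^{2m+k-2} \xlongrightarrow{r} S^{2m+k-2} \xlongrightarrow{[i_1,i_2]} S^{m}\vee S^{m+k-1},
\]
where $q$ is the quotient onto the smash $S^{2m-1}\wedge S^{k-1}\simeq S^{2m+k-2}$, the middle map has degree equal to the integer $r$ appearing in that proposition, and $[i_1,i_2]$ is the Whitehead product of the wedge summand inclusions. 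Precomposing with $j^\dagger$ then gives $\delta\circ j^\dagger\simeq [i_1,i_2]\circ r\circ(q\circ j^\dagger)$.

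Next I would apply Lemma~\ref{ej} with $m$ replaced by $2m-1$, which is precisely the instance defining $j^\dagger\colon S^{2m+k-2}\to S^{2m-1}\amsrtimes S^{k-1}$: it asserts that $q\circ j^\dagger$ is homotopic to the identity map of $S^{2m+k-2}$. This holds for all $k\geq 2$ with no constraint relating $k$ and $m$ (the co-$H$-map clause of Lemma~\ref{ej} is not needed), which is why the present lemma is stated in this generality. Substituting, $\delta\circ j^\dagger\simeq [i_1,i_2]\circ r$.

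Finally, since $S^{2m+k-2}$ is a sphere of positive dimension, hence a co-$H$-space, precomposition with a self-map of degree $r$ acts on $\pi_{2m+k-2}(S^{m}\vee S^{m+k-1})$ as multiplication by $r$; thus $[i_1,i_2]\circ r\simeq r\cdot[i_1,i_2]$ and the lemma follows. I do not anticipate a genuine obstacle here: the argument is a two-step composition of results already in hand. The only points requiring a moment's care are that the integer $r$ produced by Proposition~\ref{devidsphere} is literally the one in the statement — automatic, since we use exactly the factorisation of $\delta$ furnished by that proposition — and that $q\circ j^\dagger$ is only homotopic to, not equal to, the identity, so that $r\circ(q\circ j^\dagger)$ is homotopic to the degree-$r$ self-map before we absorb it into the Whitehead product.
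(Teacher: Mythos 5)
Your proposal is correct and follows the paper's own proof essentially step for step: invoke Proposition~\ref{devidsphere} to factor $\delta$ through $[i_1,i_2]\circ r\circ q$, then use Lemma~\ref{ej} to get $q\circ j^\dagger\simeq 1$, and conclude. The only (harmless) addition is your explicit justification that precomposition by a degree-$r$ self-map yields $r\cdot[i_1,i_2]$, which the paper treats as immediate.
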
 

\begin{proof} 
By Proposition~\ref{devidsphere}, the deviation $\delta$ is homotopic to the composite \[S^{2m-1}\amsrtimes S^{k-1} \xlongrightarrow{q} S^{2m+k-2} \xlongrightarrow{r} S^{2m+k-2} \xlongrightarrow{[i_1,i_2]} S^{m}\vee  S^{m+k-1}\] for some map of degree $r$. By Lemma~\ref{ej}, $q\circ j^\dagger$ is homotopic to the identity map on $S^{2m+k-2}$, so we obtain $\delta\circ j^\dagger\simeq r\cdot [i_{1},i_{2}]$. 
\end{proof} 

Finally, we use Lemmas \ref{ej}, \ref{einv} and \ref{Dj} to relate $j^\dagger$ to the map $f\amsrtimes 1$ in Section~\ref{sec:deviation}. 

\begin{lem} 
   \label{halfsmashj} 
   If $2\leq k\leq 2m-2$ then there is a homotopy commutative diagram 
    \begin{equation*}
        \begin{tikzcd}[row sep=3em, column sep = 3em]
            S^{2m+k-2} \arrow[r, "j^\dagger"] \arrow[d, equal] & S^{2m-1} \amsrtimes S^{k-1} \arrow[r, "f\amsrtimes 1"] & S^m \amsrtimes S^{k-1} \arrow[d, "e"] \\
            S^{2m+k-2}  \arrow[rr, "{i_2\circ\Sigma^{k-1}f +r\cdot [i_1,i_2]}"] && S^m \vee S^{m+k-1}
        \end{tikzcd}
    \end{equation*}
    where $r$ is the integer appearing in Proposition~\ref{devidsphere}. 
\end{lem}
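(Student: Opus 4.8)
The plan is to substitute $j^\dagger$ into the defining identity for the deviation $\delta$ from Section~\ref{sec:deviation} and then evaluate the two resulting terms using the lemmas already proved. Apply Proposition~\ref{devidsphere} with $g$ the identity on $S^{k-1}$, so that $f\wedge 1$ becomes $\Sigma^{k-1}f$ under the standard identification $S^{2m-1}\wedge S^{k-1}\simeq S^{2m+k-2}$. Writing $e^\dagger\colon S^{2m-1}\amsrtimes S^{k-1}\xrightarrow{\ \simeq\ }S^{2m-1}\vee S^{2m+k-2}$ and $e\colon S^{m}\amsrtimes S^{k-1}\xrightarrow{\ \simeq\ }S^{m}\vee S^{m+k-1}$ for the two equivalences of Lemma~\ref{halfsmashcoH}, the deviation is by definition
\[
\delta \;=\; e\circ(f\amsrtimes 1)\;-\;\bigl(f\vee\Sigma^{k-1}f\bigr)\circ e^\dagger .
\]
Since $S^{2m-1}\amsrtimes S^{k-1}\simeq S^{2m-1}\vee S^{2m+k-2}$ is a suspension, the set $[S^{2m-1}\amsrtimes S^{k-1},\,S^{m}\vee S^{m+k-1}]$ is a group, so this rearranges to $e\circ(f\amsrtimes 1)\simeq\delta+\bigl(f\vee\Sigma^{k-1}f\bigr)\circ e^\dagger$.

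Next I would precompose with $j^\dagger\colon S^{2m+k-2}\to S^{2m-1}\amsrtimes S^{k-1}$. The hypothesis $k\le 2m-2$ is precisely the range condition of Lemma~\ref{ej} with $2m-1$ in place of $m$, so $j^\dagger$ is a co-$H$-map and precomposition with it distributes over the difference defining $\delta$; hence
\[
e\circ(f\amsrtimes 1)\circ j^\dagger\;\simeq\;\delta\circ j^\dagger\;+\;\bigl(f\vee\Sigma^{k-1}f\bigr)\circ e^\dagger\circ j^\dagger .
\]
Lemma~\ref{Dj} identifies the first summand as $r\cdot[i_1,i_2]$, with $r$ the integer of Proposition~\ref{devidsphere}. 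For the second, Lemma~\ref{einv}(ii), again applied with $2m-1$ in place of $m$ (legitimate since $k\le 2m-2$), gives $e^\dagger\circ j^\dagger\simeq i_2^\dagger$, the inclusion of the right summand $S^{2m+k-2}\hookrightarrow S^{2m-1}\vee S^{2m+k-2}$; composing with $f\vee\Sigma^{k-1}f$ and restricting to that summand leaves $i_2\circ\Sigma^{k-1}f$. Since $S^{2m+k-2}$ is a suspension the two summands may be added in either order, and we conclude
\[
e\circ(f\amsrtimes 1)\circ j^\dagger\;\simeq\;i_2\circ\Sigma^{k-1}f\;+\;r\cdot[i_1,i_2],
\]
which is exactly the homotopy commutativity asserted.

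The argument is almost entirely bookkeeping once these three inputs are aligned, and I do not expect a genuine obstacle. The points that need a little care are: verifying that the single hypothesis $k\le 2m-2$ simultaneously makes $j^\dagger$ a co-$H$-map (so that precomposition really does distribute over the difference defining $\delta$) and validates the shifted instance of Lemma~\ref{einv}(ii); and keeping the two copies of $e$ --- one for $S^{2m-1}\amsrtimes S^{k-1}$, one for $S^{m}\amsrtimes S^{k-1}$ --- together with their respective wedge-summand inclusions straight throughout.
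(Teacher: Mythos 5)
Your proof is correct and follows essentially the same route as the paper's: rearrange the defining identity for $\delta$, precompose with $j^\dagger$ (using the co-$H$-map property from Lemma~\ref{ej} to distribute), and then identify the two terms via Lemma~\ref{Dj} and Lemma~\ref{einv}(ii). The only difference is notational hygiene — you correctly distinguish the equivalence $e^\dagger$ on $S^{2m-1}\amsrtimes S^{k-1}$ from $e$ on $S^{m}\amsrtimes S^{k-1}$, whereas the paper overloads the symbol $e$.
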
 

\begin{proof}  
By definition, $\delta=e\circ(f\amsrtimes 1)-((f\vee (f\wedge 1))\circ e$. As the identity map $1$ is for $S^{k-1}$, we may rewrite this as $\delta=e\circ(f\amsrtimes 1)-(f\vee\Sigma^{k-1} f)\circ e$. Rearranging gives $e\circ (f\amsrtimes 1)\simeq (f\vee\Sigma^{k-1} f)\circ e +\delta$. Now precompose with $j^\dagger$. Since $k\leq 2m-2$, by Lemma~\ref{ej}, $j^\dagger$ is a co-$H$-map, giving \[e\circ (f\amsrtimes 1)\circ j^\dagger\simeq ((f\vee\Sigma^{k-1} f)\circ e+\delta)\circ j^\dagger\simeq (f\vee\Sigma^{k-1} f)\circ e\circ j^\dagger + \delta\circ j^\dagger.\] By Proposition~\ref{einv}~(ii), $e\circ j\simeq i_{2}$, implying by the naturality of $i_{2}$ that \[(f\vee\Sigma^{k-1} f)\circ e\circ j^\dagger\simeq (f\vee\Sigma^{k-1} f)\circ i_{2}\simeq i_{2}\circ\Sigma^{k-1} f.\] By Lemma~\ref{Dj}, $\delta\circ j^\dagger\simeq r\cdot [i_{1},i_{2}]$. Therefore \[e\circ(f\amsrtimes 1)\circ j^\dagger\simeq i_{2}\circ\Sigma^{k-1} f + r\cdot [i_{1},i_{2}],\] and hence the diagram in the statement of the Lemma homotopy commutes. 
\end{proof} 

\section{Gyrations} 
\label{sec:gyration} 

Let $M$ be a simply-connected Poincar\'{e} Duality complex of dimension $n$. Let $\overline{M}$ be the $(n-1)$-skeleton of $M$. Then there is a homotopy cofibration \[S^{n-1} \xrightarrow{f_M} \overline{M} \rightarrow M\] where $f_M$ is the attaching map for the top-cell. Let $k\geq 2$ be an integer and take a map \(\tau:S^{k-1}\rightarrow \mathrm{SO}(n)\), then using the standard linear action of \(\mathrm{SO}(n)\) on \(S^{n-1}\) define the map \[t:S^{n-1}\times S^{k-1}\rightarrow S^{n-1}\times S^{k-1}\] by \(t(a, x)=(\tau(x)\cdot a,x)\). 

\begin{defn}\label{def:gy}
    Let \(k\geq2\) be an integer and let \(M\) be an \(n\)-dimensional Poincar\'e Duality complex. Define the \textit{\(k\)-gyration of \(M\) by \(\tau\)} to be the space defined by the (strict) pushout
        \begin{equation}\label{gyrationpo}
            \begin{tikzcd}[row sep=3em, column sep=3em]
                S^{n-1}\times S^{k-1} \arrow[r, "1\times \iota"] \arrow[d, "(f_M\times 1)\circ t"] & S^{n-1}\times D^k \arrow[d] \\
                \overline{M}\times S^{k-1} \arrow[r] & \mathcal{G}^k_\tau(M) 
            \end{tikzcd}
        \end{equation} 
    where $\iota$ is the inclusion of the boundary of the disc. When the context is clear, we will usually just write \textit{gyration} for \(\mathcal{G}^k_\tau(M)\). 
\end{defn}

If $\tau$ is trivial, then \(t\) is the identity map and this pushout is a $(k-1,n)$-type surgery on $M\times S^{k-1}$. Otherwise, the surgery is twisted by the action of $\tau$ (considered as a diffeomorphism). In either case, the gyration is an $(n+k-1)$-dimensional Poincar\'{e} Duality complex. Since the disc $D^{k}$ is contractible, from~(\ref{gyrationpo}) we obtain a homotopy pushout 
\begin{equation} 
  \label{gyrationhpo} 
    \begin{tikzcd}[row sep=3em, column sep = 3em]
        S^{n-1}\times S^{k-1} \arrow[d, "(f_M\times1)\circ t"] \arrow[r, "\pi"] & S^{n-1} \arrow[d] \\
        \overline{M}\times S^{k-1} \arrow[r] & \mathcal{G}^{k}_{\tau}(M)
    \end{tikzcd}
\end{equation}
where $\pi$ is the projection. The clockwise direction around~(\ref{gyrationhpo}) is null homotopic when restricted to~$S^{k-1}$, so the commutativity of the diagram implies the same is true in the counter-clockwise direction around the diagram. Moreover, observe that $t$ is the identity map when restricted to~$S^{k-1}$. Therefore, if in general \(j_{2}:B\longrightarrow A\times B\) is the inclusion of the second factor, then there is a homotopy commutative diagram in which the rows are homotopy cofibrations 
\begin{equation} 
    \label{tau'def} 
    \begin{tikzcd}[row sep=3em, column sep = 3em]
        S^{k-1} \arrow[d, equal] \arrow[r, "{j_2}"] & S^{n-1}\times S^{k-1} \arrow[d, "t"] \arrow[r] & S^{n-1}\amsrtimes S^{k-1} \arrow[d, "t'"] \\
        S^{k-1} \arrow[d, equal] \arrow[r, "{j_2}"] & S^{n-1}\times S^{k-1} \arrow[d, "f_M\times1"] \arrow[r] & S^{n-1}\amsrtimes S^{k-1} \arrow[d, "f_M\amsrtimes1"] \\
        S^{k-1} \arrow[r, "{j_2}"] & \overline{M}\times S^{k-1} \arrow[r] & \overline{M}\amsrtimes S^{k-1}.
    \end{tikzcd}
\end{equation}
The map $t'$ is an induced map of cofibres and the map of cofibrations in the lower rectangle follows from the naturality of the right half-smash. Thus, 
collapsing out $S^{k-1}$ in~(\ref{gyrationhpo}) results in a homotopy pushout 
\begin{equation} 
  \label{gyrationhalfsmash} 
    \begin{tikzcd}[row sep=3em, column sep = 3em]
        S^{n-1}\amsrtimes S^{k-1} \arrow[d, "(f_M\amsrtimes1)\circ t'"] \arrow[r, "\pi"] & S^{n-1} \arrow[d] \\
        \overline{M}\amsrtimes S^{k-1} \arrow[r] & \mathcal{G}^{k}_{\tau}(M).
    \end{tikzcd}
\end{equation}
Lemma~\ref{ej} implies that there is a homotopy cofibration \(S^{n+k-2} \xrightarrow{j} S^{n-1}\amsrtimes S^{k-1} \xrightarrow{\pi} S^{n-1}\). 
Writing this as a homotopy pushout 
\[\begin{tikzcd}[row sep=3em, column sep = 3em]
     S^{n+k-2} \arrow[d, "j"] \arrow[r] & \ast \arrow[d] \\
     S^{n-1}\amsrtimes S^{k-1} \arrow[r, "\pi"] & S^{n-1}
\end{tikzcd}\]
and juxtaposing it over~(\ref{gyrationhalfsmash}) shows that $\mathcal{G}^{k}_{\tau}(M)$ is the homotopy pushout of the trivial map \(\namedright{S^{n+k-2}}{}{\ast}\) and the composite \[\phi_{\tau}: S^{n+k-2} \xrightarrow{j} S^{n-1}\amsrtimes S^{k-1} \xrightarrow{t'} S^{n-1}\amsrtimes S^{k-1} \xrightarrow{f_M\amsrtimes 1} \overline{M}\amsrtimes S^{k-1}.\] Thus we obtain the following. 

\begin{lem} 
   \label{gyrationcofib} 
   For $k\geq 2$ there is a homotopy cofibration \(S^{n+k-2} \xrightarrow{\phi_{\tau}} \overline{M}\amsrtimes S^{k-1} \rightarrow \mathcal{G}^{k}_{\tau}(M)\).~$\qqed$ 
\end{lem}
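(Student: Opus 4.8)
The plan is to exhibit $\mathcal{G}^{k}_{\tau}(M)$ as an iterated homotopy pushout and then read the asserted homotopy cofibration off the final corner; all the ingredients have already been assembled in this section. First I would start from the defining strict pushout~(\ref{gyrationpo}). Since $D^{k}$ is contractible, the right-hand vertical map $1\times\iota$ may be replaced up to homotopy by the projection, so~(\ref{gyrationpo}) becomes the homotopy pushout~(\ref{gyrationhpo}), with upper map $\pi\colon S^{n-1}\times S^{k-1}\to S^{n-1}$ and left map $(f_{M}\times 1)\circ t$.

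The key reduction is to collapse the common copy of $S^{k-1}$ out of~(\ref{gyrationhpo}). The relevant restriction data for $t$, and hence for $(f_{M}\times 1)\circ t$, along the second-factor inclusion $j_{2}$ is exactly what diagram~(\ref{tau'def}) records: the copy of $S^{k-1}$ included by $j_{2}$ into $S^{n-1}\times S^{k-1}$ maps to the basepoint under $\pi$ and into the corresponding copy of $S^{k-1}$ inside $\overline{M}\times S^{k-1}$ under $(f_{M}\times 1)\circ t$, while the horizontal homotopy cofibres of the three vertical maps there are $S^{n-1}$, $S^{n-1}\amsrtimes S^{k-1}$ and $\overline{M}\amsrtimes S^{k-1}$, with the induced self-map $t'$ of $S^{n-1}\amsrtimes S^{k-1}$ defined there. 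Quotienting~(\ref{gyrationhpo}) by this compatibly-included cofibrant copy of $S^{k-1}$ then produces the homotopy pushout~(\ref{gyrationhalfsmash}), whose span is $S^{n-1}\xleftarrow{\pi}S^{n-1}\amsrtimes S^{k-1}\xrightarrow{(f_{M}\amsrtimes 1)\circ t'}\overline{M}\amsrtimes S^{k-1}$ and whose fourth corner is still $\mathcal{G}^{k}_{\tau}(M)$ (concretely, because the image in $\mathcal{G}^{k}_{\tau}(M)$ of the sub-$S^{k-1}$ is already a single point, so collapsing it changes nothing up to homotopy). I expect this collapsing step — in particular verifying that the resulting square is again a homotopy pushout with precisely the named induced maps $\pi$ and $(f_{M}\amsrtimes 1)\circ t'$ — to be the one point of the argument needing genuine care; it is where diagram~(\ref{tau'def}), viewed as a map of horizontal homotopy cofibrations over the constant base $S^{k-1}$, together with the general fact that collapsing a cofibrant subobject included compatibly in all corners of a homotopy pushout again yields a homotopy pushout, does the work.

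Finally I would invoke Lemma~\ref{ej}, which supplies the map $j\colon S^{n+k-2}\to S^{n-1}\amsrtimes S^{k-1}$ and the homotopy cofibration $S^{n+k-2}\xrightarrow{j}S^{n-1}\amsrtimes S^{k-1}\xrightarrow{\pi}S^{n-1}$. Rewriting this cofibration as the homotopy pushout of $S^{n+k-2}\to\ast$ along $j$, and juxtaposing that square above~(\ref{gyrationhalfsmash}) along the shared edge $S^{n-1}\amsrtimes S^{k-1}\xrightarrow{\pi}S^{n-1}$, the pasting lemma for homotopy pushouts makes the outer rectangle a homotopy pushout. Its span is $\ast\leftarrow S^{n+k-2}\xrightarrow{\phi_{\tau}}\overline{M}\amsrtimes S^{k-1}$ with $\phi_{\tau}=(f_{M}\amsrtimes 1)\circ t'\circ j$, exactly the composite defined just before the lemma, and its fourth corner is $\mathcal{G}^{k}_{\tau}(M)$. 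Since a homotopy pushout with the point in one corner is precisely a homotopy cofibration, this yields the claimed homotopy cofibration $S^{n+k-2}\xrightarrow{\phi_{\tau}}\overline{M}\amsrtimes S^{k-1}\to\mathcal{G}^{k}_{\tau}(M)$. Apart from the collapsing step flagged above, what remains is routine pasting of homotopy pushouts, so I anticipate no further difficulty.
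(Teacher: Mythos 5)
Your proposal reconstructs the paper's own argument exactly: passing from the strict pushout to the homotopy pushout (\ref{gyrationhpo}) by contracting $D^k$, collapsing the compatibly-included $S^{k-1}$ via diagram (\ref{tau'def}) to obtain (\ref{gyrationhalfsmash}), rewriting the cofibration from Lemma~\ref{ej} as a homotopy pushout, and pasting. The only difference is that you flag and spell out the justification for the collapsing step, which the paper leaves terse; the reasoning you give for it is correct.
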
   

\begin{rem} 
Notice that as $M$ is simply-connected and $n$-dimensional, Poincar\'{e} Duality implies that~$\overline{M}$ is at most $(n-2)$-dimensional, and therefore $\overline{M}\amsrtimes S^{k-1}$ is at most $(n+k-3)$-dimensional. Therefore~$\phi_{\tau}$ attaches the top dimensional cell to the Poincar\'{e} Duality complex $\mathcal{G}^{k}_{\tau}(M)$. 
\end{rem}  

\begin{rem} 
  \label{gyrationcofibremark} 
  As a special case worth noting, if the twisting \(\tau\) is trivial then $t$, and hence $t'$, is the identity map. Thus, writing $\mathcal{G}^{k}_{0}(M)$ for a gyration by the trivial twisting, the attaching map \(\phi_0\) is homotopic to the composite \((f_M\amsrtimes 1)\circ j\).
\end{rem} 

\begin{rem} 
Lemma~\ref{gyrationcofib} identifies the $(n+k-2)$-skeleton of the $(n+k-1)$-dimensional Poincar\'{e} Duality complex $\mathcal{G}^{k}_{\tau}(M)$ as $\overline{M}\amsrtimes S^{k-1}$. This reproduces a result of Basu-Ghosh~\cite[Proposition 6.9]{basu-ghosh} using a different argument, while saying more by identifying the attaching map for the top cell. 
\end{rem}

The goal is to understand the attaching map $\phi_{\tau}$ in order to better understand the 
twisted gyration $\mathcal{G}^{k}_{\tau}(M)$. To do so, we specialize to make use of 
Sections~\ref{sec:deviation} and~\ref{sec:einverse}. Suppose that $M$ is one of 
$\mathbb{C}P^{2}$, $\mathbb{H}P^{2}$ or $\mathbb{O}P^{2}$. Then there are homotopy cofibrations \[S^{3} \xrightarrow{\eta_2} S^{2} \rightarrow \mathbb{C}P^{2} \qquad S^{7} \xrightarrow{\nu_4} S^{4} \rightarrow \mathbb{H}P^{2} \qquad S^{15} \xrightarrow{\sigma_8} S^{8} \rightarrow \mathbb{O}P^{2}\] where $\eta_2$, $\nu_4$ and $\sigma_8$ are maps of Hopf invariant one. Collectively, these may be described by a homotopy cofibration \[S^{2m-1} \xrightarrow{f} S^{m} \rightarrow \mathbb{F}P^{2}\]  where $f=\eta_2$ and $\mathbb{F}=\mathbb{C}$ if $m=2$, $f=\nu_4$ and $\mathbb{F}=\mathbb{H}$ if $m=4$, and $f=\sigma_8$ and $\mathbb{F}=\mathbb{O}$ if $m=8$. With these dimensions for the domain and range of $f$, the map $j$ appearing in the definition of $\phi_{\tau}$  is relabelled as~$j^{\dagger}$ as in Section~\ref{sec:einverse}. To analyse $\phi_{\tau}=(f\amsrtimes 1)\circ t'\circ j^\dagger$ we proceed to first consider $f\amsrtimes 1$, then $ t'$, and finally put these together and compose with $j^\dagger$.   

The homotopy cofibration for $\mathbb{F}P^{2}$ implies we are in the context of 
Proposition~\ref{devidsphere}, which describes the deviation from $f\amsrtimes 1$ being 
$f\vee\Sigma^{k-1} f$, up to the homotopy equivalence $e$. In this case the indeterminate degree 
map $r$ in the description of the deviation can be made more precise.  

\begin{prop} \label{devidHopf} 
    Let \(f:S^{2m-1} \rightarrow S^{m}\) be one of $\eta_2$, $\nu_4$ or $\sigma_8$. If $k\geq2$ then the deviation \(\delta\) from the diagram 
    \begin{equation*}
        \begin{tikzcd}[row sep=3em, column sep = 3em]
        S^{2m-1}\amsrtimes S^{k-1} \arrow[d, "f\amsrtimes1"] \arrow[r, "e"] & S^{2m-1}\vee S^{2m+k-2} \arrow[d, "f\vee\Sigma^{k-1} f"] \\
        S^m\amsrtimes S^{k-1} \arrow[r, "e"] & S^m\vee S^{m+k-1}
        \end{tikzcd}
    \end{equation*}
    homotopy commuting is homotopic to the composite 
    \[S^{2m-1}\amsrtimes S^{k-1} \xrightarrow{q} S^{2m+k-2} \xrightarrow{[i_1,i_2]} S^m \vee  S^{m+k-1}.\]
\end{prop}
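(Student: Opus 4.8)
The plan is to pin down the integer $r$ furnished by Proposition~\ref{devidsphere}: we will show that $r=\pm1$, and hence — up to the orientation of the top cell, equivalently the sign convention for basic Whitehead products — that $\delta\simeq[i_1,i_2]\circ q$. The idea is to read $r$ off from the cohomology ring of the mapping cone of $f\amsrtimes1$, which on the one hand is constrained by the cell structure coming from Section~\ref{sec:einverse}, and on the other hand is computed independently from the fact that $f$ has Hopf invariant one.

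First I would pass to cofibres. Since the half-smash is $(-)\amsrtimes S^{k-1}\cong(-)\wedge(S^{k-1})_{+}$, which is smashing with a fixed space and so preserves homotopy cofibrations, applying it to $S^{2m-1}\xrightarrow{f}S^{m}\rightarrow\mathbb{F}P^{2}$ shows that the cofibre of $f\amsrtimes1$ is $\mathbb{F}P^{2}\amsrtimes S^{k-1}$. On the other hand, Lemma~\ref{einv}~(i) and naturality of the half-smash give $e\circ(f\amsrtimes1)\circ i\simeq i_{1}\circ f$, while Lemma~\ref{halfsmashj} gives $e\circ(f\amsrtimes1)\circ j^{\dagger}\simeq i_{2}\circ\Sigma^{k-1}f+r\cdot[i_{1},i_{2}]$. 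Thus, identifying $S^{2m-1}\amsrtimes S^{k-1}\simeq S^{2m-1}\vee S^{2m+k-2}$ via the equivalence $i\perp j^{\dagger}$ of Lemma~\ref{einv}~(iii), the map $e\circ(f\amsrtimes1)$ is $i_{1}\circ f$ on the bottom summand and $i_{2}\circ\Sigma^{k-1}f+r[i_{1},i_{2}]$ on the top one. (This uses $2\le k\le 2m-2$, which includes every case treated in this paper; the general case is the same, feeding Proposition~\ref{devidsphere} through a homotopy inverse of the co-$H$ equivalence $e$.) Composing an attaching map with the homotopy equivalences $e$ and $i\perp j^{\dagger}$ does not change the cofibre, so $\mathbb{F}P^{2}\amsrtimes S^{k-1}$ is obtained from $S^{m}\vee S^{m+k-1}$ by attaching a $2m$-cell along $i_{1}\circ f$ and a $(2m+k-1)$-cell along $i_{2}\circ\Sigma^{k-1}f+r\cdot[i_{1},i_{2}]$.

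Next I would compute the pairing $H^{m}\otimes H^{m+k-1}\rightarrow H^{2m+k-1}$ of $\mathbb{F}P^{2}\amsrtimes S^{k-1}$ twice. For the first computation, the collapse $c\colon\mathbb{F}P^{2}\times S^{k-1}\rightarrow\mathbb{F}P^{2}\amsrtimes S^{k-1}$ is injective on reduced cohomology, since the cofibration $S^{k-1}\rightarrow\mathbb{F}P^{2}\times S^{k-1}\xrightarrow{c}\mathbb{F}P^{2}\amsrtimes S^{k-1}$ splits on cohomology via the projection onto $S^{k-1}$; as $c^{*}$ is a ring map it identifies $\widetilde H^{*}(\mathbb{F}P^{2}\amsrtimes S^{k-1})$ with the subring of $H^{*}(\mathbb{F}P^{2})\otimes H^{*}(S^{k-1})=\mathbf{Z}[x]/(x^{3})\otimes\Lambda(s)$, $|x|=m$, $|s|=k-1$, spanned by $x\otimes1,\ x^{2}\otimes1,\ x\otimes s,\ x^{2}\otimes s$; in particular, as $(x\otimes1)(x\otimes s)=x^{2}\otimes s$, the pairing $H^{m}\otimes H^{m+k-1}\rightarrow H^{2m+k-1}$ carries a generator to a generator. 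For the second computation I would use the classical duality between Whitehead products and cup products applied to the cell structure above: the mapping cone of $[i_{1},i_{2}]\colon S^{2m+k-2}\rightarrow S^{m}\vee S^{m+k-1}$ is the product $S^{m}\times S^{m+k-1}$, in which the external cup product $H^{m}\otimes H^{m+k-1}\rightarrow H^{2m+k-1}$ is an isomorphism, whereas the summand $i_{2}\circ\Sigma^{k-1}f$ of the attaching map factors through the wedge summand $S^{m+k-1}$ and so contributes nothing to this pairing, and the extra $2m$-cell (attached along $i_{1}\circ f$) only affects the cup square $H^{m}\otimes H^{m}\rightarrow H^{2m}$. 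Hence this same pairing is multiplication by $\pm r$. Comparing the two computations forces $r=\pm1$.

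The delicate point is this second computation: one must verify carefully that the $i_{2}\circ\Sigma^{k-1}f$ term and the $2m$-cell are genuinely irrelevant to $H^{m}\otimes H^{m+k-1}\rightarrow H^{2m+k-1}$, which is a bookkeeping exercise with the functional cup product together with the injectivity of $c^{*}$; the latter also pins down the relevant generators in the borderline case $k=m+1$, where the $2m$-cell and the $S^{m+k-1}$ summand occupy the same dimension. Granting $r=\pm1$, Proposition~\ref{devidsphere} gives $\delta\simeq[i_{1},i_{2}]\circ(\pm1)\circ q$, and reorienting the top cell (equivalently, absorbing the sign into the Whitehead product) yields $\delta\simeq[i_{1},i_{2}]\circ q$, as asserted. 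Alternatively, and more conceptually, one can identify $r$ with the Hopf invariant of $f$ outright: $\delta$ is the image under $(-)\amsrtimes S^{k-1}$ of the co-$H$-deviation of $f\colon S^{2m-1}\rightarrow S^{m}$, which is $H(f)\cdot[i_{1},i_{2}]$, and $H(\eta_{2})=H(\nu_{4})=H(\sigma_{8})=\pm1$.
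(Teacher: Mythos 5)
Your proof is correct but takes a genuinely different route from the paper's. To pin down the integer $r$ from Proposition~\ref{devidsphere}, the paper composes with $j^\dagger$ and identifies the cofibre of $\phi_0=(f\amsrtimes1)\circ j^\dagger$ with the trivial gyration $\mathcal{G}^k_0(\mathbb{F}P^2)$, then invokes Poincar\'e duality of this oriented $(2m+k-1)$-manifold to force $x\cup y=z$, whence $r=1$; you instead take the cofibre of $f\amsrtimes1$ itself, namely $\mathbb{F}P^2\amsrtimes S^{k-1}$, and read the pairing $H^m\otimes H^{m+k-1}\to H^{2m+k-1}$ directly off the injection $c^*$ into $H^*(\mathbb{F}P^2\times S^{k-1})\cong\mathbf{Z}[x]/(x^3)\otimes\Lambda(s)$, where the Hopf-invariant-one hypothesis shows up as $x^2\ne0$. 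The trade-offs are symmetric: your space carries an extra $2m$-cell whose irrelevance to the pairing (particularly when $k=m+1$ and degrees collide) you must justify — you flag this and the injectivity of $c^*$ does handle it — while the paper's space is smaller (only one top cell) at the cost of needing the geometric input that $\mathcal{G}^k_0(\mathbb{F}P^2)$ is an oriented Poincar\'e duality complex. Your approach is thus more self-contained and elementary, the paper's slightly cleaner in bookkeeping. Both arguments pin down $r$ only up to sign unless orientation conventions for the top cell, $j^\dagger$, and $[i_1,i_2]$ are tracked meticulously (the paper writes $r=1$ on the strength of Poincar\'e duality, but that too a priori yields $x\cup y=\pm z$); since the ambiguity is absorbed by the $\pm$ already present in Proposition~\ref{prop:lambda_delta}, this is harmless downstream. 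Your closing heuristic — that $\delta$ is the half-smash of the co-$H$-deviation of $f$, hence sees the Hopf invariant directly — is a nice conceptual gloss but, as stated, is not a proof; keep the cohomological computation as the actual argument.
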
 

\begin{proof} 
By Proposition~\ref{devidsphere} we have that $\delta$ is homotopic to the composite \[S^{m-1}\amsrtimes S^{k-1} \xrightarrow{q} S^{2m+k-2} \xrightarrow{r} S^{2m+k-2} \xrightarrow{[i_1,i_2]} S^{m}\vee  S^{m+k-1}\] for some map of degree $r$. So it remains to show that $r=1$. Since the homotopy cofibre of $e\circ(f\amsrtimes 1)$ is $\mathcal{G}^{k}_{0}(\mathbb{F}P^{2})$ by Remark~\ref{gyrationcofibremark}, the homotopy commutative square in Lemma~\ref{halfsmashj} implies that there is a homotopy cofibration diagram 
\begin{equation*}
    \begin{tikzcd}[row sep=3em, column sep = 3em]
    S^{2m+k-2} \arrow[d, equal] \arrow[rr, "(f\amsrtimes 1)\circ j^\dagger"] && S^m\amsrtimes S^{k-1} \arrow[d, "e"] \arrow[r] & \mathcal{G}_0^k(\mathbb{F}P^2) \arrow[d, "\varepsilon"] \\
    S^{2m+k-2} \arrow[rr, "{i_2\circ\Sigma^{k-1}f + r\cdot [i_1,i_2]}"] && S^m\vee S^{m+k-1} \arrow[r] & C
    \end{tikzcd}
\end{equation*}
that defines the space $C$, and where $\varepsilon$ is some induced map of homotopy cofibres. Since $e$ is a homotopy equivalence, it induces an isomorphism in homology, so the Five-Lemma implies that~$\varepsilon$ also induces an isomorphism in homology. Since spaces are simply-connected, $\varepsilon$ is a homotopy equivalence by Whitehead's Theorem. 

Now use $\varepsilon$ to compare cup products. The homotopy cofibration defining $C$ implies that there is a module isomorphism $H^{\ast}(C;\mathbb{Z})\cong\mathbb{Z}\{a,b,c\}$ where $\vert a\vert=m$, $\vert b\vert=m+k-1$ and $\vert c\vert= 2m+k-1$. Since~$\varepsilon$ is a homotopy equivalence we obtain a module isomorphism $H^{\ast}(\mathcal{G}^{k}_{0}(\mathbb{F}P^{2});\mathbb{Z})\cong\mathbb{Z}\{x,y,z\}$ where $\vert x\vert=m$, $\vert y\vert=m+k-1$ and $\vert z\vert=2m+k-1$, and $\varepsilon^{\ast}$ sends $a,b,c$ to $x,y,z$ respectively. Since $\mathcal{G}^{k}_{0}(\mathbb{F}P^{2})$ is a manifold, by Poincar\'{e} Duality we obtain $x\cup y=z$. As $\varepsilon^{\ast}$ is an algebra map, this implies that $a\cup b=c$. Therefore there is an algebra isomorphism $H^{\ast}(C;\mathbb{Z})\cong H^{\ast}(S^{m}\times S^{m+k-1};\mathbb{Z})$, and a homotopy equivalence between the $(2m+k-3)$-skeletons of $C$ and $S^{m}\times S^{m+k-1}$. 

On the other hand, the cup product structure on $C$ is induced by the attaching map $i_{2}\circ\Sigma^{k-1} f +r\cdot [i_{1},i_{2}]$ for the top-cell of $C$. Since the cup product $a\cup b=c$ detects the Whitehead product $[i_{1},i_{2}]$ and $i_{2}\circ\Sigma^{k-1} f$ cannot be a multiple of $[i_{1},i_{2}]$ due to its image being concentrated in the $S^{m}$ wedge summand, it must be the case that $r=1$. 
\end{proof} 

\begin{cor} \label{fj} 
   If $2\leq k\leq 2m-2$ and $f$ is as in Proposition~\ref{devidHopf} then 
   $(f\amsrtimes 1)\circ j^\dagger\simeq (j\circ\Sigma^{k-1} f) + [i,j]$. 
\end{cor}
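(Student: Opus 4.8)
The plan is to combine Lemma~\ref{halfsmashj} with the explicit inverse of $e$ constructed in Lemma~\ref{einv}, together with the refinement $r=1$ obtained in Proposition~\ref{devidHopf}. Since $f$ is one of $\eta_2$, $\nu_4$ or $\sigma_8$, the map $f\colon S^{2m-1}\rightarrow S^m$ has $m\in\{2,4,8\}$ and the hypothesis $2\leq k\leq 2m-2$ is exactly the condition appearing in Lemma~\ref{halfsmashj}, and it also forces $k\leq m-1$ in the cases needed, so that Lemma~\ref{einv}(iii) applies and $i\perp j\colon S^m\vee S^{m+k-1}\rightarrow S^m\amsrtimes S^{k-1}$ is a homotopy equivalence inverting $e$.

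First I would invoke Lemma~\ref{halfsmashj}: since $r=1$ by Proposition~\ref{devidHopf}, its commutative square reads
\[
e\circ(f\amsrtimes 1)\circ j^\dagger \simeq i_2\circ\Sigma^{k-1}f + [i_1,i_2],
\]
where $i_1,i_2$ are the wedge inclusions into $S^m\vee S^{m+k-1}$. Next I would compose both sides on the left with the inverse $i\perp j$ of $e$. On the left this cancels $e$ and yields $(f\amsrtimes 1)\circ j^\dagger$. On the right I use that $(i\perp j)$ is a co-$H$-map (being a homotopy equivalence out of a suspension, hence respecting sums when precomposed, or more simply that left composition distributes over a sum coming from a co-$H$-structure on the source $S^{2m+k-2}$), so
\[
(i\perp j)\circ\bigl(i_2\circ\Sigma^{k-1}f\bigr) + (i\perp j)\circ[i_1,i_2].
\]
Since $(i\perp j)\circ i_1\simeq i$ and $(i\perp j)\circ i_2\simeq j$ by the defining property of the wedge sum, the first term becomes $j\circ\Sigma^{k-1}f$, and by naturality of the Whitehead product the second term becomes $[\,(i\perp j)\circ i_1,\,(i\perp j)\circ i_2\,]=[i,j]$. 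Assembling these gives $(f\amsrtimes 1)\circ j^\dagger\simeq (j\circ\Sigma^{k-1}f) + [i,j]$, as claimed.

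The one point requiring care — and the only real obstacle — is the distributivity step: one must be certain that precomposition with a fixed map on a co-$H$-space, followed by the homotopy equivalence $i\perp j$, respects the sum, i.e.\ that $(i\perp j)\circ(\alpha+\beta)\simeq (i\perp j)\circ\alpha + (i\perp j)\circ\beta$ for $\alpha,\beta\colon S^{2m+k-2}\rightarrow S^m\vee S^{m+k-1}$. This is just left-distributivity of composition over the sum in $[S^{2m+k-2},-]$ induced by the co-$H$-structure on the sphere $S^{2m+k-2}$, which holds for any map out of the target; this is the same fact already used in the proof of Lemma~\ref{dev} and again in Lemma~\ref{halfsmashj}. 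The naturality of the Whitehead product under $i\perp j$ is standard. Everything else is bookkeeping, so the corollary follows immediately once these identifications are in place.
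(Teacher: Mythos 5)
Your proof is correct and follows essentially the same route as the paper's, with the pleasant economy of citing Lemma~\ref{halfsmashj} directly rather than re-deriving the homotopy $e\circ(f\amsrtimes 1)\circ j^\dagger\simeq i_2\circ\Sigma^{k-1}f + r\cdot[i_1,i_2]$ from scratch; the paper's proof of Corollary~\ref{fj} reproduces the argument of Lemma~\ref{halfsmashj} verbatim before substituting $r=1$ and applying $e^{-1}$, so your shortcut is a genuine (if minor) simplification.

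One flaw in the write-up: the claim that $2\leq k\leq 2m-2$ ``forces $k\leq m-1$'' is false (for instance $m=4$, $k=4$, or $m=2$, $k=2$, the only case for $\C P^2$). So the hypothesis of Lemma~\ref{einv}(iii) as stated is not verified. This does not sink the argument, for two reasons: first, the paper's own proof invokes Lemma~\ref{einv}(iii) under precisely the same hypothesis $2\leq k\leq 2m-2$ without checking $k\leq m-1$, so you are in good company; second, and more substantively, the conclusion $e^{-1}\simeq i\perp j$ actually holds for all $k\geq 2$ and $m\geq 2$. Indeed, $e\circ j\simeq i_2$ can be checked directly: one has $p_1\circ e\circ j\simeq\pi\circ j\simeq\ast$ and $p_2\circ e\circ j\simeq q\circ j\simeq 1$ by Lemma~\ref{ej}, and the Hilton--Milnor decomposition of $\pi_{m+k-1}(S^m\vee S^{m+k-1})$ has no Whitehead-product summand in this degree since $2m+k-2 > m+k-1$ whenever $m\geq 2$. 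The restriction $k\leq m-1$ in Lemma~\ref{einv} is thus an artifact of the co-$H$-map proof the authors chose there, not of the statement. You would do better to cite this dimension argument (or simply Lemma~\ref{einv}(i) together with the observation above) rather than asserting a false numerical implication.
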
 

\begin{proof} 
By Proposition~\ref{devidHopf}, $e\circ(f\amsrtimes 1)\simeq (f\vee\Sigma^{k-1})\circ e+\delta$. Since $k\leq 2m-2$, Lemma~\ref{ej} implies that $j^\dagger$ is a co-$H$-map. A co-$H$-map distributes on the right, implying that \[e\circ (f\amsrtimes 1)\circ j^\dagger\simeq (f\vee\Sigma^{k-1} f)\circ e\circ j^\dagger+\delta\circ j^\dagger.\] By Lemma~\ref{einv}~(ii), $e\circ j^\dagger\simeq i_{2}$. Therefore the naturality of $i_{2}$ implies that $(f\vee\Sigma^{k-1} f)\circ e\circ j^\dagger\simeq i_{2}\circ\Sigma^{k-1} f$. By Lemma~\ref{Dj}, and using Proposition~\ref{devidHopf}, we obtain $\delta\circ j^\dagger\simeq [i_{1},i_{2}]$. Therefore \[e\circ (f\amsrtimes 1)\circ j^\dagger\simeq i_{2}\circ\Sigma^{k-1} f + [i_{1},i_{2}].\] Apply $e^{-1}$. In general, the sum of two maps in $[\Sigma X,Y]$ distributes when composed with a map \(Y\rightarrow Z\), so in our case we obtain \[(f\amsrtimes 1)\circ j^\dagger\simeq (e^{-1}\circ i_{2}\circ\Sigma^{k-1} f) + (e^{-1}\circ [i_{1},i_{2}]).\] By Lemma~\ref{einv}~(iii), $e^{-1}\simeq i\perp j$, so $e^{-1}\circ i_{2}=j$ and $e^{-1}\circ [i_{1},i_{2}]=[i,j]$. Therefore \[(f\amsrtimes 1)\circ j^\dagger\simeq (j\circ\Sigma^{k-1} f) + [i,j]\] as asserted.
\end{proof} 

\begin{rem} 
\label{trivialattach} 
By Remark~\ref{gyrationcofibremark}, the attaching map for the top-cell of 
$\mathcal{G}^{k}_{0}(\mathbb{F}P^{2})$ is $(f\amsrtimes 1)\circ j^\dagger$. Corollary~\ref{fj} therefore 
gives an alternate description of this attaching map. This will be generalized to the case 
of $\mathcal{G}^{k}_{\tau}(\mathbb{F}P^{2})$ in  Theorem~\ref{thm:twistattach}.
\end{rem} 

Next, we bring in the twist. Consider the self-equivalence \(S^{2m-1}\amsrtimes S^{k-1} \xrightarrow{t'} S^{2m-1}\amsrtimes S^{k-1}\). Let $t'_1$ and~$t'_2$ be the composites \[t'_1:S^{2m-1} \xrightarrow{i} S^{2m-1}\amsrtimes S^{k-1} \xrightarrow{t'} S^{2m-1}\amsrtimes S^{k-1}\] \[t'_2:S^{2m+k-2} \xrightarrow{j^\dagger} S^{2m-1}\amsrtimes S^{k-1} \xrightarrow{t'} S^{2m-1}\amsrtimes S^{k-1}.\] 
Let \(\overline{\tau}\) be the composite 
\begin{equation}\label{taubardef} 
\overline{\tau}\colon S^{2m+k-2} \xrightarrow{j^\dagger} S^{2m-1}\amsrtimes S^{k-1} \xrightarrow{t'} S^{2m-1}\amsrtimes S^{k-1} \xrightarrow{\pi}  S^{2m-1}.
\end{equation} 

\begin{lem} \label{twisting12} 
   Let \(k\geq 2\). The following hold: 
   \begin{itemize} 
        \item[(i)] $t'_1$ is homotopic to $i$; 
        \item[(ii)] if $k\leq 2m-2$, then $t'_2$ is homotopic to $i\circ\overline{\tau} + j^\dagger$; 
        \item[(iii)] if $\tau$ is the trivial twisting then $t'_2= j^\dagger$ and $\overline{\tau}$ is null homotopic. 
   \end{itemize} 
\end{lem}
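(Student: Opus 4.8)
\textbf{Proof plan for Lemma~\ref{twisting12}.}

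The plan is to exploit the fact that $t'$ is a self-homotopy-equivalence of $S^{2m-1}\amsrtimes S^{k-1}\simeq S^{2m-1}\vee S^{2m+k-2}$ and that, by construction, $t'$ is compatible with the projection $\pi$ and sits in the commutative ladder~\eqref{tau'def}. For part~(i), I would observe that the top row of~\eqref{tau'def} exhibits $t'$ as the induced map of cofibres of a map $t\colon S^{n-1}\times S^{k-1}\to S^{n-1}\times S^{k-1}$ that restricts to the identity on the $S^{n-1}$-factor (indeed $t(a,x)=(\tau(x)\cdot a,x)$ and $\tau(x)\cdot a = a$ when $x$ is the basepoint, where $\tau$ is based). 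Hence $t$ commutes with the inclusion $S^{n-1}\hookrightarrow S^{n-1}\times S^{k-1}$ of the first factor up to homotopy, and passing to the half-smash quotient the inclusion of the first factor becomes exactly $i\colon S^{2m-1}\to S^{2m-1}\amsrtimes S^{k-1}$; therefore $t'\circ i\simeq i$, i.e.\ $t'_1\simeq i$.

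For part~(ii), the key is that $S^{2m-1}\amsrtimes S^{k-1}\simeq S^{2m-1}\vee S^{2m+k-2}$ and, since $k\le 2m-2$, the attaching data are all in the stable range so $j^\dagger$ is a co-$H$-map (Lemma~\ref{ej}). Using the homotopy equivalence $e$ and its inverse $i\perp j^\dagger$ from Lemma~\ref{einv}, I would write $t'_2 = t'\circ j^\dagger$ in terms of its two components under $e$, namely $\pi\circ t'\circ j^\dagger$ and $q\circ t'\circ j^\dagger$. The first component is $\overline{\tau}$ by definition~\eqref{taubardef}. For the second component, the right-hand square of~\eqref{tau'def} together with the naturality square for $q$ (the third diagram in~\eqref{halfsmashnat}) shows that $q$ commutes with $t'$ up to the induced map on the smash factor; since $t$ restricts to the identity on $S^{k-1}$, the induced self-map of $S^{2m-1}\wedge S^{k-1}=S^{2m+k-2}$ is the identity, hence $q\circ t'\simeq q$, so $q\circ t'\circ j^\dagger\simeq q\circ j^\dagger\simeq 1$ by Lemma~\ref{ej}. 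Thus under the identification $e$, $t'_2$ has first component $i\circ\overline\tau$ and second component $j^\dagger$ (using $e^{-1}\circ i_1 = i$, $e^{-1}\circ i_2 = j^\dagger$ from Lemma~\ref{einv}~(iii)), giving $t'_2\simeq i\circ\overline\tau + j^\dagger$ once we note that the co-$H$ structure makes this decomposition additive.

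For part~(iii), if $\tau$ is trivial then $t$, and hence $t'$, is the identity map by~\eqref{tau'def} and the discussion following Definition~\ref{def:gy}; therefore $t'_2 = 1\circ j^\dagger = j^\dagger$ and $\overline\tau = \pi\circ 1\circ j^\dagger = \pi\circ j^\dagger$, which is null homotopic by Lemma~\ref{ej}. I expect the main obstacle to be part~(ii): one must be careful that ``$q\circ t'\simeq q$'' really follows from $t$ restricting to the identity on the collapsed $S^{k-1}$ factor (as opposed to merely being a based self-equivalence), and that the additive decomposition $t'_2\simeq i\circ\overline\tau + j^\dagger$ is legitimate — this needs the co-$H$ structure on $S^{2m+k-2}$ and the fact that $j^\dagger$ is a co-$H$-map in the given range, so that composing with $j^\dagger$ distributes over the sum coming from writing $t' = (e^{-1})\circ(\pi\vee q)\circ\overline\sigma \circ t'$ and re-expressing via components. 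Everything else is formal naturality of $\pi$, $q$, $i$ and $j^\dagger$.
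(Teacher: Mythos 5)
Your proposal follows the same overall strategy as the paper for all three parts, and parts (i) and (iii) are correct (in fact for (i) your observation that $t\circ i_1 = i_1$ strictly, hence $t'\circ i\simeq i$ after passing to the quotient, is a slight streamlining of the paper's argument, which also invokes injectivity of $[S^{2m-1}\amsrtimes S^{k-1},Z]\to[S^{2m-1}\times S^{k-1},Z]$). For part (ii) the structure is right and matches the paper's decomposition of $j'=e\circ t'\circ j^\dagger$ via Hilton--Milnor into its two components; your $\pi\circ t'\circ j^\dagger$ and $q\circ t'\circ j^\dagger$ are exactly the paper's $p_1\circ j'$ and $p_2\circ j'$, since $p_1\circ e\simeq\pi$ and $p_2\circ e\simeq q$.

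However, the concern you flag at the end is a real gap, and your proposed justification for the key claim is wrong. You argue that $q\circ t'\simeq q$ because ``$t$ restricts to the identity on $S^{k-1}$, so the induced self-map of $S^{2m-1}\wedge S^{k-1}$ is the identity.'' But $t$ restricting to the identity on $\{*\}\times S^{k-1}$ (and on $S^{2m-1}\times\{*\}$) only tells you that $t$ preserves the fat wedge and therefore descends to \emph{some} self-map $\bar t$ of $S^{2m-1}\wedge S^{k-1}\cong S^{2m+k-2}$; it says nothing about the degree of $\bar t$. The correct reason $\bar t$ has degree $+1$ (hence is homotopic to the identity) is that $\tau$ takes values in $\mathrm{SO}(2m)$ rather than $\mathrm{O}(2m)$, so $t$ is orientation-preserving and induces the identity on $H_{2m+k-2}$. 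This is precisely what the paper uses: it observes that $(t')_*$ is an isomorphism on $H_{2m+k-2}$ because $t'$ is a self-equivalence, and then refines the sign from $\pm1$ to $+1$ by orientation preservation. If $\tau$ were allowed in $\pi_{k-1}(\mathrm{O}(2m))$ your argument would go through unchanged but the conclusion would be false.

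Two smaller points. First, the naturality square for $q$ in~(\ref{halfsmashnat}) only applies to maps of the form $f\amsrtimes g$, and $t'$ is not of that form (it is induced from the twisted map $t(a,x)=(\tau(x)\cdot a,x)$, not from a product map); you need a separate observation that $t$ descends through the smash quotient, which is fine but should not be attributed to~(\ref{halfsmashnat}). Second, the worry about needing $j^\dagger$ to be a co-$H$-map ``so that composing with $j^\dagger$ distributes'' is misplaced for this decomposition: since you decompose $j'=e\circ t'\circ j^\dagger$ directly (a map out of the sphere $S^{2m+k-2}$), Hilton--Milnor applies without any co-$H$ hypothesis on $j^\dagger$. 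Where $k\le 2m-2$ is genuinely used is in invoking Lemma~\ref{einv}~(iii) to identify $e^{-1}$ with $i\perp j^\dagger$, whose proof does require $j^\dagger$ to be a co-$H$-map; the subsequent left-composition $e^{-1}\circ(i_1\circ\overline\tau + i_2)$ then distributes by left-distributivity of composition with maps out of a co-$H$-space.
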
 

\begin{proof} 
First, the left map in the homotopy cofibration 
\(S^{k-1}\longrightarrow S^{2m-1}\times S^{k-1}\longrightarrow S^{2m-1}\amsrtimes S^{k-1}\)  
has a left inverse, so the connecting map for the homotopy 
cofibration is null homotopic, implying that for any space $Z$ the induced map 
\([S^{2m-1}\amsrtimes S^{k-1},Z]\longrightarrow [S^{2m-1}\times S^{k-1},Z]\) 
is an injection. Therefore, the definition of $t'$ in~(\ref{tau'def}) implies that its homotopy class is determined by the homotopy class of $t$. Thus to show that $t'_{1}=t'\circ i$ is homotopic to $i$ it suffices to show that $t\circ i_{1}\simeq i_{1}$, where 
\(i_{1}:S^{2m-1}\longrightarrow S^{2m-1}\times S^{k-1}\) 
is the inclusion of the first factor. But by definition, \(S^{2m-1}\times S^{k-1} \stackrel{t}{\rightarrow} S^{2m-1}\times S^{k-1}\) is given by $t(a,x)=(\tau(x)\cdot a,x)$, implying that $t(a,\ast)=(a,\ast)$, and therefore $t\circ i_{1}=i_{1}$, proving part~(i). 

Next, consider the composite 
\begin{equation}\label{eq:j'def}
j':S^{2m+k-2} \xrightarrow{j^\dagger} S^{2m-1}\amsrtimes S^{k-1} \xrightarrow{t'} S^{2m-1}\amsrtimes S^{k-1} \xrightarrow{e} S^{2m-1}\vee S^{2m+k-2}.
\end{equation} 
By the Hilton-Milnor Theorem, for dimensional and connectivity reasons the homotopy class of~$j'$ is determined by its pinch maps to $S^{2m-1}$ and $S^{2m+k-2}$. That is, if \[p_1: S^{2m-1}\vee S^{2m+k-2} \rightarrow S^{2m-1} \qquad p_2:S^{2m-1}\vee S^{2m+k-2} \rightarrow S^{2m+k-2}\] are the pinch maps to the left and right wedge summands respectively, then $j'\simeq i_{1}\circ p_1\circ j'+i_{2}\circ p_2\circ j'$. Since $t'$ is a self-equivalence, it must induce an isomorphism in homology. In particular, $(t')_{\ast}$ induces an isomorphism on $H_{2m+k-2}$ and therefore so does  $(t'_{2})_{\ast}$, and therefore in turn so does $(j')_{\ast}$. This implies that $p_2\circ j'$ is a homotopy equivalence, and therefore homotopic to a map of degree $\pm 1$. Refining, since we work with \(\mathrm{SO}(n)\), the map \(t\) must preserve orientation, implying that $t_{\ast}$ is the identity on $H_{2m+k-2}$, which in turn implies that $(t')_{\ast}$, $(t'_{2})_{\ast}$ and $(j')_{\ast}$ all induce the identity on $H_{2m+k-2}$. Hence $p_{2}\circ j'$ is homotopic to the identity map on $S^{2m+k-2}$, implying that $i_{2}\circ p_2\circ j'\simeq i_{2}$. 

Let $\overline{\tau}=p_1\circ j'$, and note that post-composing (\ref{eq:j'def}) with \(p_1\) gives the asserted composite for \(\overline{\tau}\) since by Lemma \ref{halfsmashcoH} we have \(p_1\circ e\simeq\pi\). Then we have $j'\simeq i_{1}\circ\overline{\tau} + i_{2}$. By definition of $j'$ we have  $e^{-1}\circ j'\simeq t'_{2}$. On the other hand, as $2\leq k\leq 2m-2$, Lemma~\ref{einv}~(iii) gives $e^{-1}\simeq i\perp j^\dagger$. Therefore $e^{-1}\circ i_{1}\simeq i$ and $e^{-1}\circ i_{2}\simeq j^\dagger$, giving $e^{-1}\circ j'\simeq e^{-1}\circ (i_{1}\circ\overline{\tau} + i_{2})\simeq i\circ\overline{\tau} + j^\dagger$. Hence $t'_{2}\simeq i\circ\overline{\tau} + j^\dagger$, proving part~(ii). 

Finally, if $\tau$ is the trivial twisting then $t$ and hence $t'$ are identity maps, in which case the definitions of $t'_{2}$ and $\overline{\tau}$ give $t'_{2}=j^\dagger$ and $\overline{\tau}=\pi\circ j^{\dagger}$. In the latter case we obtain a null homotopy for $\overline{\tau}$ since $\pi\circ j^{\dagger}$ is null homotopic by Lemma~\ref{ej}.  
\end{proof} 

The map \(\overline{\tau}\) in Lemma \ref{twisting12}(ii) has an additional property related to the $J$-homomorphism. In general, the join $A\ast B$ and the suspension $\Sigma(A\times B)$ are both quotient spaces of $A\times B\times I$. In the unreduced case of the join we identify $(a,b,1)$ to $(a,\ast,1)$ and $(a,b,0)$ to $(\ast,b,0)$, whereas for the suspension we identify $(a,b,1)$ to $(\ast,\ast,1)$ and $(a,b,0)$ to $(\ast,\ast,0)$. Thus the quotient map \(A\times B\times I\rightarrow\Sigma(A\times B)\) factors through a quotient map
\(A\ast B\rightarrow\Sigma(A\times B)\). 
In the reduced case, there is the additional relation in both cases that $(\ast,\ast,t)$ is identified with $(\ast,\ast,0)$. Moreover, there is a homotopy equivalence 
$\Sigma A\wedge B\simeq A\ast B$, thus giving a canonical choice of a map
\(\mathfrak{s}:\Sigma A\wedge B\rightarrow \Sigma(A\times B)\).
The $J$-homomorphism 
\[J\colon\pi_{k-1}(\mathrm{SO}(n))\rightarrow\pi_{n+k-1}(S^{n})\] 
is given by the homotopy class of the composite 
\[J(\tau):S^{n+k-1}\xrightarrow{\mathfrak{s}} \Sigma(S^{n-1}\times S^{k-1})\xrightarrow{\Sigma(1\times\tau)}\Sigma(S^{n-1}\times \mathrm{SO}(n)) \xrightarrow{\Sigma\theta}\Sigma S^{n-1}\simeq S^{n},\] 
where \(\theta\) denotes the usual action of \(\mathrm{SO}(n)\) on \(S^{n-1}\). The image of the $J$-homomorphism was calculated by Adams \cite{adamsIV} and Quillen  \cite{quillen_adamsconj}: 
\begin{equation} \label{imJ}
    im(J)\cong \begin{cases} 0\mbox{ if }k\equiv3,5,6,7\mbox{ (mod 8)} \\ \Z/2 \mbox{ if }k\equiv1,2\mbox{ (mod 8)} \\ \Z/d_s \mbox{ if }k=4s \end{cases}
\end{equation} 
where \(d_s\) is the demoninator of \(\frac{B_{2s}}{4s}\), \(B_{2s}\) being the \(2s\)-th Bernoulli number. 

\begin{prop} \label{prop:jhom}
    If \(2\leq k \leq 2m-2\) then \(\Sigma\overline{\tau}\simeq J(\tau)\), where \(J\) denotes the classical $J$-homomorphism.    
\end{prop}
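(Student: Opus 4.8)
The plan is to route $J(\tau)$ through the half-smash $S^{2m-1}\amsrtimes S^{k-1}$ and to recognise, after one suspension, that the map $j^\dagger$ appearing in the definition of $\overline{\tau}$ agrees with the map $\mathfrak{s}$ built into the $J$-homomorphism, up to the collapse $q_\times\colon S^{2m-1}\times S^{k-1}\to S^{2m-1}\amsrtimes S^{k-1}$.

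First I would rewrite $J(\tau)$. For $\mathbb{F}P^2$ we have $\overline{M}=S^m$ and $n=2m$, and the linear action satisfies $\theta\circ(1\times\tau)=\pi_\times\circ t$ as maps $S^{2m-1}\times S^{k-1}\to S^{2m-1}$, since both carry $(a,x)$ to $\tau(x)\cdot a$ (here $\pi_\times$ denotes projection onto the first factor). The defining composite for the $J$-homomorphism is $J(\tau)\simeq\Sigma(\theta\circ(1\times\tau))\circ\mathfrak{s}$ with $\mathfrak{s}\colon S^{2m+k-1}=\Sigma(S^{2m-1}\wedge S^{k-1})\to\Sigma(S^{2m-1}\times S^{k-1})$. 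Since $\pi\circ q_\times=\pi_\times$ and, by the construction of $t'$ in \eqref{tau'def}, $t'\circ q_\times=q_\times\circ t$, we get $\pi_\times\circ t=\pi\circ t'\circ q_\times$, hence
\[
J(\tau)\simeq\Sigma\pi\circ\Sigma t'\circ\bigl(\Sigma q_\times\circ\mathfrak{s}\bigr).
\]
Comparing with \eqref{taubardef}, which reads $\overline{\tau}=\pi\circ t'\circ j^\dagger$, it suffices to prove $\Sigma q_\times\circ\mathfrak{s}\simeq\Sigma j^\dagger$ as maps $S^{2m+k-1}\to\Sigma(S^{2m-1}\amsrtimes S^{k-1})$.

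Next I would identify $\Sigma q_\times\circ\mathfrak{s}$. Both it and $\Sigma j^\dagger$ land in $\Sigma(S^{2m-1}\amsrtimes S^{k-1})\simeq S^{2m}\vee S^{2m+k-1}$, and since $2m\geq 4$ the first Whitehead product on this wedge lies above degree $2m+k-1$, so such a map is detected by its two pinch-map components; via the suspension of the equivalence $e$ of Section \ref{sec:einverse}, under which these correspond to $\Sigma\pi$ and $\Sigma q$ (with $q\colon S^{2m-1}\amsrtimes S^{k-1}\to S^{2m+k-2}$ the further collapse), it is enough to compare composites with $\Sigma\pi$ and $\Sigma q$. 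For $\Sigma j^\dagger$, Lemma \ref{ej} gives $\Sigma q\circ\Sigma j^\dagger\simeq 1$ and $\Sigma\pi\circ\Sigma j^\dagger\simeq\ast$. For $\Sigma q_\times\circ\mathfrak{s}$: the composite $q\circ q_\times$ is the smash collapse $c\colon S^{2m-1}\times S^{k-1}\to S^{2m-1}\wedge S^{k-1}$, and $\Sigma c\circ\mathfrak{s}$ is a self-map of $S^{2m+k-1}$ that is an isomorphism on top homology (the standard fact that $\mathfrak{s}$ detects the top cell of $\Sigma(S^{2m-1}\times S^{k-1})$), hence a homotopy equivalence, normalised to the identity; and $\pi\circ q_\times=\pi_\times$, while $\Sigma\pi_\times\circ\mathfrak{s}$ is null because the composite $S^{2m-1}\ast S^{k-1}\to\Sigma(S^{2m-1}\times S^{k-1})\xrightarrow{\Sigma\pi_\times}\Sigma S^{2m-1}$ is induced on homotopy pushouts (double mapping cylinders) by the map of spans $(S^{2m-1}\xleftarrow{\pi_\times}S^{2m-1}\times S^{k-1}\to S^{k-1})\to(S^{2m-1}\xleftarrow{=}S^{2m-1}\to\ast)$, whose target has the contractible cone on $S^{2m-1}$ as pushout. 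Matching components gives $\Sigma q_\times\circ\mathfrak{s}\simeq\Sigma j^\dagger$, and therefore $J(\tau)\simeq\Sigma\pi\circ\Sigma t'\circ\Sigma j^\dagger=\Sigma(\pi\circ t'\circ j^\dagger)=\Sigma\overline{\tau}$.

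The main obstacle is the step identifying $\Sigma q_\times\circ\mathfrak{s}$, namely establishing the two classical properties of the canonical map $\mathfrak{s}$: that it splits the suspended smash collapse and that it is annihilated by the factor projections. The homotopy-pushout model of the join makes the nullity transparent, and a top-homology computation handles the splitting; the delicate points are the reduced-versus-unreduced basepoint bookkeeping in the join and fixing orientation conventions so that the relevant self-equivalence of $S^{2m+k-1}$ has degree exactly $+1$ rather than $\pm1$ (the latter ambiguity being harmless for the applications, and reconcilable with the normalisation of $j^\dagger$ fixed in Lemma \ref{ej}).
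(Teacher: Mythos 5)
Your argument is correct and follows essentially the same route as the paper: both reduce to showing $\Sigma\mathfrak{q}\circ\mathfrak{s}\simeq\Sigma j^{\dagger}$ (your $q_\times$ is the paper's $\mathfrak{q}$), using the relation $\theta\circ(1\times\tau)=\pi_\times\circ t$ and a Hilton--Milnor pinch-map comparison on $S^{2m}\vee S^{2m+k-1}$. The only differences are cosmetic: the paper justifies the pinch-map reduction by noting $\Sigma\mathfrak{q}\circ\mathfrak{s}$ is a suspension in the stable range $k\leq 2m-2$ (your appeal to ``$2m\geq 4$'' should really be to $k<2m$, though the conclusion holds), and the paper asserts the nullity of $\Sigma\pi_1\circ\mathfrak{s}$ ``by definition of $\mathfrak{s}$'' where you supply the homotopy-pushout factorisation through the cone.
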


\begin{proof}
    Take $n=2m$. We begin by relating $\mathfrak{s}$ to maps associated with the half-smash. Let 
    \[\mathfrak{q}:S^{2m-1}\times S^{k-1} \rightarrow S^{2m-1}\amsrtimes S^{k-1}\] 
    be the quotient map to the half-smash. Consider the composite 
    \[\mathfrak{j}:S^{2m+k-1}\xrightarrow{\mathfrak{s}}\Sigma(S^{2m-1}\times S^{k-1})\xrightarrow{\Sigma\mathfrak{q}} \Sigma(S^{2m-1}\amsrtimes S^{k-1}).\] 
    Since $\Sigma(S^{2m-1}\amsrtimes S^{k-1})\simeq S^{2m}\vee S^{2m+k-1}$ and $k\leq 2m-2$, the map $\mathfrak{j}$ is in the stable range and is a suspension, $\mathfrak{j}\simeq\Sigma\mathfrak{j}'$. 
    As $\mathfrak{j}$ is a suspension, regarding $\Sigma(S^{2m-1}\amsrtimes S^{k-1})$ as $S^{2m}\vee S^{2m+k-1}$, the Hilton-Milnor Theorem implies that the homotopy class of $\mathfrak{j}$ is determined by its composition with the pinch maps to $S^{2m-1}$ and $S^{2m+k-1}$. The pinch map to $S^{2m-1}$ factors as the composite 
    \[S^{2m+k-1}\xrightarrow{\mathfrak{s}} \Sigma(S^{2m-1}\times S^{2k-1}) \xrightarrow{\Sigma\pi_{1}} \Sigma S^{2m},\] 
    where $\pi_{1}$ is the projection. This is null homotopic by definition of $\mathfrak{s}$. The pinch map to $S^{2m+k-1}$ is homotopic to the identity map since it induces the identity map in homology. As the same is true of the map $j^{\dagger}$, we have $\mathfrak{j}\simeq \Sigma j^{\dagger}$. Hence $\Sigma j^{\dagger}\simeq\mathfrak{q}\circ\mathfrak{s}$.
    
    We now connect $\Sigma\overline{\tau}$ and $J(\tau)$. The self-map \(t:S^{2m-1}\times S^{k-1}\rightarrow S^{2m-1}\times S^{k-1}\) associated to $\tau$ is defined via the action of \(\mathrm{SO}(2m)\) on \(S^{2m-1}\) as well, namely \(t:(a,x)\mapsto(\tau(x)\cdot a,x)\), as in Diagram~(\ref{tau'def}). Thus there is a commutative square
    \begin{equation} \label{dgm:t vs tau-with-theta}
        \begin{tikzcd}[row sep=3em, column sep = 3em]
        S^{2m-1}\times S^{k-1} \arrow[d, "1 \times \tau"] \arrow[r, "t"] & S^{2m-1}\times S^{k-1} \arrow[d, "\pi_1"] \\
        S^{2m-1}\times \mathrm{SO}(2m)\arrow[r, "\theta"] & S^{2m-1}.
        \end{tikzcd}
    \end{equation}
    Consider the following diagram
    \begin{equation} \label{dgm:tau bar and J hom}
        \begin{tikzcd}[row sep=3em, column sep = 3em]
        S^{2m+k-1} \arrow[r, "\mathfrak{s}"] \arrow[dr, "\Sigma j^\dagger", swap]& \Sigma(S^{2m-1}\times S^{k-1}) \arrow[d, "\Sigma\mathfrak{q}"] \arrow[r, "\Sigma t"] & \Sigma(S^{2m-1}\times S^{k-1}) \arrow[d, "\Sigma\mathfrak{q}"] \arrow[r, "\Sigma\pi_1"] & S^{2m} \arrow[d, equal] \\
        & \Sigma(S^{2m-1}\amsrtimes S^{k-1}) \arrow[r, "\Sigma t'"] & \Sigma(S^{2m-1}\amsrtimes S^{k-1}) \arrow[r, "\Sigma\pi"] & S^{2m}.
        \end{tikzcd}
    \end{equation}
    The left-hand triangle homotopy commutes since $\mathfrak{s}\simeq\sigma\circ\Sigma j^{\dagger}$. The middle square homotopy commutes by  the top right square in (\ref{tau'def}). The right square homotopy commutes since $\pi$ and $\pi_{1}$ are both projections. Diagram (\ref{dgm:t vs tau-with-theta}) implies that \(\Sigma\pi_1\circ\Sigma t\simeq \Sigma\theta\circ\Sigma(1\times\tau)\), so the top direction around (\ref{dgm:tau bar and J hom}) is homotopic to \(J(\tau)\), whereas the bottom direction gives \(\Sigma\overline{\tau}\). Thus  $\Sigma\overline{\tau}\simeq J(\tau)$ as asserted.
\end{proof}

\begin{cor}\label{cor:tau-bar_non-trivial}
    Let \(2\leq k \leq 2m-2\) and suppose that \(k\equiv 1\text{ or }2\text{ (mod 8)}\). If \(\tau\in\pi_{k-1}(\mathrm{SO}(2m))\) is non-trivial, then \(\overline{\tau}\) is non-trivial.
\end{cor}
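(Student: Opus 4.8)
The plan is to read this off from Proposition~\ref{prop:jhom} together with the Adams--Quillen computation of the image of the $J$-homomorphism recorded in~(\ref{imJ}). Since $2\leq k\leq 2m-2$, Proposition~\ref{prop:jhom} gives $\Sigma\overline{\tau}\simeq J(\tau)$; as suspension sends a null-homotopic map to a null-homotopic map, it follows that if $\overline{\tau}$ were trivial then $J(\tau)$ would be trivial too. So it suffices to prove that the $J$-homomorphism $J\colon\pi_{k-1}(\mathrm{SO}(2m))\rightarrow\pi_{2m+k-1}(S^{2m})$ is injective under the stated hypotheses, and then argue by contraposition.

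First I would pass to the stable range. Because $k-1\leq 2m-2$, the stabilization map $\pi_{k-1}(\mathrm{SO}(2m))\rightarrow\pi_{k-1}(\mathrm{SO})$ is an isomorphism; and because $(2m+k-1)-2m=k-1\leq 2m-2$, the Freudenthal suspension $\pi_{2m+k-1}(S^{2m})\rightarrow\pi_{k-1}^{s}$ is also an isomorphism. Under these identifications $J$ becomes the stable $J$-homomorphism $\pi_{k-1}(\mathrm{SO})\rightarrow\pi_{k-1}^{s}$. Next, by Bott periodicity the hypothesis $k\equiv 1$ or $2\pmod 8$, i.e. $k-1\equiv 0$ or $1\pmod 8$, forces $\pi_{k-1}(\mathrm{SO})\cong\Z/2$. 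Finally, by~(\ref{imJ}) the image of $J$ in this range is $\Z/2$, so $J$ is non-zero; since a non-zero homomorphism out of $\Z/2$ is automatically injective, $J$ is injective. Hence a non-trivial $\tau$ has $J(\tau)$ non-trivial, and therefore $\Sigma\overline{\tau}\simeq J(\tau)$ is non-trivial, so $\overline{\tau}$ is non-trivial.

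There is essentially no obstacle here: Proposition~\ref{prop:jhom} has already done the geometric work of identifying $\Sigma\overline{\tau}$ with $J(\tau)$, and what remains is bookkeeping with the stable range plus two classical inputs (Bott periodicity and the Adams--Quillen value of $\mathrm{im}(J)$). The only point needing a moment's care is verifying that both the source and the target of $J$ are genuinely stable for the given range of $k$, which is immediate from $k\leq 2m-2$.
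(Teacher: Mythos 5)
Your proof is correct and follows essentially the same route as the paper's: contrapose via $\Sigma\overline{\tau}\simeq J(\tau)$ (Proposition~\ref{prop:jhom}), then observe that Bott periodicity and the Adams--Quillen computation~(\ref{imJ}) force $J$ to be injective on the $\Z/2$ domain. The only difference is that you spell out the stability bookkeeping — that $\pi_{k-1}(\mathrm{SO}(2m))$ and $\pi_{2m+k-1}(S^{2m})$ are in the stable range when $k\leq 2m-2$, so the unstable $J$ identifies with the stable one whose image~(\ref{imJ}) describes — which the paper leaves implicit; this is a reasonable clarification rather than a different argument.
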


\begin{proof}
    For such \(k\) Bott periodicity gives \(\pi_{k-1}(\mathrm{SO}(2m))\cong\Z/2\) and~(\ref{imJ}) gives \(im(J)\cong\Z/2\), implying that the \(J\)-homomorphism is an isomorphism onto its image. Thus if \(\tau\) is non-trivial then $J(\tau)$ is non-trivial, so Proposition \ref{prop:jhom} implies that \(\Sigma\overline{\tau}\) is non-trivial. Therefore \(\overline{\tau}\) must be non-trivial.
\end{proof}

Returning to our study of gyrations, by Lemma~\ref{gyrationcofib} there is a homotopy cofibration \[S^{2m+k-2} \xrightarrow{\phi_{\tau}} S^{m}\amsrtimes S^{k-1} \rightarrow \mathcal{G}^{k}_{\tau}(\mathbb{F}P^{2})\] for each integer $k\geq 2$, where $\phi_{\tau}=(f\amsrtimes 1)\circ t'\circ j^\dagger$. By definition $t'\circ j^\dagger=t'_{2}$, so if $2\leq k\leq 2m-2$  Lemma~\ref{twisting12} gives a homotopy $t'\circ j^\dagger\simeq i\circ\overline{\tau} + j^\dagger$, or in the case of the trivial twisting, $t'_{2}=j^\dagger$. This proves the following. 

\begin{lem}  \label{prethm:twistattach} 
   If $2\leq k\leq 2m-2$ then the attaching map \(\phi_\tau\) for the top-cell of $\mathcal{G}^{k}_{\tau}(\mathbb{F}P^{2})$ is given by the composite 
   \[S^{2m+k-2} \xrightarrow{i\circ\overline{\tau} + j^\dagger} S^{2m-1}\amsrtimes S^{k-1} \xrightarrow{f\amsrtimes 1} S^{m}\amsrtimes S^{k-1}.\]  If $\tau$ is the trivial twisting then the attaching map for the top-cell of \(\mathcal{G}^{k}_{0}(\mathbb{F}P^{2})\) is \((f\amsrtimes 1)\circ j^\dagger\).~$\qqed$ 
\end{lem}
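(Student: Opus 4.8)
The plan is simply to assemble pieces that are already in place. First I would specialise Lemma~\ref{gyrationcofib} to $M=\mathbb{F}P^{2}$, so that $n=2m$ and $\overline{M}=S^{m}$: this produces a homotopy cofibration $S^{2m+k-2}\xrightarrow{\phi_\tau} S^{m}\amsrtimes S^{k-1}\to\mathcal{G}^{k}_{\tau}(\mathbb{F}P^{2})$ whose attaching map is the composite $\phi_\tau=(f\amsrtimes 1)\circ t'\circ j^\dagger$, where $j^\dagger$ is the relabelling (as fixed in Section~\ref{sec:einverse} and reiterated at the end of Section~\ref{sec:gyration}) of the map $j$ from Lemma~\ref{ej} for the pair of spheres $S^{2m+k-2}\to S^{2m-1}\amsrtimes S^{k-1}$, and $t'$ is the induced self-equivalence of $S^{2m-1}\amsrtimes S^{k-1}$ coming from Diagram~(\ref{tau'def}).

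Next I would note that, by the definition of $t'_{2}$ given immediately before Lemma~\ref{twisting12}, we have $t'\circ j^\dagger=t'_{2}$ on the nose, so $\phi_\tau=(f\amsrtimes 1)\circ t'_{2}$. Now invoke Lemma~\ref{twisting12}(ii): the standing hypothesis $2\le k\le 2m-2$ is precisely the range required there, and it gives a homotopy $t'_{2}\simeq i\circ\overline{\tau}+j^\dagger$. Post-composing with $f\amsrtimes 1$ then yields $\phi_\tau\simeq(f\amsrtimes 1)\circ(i\circ\overline{\tau}+j^\dagger)$, which is exactly the asserted factorisation of the attaching map through $S^{2m-1}\amsrtimes S^{k-1}$.

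For the trivial twisting $\tau=0$, the map $t$, and hence $t'$, is the identity, so either directly or by Lemma~\ref{twisting12}(iii) we get $t'_{2}=j^\dagger$, whence $\phi_{0}=(f\amsrtimes 1)\circ j^\dagger$; this also recovers Remark~\ref{gyrationcofibremark}.

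As for the main obstacle: there really is none of substance here — the statement is a bookkeeping corollary of Lemma~\ref{gyrationcofib} and Lemma~\ref{twisting12}, with all the genuine work (the deviation analysis of Sections~\ref{sec:deviation}--\ref{sec:einverse}, the determination $r=1$ in Proposition~\ref{devidHopf}, and the Hilton--Milnor analysis of $t'$ underlying Lemma~\ref{twisting12}) already done. The only points needing a moment's care are (i) matching the dimension constraint $2\le k\le 2m-2$ to the hypothesis of Lemma~\ref{twisting12}(ii), and (ii) keeping the bookkeeping straight between $j$ and its relabelled version $j^\dagger$, since the relevant half-smash here is built from $S^{2m-1}$ and $S^{k-1}$ rather than from $S^{m}$ and $S^{k-1}$.
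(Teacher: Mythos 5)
Your proof is correct and follows essentially the same route as the paper: specialise Lemma~\ref{gyrationcofib} to $\mathbb{F}P^{2}$ to get $\phi_\tau=(f\amsrtimes 1)\circ t'\circ j^\dagger$, observe $t'\circ j^\dagger=t'_2$, and apply Lemma~\ref{twisting12}(ii) (resp.\ (iii)) for the twisted (resp.\ trivial) case. The bookkeeping caveats you flag are exactly the right ones and the paper handles them the same way.
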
 

We conclude Part I of this paper by bringing our description together, giving to a formulation of the attaching map for the top-cell in $\mathcal{G}^{k}_{\tau}(\mathbb{F}P^{2})$ that is easier to compute with.

\begin{thm} \label{thm:twistattach} 
    Let $2\leq k\leq 2m-2$, \(\tau:S^{k-1}\rightarrow\mathrm{SO}(2m)\), and let \(\phi_\tau:S^{2m+k-2} \rightarrow S^{m}\amsrtimes S^{k-1}\) denote the attaching map for the top-cell of the gyration $\mathcal{G}^{k}_{\tau}(\mathbb{F}P^{2})$. Then there is a homotopy \[\phi_\tau\simeq(i\circ f\circ\overline{\tau}) + ( j\circ\Sigma^{k-1} f) + [i,j]\] where \(\Sigma\overline{\tau}\) is in the image of the \(J\)-homomorphism. Explicitly, the summands are: 
    \begin{gather*} 
        S^{2m+k-2} \xrightarrow{\overline{\tau}} S^{2m-1} \xrightarrow{f} S^{m} \xrightarrow{i} S^{m}\amsrtimes S^{k-1} \\   
        S^{2m+k-2} \xrightarrow{\Sigma^{k-1} f} S^{m+k-1} \xrightarrow{j} S^{m}\amsrtimes S^{k-1} \\  
        S^{2m+k-2} \xrightarrow{[i,j]} S^{m}\amsrtimes S^{k-1}. 
    \end{gather*}     
    In particular, if $\tau$ is the trivial twisting then \(\phi_0\simeq(j\circ\Sigma^{k-1} f) + [i,j]\). 
\end{thm}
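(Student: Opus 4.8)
The plan is to substitute the description of $\phi_\tau$ from Lemma~\ref{prethm:twistattach} into Corollary~\ref{fj}, the only tools needed being left-distributivity of composition over a sum (valid since the relevant domain is a sphere, hence a co-$H$-space, exactly as used in the proofs of Lemmas~\ref{dev} and~\ref{devid} and of Corollary~\ref{fj}) together with the naturality square for the half-smash inclusion. By Lemma~\ref{prethm:twistattach}, for $2\le k\le 2m-2$ the attaching map for the top-cell of $\mathcal{G}^{k}_{\tau}(\mathbb{F}P^{2})$ is
\[
\phi_\tau\simeq (f\amsrtimes 1)\circ\bigl(i\circ\overline{\tau}+j^\dagger\bigr)\colon S^{2m+k-2}\longrightarrow S^{2m-1}\amsrtimes S^{k-1}\longrightarrow S^{m}\amsrtimes S^{k-1}.
\]
Since the domain $S^{2m+k-2}$ is a suspension, the sum $i\circ\overline{\tau}+j^\dagger$ is defined and left composition with $f\amsrtimes 1$ distributes over it, so that $\phi_\tau\simeq (f\amsrtimes 1)\circ i\circ\overline{\tau}+(f\amsrtimes 1)\circ j^\dagger$.

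First I would handle the first summand. Applying the left-hand naturality square of~\eqref{halfsmashnat} with $g$ the identity map of $S^{k-1}$ gives $(f\amsrtimes 1)\circ i\simeq i\circ f$, where on the right $i$ denotes the inclusion $S^{m}\rightarrow S^{m}\amsrtimes S^{k-1}$; hence $(f\amsrtimes 1)\circ i\circ\overline{\tau}\simeq i\circ f\circ\overline{\tau}$. For the second summand, Corollary~\ref{fj} applies precisely in the range $2\le k\le 2m-2$ and yields $(f\amsrtimes 1)\circ j^\dagger\simeq (j\circ\Sigma^{k-1}f)+[i,j]$. Combining the two computations gives $\phi_\tau\simeq (i\circ f\circ\overline{\tau})+(j\circ\Sigma^{k-1}f)+[i,j]$, and the three displayed composites in the statement are read off by unwinding the domains and codomains of the maps just assembled. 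The assertion that $\Sigma\overline{\tau}$ lies in the image of the $J$-homomorphism is exactly Proposition~\ref{prop:jhom}.

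For the trivial-twisting case, Lemma~\ref{twisting12}(iii) shows $\overline{\tau}$ is null homotopic, so the summand $i\circ f\circ\overline{\tau}$ vanishes and $\phi_0\simeq (j\circ\Sigma^{k-1}f)+[i,j]$; equivalently this is immediate from Remark~\ref{gyrationcofibremark} combined with Corollary~\ref{fj}. I do not anticipate a genuine obstacle: the proof is an assembly of results already established in Sections~\ref{sec:deviation}--\ref{sec:gyration}, and the only points demanding care are confirming that left composition genuinely distributes over the sum $i\circ\overline{\tau}+j^\dagger$ (which is why the bound $k\le 2m-2$, ensuring $j^\dagger$ is a co-$H$-map, is invoked in the earlier lemmas rather than here) and correctly invoking the half-smash naturality square to replace $(f\amsrtimes 1)\circ i$ by $i\circ f$ without conflating the two meanings of $i$ and of $j$ versus $j^\dagger$.
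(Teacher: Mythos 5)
Your proof is correct and follows exactly the same route as the paper's: substitute the form of $\phi_\tau$ from Lemma~\ref{prethm:twistattach}, apply left-distributivity of composition over a sum of maps out of a suspension, resolve the first summand via the naturality of $i$, resolve the second via Corollary~\ref{fj}, and read off the trivial case from $\overline{\tau}\simeq\ast$. Your additional remarks on where the bound $k\le 2m-2$ is actually consumed and on keeping the two meanings of $i$ (and $j$ versus $j^\dagger$) straight are accurate, but the argument itself is the paper's.
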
 

\begin{proof} 
Noting that $\overline{\mathbb{F}P^{2}}\simeq S^{m}$, by Lemma~\ref{prethm:twistattach} the attaching map for the top-cell of $\mathcal{G}^{k}_{\tau}(\mathbb{F}P^{2})$ is given by the composite 
\[S^{2m+k-2} \xrightarrow{i\circ\overline{\tau} + j^\dagger} S^{2m-1}\amsrtimes S^{k-1} \xrightarrow{f\amsrtimes 1} S^{m}\amsrtimes S^{k-1}.\] 
In general, the sum of two maps in $[\Sigma X,Y]$ distributes on the left when composed with a map \(Y \rightarrow Z\). In our case, this gives \[(f\amsrtimes 1)\circ(i\circ\overline{\tau} + j^\dagger)\simeq((f\amsrtimes 1)\circ i\circ\overline{\tau})  + ((f\amsrtimes 1)\circ j^\dagger).\] The naturality of $i$ implies that $(f\amsrtimes 1)\circ i\simeq i\circ f$. Therefore $(f\amsrtimes 1)\circ i\circ\overline{\tau}\simeq i\circ f\circ\overline{\tau}$. By Corollary~\ref{fj}, $(f\amsrtimes 1)\circ j^\dagger\simeq (j\circ\Sigma^{k-1}  f) + [i,j]$. Thus  \[(f\amsrtimes 1)\circ(i\circ\overline{\tau} + j^\dagger)\simeq (i\circ f\circ\overline{\tau}) + (j\circ\Sigma^{k-1} f) + [i,j],\] as asserted. The case of the trivial attaching map follows from Remark~\ref{trivialattach} (or, in the 
argument above, setting~$\overline{\tau}$ to be the constant map). 
\end{proof} 

For what is to come in the next section, it is convenient to rephrase Theorem \ref{thm:twistattach} by letting \(\varphi_\tau\) be the composite 
\[
    \varphi_{\tau}:S^{2m+k-2}\xrightarrow{\phi_{t}} S^{m}\amsrtimes S^{k-1} \xrightarrow{e} S^{m}\vee S^{m+k-1}
\]
and thinking of the attaching map for the top-cell of a gyration as a map from a sphere to a wedge of spheres. We record this in the following Corollary, which is a direct consequence of Theorem \ref{thm:twistattach} and Lemma \ref{einv}.
    
\begin{cor} \label{cor:phipsi}
    Let $2\leq k\leq 2m-2$, \(\tau:S^{k-1}\rightarrow\mathrm{SO}(2m)\), and let \(\varphi_\tau:S^{2m+k-2} \rightarrow S^{m}\vee S^{m+k-1}\) denote the adjusted attaching map for the top-cell of the gyration $\mathcal{G}^{k}_{\tau}(\mathbb{F}P^{2})$. There is a homotopy \[\varphi_\tau\simeq(i_1\circ f\circ\overline{\tau}) + (i_2\circ\Sigma^{k-1} f) + [i_1,i_2].\] Rearranging, we have the identity 
    \begin{equation*}
        \varphi_\tau\simeq\varphi_0+\psi_\tau 
    \end{equation*} 
    where \(\psi_\tau:=i_1\circ f\circ\overline{\tau}\). \qed
\end{cor}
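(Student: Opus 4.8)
The plan is to read this off directly from Theorem~\ref{thm:twistattach} together with Lemma~\ref{einv}, since Corollary~\ref{cor:phipsi} is essentially just a translation of the attaching map across the homotopy equivalence $e$. First I would recall that by definition $\varphi_\tau = e\circ\phi_\tau$, so applying $e$ to the homotopy of Theorem~\ref{thm:twistattach} and using that post-composition by a (based) map distributes over sums of maps out of a suspension (here $S^{2m+k-2}$ is certainly a suspension), one gets
\[
\varphi_\tau \simeq (e\circ i\circ f\circ\overline{\tau}) + (e\circ j\circ\Sigma^{k-1}f) + (e\circ [i,j]).
\]
Then I would identify the three summands using Lemma~\ref{einv}: part~(i) gives $e\circ i\simeq i_1$, so the first term is $i_1\circ f\circ\overline{\tau}$; part~(ii) gives $e\circ j^\dagger\simeq i_2$, and since the $j$ appearing here is the version for $S^{m+k-1}\to S^m\amsrtimes S^{k-1}$ (not $j^\dagger$), I should note $e\circ j\simeq i_2$ for that $j$ as well — this is exactly Lemma~\ref{einv}(ii) in the range $k\le m-1$, but here we only have $k\le 2m-2$, so I need to be slightly careful. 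Actually the cleanest route is to avoid re-deriving anything about $e\circ j$ and instead argue that $e\circ[i,j] \simeq [e\circ i, e\circ j]$ by naturality of the Whitehead product, and that $e_*$ carries the three summands of $\phi_\tau$ in $H_*$ and on the level of the known description; but the honest statement is that Corollary~\ref{fj} already gives $(f\amsrtimes 1)\circ j^\dagger \simeq (j\circ\Sigma^{k-1}f) + [i,j]$ with $i,j$ the maps into $S^m\amsrtimes S^{k-1}$, and $e$ of the wedge-sum description of the whole attaching map is what Corollary~\ref{fj}'s proof already computed, namely $i_2\circ\Sigma^{k-1}f + [i_1,i_2]$. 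So I would simply cite the line in the proof of Corollary~\ref{fj} stating $e\circ(f\amsrtimes 1)\circ j^\dagger \simeq i_2\circ\Sigma^{k-1}f + [i_1,i_2]$, together with $e\circ i\circ f\circ\overline{\tau}\simeq i_1\circ f\circ\overline{\tau}$ from Lemma~\ref{einv}(i), to assemble
\[
\varphi_\tau = e\circ\phi_\tau \simeq e\circ\big((i\circ f\circ\overline{\tau}) + (f\amsrtimes 1)\circ j^\dagger\big) \simeq (i_1\circ f\circ\overline{\tau}) + (i_2\circ\Sigma^{k-1}f) + [i_1,i_2],
\]
where I have used left-distributivity of post-composition over the sum once more and the factorization $\phi_\tau \simeq (f\amsrtimes 1)\circ(i\circ\overline{\tau} + j^\dagger)$ from Lemma~\ref{prethm:twistattach}.

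For the ``rearranging'' clause, I would observe that setting $\overline{\tau}$ to the constant map (equivalently taking $\tau$ trivial, by Lemma~\ref{twisting12}(iii)) gives $\varphi_0 \simeq (i_2\circ\Sigma^{k-1}f) + [i_1,i_2]$, and then since all maps in question are maps out of the suspension $S^{2m+k-2}$, addition in $\pi_{2m+k-2}(S^m\vee S^{m+k-1})$ is the usual (abelian) group operation; hence $\varphi_\tau \simeq \varphi_0 + \psi_\tau$ with $\psi_\tau := i_1\circ f\circ\overline{\tau}$ follows by commuting and regrouping the three summands. The statement that $\Sigma\overline{\tau}$ lies in the image of the $J$-homomorphism is inherited verbatim from Theorem~\ref{thm:twistattach} (which in turn rests on Proposition~\ref{prop:jhom}), so nothing new is needed there.

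I do not expect any real obstacle: the entire content is bookkeeping with the distributivity of post-composition over sums of maps from a co-$H$-space and the identifications $e\circ i\simeq i_1$, $e\circ j^\dagger\simeq i_2$ already established. The one point requiring a moment's care is making sure the $j$'s are not conflated — $j^\dagger$ is the half-smash section for $S^{2m-1}\amsrtimes S^{k-1}$ while the $j$ in the final formula (via Corollary~\ref{fj}) is the section for $S^m\amsrtimes S^{k-1}$ — but this is handled cleanly by quoting Corollary~\ref{fj} and the line in its proof rather than manipulating $e\circ j$ directly. Accordingly I would keep the proof to two or three sentences, exactly as the ``direct consequence'' remark before the statement promises.

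\begin{proof}
By definition $\varphi_\tau = e\circ\phi_\tau$, and by Lemma~\ref{prethm:twistattach} we have $\phi_\tau\simeq(f\amsrtimes 1)\circ(i\circ\overline{\tau} + j^\dagger)$. Since $S^{2m+k-2}$ is a suspension, post-composition by $e$ distributes over this sum, so
\[
\varphi_\tau \simeq \big(e\circ(f\amsrtimes 1)\circ i\circ\overline{\tau}\big) + \big(e\circ(f\amsrtimes 1)\circ j^\dagger\big).
\]
The naturality of $i$ gives $(f\amsrtimes 1)\circ i\simeq i\circ f$, and Lemma~\ref{einv}~(i) gives $e\circ i\simeq i_1$, so the first summand is $i_1\circ f\circ\overline{\tau}$. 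By the computation carried out in the proof of Corollary~\ref{fj}, $e\circ(f\amsrtimes 1)\circ j^\dagger\simeq i_2\circ\Sigma^{k-1}f + [i_1,i_2]$. Hence
\[
\varphi_\tau \simeq (i_1\circ f\circ\overline{\tau}) + (i_2\circ\Sigma^{k-1}f) + [i_1,i_2],
\]
which is the first claimed homotopy; the assertion that $\Sigma\overline{\tau}$ lies in the image of the $J$-homomorphism is Proposition~\ref{prop:jhom}. Taking $\tau$ trivial, Lemma~\ref{twisting12}~(iii) makes $\overline{\tau}$ null homotopic, so $\varphi_0\simeq(i_2\circ\Sigma^{k-1}f) + [i_1,i_2]$. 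Since addition in $\pi_{2m+k-2}(S^m\vee S^{m+k-1})$ is the usual abelian group operation, regrouping the three summands yields $\varphi_\tau\simeq\varphi_0 + \psi_\tau$ with $\psi_\tau = i_1\circ f\circ\overline{\tau}$.
\end{proof}
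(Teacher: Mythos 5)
Your proof is correct, and it follows essentially the route the paper intends: apply the homotopy equivalence $e$ to the description of $\phi_\tau$, then distribute over the sum and translate $i$, $j$, $[i,j]$ into $i_1$, $i_2$, $[i_1,i_2]$. The one genuinely useful thing you do differently is your handling of the $k$-range. As the paper states them, Lemma~\ref{einv}(ii) and (iii) carry the hypothesis $k\le m-1$ for the map $j\colon S^{m+k-1}\to S^m\amsrtimes S^{k-1}$, whereas the corollary is claimed for $2\le k\le 2m-2$; so a literal "apply Theorem~\ref{thm:twistattach} and then Lemma~\ref{einv}(ii)" does not obviously cover the full range. You sidestep this by instead citing Lemma~\ref{prethm:twistattach} (which only needs Lemma~\ref{einv} applied to $j^\dagger$ into $S^{2m-1}\amsrtimes S^{k-1}$, where the corresponding bound is $k\le 2m-2$) together with the intermediate line in the proof of Corollary~\ref{fj}, namely $e\circ(f\amsrtimes 1)\circ j^\dagger\simeq i_2\circ\Sigma^{k-1}f+[i_1,i_2]$, which again only invokes Lemma~\ref{einv}(ii) for $j^\dagger$. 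Everything then holds in the stated range $2\le k\le 2m-2$. The distributivity steps, the appeal to Proposition~\ref{prop:jhom} for $\Sigma\overline{\tau}$, and the rearrangement $\varphi_\tau\simeq\varphi_0+\psi_\tau$ using Lemma~\ref{twisting12}(iii) are all handled correctly.
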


Corollary \ref{cor:phipsi} makes clear that the attaching maps \(\varphi_\tau\) differ from the map from the map for the trivial twisting (namely \(\varphi_0\)) by the addition of a summand \(\psi_\tau\) whose homotopy class depends on \(\tau\) and~\(f\).

\section*{\textbf{Part II: Gyrations of Projective Planes}}

From this point onwards, all manifolds are smooth and oriented unless otherwise stated. Given the structure established in Part I, we now focus on computations. The motivating questions are as in the Introduction.

\begin{question}[GSI]
    For a given \(k\geq2\) and \(n\)-manifold \(M\), do we have \(\mathcal{G}^{k}_{\tau}(M)\simeq\mathcal{G}^{k}_{\omega}(M)\) for all twistings \(\tau,\omega\in\pi_{k-1}(\mathrm{SO}(n))\)? 
\end{question}

\begin{question}[GSII]
    For a given \(k\geq2\) and \(n\)-manifold \(M\), how many different homotopy types can \(\mathcal{G}^{k}_{\tau}(M)\) have as the homotopy class of \(\tau\) is varied? 
\end{question}

\noindent The goal of Part II is to enumerate the homotopy types of \(\mathcal{G}^{k}_{\tau}(\mathbb{F}P^{2})\) for $\mathbb{F}$ being each of $\mathbb{C}$, $\mathbb{H}$ and $\mathbb{O}$.  

\section{Initial Observations and Examples}\label{sec:exgystab}

After making some initial observations this section considers gyration stability for $\mathcal{G}^{2}_{\tau}(\mathbb{C}P^{2})$ as an illustrative example. 
First observe that given two twistings \(\tau\) and \(\omega\) and an explicit homotopy between them, this cascades through the definitions (cf. Definition \ref{def:gy}) and one easily checks the following fact.

\begin{lem}\label{lem:easycheck}
    Let \(k\geq2\) be an integer, \(M\) an \(n\)-manifold and let \(\tau,\omega:S^{k-1}\rightarrow\mathrm{SO}(n)\). If \(\tau\simeq \omega\) then there is a homotopy equivalence \(\mathcal{G}^{k}_{\tau}(M)\simeq\mathcal{G}^{k}_{\omega}(M)\). \hfill \qed 
\end{lem}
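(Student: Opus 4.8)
The plan is to reduce the statement to the homotopy cofibration description of a gyration furnished by Lemma~\ref{gyrationcofib}, and then to observe that a homotopy $\tau\simeq\omega$ propagates to a homotopy $\phi_\tau\simeq\phi_\omega$ between the attaching maps for the respective top cells.

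First I would take a based homotopy $H\colon S^{k-1}\times I\to\mathrm{SO}(n)$ with $H|_{s=0}=\tau$ and $H|_{s=1}=\omega$. Using the standard linear action of $\mathrm{SO}(n)$ on $S^{n-1}$, the assignment $(a,x,s)\mapsto(H(x,s)\cdot a,\,x)$ defines a homotopy between the self-maps $t_\tau$ and $t_\omega$ of $S^{n-1}\times S^{k-1}$ appearing in Definition~\ref{def:gy}, and this homotopy is the identity on the second factor $S^{k-1}$ at every stage. Consequently the construction of diagram~(\ref{tau'def}), which is nothing more than the naturality of passing from a product to its right half-smash applied to the map $t$, carries through with the parameter $s\in I$ present throughout, producing a homotopy $t'_\tau\simeq t'_\omega$ between the induced self-equivalences of $S^{n-1}\amsrtimes S^{k-1}$. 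Post-composing with the fixed map $f_M\amsrtimes 1$ and pre-composing with the fixed map $j$ of Lemma~\ref{ej} then gives $\phi_\tau=(f_M\amsrtimes 1)\circ t'_\tau\circ j\simeq(f_M\amsrtimes 1)\circ t'_\omega\circ j=\phi_\omega$.

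Finally, Lemma~\ref{gyrationcofib} provides homotopy cofibrations $S^{n+k-2}\xrightarrow{\phi_\tau}\overline{M}\amsrtimes S^{k-1}\to\mathcal{G}^k_\tau(M)$ and $S^{n+k-2}\xrightarrow{\phi_\omega}\overline{M}\amsrtimes S^{k-1}\to\mathcal{G}^k_\omega(M)$. As the homotopy cofibre of a map depends only on its homotopy class, the homotopy $\phi_\tau\simeq\phi_\omega$ yields a homotopy equivalence $\mathcal{G}^k_\tau(M)\simeq\mathcal{G}^k_\omega(M)$. Alternatively, and more directly, one can work from the strict pushout~(\ref{gyrationpo}): the map $1\times\iota\colon S^{n-1}\times S^{k-1}\to S^{n-1}\times D^k$ is a cofibration, so that square is a homotopy pushout, and the homotopy $(f_M\times 1)\circ t_\tau\simeq(f_M\times 1)\circ t_\omega$ between the other legs gives a homotopy equivalence of the two pushouts by the gluing lemma.

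Since every step is an instance of naturality or of the homotopy invariance of (co)fibres, there is no real obstacle here; the only thing to be careful about is the bookkeeping in the middle step, namely that the homotopy of twistings descends compatibly through all of the half-smash quotients and respects the identification of $t'$ as an induced map of cofibres in~(\ref{tau'def}). This is routine, which is why the lemma can be recorded without a detailed argument in the text.
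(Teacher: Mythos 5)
Your proof is correct, and the ``more direct'' alternative at the end—working from the strict pushout~(\ref{gyrationpo}), noting $1\times\iota$ is a cofibration so the square is a homotopy pushout, and then applying the gluing lemma to the homotopy $(f_M\times 1)\circ t_\tau\simeq (f_M\times 1)\circ t_\omega$—is precisely what the paper means by the homotopy ``cascading through the definitions (cf.\ Definition~\ref{def:gy})''; that route is self-contained and is the one to keep.

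Your first route also reaches the right conclusion but states one step imprecisely. You describe the passage from $t$ to $t'$ as ``naturality of passing from a product to its right half-smash,'' which would require $t$ to preserve the subspace $\{a_0\}\times S^{k-1}$ being collapsed. It does not: $t(a_0,x)=(\tau(x)\cdot a_0,x)$, and $\tau(x)\cdot a_0$ is generally not $a_0$. Thus~(\ref{tau'def}) is only a homotopy-commutative diagram (the top square commutes up to homotopy because $\pi_{k-1}(S^{n-1})=0$ for $k<n$), $t'$ is an induced map of homotopy cofibres rather than a literal quotient of $t$, and the homotopy $t_\tau\simeq t_\omega$ does not automatically descend through the quotient. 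The correct justification for $t'_\tau\simeq t'_\omega$ is the one the paper uses inside the proof of Lemma~\ref{twisting12}: since $j_2$ has a retraction, restriction $[S^{n-1}\amsrtimes S^{k-1},Z]\to[S^{n-1}\times S^{k-1},Z]$ is injective, so the homotopy class of $t'$ is determined by that of $t$, and $t_\tau\simeq t_\omega$ then forces $t'_\tau\simeq t'_\omega$. With that patch your first route is also fine.
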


Since the homotopy type of a gyration therefore depends (in part) on the homotopy class of the twisting, the homotopy groups of \(\mathrm{SO}(n)\) play an important role. In particular, if the relevant group is trivial then gyration stability follows.

\begin{prop} \label{prop:certaink}
    For a given \(n\)-manifold \(M\) and integer \(k\) such that \(2\leq k\leq n-1\), \(M\) is \(\mathcal{G}^k\)-stable if \(k\equiv3,5,6,7\text{ (mod 8)}\). 
\end{prop}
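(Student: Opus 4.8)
The plan is to deduce this directly from the observation recorded in the Introduction that $M$ is $\mathcal{G}^k$-stable whenever the group $\pi_{k-1}(\mathrm{SO}(n))$ is trivial: in that situation the trivial twisting is the only twisting, so Lemma~\ref{lem:easycheck} gives $\mathcal{G}^k_\tau(M)\simeq\mathcal{G}^k_0(M)$ for the unique class $\tau$. Thus the entire content is to verify that the hypotheses $2\leq k\leq n-1$ and $k\equiv 3,5,6,7\ (\mathrm{mod}\ 8)$ force $\pi_{k-1}(\mathrm{SO}(n))=0$.

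First I would use stability of the orthogonal groups. The long exact sequence of the fibration $\mathrm{SO}(n)\to\mathrm{SO}(n+1)\to S^n$ shows that $\pi_i(\mathrm{SO}(n))\to\pi_i(\mathrm{SO}(n+1))$ is an isomorphism for $i\leq n-2$, so iterating gives $\pi_i(\mathrm{SO}(n))\cong\pi_i(\mathrm{SO})$ in that range. The bound $k\leq n-1$ is precisely the statement that $i=k-1$ lies in this range, hence $\pi_{k-1}(\mathrm{SO}(n))\cong\pi_{k-1}(\mathrm{SO})$.

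Next I would invoke Bott periodicity. Since $k\geq 2$ and $k\equiv 3,5,6,7\ (\mathrm{mod}\ 8)$, the shifted index satisfies $k-1\equiv 2,4,5,6\ (\mathrm{mod}\ 8)$, and in each of these residues $\pi_{k-1}(\mathrm{SO})=0$. Combining with the previous step gives $\pi_{k-1}(\mathrm{SO}(n))=0$, so there is a unique twisting and $M$ is $\mathcal{G}^k$-stable.

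There is no genuine obstacle in this argument; the only points requiring care are matching the index shift between $k$ and $k-1$ with the residues appearing in~(\ref{imJ}), and respecting the precise stable range $i\leq n-2$, which is exactly why the statement is restricted to $k\leq n-1$. For context, these are also exactly the values of $k$ for which $\mathrm{im}(J)=0$ in~(\ref{imJ})---unsurprising, since the $J$-homomorphism has domain $\pi_{k-1}(\mathrm{SO}(n))$---while the complementary indices $k\equiv 1,2\ (\mathrm{mod}\ 8)$ and $k=4s$, where $\pi_{k-1}(\mathrm{SO}(n))$ is nontrivial, are precisely those for which the detailed analysis of the attaching map $\phi_\tau$ carried out in the rest of Part II is required.
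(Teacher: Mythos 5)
Your proof is correct and follows essentially the same route as the paper's: reduce $\mathcal{G}^k$-stability to triviality of $\pi_{k-1}(\mathrm{SO}(n))$ via Lemma~\ref{lem:easycheck}, then read off that vanishing from Bott periodicity. The paper's proof simply asserts the vanishing ``by Bott periodicity''; you additionally spell out the stable-range argument (using the fibration $\mathrm{SO}(n)\to\mathrm{SO}(n+1)\to S^n$ to show $\pi_{k-1}(\mathrm{SO}(n))\cong\pi_{k-1}(\mathrm{SO})$ precisely when $k\leq n-1$), which is the correct justification implicit in the paper's shorter statement.
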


\begin{proof}
   For \(k\) in this range, by Bott periodicity we have \(\pi_{k-1}(\mathrm{SO}(n))\cong0\mbox{ if }k\equiv3,5,6,7\pmod{8}\). Hence Lemma~\ref{lem:easycheck} implies \(\mathcal{G}^{k}_{\tau}(M)\simeq\mathcal{G}^{k}_{\omega}(M)\) for all $\tau,\omega\in\pi_{k-1}(SO(n))$, and thus that $M$ is $\mathcal{G}^{k}$-stable.
\end{proof}

Proposition \ref{prop:certaink} is not an `if and only if' statement. To see why, consider gyrations of spheres. 

\begin{exa}[Spheres are \(\mathcal{G}^k\)-stable for all \(k\)] \label{ex:spheres}
For a sphere \(S^n\), we have \(\overline{S^n}\simeq\ast\), so by Lemma \ref{gyrationcofib} the \(k\)-gyration is given by the homotopy cofibration
\[
    S^{n+k-2} \xrightarrow{\phi_\tau} \ast \amsrtimes S^{k-1} \rightarrow \mathcal{G}^k_\tau(S^n)
\]
in which we observe that \(\ast\amsrtimes S^{k-1}\simeq\ast\). Therefore \(\mathcal{G}^k_\tau(S^n)\simeq S^{n+k-1}\) for all twistings \(\tau\).
\end{exa}

We now turn to the case of gyration stability for $\mathcal{G}^{2}_{\tau}(\mathbb{C}P^{2})$. Recall that for $\tau\in\pi_{1}(SO(4))$ there is a homotopy cofibration 
\(S^{4} \xrightarrow{\varphi_{\tau}} S^{2}\vee S^{3} \rightarrow \mathcal{G}^{2}_{\tau}(\mathbb{C}P^{2})\).

\begin{prop}\label{prop:cp2}
    There exists a homotopy equivalence \(\varepsilon:S^2\vee S^3 \rightarrow S^2\vee S^3\) such that \[\varepsilon \circ \varphi_0 \simeq \varphi_1\] where `0' denotes the trivial twisting and `1' denotes the twisting from the generator of \(\pi_1(\mathrm{SO}(4))\cong \Z/2\).  
\end{prop}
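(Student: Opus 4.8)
The plan is to make both attaching maps completely explicit using Corollary~\ref{cor:phipsi}, and then simply write $\varepsilon$ down by hand. Here $m=2$, $k=2$, $f=\eta_2$ and $2\le k\le 2m-2$ holds (with equality), so Corollary~\ref{cor:phipsi} applies and, using $\Sigma^{k-1}f=\Sigma\eta_2=\eta_3$, gives
\[
\varphi_0\simeq (i_2\circ\eta_3)+[i_1,i_2],\qquad \varphi_1\simeq\varphi_0+\psi_1,\qquad \psi_1=i_1\circ\eta_2\circ\overline{\tau},
\]
where $\tau$ generates $\pi_1(\mathrm{SO}(4))\cong\Z/2$. The first step is to pin down $\overline{\tau}$. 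Since $k=2\equiv 2\pmod 8$, Corollary~\ref{cor:tau-bar_non-trivial} shows $\overline{\tau}$ is non-trivial, and as $\overline{\tau}\in\pi_4(S^3)\cong\Z/2$ this forces $\overline{\tau}\simeq\eta_3$. (Alternatively, Proposition~\ref{prop:jhom} and~(\ref{imJ}) identify $\Sigma\overline{\tau}$ with the generator $\eta_4$ of $\operatorname{im}(J)=\pi_5(S^4)$, and the suspension $\pi_4(S^3)\to\pi_5(S^4)$ is an isomorphism.) Writing $\eta^2:=\eta_2\circ\eta_3$ for the generator of $\pi_4(S^2)\cong\Z/2$, we conclude $\psi_1\simeq i_1\circ\eta^2$, hence $\varphi_1\simeq (i_2\circ\eta_3)+[i_1,i_2]+(i_1\circ\eta^2)$.

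Next I would define $\varepsilon\colon S^2\vee S^3\to S^2\vee S^3$ by $\varepsilon\circ i_1\simeq i_1$ and $\varepsilon\circ i_2\simeq i_2+(i_1\circ\eta_2)$; this is a well-defined map since $i_2$ and $i_1\circ\eta_2$ both lie in $\pi_3(S^2\vee S^3)$. Because $i_1\circ\eta_2$ is trivial on homology, $\varepsilon$ induces the identity on $H_\ast(S^2\vee S^3)$, so Whitehead's theorem shows it is a homotopy equivalence. It then remains to compute $\varepsilon\circ\varphi_0$. Post-composition with $\varepsilon$ distributes over the sum defining $\varphi_0$ (since $S^4$ is a suspension); one summand gives $\varepsilon\circ(i_2\circ\eta_3)\simeq(\varepsilon\circ i_2)\circ\eta_3\simeq (i_2\circ\eta_3)+(i_1\circ\eta_2\circ\eta_3)=(i_2\circ\eta_3)+(i_1\circ\eta^2)$, where pre-composition by $\eta_3=\Sigma\eta_2$ distributes because $\eta_3$ is a co-$H$-map; the other gives $\varepsilon\circ[i_1,i_2]\simeq[\varepsilon\circ i_1,\varepsilon\circ i_2]\simeq[i_1,i_2]+[i_1,i_1\circ\eta_2]$ by naturality and bilinearity of the Whitehead product.

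The only delicate point is that the correction term $[i_1,i_1\circ\eta_2]$ must vanish. Here naturality gives $[i_1,i_1\circ\eta_2]\simeq (i_1)_\ast[\iota_2,\eta_2]$, and the classical identity $[\iota_2,\alpha]\simeq\pm[\iota_2,\iota_2]\circ\Sigma\alpha$ together with $[\iota_2,\iota_2]\simeq\pm 2\eta_2$ yields $[\iota_2,\eta_2]\simeq\pm 2\,\eta_2\circ\eta_3=\pm 2\,\eta^2$, which is null because $\pi_4(S^2)\cong\Z/2$. Consequently $\varepsilon\circ[i_1,i_2]\simeq[i_1,i_2]$, and assembling the pieces, $\varepsilon\circ\varphi_0\simeq (i_2\circ\eta_3)+(i_1\circ\eta^2)+[i_1,i_2]\simeq\varphi_1$, which is the claim. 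I expect this vanishing of $[i_1,i_1\circ\eta_2]$ — that is, controlling the single $\Z/2$ of ambiguity sitting in the $i_1\circ\eta^2$-summand of $\pi_4(S^2\vee S^3)\cong\Z/2\{i_1\circ\eta^2\}\oplus\Z/2\{i_2\circ\eta_3\}\oplus\Z\{[i_1,i_2]\}$ — to be the main obstacle; everything else is formal bookkeeping with the Hilton--Milnor decomposition and the computations of Part~I.
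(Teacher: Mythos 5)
Your proof takes essentially the same route as the paper's: you define the same self-equivalence $\varepsilon$ (identity on $S^2$, and $i_2 + i_1\circ\eta_2$ on $S^3$), expand $\varepsilon\circ\varphi_0$ using left-distributivity and bilinearity/naturality of the Whitehead product, identify $\overline{\tau}\simeq\eta_3$ via the $J$-homomorphism argument, and reduce everything to showing that the correction term $[i_1,i_1\circ\eta_2]=i_1\circ[\iota_2,\eta_2]$ is null. This matches the paper's proof step for step.

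The one place you diverge is in justifying $[\iota_2,\eta_2]\simeq\ast$, and this is where I'd flag a gap. You invoke the identity $[\iota_2,\alpha]\simeq\pm[\iota_2,\iota_2]\circ\Sigma\alpha$ as ``classical.'' That identity is indeed a consequence of Lemma~\ref{lem:whitehead_prods_and_susps} (Whitehead X.8.18), but \emph{only when $\alpha$ is itself a suspension}: one sets $\alpha=\iota_2\circ\Sigma\delta$ and applies the lemma. Here $\alpha=\eta_2$, which is not a suspension ($\pi_2(S^1)=0$), so the lemma does not apply and the identity is not established by anything in the paper or by the standard statement of that result. The conclusion $[\iota_2,\eta_2]\simeq\ast$ happens to be correct, but your derivation of it is not justified as written. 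The paper's argument for this step is different and avoids the issue entirely: Whitehead products suspend to zero, $[\iota_2,\eta_2]\in\pi_4(S^2)\cong\Z/2$ generated by $\eta_2^2$, and $\Sigma\eta_2^2=\eta_3^2$ generates $\pi_5(S^3)\cong\Z/2$, so the suspension $\pi_4(S^2)\to\pi_5(S^3)$ is injective and hence $[\iota_2,\eta_2]$ must be trivial. You should replace your appeal to the non-applicable formula with this (or some other valid) argument.
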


\begin{proof}
    Recalling our notation from the preamble to Part I, consider the map  
    \[\varepsilon\colon S^2\vee S^3 \rightarrow S^2\vee S^3\] 
    defined by \(\mathbb{1}+i_1\circ(*\perp \eta_2)\),where \(\mathbb{1}\) denotes the identity on the wedge. On homology this map induces an isomorphism since $(\eta_{2})_{\ast}=0$. Therefore $\varepsilon$ is a homotopy equivalence by Whitehead's Theorem. 

    By Corollary~\ref{cor:phipsi} with $k=2$ and $f=\eta_{2}$, the map \(\varphi_0:S^{4}\rightarrow S^{2}\vee S^{3}\) satisfies $\varphi_{0}\simeq i_{2}\circ\eta_{3}+[i_{1},i_{2}]$, where \(\eta_3\) denotes \(\Sigma \eta_2\). 
    Now consider the composite \(\varepsilon\circ\varphi_0:S^4\rightarrow S^2 \vee S^3\). In general, if \(\alpha,\beta:\Sigma A \rightarrow X\) and \(\gamma:X \rightarrow Y\) then $\gamma$ distributes on the left, i.e. $\gamma\circ(\alpha+\beta)\simeq \gamma\circ \alpha+ \gamma\circ \beta$. Therefore 
    \begin{equation}\label{eq:epsilonphi_cp2}
        \varepsilon\circ\varphi_0 \simeq \varepsilon\circ(i_2\circ\eta_3) + \varepsilon\circ[i_1,i_2].
    \end{equation} 
    Write \(\eta_2^2=\eta_2 \circ\eta_3\). Since $\epsilon=\mathbb{1}+i_1\circ(*\perp \eta_2)$, we see that \(\varepsilon\circ(i_2\circ\eta_3)\simeq (i_2\circ\eta_3)+(i_1\circ \eta_2^2)\), and noting that the Whitehead product in general satisfies \(\gamma\circ[\alpha,\beta]\simeq [\gamma\circ\alpha,\gamma\circ\beta]\) and \([\alpha,\beta+\beta']\simeq[\alpha,\beta]+[\alpha,\beta']\), we have 
    \[
        \varepsilon\circ[i_1,i_2]\simeq[\varepsilon\circ i_1,\varepsilon\circ i_2]\simeq[i_1,i_2+i_1\circ\eta_2]\simeq [i_{1},i_{2}]+[i_{1},i_{1}\circ\eta_{2}]\simeq [i_1,i_2]+i_1\circ[\iota,\eta_2]
    \] 
    where \(\iota\) denotes the identity map on \(S^2\). Thus (\ref{eq:epsilonphi_cp2}) becomes 
    \[
        \varepsilon\circ\varphi_0 \simeq (i_2\circ\eta_3)+(i_1\circ \eta_2^2) + [i_1,i_2]+i_1\circ[\iota,\eta_2]\simeq \varphi_0+i_1\circ(\eta_2^2 + [\iota,\eta_2])
    \]
    where the last homotopy comes from regrouping the summands. Thus if \(\psi_1\simeq i_1\circ(\eta_2^2+ [\iota,\eta_2])\) then $\varepsilon\circ\varphi_{0}\simeq\varphi_{0}+\psi_{1}$, and as $\varphi_{0}+\psi_{1}\simeq\varphi_{1}$ by Corollary~\ref{cor:phipsi}, we obtain $\varepsilon\circ\varphi_{0}\simeq\varphi_{1}$, as asserted.

    It remains to show that \(\psi_1\simeq i_1\circ(\eta_2^2+ [\iota,\eta_2])\).
    Since \(\pi_1(\mathrm{SO}(4))\cong\Z/2\) there is only one non-trivial twisting, for which we must have \(\overline{\tau}\simeq\eta_2\). Therefore, by definition of $\psi_{1}$ in Corollary~\ref{cor:phipsi}, \(\psi_1 \simeq i_1\circ\eta_2^2\). Moreover, \([\iota,\eta_2]\) is null homotopic since it represents a class in $\pi_{4}(S^{2})\cong\mathbb{Z}/2\mathbb{Z}$, which is stable, but Whitehead products suspend trivially. Therefore 
    \[
        i_1\circ(\eta_2^2 + [\iota,\eta_2])\simeq i_1\circ(\eta_2^2 + \ast)\simeq i_1\circ\eta_2^2 \simeq \psi_1,
    \] 
    as required.
\end{proof}

\begin{thm}\label{thm:g2cp2} 
    \(\C P^2\) is \(\mathcal{G}^2\)-stable, i.e. there is a homotopy equivalence \(\mathcal{G}_0^2(\C P^2)\simeq\mathcal{G}_1^2(\C P^2)\).
\end{thm}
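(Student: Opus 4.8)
The plan is to deduce Theorem~\ref{thm:g2cp2} directly from Proposition~\ref{prop:cp2} together with the homotopy cofibrations $S^{4}\xrightarrow{\varphi_{\tau}}S^{2}\vee S^{3}\rightarrow\mathcal{G}^{2}_{\tau}(\mathbb{C}P^{2})$ attached to the adjusted attaching maps (cf. Lemma~\ref{gyrationcofib} and Corollary~\ref{cor:phipsi}), specialised to $\tau=0$ and $\tau=1$. First I would record the homotopy-commutative square supplied by Proposition~\ref{prop:cp2},
\begin{equation*}
    \begin{tikzcd}[row sep=3em, column sep=3em]
        S^{4} \arrow[r, "\varphi_0"] \arrow[d, equal] & S^{2}\vee S^{3} \arrow[d, "\varepsilon"] \\
        S^{4} \arrow[r, "\varphi_1"] & S^{2}\vee S^{3},
    \end{tikzcd}
\end{equation*}
in which $\varepsilon$ is the explicit homotopy equivalence built there. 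Since the two rows extend to homotopy cofibrations with cofibres $\mathcal{G}^{2}_{0}(\mathbb{C}P^{2})$ and $\mathcal{G}^{2}_{1}(\mathbb{C}P^{2})$ respectively, the (homotopy-)commuting square induces a map $\overline{\varepsilon}\colon\mathcal{G}^{2}_{0}(\mathbb{C}P^{2})\rightarrow\mathcal{G}^{2}_{1}(\mathbb{C}P^{2})$ of cofibres, fitting into a ladder of homotopy cofibrations whose other two vertical maps are the identity of $S^{4}$ and the equivalence $\varepsilon$.

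Next I would argue that $\overline{\varepsilon}$ is itself a homotopy equivalence. Both gyrations are simply-connected CW complexes — each being $S^{2}\vee S^{3}$ with a single $4$-cell attached — so applying the Five Lemma to the long exact integral homology sequences of the two cofibrations, with the two known isomorphisms induced by $1_{S^{4}}$ and $\varepsilon$, shows that $\overline{\varepsilon}_{\ast}$ is an isomorphism in every degree. Whitehead's Theorem then promotes $\overline{\varepsilon}$ to a homotopy equivalence. This is exactly the mechanism already used twice in the paper (for the map $\varepsilon$ in Proposition~\ref{prop:cp2} and for the comparison map $\varepsilon$ in the proof of Proposition~\ref{devidHopf}), so it can be invoked essentially verbatim. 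Finally, since Bott periodicity gives $\pi_{1}(\mathrm{SO}(4))\cong\mathbb{Z}/2\mathbb{Z}$, the only twistings are the trivial one and the one denoted $1$, so a single homotopy equivalence $\mathcal{G}^{2}_{0}(\mathbb{C}P^{2})\simeq\mathcal{G}^{2}_{1}(\mathbb{C}P^{2})$ already establishes $\mathcal{G}^{2}$-stability.

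I do not expect any serious obstacle: all of the genuine content — producing a self-equivalence of $S^{2}\vee S^{3}$ carrying $\varphi_{0}$ to $\varphi_{1}$, which ultimately rests on $\overline{\tau}\simeq\eta_{2}$ for the nontrivial twisting and on the triviality of the Whitehead product $[\iota,\eta_{2}]$ — has been carried out in Proposition~\ref{prop:cp2}. The only point deserving a sentence of care is the standard fact that a square commuting \emph{up to homotopy} between attaching maps already induces a map of mapping cones (one need not arrange strict commutativity), after which the Five Lemma and Whitehead's Theorem close the argument.
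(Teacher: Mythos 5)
Your proposal reproduces the paper's proof of Theorem~\ref{thm:g2cp2} essentially verbatim: the homotopy $\varepsilon\circ\varphi_{0}\simeq\varphi_{1}$ from Proposition~\ref{prop:cp2} is placed in a ladder of homotopy cofibrations, and the Five Lemma in homology together with Whitehead's Theorem promotes the induced map of cofibres to a homotopy equivalence. The extra remark that $\pi_{1}(\mathrm{SO}(4))\cong\mathbb{Z}/2$ (so one equivalence closes out stability) is an accurate, if implicit-in-the-paper, observation and does not change the argument.
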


\begin{proof} 
    From the homotopy $\varepsilon\circ\varphi_{0}\simeq\varphi_{1}$ in Proposition \ref{prop:cp2} we obtain a homotopy cofibration diagram
    \begin{equation*}
        \begin{tikzcd}[row sep=3em, column sep = 3em]
            S^4 \arrow[d, equal] \arrow[r, "\varphi_0"] & S^2\vee S^3 \arrow[r] \arrow[d, "\varepsilon"] & \mathcal{G}_0^2(\C P^2) \arrow[d, dashed] \\
            S^4 \arrow[r, "\varphi_1"] & S^2\vee S^3 \arrow[r] & \mathcal{G}_1^2(\C P^2)
        \end{tikzcd}
    \end{equation*}
    where the dashed arrow is an induced map of homotopy cofibres. This diagram induces a map of long exact sequences in homology. Therefore, as the left-hand vertical map is the identity and the middle map is a homotopy equivalence, the Five-Lemma implies that the dashed map induces an isomorphism in homology and so is a homotopy equivalence by Whitehead's Theorem since all spaces are simply-connected.
\end{proof}

Theorem~\ref{thm:g2cp2} shows that it is possible to have two twistings that are not in the same homotopy class but nevertheless produce gyrations that are homotopy equivalent. The reasoning used for \(\C P^2\) can be generalised to tackle gyration stability for \(\mathcal{G}_\tau^k(\H P^2)\) and \(\mathcal{G}_\tau^k(\O P^2)\) on a case-by-case basis, which will be done in Sections~\ref{sec:G(hp2)}, \ref{sec:G(op2)-k=<8} and~\ref{sec:G(op2)-9to14}. First however, we establish a general framework for the forthcoming arguments. 

\section{Compositions with Self-Equivalences of Wedges of Spheres}
\label{sec:selfequiv}

Fix a choice of field $\mathbb{F}\in\lbrace \H, \O \rbrace$ (i.e.~\(m=4\text{ or } 8\)) and index $k$ in the range \(2\leq k\leq 2m-2\). In this section, we wish to determine conditions for gyrations of \(\mathbb{F} P^2\) to be homotopy equivalent. We begin with a Lemma linking this question to self-equivalences of wedges of spheres.

\begin{lem}\label{lem:blakersmasseyargument}
    Let \(\tau\) and \(\omega\) be distinct homotopy classes in \(\pi_{k-1}(\mathrm{SO}(2m))\). There is a homotopy equivalence \(\mathcal{G}^{k}_{\tau}(\mathbb{F}P^{2})\simeq\mathcal{G}^{k}_{\omega}(\mathbb{F}P^{2})\) if and only if there exists a self-equivalence \(\varepsilon:S^{m}\vee S^{m+k-1}\xrightarrow{\simeq}S^{m}\vee S^{m+k-1}\) such that \(\varepsilon\circ\varphi_\tau\simeq\pm\varphi_\omega\).
\end{lem}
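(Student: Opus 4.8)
The plan is to prove both directions of the equivalence by exploiting the description of a gyration as the homotopy cofibre of its top-cell attaching map, which Lemma~\ref{gyrationcofib} and Corollary~\ref{cor:phipsi} provide in the form $S^{2m+k-2}\xrightarrow{\varphi_\tau}S^m\vee S^{m+k-1}\to\mathcal G^k_\tau(\mathbb FP^2)$.

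First I would handle the easy direction: suppose there is a self-equivalence $\varepsilon$ of $S^m\vee S^{m+k-1}$ with $\varepsilon\circ\varphi_\tau\simeq\pm\varphi_\omega$. Since the attaching maps land in a simply-connected wedge of spheres and the cofibres are formed from the same sphere $S^{2m+k-2}$, a homotopy $\varepsilon\circ\varphi_\tau\simeq\varphi_\omega$ (the sign is absorbed by post-composing $\varepsilon$ with a degree $\mp1$ self-map of the attaching sphere's image, or equivalently by noting that $\varphi_\omega$ and $-\varphi_\omega$ have homotopy equivalent cofibres via a self-equivalence of $S^m\vee S^{m+k-1}$ realising $-1$ on each factor) yields a homotopy-cofibration ladder
\begin{equation*}
    \begin{tikzcd}[row sep=3em, column sep = 3em]
        S^{2m+k-2} \arrow[d, equal] \arrow[r, "\varphi_\tau"] & S^m\vee S^{m+k-1} \arrow[d, "\varepsilon"] \arrow[r] & \mathcal G^k_\tau(\mathbb FP^2) \arrow[d, dashed] \\
        S^{2m+k-2} \arrow[r, "\pm\varphi_\omega"] & S^m\vee S^{m+k-1} \arrow[r] & \mathcal G^k_\omega(\mathbb FP^2)
    \end{tikzcd}
\end{equation*}
and the induced dashed map of cofibres is a homology isomorphism by the Five-Lemma (left map identity, middle map an equivalence), hence a homotopy equivalence by Whitehead's Theorem since all spaces are simply-connected. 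This is essentially the argument already used in the proof of Theorem~\ref{thm:g2cp2}.

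For the converse I would start from a homotopy equivalence $h:\mathcal G^k_\tau(\mathbb FP^2)\xrightarrow{\simeq}\mathcal G^k_\omega(\mathbb FP^2)$ and recover a compatible equivalence of skeleta. The key point is that, by Lemma~\ref{gyrationcofib} and the remark following it, $S^m\vee S^{m+k-1}\simeq S^m\amsrtimes S^{k-1}$ is the $(2m+k-2)$-skeleton of both gyrations (here one uses $\overline{\mathbb FP^2}\simeq S^m$ and the half-smash splitting of Lemma~\ref{einv}), and since the top cells are attached by maps from $S^{2m+k-2}$ the inclusion of the skeleton is $(2m+k-2)$-connected. Cellular approximation lets me restrict $h$ to a self-map $\varepsilon$ of $S^m\vee S^{m+k-1}$ commuting up to homotopy with the skeletal inclusions; $\varepsilon$ induces an isomorphism on $H_m$ and $H_{m+k-1}$ (as $h$ does, and these groups inject via the skeletal inclusion into $H_*$ of the gyration in the relevant degrees), so $\varepsilon$ is a homology isomorphism and hence a homotopy equivalence by Whitehead. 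Finally, the square
\begin{equation*}
    \begin{tikzcd}[row sep=3em, column sep = 3em]
        S^{2m+k-2} \arrow[r, "\varphi_\tau"] & S^m\vee S^{m+k-1} \arrow[r] & \mathcal G^k_\tau(\mathbb FP^2) \arrow[d, "h"] \\
        S^{2m+k-2} \arrow[r, "\varphi_\omega"] \arrow[u, equal] & S^m\vee S^{m+k-1} \arrow[r] \arrow[u, "\varepsilon"'] & \mathcal G^k_\omega(\mathbb FP^2)
    \end{tikzcd}
\end{equation*}
forces $\varepsilon\circ\varphi_\tau\simeq\varphi_\omega$ up to the indeterminacy of the choice of attaching map: the cofibre of $\varepsilon\circ\varphi_\tau$ is $\mathcal G^k_\omega(\mathbb FP^2)$, and two maps $S^{2m+k-2}\to S^m\vee S^{m+k-1}$ with homotopy equivalent cofibres (compatibly with $h$ and the skeletal identifications) must agree up to a self-equivalence of the target and a degree on the source — which is exactly the statement $\varepsilon\circ\varphi_\tau\simeq\pm\varphi_\omega$ after absorbing the self-equivalence into $\varepsilon$.

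The main obstacle is the converse direction, specifically pinning down the $\pm$ sign and being careful about the indeterminacy in ``recovering'' the attaching map from the cofibre: a priori $h$ need only carry $\varphi_\tau$ to a map with homotopy equivalent cofibre, not literally to $\varphi_\omega$, and one must argue that any such ambiguity is realised by a self-equivalence of $S^m\vee S^{m+k-1}$ together with a degree $\pm1$ on the source sphere $S^{2m+k-2}$ — the degree on the top sphere is where the sign enters, coming from the fact that $h$ may reverse the orientation class in $H_{2m+k-1}$. I would make this precise using that a homotopy equivalence of the cofibres restricting to $\varepsilon$ on skeleta induces on the quotient sphere $S^{2m+k-1}$ a self-map of degree $\pm1$, and then the standard fact that $\operatorname{cofib}(\varphi)\simeq\operatorname{cofib}(\varphi')$ rel skeleton with the equivalence of degree $d$ on the top cell implies $\varepsilon\circ\varphi\simeq d\cdot\varphi'$ in $\pi_{2m+k-2}(S^m\vee S^{m+k-1})$.
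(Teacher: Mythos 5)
The forward direction (self-equivalence $\Rightarrow$ homotopy equivalence of gyrations) matches the paper's argument: cofibration ladder, Five Lemma, Whitehead. That part is fine.

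The converse is where your proposal has a genuine gap, precisely at the step you flag as ``the main obstacle.'' You invoke a ``standard fact'' that a homotopy equivalence of cofibres $\operatorname{cofib}(\varphi)\simeq\operatorname{cofib}(\varphi')$ restricting to $\varepsilon$ on the skeleton and having degree $d$ on the top cell forces $\varepsilon\circ\varphi\simeq d\cdot\varphi'$. But the naive way to try to justify this runs through the Barratt--Puppe sequence of the skeletal inclusion, and that only yields $\Sigma\varepsilon\circ\Sigma\varphi\simeq d\cdot\Sigma\varphi'$, i.e.\ the desired relation \emph{after one suspension}. You cannot desuspend for free: in general $\Sigma\colon\pi_{2m+k-2}(S^{m}\vee S^{m+k-1})\to\pi_{2m+k-1}(S^{m+1}\vee S^{m+k})$ need not be injective in the range relevant here, so this does not establish $\varepsilon\circ\varphi_\tau\simeq\pm\varphi_\omega$ itself. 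In other words, the fact you appeal to is not standard and, as stated, is not proved by the cofibre-sequence argument you sketch.

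The paper's proof gets around exactly this by working with homotopy \emph{fibres} rather than cofibres. Letting $F_\tau$, $F_\omega$ be the homotopy fibres of the skeletal inclusions $q_\tau$, $q_\omega$, one lifts $\varphi_\tau$ and $\varphi_\omega$ to maps $\beta\colon S^{2m+k-2}\to F_\tau$ and $\beta'\colon S^{2m+k-2}\to F_\omega$, and the Blakers--Massey theorem shows these lifts are $(3m+k-3)$-equivalences, hence isomorphisms on $\pi_{2m+k-2}$. Since $\varepsilon$ and $h$ induce a homotopy equivalence $\alpha\colon F_\tau\to F_\omega$, the composite $\alpha\circ\beta$ lifts uniquely (for dimension reasons) through $\beta'$ to a self-map $\gamma$ of $S^{2m+k-2}$ which must be $\pm 1$; chasing this through the fibration diagrams then gives $\varepsilon\circ\varphi_\tau\simeq\pm\varphi_\omega$ unsuspended. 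The Blakers--Massey step is the idea your proposal is missing, and without it the converse does not close.
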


\begin{proof}
    Suppose first that there exists a self-equivalence \(\varepsilon\) such that \(\varepsilon\circ\varphi_\tau\simeq\pm\varphi_\omega\). From this homotopy we obtain a homotopy cofibration diagram
    \begin{equation*}
        \begin{tikzcd}[row sep=3em, column sep = 3em]
            S^{2m+k-2} \arrow[d, "\pm"] \arrow[r, "\varphi_\tau"] & S^{m}\vee S^{m+k-1}  \arrow[r] \arrow[d, "\varepsilon"] & \mathcal{G}^{k}_{\tau}(\mathbb{F}P^{2}) \arrow[d, dashed] \\
            S^{2m+k-2} \arrow[r, "\varphi_\omega"] & S^{m}\vee S^{m+k-1} \arrow[r] & \mathcal{G}^{k}_{\omega}(\mathbb{F}P^{2}) 
        \end{tikzcd}
    \end{equation*} 
    where the dashed arrow is an induced map of homotopy cofibres. This diagram induces a map of long exact sequences in homology. Therefore, as the left-hand and middle vertical maps are homotopy equivalences, the Five-Lemma implies that the dashed map induces an isomorphism in homology and so is a homotopy equivalence by Whitehead's Theorem since all spaces are simply-connected. 

    Conversely, assume that there is a homotopy equivalence 
    \[
        \epsilon:\mathcal{G}^{k}_{\tau}(\mathbb{F}P^{2}) \rightarrow \mathcal{G}^{k}_{\omega}(\mathbb{F}P^{2}).
    \] 
    Both spaces are simply-connected $CW$-complexes of dimension $2m+k-1$ whose $(2m+k-2)$-skeleton is homotopy equivalent to $S^{m}\amsrtimes S^{k-1}$. The restriction of $\epsilon$ to $(2m+k-2)$-skeletons gives a map 
    \[
        \epsilon^\prime:S^{m}\amsrtimes S^{k-1} \rightarrow S^{m}\amsrtimes S^{k-1}.
    \] 
    Since \(\epsilon\) induces an isomorphism on homology so does $\epsilon^\prime$, and $S^{m}\amsrtimes S^{k-1}$ is simply-connected, so $\epsilon^\prime$ is therefore a homotopy equivalence by Whitehead's Theorem. We place \(\epsilon\) in the context of self-equivalences of wedges of spheres by using the homotopy equivalence \(e:S^{m}\amsrtimes S^{k} \rightarrow S^{m}\vee S^{m+k-1}\) and defining a new equivalence
    \[
        \varepsilon=e\circ\epsilon^\prime\circ e^{-1}:S^{m}\vee S^{m+k-1}\rightarrow S^{m}\vee S^{m+k-1}.
    \] 
    Let \(q_\tau:S^{m}\vee S^{m+k-1}\rightarrow\mathcal{G}^{k}_{\tau}(\mathbb{F}P^{2})\) and \(q_\omega:S^{m}\vee S^{m+k-1}\rightarrow\mathcal{G}^{k}_{\omega}(\mathbb{F}P^{2})\) denote the respective skeletal inclusions, and let \(F_\tau\) and \(F_\omega\) be their homotopy fibres. Then there is a homotopy fibration diagram
    \begin{equation}\label{dgm:blakersmassey1}
        \begin{tikzcd}[row sep=3em, column sep = 3em]
            F_\tau \arrow[d, dashed, "\alpha"] \arrow[r] & S^{m}\vee S^{m+k-1} \arrow[r, "q_\tau"] \arrow[d, "\varepsilon"] & \mathcal{G}^{k}_{\tau}(\mathbb{F}P^{2}) \arrow[d, "\epsilon"] \\
            F_\omega \arrow[r] & S^{m}\vee S^{m+k-1}  \arrow[r, "q_\omega"] & \mathcal{G}^{k}_{\omega}(\mathbb{F}P^{2})
        \end{tikzcd}
    \end{equation}
    where the dashed arrow \(\alpha\) is an induced map of homotopy fibres. The diagram induces a map of long exact sequences of homotopy groups. Therefore, as the middle and right vertical maps are homotopy equivalences, the Five-Lemma implies that the dashed map induces an isomorphism on homotopy groups, and so is a homotopy equivalence by Whitehead's Theorem.
    
    Since \(\varphi_\tau\) and \(q_\tau\) are consecutive maps in a homotopy cofibration, the composite \(q_\tau\circ\varphi_\tau\) is null homotopic, so there is a lift \(\beta\) that makes the following diagram homotopy commutative  
    \begin{equation}\label{dgm:blakersmassey2}
        \begin{tikzcd}[row sep=3em, column sep = 3em]
            & S^{2m+k-2} \arrow[d, "\varphi_\tau"] \arrow[dl, swap, "\beta"] & \\
            F_\tau \arrow[r] & S^{m}\vee S^{m+k-1} \arrow[r, "q_\tau"] & \mathcal{G}^{k}_{\tau}(\mathbb{F}P^{2}).
        \end{tikzcd}
    \end{equation}
    Since \(\mathcal{G}^{k}_{\tau}(\mathbb{F}P^{2})\) is \((m-1)\)-connected and \(S^{2m+k-2}\) is \((2m+k-3)\)-connected, the lift \(\beta\) is a \((3m+k-3)\)-equivalence by the Blakers-Massey Theorem (cf. \cite{ark}*{Theorem 5.6.4}). Arguing identically with \(\omega\) in place of \(\tau\), there is also a \((3m+k-3)\)-equivalence \(\beta':S^{2m+k-2}\rightarrow F_\omega\).  
    Thus, for dimension reasons, $\alpha\circ\beta$ lifts through $\beta'$ to give a homotopy commutative diagram 
    \begin{equation}\label{dgm:blakersmassey3}
        \begin{tikzcd}[row sep=3em, column sep = 3em]
            S^{2m+k-2} \arrow[r, "\beta"] \arrow[d, "\gamma"]  & F_{\tau} \arrow[d, "\alpha"] \\
            S^{2m+k-2} \arrow[r, "\beta'"] &  F_\tau 
        \end{tikzcd}
    \end{equation}
    for some map $\gamma$. As $\alpha$ is a homotopy equivalence and both $\beta$ and $\beta'$ are \((3m+k-3)\)-equivalences, the homotopy commutativity of~(\ref{dgm:blakersmassey3}) implies that $\gamma$ is also a \((3m+k-3)\)-equivalence. In particular, $\gamma$ induces an isomorphism on $\pi_{2m+k-2}$, and therefore must be a homotopy equivalence. Hence \mbox{\(\gamma\simeq\pm1\)}. Juxtaposing~(\ref{dgm:blakersmassey3}) and (\ref{dgm:blakersmassey1}), and using the factorizations of $\varphi_{\tau}$ and $\varphi_{\omega}$ in (\ref{dgm:blakersmassey2}), we obtain a homotopy cofibration diagram
    \begin{equation}
        \begin{tikzcd}[row sep=3em, column sep = 3em]
            S^{2m+k-2} \arrow[d, "\pm1"] \arrow[r, "\varphi_\tau"] & S^{m}\vee S^{m+k-1} \arrow[r, "q_\tau"] \arrow[d, "\varepsilon"] & \mathcal{G}^{k}_{\tau}(\mathbb{F}P^{2}) \arrow[d, "\epsilon"] \\
            S^{2m+k-2} \arrow[r, "\varphi_\omega"] & S^{m}\vee S^{m+k-1}  \arrow[r, "q_\omega"] & \mathcal{G}^{k}_{\omega}(\mathbb{F}P^{2}).
        \end{tikzcd}
    \end{equation}
    The homotopy commutativity of the left-hand square gives that \(\varepsilon\circ\varphi_\tau\simeq\pm\varphi_\omega\).
\end{proof}

The next step is to identify candidates for a self-homotopy equivalence of $S^{m}\vee S^{m+k-1}$. Recall that we denote the inclusion of wedge summands by \(i_1:S^m\rightarrow S^{m}\vee S^{m+k-1}\) and \(i_2:S^{m+k-1}\rightarrow S^{m}\vee S^{m+k-1}\) and that \(\iota_m\) denotes the identity map on the sphere \(S^m\).

\begin{lem} \label{lem:prewedgeselfequiv} 
    Let \(\tau\) be a homotopy classes in \(\pi_{k-1}(\mathrm{SO}(2m))\). If \(\varepsilon:S^{m}\vee S^{m+k-1} \rightarrow S^{m}\vee S^{m+k-1}\) is a homotopy equivalence then: 
    \begin{enumerate}
        \item[(i)] the restriction of $\varepsilon$ to $S^{m}$ is homotopic to $(-1)^{i}\cdot i_{1}$ for some integer $i$;
        \item[(ii)] the restriction of $\varepsilon$ to $S^{m+k-1}$ is homotopic to $(i_{1}\circ\lambda)+(-1)^{j}\cdot i_{2}$ for some \(\lambda\in\pi_{m+k-1}(S^m)\) and some integer $j$; 
        \item[(iii)] for \(i,j\) and \(\lambda\) as in (i) and (ii) there is a homotopy 
        \[\varepsilon\circ\varphi_\tau\simeq i_{1}\circ\bigg((-1)^{i}\cdot f\circ\overline{\tau}+\lambda\circ\Sigma^{k-1} f+(-1)^{i}\cdot[\iota_m,\lambda]\bigg)+(-1)^{j}\cdot (i_{2}\circ\Sigma^{k-1}f)+(-1)^{i+j}\cdot [i_{1},i_{2}].\]
    \end{enumerate}
\end{lem}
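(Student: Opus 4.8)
The plan is to verify parts (i) and (ii) by a standard cellular/homological analysis of self-equivalences of a two-cell wedge, and then to obtain (iii) by substituting these normal forms into the formula for $\varphi_\tau$ from Corollary~\ref{cor:phipsi} and distributing. For (i), note that a self-equivalence $\varepsilon$ must induce an automorphism of $H_*(S^m\vee S^{m+k-1})\cong\mathbb{Z}\{u_m, u_{m+k-1}\}$ respecting the (trivial, since $m<m+k-1$) cell structure; the bottom cell is preserved up to sign, so $\varepsilon$ restricted to $S^m$ is a self-map of $S^m$ of degree $\pm1$ composed with $i_1$, giving $(-1)^i\cdot i_1$. For (ii), the restriction of $\varepsilon$ to $S^{m+k-1}$ lands in $S^m\vee S^{m+k-1}$; projecting to $S^{m+k-1}$ must be degree $\pm1$ (again by the homology isomorphism), and projecting to $S^m$ records some element $\lambda\in\pi_{m+k-1}(S^m)$. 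Since $S^{m+k-1}$ is a suspension, its image decomposes as a sum under the Hilton--Milnor theorem, hence is homotopic to $(i_1\circ\lambda)+(-1)^j\cdot i_2$. (One should check $\varepsilon$ really is an equivalence for any such $i,j,\lambda$, but only the forward direction is needed here.)

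For part (iii), I would start from $\varphi_\tau\simeq(i_1\circ f\circ\overline{\tau}) + (i_2\circ\Sigma^{k-1} f) + [i_1,i_2]$ (Corollary~\ref{cor:phipsi}), and compose with $\varepsilon$ on the left. Since $S^{2m+k-2}$ is a suspension, left composition distributes over the sum, so it suffices to compute $\varepsilon$ applied to each of the three summands. For the first, $\varepsilon\circ i_1\circ f\circ\overline{\tau}\simeq(-1)^i\cdot i_1\circ f\circ\overline{\tau}$ using (i). For the second, $\varepsilon\circ i_2\circ\Sigma^{k-1}f\simeq\big((i_1\circ\lambda)+(-1)^j\cdot i_2\big)\circ\Sigma^{k-1}f\simeq i_1\circ\lambda\circ\Sigma^{k-1}f+(-1)^j\cdot i_2\circ\Sigma^{k-1}f$ using (ii) and right-distributivity of the sum. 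For the third, Whitehead products are bilinear and natural: $\varepsilon\circ[i_1,i_2]\simeq[\varepsilon\circ i_1,\varepsilon\circ i_2]\simeq[(-1)^i i_1,(i_1\circ\lambda)+(-1)^j i_2]$, which expands by bilinearity to $(-1)^i[i_1,i_1\circ\lambda]+(-1)^{i+j}[i_1,i_2]$, and $[i_1,i_1\circ\lambda]\simeq i_1\circ[\iota_m,\lambda]$ by naturality of Whitehead products under $i_1$. Collecting the terms that factor through $i_1$ yields exactly the stated expression.

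The main obstacle I anticipate is bookkeeping rather than conceptual: one must be careful that the various distributivity laws genuinely apply. Left-distributivity of composition over a sum on $[\Sigma X, Y]\to[\Sigma X, Z]$ (used for $\varepsilon\circ(-)$) and right-distributivity (used for $(-)\circ\Sigma^{k-1}f$) both hold because the relevant domains are suspensions, so this is fine, but it should be invoked explicitly. The genuinely delicate point is the manipulation of the Whitehead product term: one needs bilinearity of $[\,\cdot\,,\,\cdot\,]$ in each variable together with the identity $\gamma\circ[\alpha,\beta]\simeq[\gamma\circ\alpha,\gamma\circ\beta]$ and $[\iota\circ\mu, \nu]$-type naturality, exactly as used in the proof of Proposition~\ref{prop:cp2}; reusing that argument verbatim here keeps things clean. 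No convergence or range issues arise since $2\le k\le 2m-2$ is assumed throughout, which is what makes $S^{2m+k-2}$ and $S^{m+k-1}$ suspensions and keeps $j^\dagger$ a co-$H$-map where needed upstream.
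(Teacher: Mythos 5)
Your proposal is correct and takes essentially the same approach as the paper: parts (i) and (ii) via the homology isomorphism plus a cellular/Hilton–Milnor analysis, and part (iii) by substituting into the formula from Corollary~\ref{cor:phipsi} and distributing, handling the Whitehead product term via bilinearity, naturality ($\gamma\circ[\alpha,\beta]\simeq[\gamma\circ\alpha,\gamma\circ\beta]$), and $[i_1,i_1\circ\lambda]\simeq i_1\circ[\iota_m,\lambda]$. The points you flag about which distributivity laws apply and why (domains being suspensions) are exactly the ones the paper also invokes.
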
 

\begin{proof}
The map $\varepsilon$ is a map out of a wedge so it is determined up to homotopy by its restriction to each summand, i.e. \(\varepsilon\circ i_1\) and \(\varepsilon\circ i_2\). Note that as $\varepsilon$ is a homotopy equivalence it induces an isomorphism 
in homology, so $\varepsilon\circ i_1$ and $\varepsilon\circ i_2$ induce isomorphisms on $H_{m}$ and $H_{m+k-1}$ respectively. 

For (i), observe that $\varepsilon\circ i_1$ factors through the $m$-skeleton of $S^{m}\vee S^{m+k-1}$ -- this implies that it is homotopic to a composite  \[S^{m} \xlongrightarrow{d} S^{m} \xlongrightarrow{i_{1}} S^{m}\vee S^{m+k-1}\] for some map of degree $d$. Since $\varepsilon\circ i_1$ induces an isomorphism on $H_{m}$ we must have $d=\pm 1$. Thus there is a homotopy $\varepsilon\circ i_1\simeq\pm i_{1}$. 

For (ii), since $m\geq 2$ the Hilton-Milnor Theorem implies that $\varepsilon\circ i_2$ is determined by composing with the pinch maps to $S^{m}$ and $S^{m+k-1}$. The composition to $S^{m}$ gives a map \(\lambda:S^{m+k-1} \rightarrow S^{m}\) and the composition to $S^{m+k-1}$ again induces an isomorphism in homology, so is homotopic to a map of degree~$\pm 1$. Thus  $\varepsilon\circ i_2\simeq (i_{1}\circ\lambda)\pm i_{2}$. 

Summarising, for such a self-equivalence \(\varepsilon\) we may write 
\begin{equation} \label{eq:general epsilon}
\varepsilon\circ i_1\simeq(-1)^i\cdot i_1\text{\; and \;} \varepsilon\circ i_2\simeq (i_1\circ\lambda )+ (-1)^j\cdot i_2
\end{equation} for some integers \(i,j\in\lbrace0,1\rbrace\).

For (iii), by Corollary~\ref{cor:phipsi}, \(\varphi_\tau\simeq(i_1\circ f\circ\overline{\tau}) + (i_2\circ\Sigma^{k-1} f) + [i_1,i_2]\). By left-distributivity we obtain 
\begin{equation} \label{vareps1} 
    \varepsilon\circ\varphi_\tau\simeq(\varepsilon\circ i_1\circ f\circ\overline{\tau}) + (\varepsilon\circ i_2\circ\Sigma^{k-1} f) + (\varepsilon\circ [i_1,i_2]).
\end{equation}   
First, consider $\varepsilon\circ i_1\circ f\circ\overline{\tau}$. By (\ref{eq:general epsilon}) we have $\varepsilon\circ i_1\simeq (-1)^i\cdot i_1$, so \[\varepsilon\circ i_1\circ f\circ\overline{\tau}\simeq (-1)^i\cdot i_1\circ f\circ\overline{\tau}.\] 
Second, consider the composite $\varepsilon\circ i_2\circ\Sigma^{k-1} f$. In general, if \(\alpha,\beta:\Sigma A \rightarrow X\) and \(\Sigma g:\Sigma B \rightarrow \Sigma A\) then $\Sigma g$ distributes on the right: $(\alpha+\beta)\circ\Sigma g\simeq \alpha\circ\Sigma g + \beta\circ\Sigma g$. Since we demand $k\geq 2$, the map $\Sigma^{k-1} f$ is a suspension, and by (\ref{eq:general epsilon}) we obtain 
\[
    \varepsilon\circ i_2\circ\Sigma^{k-1} f\simeq ((i_{1}\circ\lambda)\pm i_{2})\circ\Sigma^{k-1} f \simeq  i_1\circ\lambda\circ\Sigma^{k-1} f + (-1)^j\cdot i_2\circ\Sigma^{k-1} f.
\]  
Third, consider $\varepsilon\circ [i_1,i_2]$. Recalling properties of the Whitehead product, we have in our case that 
\begin{align*}
\varepsilon\circ [i_1,i_2] & \simeq [\varepsilon\circ i_1, \varepsilon\circ i_2] \\
& \simeq [(-1)^i\cdot i_1,(i_1\circ\lambda )+ (-1)^j\cdot i_2] \\ 
& \simeq (-1)^i\cdot[i_1,i_1\circ\lambda]+(-1)^{i+j}\cdot[i_1,i_2] \\
& \simeq (-1)^i\cdot i_1\circ [\iota_m,\lambda]+ (-1)^{i+j}\cdot[i_1,i_2].
\end{align*}
Substituting the three parts into~(\ref{vareps1}) and then rearranging gives 
\[\begin{split} 
\varepsilon\circ\varphi_\tau & \simeq (-1)^i\cdot(i_1\circ f\circ\overline{\tau})+(i_1\circ\lambda\circ\Sigma^{k-1} f) + (-1)^j\cdot(i_2\circ\Sigma^{k-1} f) + (-1)^i\cdot i_1\circ [\iota_m,\lambda] + (-1)^{i+j}\cdot[i_1,i_2] \\ 
&  = i_{1}\circ\bigg((-1)^{i}\cdot f\circ\overline{\tau}+\lambda\circ\Sigma^{k-1} f+(-1)^{i}\cdot[\iota_m,\lambda]\bigg)+(-1)^{j}\cdot (i_{2}\circ\Sigma^{k-1}f)+(-1)^{i+j}\cdot [i_{1},i_{2}],
\end{split}\] 
proving part (iii).
\end{proof} 

\begin{lem} \label{lem:wedgeselfequiv} 
    Let \(\tau\) and \(\omega\) be distinct homotopy classes in \(\pi_{k-1}(\mathrm{SO}(2m))\).
    \begin{enumerate}
        \item[(i)]  There is a self-equivalence \(\varepsilon:S^{m}\vee S^{m+k-1} \rightarrow S^{m}\vee S^{m+k-1}\) such that \(\varepsilon\circ\varphi_\tau\simeq\varphi_\omega\) if and only if there exists a \(\lambda\in\pi_{m+k-1}(S^m)\) such that $f\circ\overline{\tau} +\lambda\circ\Sigma^{k-1} f +[\iota_m,\lambda]\simeq f\circ\overline{\omega}$; 
        \item[(ii)] There is a self-equivalence \(\varepsilon:S^{m}\vee S^{m+k-1} \rightarrow S^{m}\vee S^{m+k-1}\) such that \(\varepsilon\circ\varphi_\tau\simeq -\varphi_\omega\) if and only if there exists a \(\lambda\in\pi_{m+k-1}(S^m)\) such that $f\circ\overline{\tau} +\lambda\circ\Sigma^{k-1} f +[\iota_m,\lambda]\simeq -f\circ\overline{\omega}$.
    \end{enumerate} 
\end{lem}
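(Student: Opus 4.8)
The plan is to derive both directions from Lemma~\ref{lem:prewedgeselfequiv}, together with the fact that in this dimension range a map $S^{2m+k-2}\to S^{m}\vee S^{m+k-1}$ is detected, up to homotopy, by its three ``Hilton components''. By the Hilton--Milnor theorem the only basic products landing in degree $2m+k-2$ are $i_{1}$, $i_{2}$ and $[i_{1},i_{2}]$: the higher ones, such as $[i_{1},[i_{1},i_{2}]]$ and $[i_{2},[i_{1},i_{2}]]$, come from spheres of dimension $\geq 3m+k-3>2m+k-2$ since $m\geq 2$. Hence there is a direct sum decomposition
\[
\pi_{2m+k-2}(S^{m}\vee S^{m+k-1})\;\cong\;\pi_{2m+k-2}(S^{m})\oplus\pi_{2m+k-2}(S^{m+k-1})\oplus\Z\{[i_{1},i_{2}]\},
\]
so that two maps into the wedge are homotopic precisely when the corresponding three components agree. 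Recall also from Corollary~\ref{cor:phipsi} that $\varphi_{\tau}\simeq(i_{1}\circ f\circ\overline{\tau})+(i_{2}\circ\Sigma^{k-1}f)+[i_{1},i_{2}]$.

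For the ``if'' direction of (i), given $\lambda\in\pi_{m+k-1}(S^{m})$ realising the stated identity, I would take the self-map $\varepsilon$ of $S^{m}\vee S^{m+k-1}$ with $\varepsilon\circ i_{1}\simeq i_{1}$ and $\varepsilon\circ i_{2}\simeq(i_{1}\circ\lambda)+i_{2}$; since $\lambda$ acts trivially on homology, $\varepsilon$ induces an isomorphism on homology and is therefore a homotopy equivalence by Whitehead's theorem. Then Lemma~\ref{lem:prewedgeselfequiv}(iii) with $i=j=0$ gives $\varepsilon\circ\varphi_{\tau}\simeq i_{1}\circ\big(f\circ\overline{\tau}+\lambda\circ\Sigma^{k-1}f+[\iota_{m},\lambda]\big)+(i_{2}\circ\Sigma^{k-1}f)+[i_{1},i_{2}]$, and substituting the identity and applying Corollary~\ref{cor:phipsi} rewrites the right-hand side as $\varphi_{\omega}$. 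Part (ii) is the same construction with $\varepsilon\circ i_{2}\simeq(i_{1}\circ\lambda)-i_{2}$ (so $i=0$, $j=1$), for which Lemma~\ref{lem:prewedgeselfequiv}(iii) yields $-\varphi_{\omega}$ exactly when $f\circ\overline{\tau}+\lambda\circ\Sigma^{k-1}f+[\iota_{m},\lambda]\simeq-f\circ\overline{\omega}$.

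For the ``only if'' direction, suppose $\varepsilon$ is a self-equivalence with $\varepsilon\circ\varphi_{\tau}\simeq\varphi_{\omega}$ (respectively $\simeq-\varphi_{\omega}$). By Lemma~\ref{lem:prewedgeselfequiv}(i)--(ii) the homotopy class of $\varepsilon$ is recorded by integers $i,j\in\{0,1\}$ and a class $\lambda\in\pi_{m+k-1}(S^{m})$, and part~(iii) computes $\varepsilon\circ\varphi_{\tau}$ in the decomposition above. Comparing $[i_{1},i_{2}]$-components forces $(-1)^{i+j}=+1$ (respectively $-1$), and comparing the $\pi_{2m+k-2}(S^{m+k-1})$-components forces $\big((-1)^{j}-1\big)\cdot\Sigma^{k-1}f=0$ (respectively $\big((-1)^{j}+1\big)\cdot\Sigma^{k-1}f=0$). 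This is where the hypothesis $\mathbb{F}\in\{\mathbb{H},\mathbb{O}\}$ is used: since $f$ is $\nu_{4}$ or $\sigma_{8}$, for every $k$ with $2\leq k\leq 2m-2$ the suspension $\Sigma^{k-1}f$ is in the stable range and generates $\pi_{2m+k-2}(S^{m+k-1})\cong\Z/24$ (for $\mathbb{H}$) or $\Z/240$ (for $\mathbb{O}$), so $2\cdot\Sigma^{k-1}f\neq 0$. Hence the only consistent possibility is $i=j=0$ (respectively $i=0$, $j=1$), and comparing $\pi_{2m+k-2}(S^{m})$-components then yields exactly $f\circ\overline{\tau}+\lambda\circ\Sigma^{k-1}f+[\iota_{m},\lambda]\simeq f\circ\overline{\omega}$ (respectively $\simeq-f\circ\overline{\omega}$).

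I expect the main obstacle to be the sign book-keeping in the ``only if'' direction: Lemma~\ref{lem:prewedgeselfequiv} alone leaves open the configurations $(i,j)=(1,1)$ in~(i) and $(i,j)=(1,0)$ in~(ii), and it is precisely the estimate $2\cdot\Sigma^{k-1}f\neq 0$ that excludes them, so one must check that the relevant suspension indeed lies in the stable $3$-stem (for $\mathbb{H}$, as $k+3\geq 5$) or $7$-stem (for $\mathbb{O}$, as $k+7\geq 9$) throughout the range. Establishing the direct sum splitting of $\pi_{2m+k-2}(S^{m}\vee S^{m+k-1})$ is routine, but it underpins the matching-of-components step and should be stated carefully.
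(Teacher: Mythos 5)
Your proof is correct and follows the paper's route exactly: parametrizing self-equivalences by $(i,j,\lambda)$ via Lemma~\ref{lem:prewedgeselfequiv}, building $\varepsilon$ with $i=j=0$ (resp.\ $i=0$, $j=1$) for the converse, and comparing Hilton components to pin down $(i,j)$ for the forward direction. You rightly make explicit one point the paper's proof leaves tacit when it says ``comparing the $i_2$-terms gives $(-1)^j=1$'': excluding $(i,j)=(1,1)$ in~(i) and $(1,0)$ in~(ii) genuinely requires $2\cdot\Sigma^{k-1}f\neq 0$, which holds because $\Sigma^{k-1}f$ has order $24$ (for $\mathbb{F}=\mathbb{H}$) or $240$ (for $\mathbb{F}=\mathbb{O}$) throughout the range $2\leq k\leq 2m-2$.
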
 

\begin{proof}  
Suppose there is a self-equivalence \(\varepsilon:S^{m}\vee S^{m+k-1} \rightarrow S^{m}\vee S^{m+k-1}\) such that \(\varepsilon\circ\varphi_\tau\simeq\pm\varphi_\omega\). 
By Lemma~\ref{lem:prewedgeselfequiv} there exists a \(\lambda\in\pi_{m+k-1}(S^m)\) that gives a homotopy
\[\varepsilon\circ\varphi_\tau\simeq i_{1}\circ\bigg((-1)^{i}\cdot f\circ\overline{\tau}+\lambda\circ\Sigma^{k-1} f+(-1)^{i}\cdot[\iota_m,\lambda]\bigg)+(-1)^{j}\cdot (i_{2}\circ\Sigma^{k-1}f)+(-1)^{i+j}\cdot [i_{1},i_{2}]\]
and by Corollary \ref{cor:phipsi},   
\[\varphi_\omega\simeq i_{1}\circ f\circ\overline{\omega}+i_{2}\circ\Sigma^{k-1} f+[i_{1},i_{2}].\] 
Since the terms $i_{1}\circ(\  \ )$, $i_{2}\circ (\ \ )$ and $[i_{1},i_{2}]$ are linearly independent, we may compare coefficients in the expressions for $\varepsilon\circ\varphi_{\tau}$ and $\pm\varphi_{\omega}$. 
\begin{itemize} 
\item[(i)] If $\varepsilon\circ\varphi_\tau\simeq\varphi_{\omega}$ then comparing the $i_{2}$-terms gives $(-1)^{j}=1$, so $j=0$, in which case comparing the $[i_{1},i_{2}]$-terms gives $(-1)^{i+j}=1$, implying that $i=0$, and then comparing the $i_{1}$-terms results in $f\circ\overline{\tau} +\lambda\circ\Sigma^{k-1} f +[\iota_m,\lambda]\simeq f\circ\overline{\omega}$; 
\item[(ii)] If $\varepsilon\circ\varphi_\tau\simeq -\varphi_{\omega}$ then comparing the $i_{2}$-terms gives $(-1)^{j}=-1$, so $j=1$, in which case comparing the $[i_{1},i_{2}]$-terms gives $(-1)^{i+j}=-1$, implying that $i=0$, and then comparing the $i_{1}$-terms results in $f\circ\overline{\tau}+\lambda\circ\Sigma^{k-1} f+[\iota_m,\lambda]\simeq -f\circ\overline{\omega}$. 
\end{itemize}

Conversely, suppose that there exists a \(\lambda\in\pi_{m+k-1}(S^m)\) such that $f\circ\overline{\tau} +\lambda\circ\Sigma^{k-1} f +[\iota_m,\lambda]\simeq f\circ\overline{\omega}$. Define \(\varepsilon:S^{m}\vee S^{m+k-1} \rightarrow S^{m}\vee S^{m+k-1}\) by letting the restrictions of $\varepsilon$ to $S^{m}$ and $S^{m+k-1}$ be $i_{1}$ and $(i_{1}\circ\lambda)+i_{2}$ respectively. Then $i=j=0$ in Lemma~\ref{lem:prewedgeselfequiv}~(i) and~(ii), implying that the same is true in the expression for $\varepsilon\circ\varphi_{\tau}$. Thus by Lemma \ref{lem:prewedgeselfequiv}~(iii) there is a homotopy 
\[\begin{split}
\varepsilon\circ\varphi_\tau & \simeq i_{1}\circ\bigg(f\circ\overline{\tau}+\lambda\circ\Sigma^{k-1} f+[\iota_m,\lambda]\bigg)+ (i_{2}\circ\Sigma^{k-1}f)+ [i_{1},i_{2}] \\ 
& \simeq i_{1}\circ f\circ\overline{\omega}+ (i_{2}\circ\Sigma^{k-1}f)+ [i_{1},i_{2}] \\ 
& \simeq\varphi_{\omega}. 
\end{split}\]
Similarly, if $f\circ\overline{\tau} +\lambda\circ\Sigma^{k-1} f +[\iota_m,\lambda]\simeq -f\circ\overline{\omega}$ then define the self-equivalence $\varepsilon$ by letting its restrictions to $S^{m}$ and $S^{m+k-1}$ be $i_{1}$ and $(i_{1}\circ\lambda) -i_{2}$ respectively, so that $i=0$ and $j=1$, and obtain $\varepsilon\circ\varphi_{\tau}\simeq-\varphi_{\omega}$. 
\end{proof} 

Lemmas \ref{lem:blakersmasseyargument} and \ref{lem:wedgeselfequiv} are used to prove the following key proposition.

\begin{prop}\label{prop:lambda_delta}
    Let \(\tau\) and \(\omega\) be distinct homotopy classes in \(\pi_{k-1}(\mathrm{SO}(2m))\). There is a homotopy equivalence \(\mathcal{G}^{k}_{\tau}(\mathbb{F}P^{2})\simeq\mathcal{G}^{k}_{\omega}(\mathbb{F}P^{2})\) if and only if there exists a homotopy class \(\lambda\in\pi_{m+k-1}(S^m)\) such that 
    \begin{equation}\label{eq:delta_def}
        f\circ\overline{\tau}+\lambda\circ\Sigma^{k-1}f+[\iota_m,\lambda]\simeq\pm f\circ\overline{\omega}.
    \end{equation}    
\end{prop}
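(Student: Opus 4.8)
The plan is to derive this directly from the two preceding results, Lemma~\ref{lem:blakersmasseyargument} and Lemma~\ref{lem:wedgeselfequiv}, so that the proof amounts to chaining together biconditionals, with the only subtlety being the bookkeeping of the sign~$\pm$.

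First I would apply Lemma~\ref{lem:blakersmasseyargument}: there is a homotopy equivalence $\mathcal{G}^{k}_{\tau}(\mathbb{F}P^{2})\simeq\mathcal{G}^{k}_{\omega}(\mathbb{F}P^{2})$ if and only if there exists a self-equivalence $\varepsilon$ of $S^{m}\vee S^{m+k-1}$ with $\varepsilon\circ\varphi_\tau\simeq\pm\varphi_\omega$. I would then split this into the two sign cases: either $\varepsilon\circ\varphi_\tau\simeq\varphi_\omega$ for some self-equivalence $\varepsilon$, or $\varepsilon\circ\varphi_\tau\simeq-\varphi_\omega$ for some self-equivalence $\varepsilon$.

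Next I would feed each case into Lemma~\ref{lem:wedgeselfequiv}. Part~(i) of that lemma rewrites the existence of an $\varepsilon$ with $\varepsilon\circ\varphi_\tau\simeq\varphi_\omega$ as the existence of a $\lambda\in\pi_{m+k-1}(S^m)$ with $f\circ\overline{\tau}+\lambda\circ\Sigma^{k-1}f+[\iota_m,\lambda]\simeq f\circ\overline{\omega}$, and part~(ii) does the same for the minus sign, yielding $f\circ\overline{\tau}+\lambda\circ\Sigma^{k-1}f+[\iota_m,\lambda]\simeq -f\circ\overline{\omega}$. Taking the disjunction of the two cases then shows that a homotopy equivalence between the gyrations exists precisely when there is a $\lambda$ satisfying~\eqref{eq:delta_def}, which is the claim.

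The argument is essentially a syllogism and I do not anticipate a genuine obstacle. The one point deserving a moment's attention is ensuring that the $\pm$ on the source sphere $S^{2m+k-2}$ appearing in Lemma~\ref{lem:blakersmasseyargument} — originating from the degree $\pm1$ self-map $\gamma$ of the bottom cell of the homotopy fibre — is exactly the $\pm$ that Lemma~\ref{lem:wedgeselfequiv} tracks through its comparison of the $i_1$-, $i_2$- and $[i_1,i_2]$-coefficients. Since both lemmas are already phrased in terms of $\varepsilon\circ\varphi_\tau\simeq\pm\varphi_\omega$, this matching is automatic and no further computation is required; in particular, in Lemma~\ref{lem:wedgeselfequiv}(ii) the integer $i$ is forced to be $0$, so the signs on $f\circ\overline{\tau}$ and on $[\iota_m,\lambda]$ remain $+1$ and only the sign on the right-hand side of~\eqref{eq:delta_def} varies, consistent with the form of the statement.
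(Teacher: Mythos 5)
Your proposal matches the paper's proof exactly: both directions are obtained by chaining the biconditionals of Lemma~\ref{lem:blakersmasseyargument} (gyration equivalence if and only if a self-equivalence $\varepsilon$ with $\varepsilon\circ\varphi_\tau\simeq\pm\varphi_\omega$ exists) with those of Lemma~\ref{lem:wedgeselfequiv} (such an $\varepsilon$ exists if and only if a suitable $\lambda$ exists). Your aside about the sign bookkeeping (that $i=0$ is forced and only the right-hand sign varies) is correct and consistent with the computation inside Lemma~\ref{lem:wedgeselfequiv}.
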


\begin{proof}
    If there is a homotopy equivalence \(\mathcal{G}^{k}_{\tau}(\mathbb{F}P^{2})\simeq\mathcal{G}^{k}_{\omega}(\mathbb{F}P^{2})\), then by Lemma \ref{lem:blakersmasseyargument} there exists a self-equivalence \(\varepsilon:S^{m}\vee S^{m+k-1} \rightarrow S^{m}\vee S^{m+k-1}\) such that \(\varepsilon\circ\varphi_\tau\simeq\pm\varphi_\omega\). Lemma~\ref{lem:wedgeselfequiv} then implies that there is a homotopy $f\circ\overline{\tau}+\lambda\circ\Sigma^{k-1}f+[\iota_m,\lambda]\simeq\pm f\circ\overline{\omega}$.
    
   Conversely, suppose there exists a \(\lambda\in\pi_{m+k-1}(S^m)\) such that (\ref{eq:delta_def}) holds. Then Lemma \ref{lem:wedgeselfequiv} implies that there is a self-equivalence \(\varepsilon\colon\namedright{S^{m}\vee S^{m+k-1}}{}{S^{m}\vee  S^{m+k-1}}\) such that 
   \(\varepsilon\circ\varphi_\tau \simeq \pm\varphi_\omega\). Lemma \ref{lem:blakersmasseyargument} then implies that \(\mathcal{G}^{k}_{\tau}(\mathbb{F}P^{2})\simeq\mathcal{G}^{k}_{\omega}(\mathbb{F}P^{2})\). 
\end{proof}

Proposition~\ref{prop:lambda_delta} reduces the problem of detecting gyration stability of projective planes to the problem of finding a $\lambda\in\pi_{m+k-1}(S^{m})$ such that  
$f\circ\overline{\tau}+\lambda\circ\Sigma^{k-1}f+[\iota_m,\lambda]\simeq\pm f\circ\overline{\omega}$. We formulate this as a computational strategy for the coming sections. 

\begin{strat}\label{strategy}
    Fix \(m\) and \(k\). Given twistings \(\tau,\omega\in\pi_{k-1}(\mathrm{SO}(2m))\), by~(\ref{taubardef}) there are associated maps \(\overline{\tau},\overline{\omega}\in\pi_{2m+k-2}(S^{2m-1})\).
    \begin{enumerate}
        \item[Step\! 1:] Let \(x_1,\dots, x_r\in\pi_{2m+k-2}(S^{2m-1})\) be a generating set. As \(f=\eta_2,\nu_4\) or \(\sigma_8\) (for \(m=2,4\) or \(8\), respectively) the morphism \(f\circ-:\pi_{2m+k-2}(S^{2m-1})\rightarrow\pi_{2m+k-2}(S^m)\) is injective,  implying that \(f\circ x_1,\ldots,f\circ x_{r}\) are distinct elements of \(\pi_{2m+k-2}(S^m)\). Thus we may write 
        \[f\circ\overline{\tau}\simeq a_1\cdot(f\circ  x_1) + a_2\cdot(f\circ x_2) + \dots + a_r\cdot(f\circ x_r)\] 
        and 
        \[f\circ\overline{\omega}\simeq b_1\cdot(f\circ x_1) + b_2\cdot(f\circ x_2) + \dots + b_r\cdot(f\circ x_r)\] 
        where \(a_i\) and \(b_i\) are integers modulo the order of \(f\circ x_i\).  
        \vspace{5pt}
        \item[Step\! 2:] Find \(\lambda_1,\lambda_2,\dots\lambda_r\in\pi_{m+k-1}(S^m)\) such that
        \(\lambda_i\circ\Sigma^{k-1}f+[\iota_m,\lambda_i]\simeq f\circ x_i\) for all \(1\leq i\leq r\). 
        \vspace{5pt}
        \item[Step\! 3:] Take \(\lambda=(b_1-a_1)\cdot\lambda_1 + (b_2-a_2)\cdot\lambda_2 + \dots + (b_r-a_r)\cdot\lambda_r\). By Steps 1 and 2, and using right-distributivity, this choice of \(\lambda\) gives 
        \[
           f\circ\overline{\tau}+\lambda\circ\Sigma^{k-1}f+[\iota_m,\lambda]\simeq b_1\cdot(f\circ x_1) + b_2\cdot(f\circ x_2) + \dots + b_r\cdot(f\circ x_r) \simeq f\circ\overline{\omega}
        \]
        and therefore satisfies Proposition \ref{prop:lambda_delta}. Thus \(\mathcal{G}^{k}_{\tau}(\mathbb{F}P^{2})\simeq\mathcal{G}^{k}_{\omega}(\mathbb{F}P^{2})\).
    \end{enumerate}
    In arguments proving gyration \textit{instability} we will show that Step 2 fails. This is done by proving that there is at least one generator \(x_i\) for which there is no appropriate \(\lambda_i\), or that for some $\lambda\in\pi_{m+k-1}(S^{m})$ the term \(\lambda\circ\Sigma^{k-1}f+[\iota_m,\lambda]\) fails to produce the necessary congruence relations between the coefficients \(a_i\) and~\(b_i\) from Step 1. In either case it follows that \(\mathcal{G}^{k}_{\tau}(\mathbb{F}P^{2})\not\simeq\mathcal{G}^{k}_{\omega}(\mathbb{F}P^{2})\).  
\end{strat}

\section{Computations for \(\mathcal{G}_\tau^k(\mathbb{H}P^2)\)}
\label{sec:G(hp2)}

Throughout Sections \ref{sec:G(hp2)}, \ref{sec:G(op2)-k=<8} and \ref{sec:G(op2)-9to14} we will repeatedly use results regarding compositions of elements in the homotopy groups of spheres. We shall follow Toda's notation, in particular that \(\eta_n=\Sigma^{n-2}\eta_2\) for \(n\geq 2\), \(\nu_n=\Sigma^{n-4}\nu\) for \(n\geq 4\) and that there exists a non-trivial homotopy class \(\nu'\in\pi_6(S^3)\).
For a cyclic group \(\Gamma\) we write \(\Gamma\langle x \rangle\) for \(\Gamma\) with the explicit choice of generator \(x\in\Gamma\).  
\medskip 
 
For \(\H P^2\) fix \(m=4\) and \(f=\nu_4\), and consider \(k\) in the range \(2\leq k \leq 6\). Proposition \ref{prop:certaink} implies \(\mathcal{G}^k\)-stability when \(k=3,5\text{ or }6\), so we are left to investigate gyrations when \(k=2\) and \(k=4\). 
\medskip 
 
\noindent 
\textbf{The $k=2$ case.} 
Since \(\pi_1(\mathrm{SO}(8))\cong\Z/2\), there are two gyrations \(\mathcal{G}_0^2(\H P^2)\) and \(\mathcal{G}_1^2(\H P^2)\) corresponding to the trivial and non-trivial twistings, respectively.  
Following Strategy \ref{strategy}, given two twistings \newline \(\tau,\omega\in\pi_1(\mathrm{SO}(8))\cong\Z/2\), the elements \(\overline{\tau}\), \(\overline{\omega}\) and \(\lambda\) lie in the following homotopy groups
\begin{equation}\label{pi8(S7)_pi5(S4)}
    \overline{\tau},\overline{\omega}\in\pi_{8}(S^7) \cong \Z/2\langle\eta_7\rangle
    \text{\; and \;}
    \lambda\in\pi_5(S^4) \cong \Z/2\langle\eta_4\rangle.
\end{equation}
For \(\pi_8(S^4)\), the following proposition records a synthesis of facts from \cite{toda}*{Proposition 5.8} and its proof, for ease of reference.

\begin{prop}[Toda] \label{prop:toda_pi8^4}
    For \(\pi_8(S^4)\) the  following hold:
    \begin{enumerate}
        \item[(i)] \(\pi_8(S^4)\cong\Z/2\langle \nu_4\circ\eta_7 \rangle \oplus \Z/2\langle \Sigma\nu'\circ\eta_7 \rangle\);
        \item[(ii)] the composite \(\eta_4\circ\nu_5\) is homotopic to \(\Sigma\nu'\circ\eta_7\);
        \item[(iii)] the kernel of the suspension map \(E:\pi_8(S^4)\rightarrow\pi_9(S^5)\) is generated by \(\Sigma\nu'\circ\eta_7\). \qed
    \end{enumerate}
\end{prop}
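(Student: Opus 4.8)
The plan is to treat this proposition as a repackaging of Toda's computations, so the most economical route is to cite \cite{toda}*{Chapter~V} (in particular Proposition~5.8 and the calculations around it); below I record the homotopy-theoretic content I would write out.

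For part~(i) I would run the EHP (James) exact sequence for $S^4$, the relevant segment being
\[
\pi_{10}(S^9)\xrightarrow{P}\pi_8(S^4)\xrightarrow{E}\pi_9(S^5)\xrightarrow{H}\pi_9(S^9).
\]
Since $\pi_9(S^9)\cong\Z$ is torsion-free while $\pi_9(S^5)$ is finite, the map $H$ vanishes here, so $E$ maps onto $\pi_9(S^5)\cong\Z/2$; meanwhile $\pi_{10}(S^9)\cong\Z/2\langle\eta_9\rangle$. The one genuine computation is $P(\eta_9)$: writing $\eta_9=\Sigma^2\eta_7$ and using the standard identity $P(\Sigma^2\gamma)\simeq[\iota_4,\iota_4]\circ\gamma$ together with the classical Whitehead product relation $[\iota_4,\iota_4]\simeq\pm(2\nu_4-\Sigma\nu')$ in $\pi_7(S^4)$, one gets $P(\eta_9)\simeq\pm(2\nu_4\circ\eta_7-\Sigma\nu'\circ\eta_7)\simeq\Sigma\nu'\circ\eta_7$, because $2\nu_4\circ\eta_7=\nu_4\circ(2\eta_7)$ is null. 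Hence $\ker E=\mathrm{im}\,P=\langle\Sigma\nu'\circ\eta_7\rangle$. Granting that $\Sigma\nu'\circ\eta_7$ is essential and that $\nu_5\circ\eta_8=E(\nu_4\circ\eta_7)$ generates $\pi_9(S^5)$, the class $\nu_4\circ\eta_7$ lies outside $\ker E$ and has order $2$; thus $\nu_4\circ\eta_7$ and $\Sigma\nu'\circ\eta_7$ are independent of order $2$, giving $\pi_8(S^4)\cong\Z/2\langle\nu_4\circ\eta_7\rangle\oplus\Z/2\langle\Sigma\nu'\circ\eta_7\rangle$.

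Part~(iii) then falls out of the same bookkeeping: $E(\Sigma\nu'\circ\eta_7)=\Sigma^2\nu'\circ\eta_8=\pm2\nu_5\circ\eta_8=0$, using $\Sigma^2\nu'=\pm2\nu_5$ in $\pi_8(S^5)\cong\Z/24$ and that $\nu_5\circ\eta_8$ has order $2$, whereas $E(\nu_4\circ\eta_7)=\nu_5\circ\eta_8\neq0$; since $|\pi_8(S^4)|=4$ and $E$ is onto $\Z/2$, the kernel has order $2$ and therefore equals $\langle\Sigma\nu'\circ\eta_7\rangle$. For part~(ii) I would establish the composition identity by desuspending to $\pi_7(S^3)\cong\Z/2$, where both $\eta_3\circ\nu_4$ and $\nu'\circ\eta_6$ are the nontrivial element (a low-dimensional fact recorded in \cite{toda}) and so agree; a single suspension then yields $\eta_4\circ\nu_5\simeq\Sigma\nu'\circ\eta_7$. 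Alternatively one argues inside $\pi_8(S^4)$: $E(\eta_4\circ\nu_5)=\eta_5\circ\nu_6=0$ in $\pi_9(S^5)$, so by part~(iii) $\eta_4\circ\nu_5\in\langle\Sigma\nu'\circ\eta_7\rangle$, and it remains only to rule out $\eta_4\circ\nu_5\simeq\ast$.

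The main obstacle is none of the formal steps above but the non-vanishing assertions that feed them: that $\Sigma\nu'\circ\eta_7$ is essential, that $E\colon\pi_8(S^4)\to\pi_9(S^5)$ is nonzero (equivalently $\nu_5\circ\eta_8\neq0$), and that $\eta_4\circ\nu_5$ is essential. These are exactly the delicate inputs Toda extracts from his inductive EHP analysis and secondary (Toda bracket) arguments, and reproducing them from first principles is the real work; in the present setting they are simply imported from \cite{toda}.
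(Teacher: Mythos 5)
This proposition has no proof in the paper: it is stated with a terminal \(\square\) and the preceding sentence flags it explicitly as ``a synthesis of facts from \cite{toda}*{Proposition 5.8} and its proof, for ease of reference.'' Your proposal therefore cannot match or diverge from a paper argument; what it does is reconstruct the underlying Toda computation, and it does so correctly. The EHP segment
\[
\pi_{10}(S^9)\xrightarrow{P}\pi_8(S^4)\xrightarrow{E}\pi_9(S^5)\xrightarrow{H}\pi_9(S^9),
\]
the identification \(P(\eta_9)\simeq[\iota_4,\iota_4]\circ\eta_7\), the relation \([\iota_4,\iota_4]\simeq\pm(2\nu_4-\Sigma\nu')\), and the conclusion \(\ker E=\langle\Sigma\nu'\circ\eta_7\rangle\) with the order-\(2\) splitting element \(\nu_4\circ\eta_7\) are exactly the ingredients Toda uses for Proposition~5.8, so your derivation is faithful rather than merely plausible. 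The desuspension argument for (ii) (via \(\pi_7(S^3)\cong\Z/2\) with \(\eta_3\circ\nu_4\simeq\nu'\circ\eta_6\) both nontrivial) and the alternative \(E(\eta_4\circ\nu_5)=\eta_5\circ\nu_6\simeq\Sigma^2\nu'\circ\eta_8\simeq\ast\) are both correct, and you are appropriately explicit that the non-vanishing inputs (\(\nu_5\circ\eta_8\neq\ast\), \(\Sigma\nu'\circ\eta_7\neq\ast\), \(\eta_4\circ\nu_5\neq\ast\)) are imported from Toda rather than re-established from scratch. The only caveat worth recording is that this level of re-derivation is more than the paper intends: the authors deliberately package these as citable facts so that later arguments (Lemma~\ref{lem:eta-nu}, Theorem~\ref{thm:g2hp2}) can simply quote (i)--(iii), and reproving them would duplicate Toda rather than add content. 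Your write-up is a correct expansion of the citation, not an alternative to it.
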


\noindent Thus, Step 1 of Strategy~\ref{strategy} amounts to the lemma below.

\begin{lem}\label{lem:G2(HP2)_tau-bar}
    Given \(\tau\in\pi_{1}(\mathrm{SO}(8))\), if \(\tau\) is non-trivial then \(\nu_{4}\circ\overline{\tau}\simeq\nu_{4}\circ\eta_7\).
\end{lem}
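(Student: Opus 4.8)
The plan is to pin down the homotopy class of $\overline{\tau}$ completely and then compose with $\nu_4$ on the left. First I would check that the hypotheses of Corollary~\ref{cor:tau-bar_non-trivial} are satisfied in this situation: here $m=4$ and $k=2$, so $2\leq k\leq 2m-2=6$ and $k\equiv 2\pmod 8$, which is exactly the range covered by that corollary. Since the corollary rests on Proposition~\ref{prop:jhom} (identifying $\Sigma\overline{\tau}$ with $J(\tau)$) together with the fact that the $J$-homomorphism $\pi_1(\mathrm{SO}(8))\to\pi_9(S^8)$ is an isomorphism onto $\mathrm{im}(J)\cong\Z/2$ by~(\ref{imJ}), we conclude that $\overline{\tau}$ is non-trivial whenever $\tau$ is non-trivial.

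Next I would invoke the group identification recorded in~(\ref{pi8(S7)_pi5(S4)}), namely $\pi_8(S^7)\cong\Z/2\langle\eta_7\rangle$. This cyclic group of order two contains a single non-trivial element, so the non-triviality of $\overline{\tau}$ forces $\overline{\tau}\simeq\eta_7$. Composing on the left with $\nu_4$ then immediately yields $\nu_4\circ\overline{\tau}\simeq\nu_4\circ\eta_7$, which is the assertion.

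I do not expect any real obstacle here: the substantive input has already been established in Proposition~\ref{prop:jhom} and Corollary~\ref{cor:tau-bar_non-trivial}, and the remaining step is purely a matter of $\pi_8(S^7)$ being as small as possible, so that ``non-trivial'' pins the element down uniquely. The only thing to be careful about is to state explicitly that the range condition $2\leq k\leq 2m-2$ and the congruence $k\equiv 1,2\pmod 8$ both hold for the pair $(m,k)=(4,2)$, so that Corollary~\ref{cor:tau-bar_non-trivial} genuinely applies.
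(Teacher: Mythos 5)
Your proof is correct and follows the same route as the paper: invoke Corollary~\ref{cor:tau-bar_non-trivial} to get non-triviality of $\overline{\tau}$, use $\pi_8(S^7)\cong\Z/2\langle\eta_7\rangle$ to conclude $\overline{\tau}\simeq\eta_7$, and compose with $\nu_4$. Your extra care in verifying that $(m,k)=(4,2)$ satisfies the hypotheses of the corollary is a welcome addition but does not change the substance of the argument.
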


\begin{proof}
    If \(\tau\) is non-trivial, then Corollary \ref{cor:tau-bar_non-trivial} implies that \(\overline{\tau}\) is also non-trivial. Thus as $\overline{\tau}\in\pi_{8}(S^{7})$ by~(\ref{pi8(S7)_pi5(S4)}), we have  \(\overline{\tau}\simeq\eta_7\). The lemma then follows immediately.
\end{proof} 
 
As for Step 2 of Strategy \ref{strategy}, we first show the following.

\begin{lem} \label{lem:eta-nu}
    The composite \(\eta_4\circ\nu_5+[\iota_4,\eta_4]\) is null homotopic.
\end{lem}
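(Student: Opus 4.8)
The plan is to identify both summands explicitly inside $\pi_8(S^4)$ and show they coincide. Proposition~\ref{prop:toda_pi8^4}(i) gives $\pi_8(S^4)\cong\Z/2\langle\nu_4\circ\eta_7\rangle\oplus\Z/2\langle\Sigma\nu'\circ\eta_7\rangle$, so every element is $2$-torsion; in particular $\nu_4\circ\eta_7$ and $\Sigma\nu'\circ\eta_7$ both have order $2$. By Proposition~\ref{prop:toda_pi8^4}(ii) we have $\eta_4\circ\nu_5\simeq\Sigma\nu'\circ\eta_7$. Hence it is enough to prove that $[\iota_4,\eta_4]\simeq\Sigma\nu'\circ\eta_7$ as well, since then $\eta_4\circ\nu_5+[\iota_4,\eta_4]\simeq 2(\Sigma\nu'\circ\eta_7)\simeq\ast$.

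To evaluate the Whitehead product, write $\eta_4=\iota_4\circ\eta_4$ and apply the standard composition formula $[\alpha,\beta\circ\gamma]\simeq[\alpha,\beta]\circ\Sigma^{|\alpha|-1}\gamma$ with $\alpha=\beta=\iota_4$ and $\gamma=\eta_4$. Using the convention $\eta_n=\Sigma^{n-2}\eta_2$ one has $\Sigma^3\eta_4=\Sigma^5\eta_2=\eta_7$, so $[\iota_4,\eta_4]\simeq[\iota_4,\iota_4]\circ\eta_7$. Now substitute Toda's evaluation $[\iota_4,\iota_4]\simeq\pm(2\nu_4-\Sigma\nu')$ in $\pi_7(S^4)$ \cite{toda}. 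Since $\eta_7$ is a suspension, composition with it distributes over this sum, so $[\iota_4,\eta_4]\simeq\pm(2(\nu_4\circ\eta_7)-\Sigma\nu'\circ\eta_7)$. As $\nu_4\circ\eta_7$ has order $2$ the term $2(\nu_4\circ\eta_7)$ is trivial, and as $\Sigma\nu'\circ\eta_7$ has order $2$ the overall sign is irrelevant; therefore $[\iota_4,\eta_4]\simeq\Sigma\nu'\circ\eta_7$. Combining the two identifications gives $\eta_4\circ\nu_5+[\iota_4,\eta_4]\simeq(\Sigma\nu'\circ\eta_7)+(\Sigma\nu'\circ\eta_7)\simeq\ast$, as required.

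The only delicate point I anticipate is the second paragraph, where one must invoke the composition formula for Whitehead products with the correct suspension exponent $\Sigma^{4-1}=\Sigma^3$ and recall Toda's value of $[\iota_4,\iota_4]\in\pi_7(S^4)$ (it is crucial here that the torsion coefficient in that formula is odd); everything else is bookkeeping in the $2$-torsion group $\pi_8(S^4)$. An alternative route that avoids the composition formula is to observe that $[\iota_4,\eta_4]$ lies in $\ker(E\colon\pi_8(S^4)\to\pi_9(S^5))=\Z/2\langle\Sigma\nu'\circ\eta_7\rangle$ by Proposition~\ref{prop:toda_pi8^4}(iii), since Whitehead products suspend trivially, and then to rule out $[\iota_4,\eta_4]\simeq\ast$ via the EHP sequence; pinning the class down directly as above seems cleaner.
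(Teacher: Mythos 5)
Your proof is correct, but it takes a genuinely different route from the paper. Both proofs agree that it suffices to identify $[\iota_4,\eta_4]\simeq\Sigma\nu'\circ\eta_7$ (given Proposition~\ref{prop:toda_pi8^4}(ii) and the fact that $\Sigma\nu'\circ\eta_7$ has order two). Where you diverge is in how you pin down $[\iota_4,\eta_4]$. You compute it \emph{directly}: applying the Whitehead product composition formula (a specialisation of Lemma~\ref{lem:whitehead_prods_and_susps}, valid since $\eta_4$ is a suspension) to get $[\iota_4,\eta_4]\simeq[\iota_4,\iota_4]\circ\eta_7$, substituting Toda's value $[\iota_4,\iota_4]\simeq\pm(2\nu_4-\Sigma\nu')$, and killing the $2\nu_4\circ\eta_7$ term by order considerations. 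The paper instead argues \emph{indirectly}: it cites Hilton--Whitehead for the non-triviality of $[\iota_4,\eta_4]$, notes that any Whitehead product suspends trivially, and then invokes Proposition~\ref{prop:toda_pi8^4}(iii) — the kernel of $E\colon\pi_8(S^4)\to\pi_9(S^5)$ is the $\Z/2$ generated by $\Sigma\nu'\circ\eta_7$ — to conclude that the only non-trivial element of the kernel is $\Sigma\nu'\circ\eta_7$. Your approach is self-contained given Toda's evaluation of $[\iota_4,\iota_4]$ (which the paper itself uses shortly afterwards in the proof of Lemma~\ref{lem:lambda_pi7(S4)}) and avoids an external citation for the non-triviality of $[\iota_4,\eta_4]$; the paper's approach is shorter and parallels the style used for $\mathbb{O}P^2$ in Lemma~\ref{lem:g2op2nonhomotopy}. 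Your closing remark correctly identifies the paper's route as the alternative, though the paper side-steps the EHP sequence by simply quoting the non-triviality of the Whitehead product.
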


\begin{proof}
    By \cite{hilton_whitehead}, the Whitehead product \([\iota_4,\eta_4]\) is non-trivial. As it represents a class in $\pi_{8}(S^{4})$ and suspends trivially, Proposition \ref{prop:toda_pi8^4}(iii) implies that is homotopic to \(\Sigma\nu'\circ\eta_7\). Also, Proposition \ref{prop:toda_pi8^4}(ii) says that \(\eta_4\circ\nu_5\simeq \Sigma\nu'\circ\eta_7\) while Proposition \ref{prop:toda_pi8^4}(i) says that \(\Sigma\nu'\circ\eta_7\) has order two, implying that \[\eta_4\circ\nu_5+[\iota_4,\eta_4]\simeq 2 \cdot( \Sigma\nu'\circ\eta_7)\simeq \ast.\qedhere\]  
\end{proof}

This enables us to show that Step 2 of Strategy \ref{strategy} fails, resulting in the following. 

\begin{thm} \label{thm:g2hp2}
    \(\H P^2\) is not \(\mathcal{G}^2\)-stable, i.e. \(\mathcal{G}_0^2(\H P^2)\not\simeq\mathcal{G}_1^2(\H P^2)\).
\end{thm}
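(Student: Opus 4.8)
The plan is to invoke Proposition~\ref{prop:lambda_delta} with \(m=4\), \(k=2\) and \(f=\nu_4\), taking \(\tau\) to be the trivial twisting and \(\omega\) the non-trivial generator of \(\pi_1(\mathrm{SO}(8))\cong\Z/2\). By that proposition, a homotopy equivalence \(\mathcal{G}_0^2(\H P^2)\simeq\mathcal{G}_1^2(\H P^2)\) exists if and only if there is a class \(\lambda\in\pi_5(S^4)\) with \(\nu_4\circ\overline{\tau}+\lambda\circ\nu_5+[\iota_4,\lambda]\simeq\pm\,\nu_4\circ\overline{\omega}\). So I would first simplify both sides. Since \(\tau\) is trivial, Lemma~\ref{twisting12}(iii) (equivalently the trivial case of Corollary~\ref{cor:phipsi}) gives \(\overline{\tau}\simeq\ast\), so the term \(\nu_4\circ\overline{\tau}\) vanishes; and Lemma~\ref{lem:G2(HP2)_tau-bar} gives \(\nu_4\circ\overline{\omega}\simeq\nu_4\circ\eta_7\). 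The existence question thus reduces to: is there \(\lambda\in\pi_5(S^4)\) with \(\lambda\circ\nu_5+[\iota_4,\lambda]\simeq\pm\,\nu_4\circ\eta_7\)? This is precisely the assertion that Step~2 of Strategy~\ref{strategy} succeeds for the single generator \(\eta_7\) of \(\pi_8(S^7)\).

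Next I would dispose of the (only two) candidates for \(\lambda\), using \(\pi_5(S^4)\cong\Z/2\langle\eta_4\rangle\) from~(\ref{pi8(S7)_pi5(S4)}). If \(\lambda\simeq\ast\), then \(\lambda\circ\nu_5+[\iota_4,\lambda]\simeq\ast\) trivially. If \(\lambda\simeq\eta_4\), then \(\lambda\circ\nu_5+[\iota_4,\lambda]=\eta_4\circ\nu_5+[\iota_4,\eta_4]\), which is null homotopic by Lemma~\ref{lem:eta-nu}. Hence in every case the left-hand side of the required homotopy is null homotopic.

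Finally I would observe that the right-hand side is not null homotopic: by Proposition~\ref{prop:toda_pi8^4}(i), \(\nu_4\circ\eta_7\) is a generator of a \(\Z/2\) summand of \(\pi_8(S^4)\), hence non-trivial, and being of order two it satisfies \(-\nu_4\circ\eta_7\simeq\nu_4\circ\eta_7\), so \(\pm\,\nu_4\circ\eta_7\not\simeq\ast\). Therefore no admissible \(\lambda\) exists, and Proposition~\ref{prop:lambda_delta} yields \(\mathcal{G}_0^2(\H P^2)\not\simeq\mathcal{G}_1^2(\H P^2)\). The only real input is Lemma~\ref{lem:eta-nu}: the point is that the unique potentially useful correction term, coming from \(\lambda=\eta_4\), contributes nothing because \(\eta_4\circ\nu_5\) cancels the Whitehead product \([\iota_4,\eta_4]\) modulo the order-two relation in \(\pi_8(S^4)\). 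Everything else is bookkeeping, so no genuine obstacle remains once that identity is in hand; the main subtlety — the interaction of \(\eta_4\circ\nu_5\) with \([\iota_4,\eta_4]\) inside \(\pi_8(S^4)\), relying on parts (ii) and (iii) of Proposition~\ref{prop:toda_pi8^4} — has already been settled.
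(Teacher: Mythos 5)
Your proposal is correct and follows essentially the same route as the paper: reduce via Proposition~\ref{prop:lambda_delta} to the non-existence of a suitable \(\lambda\in\pi_5(S^4)\cong\Z/2\langle\eta_4\rangle\), check both candidates, invoke Lemma~\ref{lem:eta-nu} to kill the \(\lambda=\eta_4\) contribution, and contrast with the non-triviality of \(\nu_4\circ\eta_7\) from Proposition~\ref{prop:toda_pi8^4}(i), noting that the order-two ambient group collapses the \(\pm\) sign. The only cosmetic difference is that the paper takes \(\tau\) non-trivial and \(\omega\) trivial so that the target of~(\ref{eq:delta_def}) becomes the null element, whereas you take the reverse convention; since the homotopy-equivalence question is symmetric in \(\tau\) and \(\omega\), this is immaterial.
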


\begin{proof}
    We will show that there is no \(\lambda\in\pi_{5}(S^{4})\cong\mathbb{Z}/2\langle\eta_{4}\rangle\) satisfying Proposition \ref{prop:lambda_delta}. Note that since the homotopy group \(\pi_8(S^4)\) is isomorphic to a direct sum of \(\Z/2\) summands there is no distinction between \(+1\) and~\(-1\), so we may operate in the `+' case of Proposition \ref{prop:lambda_delta} without loss of generality. 
    
    Proving \(\mathcal{G}^2\)-instability therefore reduces to checking that there exists no \(\lambda\) such that 
    \begin{equation}\label{g2hp2eqn}
        \nu_{4}\circ\eta_7+\lambda\circ\nu_5+[\iota_4,\lambda]\simeq\ast.
    \end{equation} 
    By~(\ref{pi8(S7)_pi5(S4)}), either $\lambda\simeq\ast$ or $\lambda\simeq\eta_{4}$. If $\lambda\simeq\ast$ then~(\ref{g2hp2eqn}) implies that $\nu_{4}\circ\eta_{7}$ is null homotopic, contradicting Proposition~\ref{prop:toda_pi8^4}~(i). If $\lambda\simeq\eta_{4}$ then Lemma \ref{lem:eta-nu} shows that \(\eta_4\circ\nu_5+[\iota_4,\eta_4]\) is null  homotopic, in which case~(\ref{g2hp2eqn}) again shows that $\nu_{4}\circ\eta_{7}$ is null homotopic, a contradiction.
\end{proof}

\begin{rem}
    Note that Theorem~\ref{thm:g2hp2} is subtly stronger that what was known before. Duan~\cite{duan} showed that \(\mathcal{G}_0^2(\H P^2)\) and \(\mathcal{G}_1^2(\H P^2)\) are not diffeomorphic, Theorem~\ref{thm:g2hp2} shows that they are also not homeomorphic or even homotopy equivalent.
\end{rem}

\noindent 
\textbf{The $k=4$ case:} 
There is a gyration $\mathcal{G}^{4}_{\tau}(\mathbb{H}P^{2})$ for each $\tau\in\pi_{3}(SO(8))\cong\mathbb{Z}$. It will be shown in Theorem~\ref{thm:g4hp2} that they are all homotopy equivalent. We start with two preparatory statements: an elementary lemma, followed by a classical result regarding Whitehead products and suspensions. 

\begin{lem} \label{lem:coprime_order}
    Let \(\Sigma\alpha\in\pi_i(S^n)\) and \(\Sigma\beta\in\pi_j(S^i)\) be two suspensions of finite order. If their orders are coprime, then the composition \(\Sigma\beta\circ\Sigma\alpha\) is null homotopic.
\end{lem}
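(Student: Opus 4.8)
The plan is to deduce the vanishing of $\Sigma\beta\circ\Sigma\alpha$ (the composite $S^{j}\xrightarrow{\Sigma\beta}S^{i}\xrightarrow{\Sigma\alpha}S^{n}$) purely from the bilinearity of composition, with no reference to the actual structure of the groups involved. Write $a$ for the order of $\Sigma\alpha\in\pi_i(S^n)$ and $b$ for the order of $\Sigma\beta\in\pi_j(S^i)$, so $\gcd(a,b)=1$ by hypothesis, and set $c=\Sigma\beta\circ\Sigma\alpha\in\pi_j(S^n)$. The goal is to show that the order of $c$ divides both $a$ and $b$; it then divides $\gcd(a,b)=1$, forcing $c\simeq\ast$.

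The two standard inputs I would invoke are: (1) post-composition by a fixed map is additive on homotopy groups, since the group structure is induced by the pinch comultiplication on the sphere that serves as domain, and this is untouched by post-composition; and (2) pre-composition by a \emph{suspension} is additive, because a suspension is a co-$H$-map and hence commutes up to homotopy with the pinch map. Both factors here are suspensions, so (2) is available. Applying (1), with $c$ viewed as $\Sigma\beta$ post-composed with $\Sigma\alpha$, gives $b\cdot c\simeq(b\cdot\Sigma\beta)\circ\Sigma\alpha\simeq\ast$. Applying (2), with $c$ viewed as $\Sigma\alpha$ pre-composed with the suspension $\Sigma\beta$, gives $a\cdot c\simeq\Sigma\beta\circ(a\cdot\Sigma\alpha)\simeq\ast$. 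Hence the order of $c$ divides both $a$ and $b$, and the conclusion follows.

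There is essentially no obstacle here; the lemma is elementary. The only point needing care is to apply additivity to the correct variable in each of the two steps, and in particular to note that pre-composition is additive precisely because $\Sigma\beta$ is a suspension — this is exactly where the hypothesis that the classes are suspensions enters. (Alternatively one could argue via the primary decomposition of the finitely generated abelian group $\pi_j(S^n)$, but the bilinearity argument above is shorter and self-contained.)
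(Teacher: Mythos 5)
Your proof is correct and takes essentially the same approach as the paper: both establish $a\cdot(\Sigma\beta\circ\Sigma\alpha)\simeq\ast$ and $b\cdot(\Sigma\beta\circ\Sigma\alpha)\simeq\ast$ using post-composition linearity and the co-$H$ property of the suspension, then deduce triviality from coprimality. The paper phrases the final step via B\'ezout's identity $sa+tb=1$, whereas you phrase it as the order dividing $\gcd(a,b)=1$; these are the same argument.
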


\begin{proof}
    Let \(\Sigma\alpha\) have order \(a\) and \(\Sigma\beta\) have order \(b\). Since both maps are suspensions we have \[a\cdot(\Sigma\beta\circ\Sigma\alpha)\simeq\Sigma\beta\circ(a\cdot\Sigma\alpha)\simeq\Sigma\beta\circ\ast\simeq\ast\] and similarly that \[b\cdot(\Sigma\beta\circ\Sigma\alpha)\simeq(b\cdot\Sigma\beta)\circ\Sigma\alpha\simeq\ast\circ\Sigma\alpha\simeq\ast.\] By assumption, \(gcd(a,b)=1\), so B\'ezout's identity implies that there exist integers \(s\) and \(t\) such that \(sa+tb=1\). This together with the above null homotopies implies that \[\Sigma\beta\circ\Sigma\alpha\simeq(sa+tb)\cdot(\Sigma\beta\circ\Sigma\alpha)\simeq sa\cdot(\Sigma\beta\circ\Sigma\alpha)+tb\cdot(\Sigma\beta\circ\Sigma\alpha)\simeq\ast\]
    where right-distributivity comes from the fact that both maps are suspensions.
\end{proof}

\begin{lem}[\cite{whitehead-elements}*{Theorem X.8.18}]\label{lem:whitehead_prods_and_susps}
    Let \(\alpha\in\pi_{p+1}(X)\), \(\beta\in\pi_{q+1}(X), \gamma\in\pi_i(S^p)\text{ and }\delta\in\pi_j(S^q)\). Then 
    \[
    \pushQED{\qed} 
        [\alpha\circ\Sigma\gamma,\beta\circ\Sigma\delta] \simeq [\alpha,\beta]\circ\Sigma(\gamma\wedge\delta).
    \qedhere
    \popQED 
    \]
\end{lem}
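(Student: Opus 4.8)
The plan is to reduce the Lemma to the analogous identity for the \emph{universal} Whitehead product and then verify that identity directly on explicit cellular representatives built from products of discs. Write $w_{a,b}\colon S^{a+b-1}\to S^{a}\vee S^{b}$ for the universal Whitehead product, i.e.\ the Whitehead product $[j_1,j_2]$ of the wedge inclusions $j_1\colon S^a\to S^a\vee S^b$ and $j_2\colon S^b\to S^a\vee S^b$, equivalently the attaching map of the top cell of $S^a\times S^b$; recall that $[\mu,\nu]\simeq(\mu\perp\nu)\circ w_{a,b}$ for all $\mu\in\pi_a(X)$, $\nu\in\pi_b(X)$. Using $(\alpha\circ\Sigma\gamma)\perp(\beta\circ\Sigma\delta)\simeq(\alpha\perp\beta)\circ(\Sigma\gamma\vee\Sigma\delta)$, the left-hand side of the Lemma equals $(\alpha\perp\beta)\circ(\Sigma\gamma\vee\Sigma\delta)\circ w_{i+1,j+1}$, while the right-hand side equals $(\alpha\perp\beta)\circ w_{p+1,q+1}\circ\Sigma(\gamma\wedge\delta)$. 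Thus the Lemma follows by post-composing with $\alpha\perp\beta$ once the universal identity
\[
(\Sigma\gamma\vee\Sigma\delta)\circ w_{i+1,j+1}\;\simeq\;w_{p+1,q+1}\circ\Sigma(\gamma\wedge\delta)\colon S^{i+j+1}\longrightarrow S^{p+1}\vee S^{q+1}
\]
is established.

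To prove the universal identity I would represent $\Sigma\gamma$ and $\Sigma\delta$ by maps of pairs $\hat\gamma\colon(D^{i+1},S^i)\to(D^{p+1},S^p)$ and $\hat\delta\colon(D^{j+1},S^j)\to(D^{q+1},S^q)$, taken to be the cones on chosen representatives of $\gamma$ and $\delta$, so that $\Sigma\gamma$ and $\Sigma\delta$ are recovered by collapsing the boundary spheres. The product $\hat\gamma\times\hat\delta$ then sends $\partial(D^{i+1}\times D^{j+1})$ into $\partial(D^{p+1}\times D^{q+1})$, and under the standard identification
\[
\partial(D^{a}\times D^{b})=(D^{a}\times S^{b-1})\cup(S^{a-1}\times D^{b})\;\cong\;S^{a-1}\ast S^{b-1}\;\simeq\;\Sigma(S^{a-1}\wedge S^{b-1})
\]
this boundary restriction is the join $\gamma\ast\delta$ of the boundary maps, hence -- since the join of maps corresponds to the suspension of the smash of maps -- is homotopic to $\Sigma(\gamma\wedge\delta)$. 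On the other hand, writing $\partial(D^{a}\times D^{b})$ as the union above, $w_{a,b}$ is the map sending $(d,s)\in D^{a}\times S^{b-1}$ to $j_1(\rho d)$ and $(s,d)\in S^{a-1}\times D^{b}$ to $j_2(\rho' d)$, where $\rho,\rho'$ collapse the respective boundary spheres. Tracing a point of $\partial(D^{i+1}\times D^{j+1})$ through the two composites in the universal identity, one finds that on $D^{i+1}\times S^j$ both give $j_1(\rho\,\hat\gamma(d))$ and on $S^i\times D^{j+1}$ both give $j_2(\rho'\hat\delta(d))$, so the two composites agree on the nose; combined with the previous identification this gives the universal identity.

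The main obstacle is bookkeeping rather than conceptual: one must fix the homeomorphism $\partial(D^{a}\times D^{b})\cong S^{a-1}\ast S^{b-1}$ and the equivalence $A\ast B\simeq\Sigma(A\wedge B)$ compatibly, and check carefully that the join of two maps goes to the suspension of their smash under exactly these choices, including the orientation bookkeeping. That orientation issue is the source of the sign implicit in the conventions for the bracket $[\,\cdot\,,\,\cdot\,]$, and it is the only delicate point; everything else is a direct verification on explicit representatives. (The identity is classical -- see \cite{whitehead-elements}.)
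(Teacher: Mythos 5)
The paper does not prove this lemma; it is cited from Whitehead's \emph{Elements of Homotopy Theory} (Theorem X.8.18) and marked with a \(\square\) directly in the statement, so there is no in-paper argument to compare against. Your job was therefore to supply a proof of a known classical fact, and your sketch is a correct version of the standard geometric argument.

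The reduction step is sound: using \([\mu,\nu]=(\mu\perp\nu)\circ w_{a,b}\) together with \((\alpha\circ\Sigma\gamma)\perp(\beta\circ\Sigma\delta)=(\alpha\perp\beta)\circ(\Sigma\gamma\vee\Sigma\delta)\) (which holds because \(f\perp g=\nabla\circ(f\vee g)\) and \(\vee\) is functorial) reduces the lemma to the universal identity \((\Sigma\gamma\vee\Sigma\delta)\circ w_{i+1,j+1}\simeq w_{p+1,q+1}\circ\Sigma(\gamma\wedge\delta)\), after post-composing with \(\alpha\perp\beta\). The cellular argument for the universal identity is likewise correct: taking cone extensions \(\hat\gamma\colon(D^{i+1},S^i)\to(D^{p+1},S^p)\), \(\hat\delta\colon(D^{j+1},S^j)\to(D^{q+1},S^q)\), the boundary restriction of \(\hat\gamma\times\hat\delta\) is \(\gamma\ast\delta\) under \(\partial(D^{a}\times D^{b})\cong S^{a-1}\ast S^{b-1}\), and the join of maps is the suspension of the smash under \(A\ast B\simeq\Sigma(A\wedge B)\). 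The only step you gesture at rather than verify is the commutativity of the naturality square
\[
\begin{tikzcd}[row sep=2em, column sep=3em]
\partial(D^{i+1}\times D^{j+1}) \arrow[r,"\partial(\hat\gamma\times\hat\delta)"] \arrow[d,"\cong"'] & \partial(D^{p+1}\times D^{q+1}) \arrow[d,"\cong"] \\
S^{i}\ast S^{j} \arrow[r,"\gamma\ast\delta"'] & S^{p}\ast S^{q}
\end{tikzcd}
\]
on which the whole argument rests. It is worth recording that it commutes on the nose once the homeomorphism is fixed uniformly: parameterize \((d,s)\in D^{i+1}\times S^j\) by \(d=rx\) and send it to \((x,t(r),s)\) in the join; then \(\hat\gamma\times\hat\delta\) sends \((rx,s)\mapsto(r\gamma(x),\delta(s))\), which maps to \((\gamma(x),t(r),\delta(s))\), exactly \((\gamma\ast\delta)(x,t(r),s)\); the piece \(S^i\times D^{j+1}\) is symmetric. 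With that square verified, your pointwise tracing shows the two composites in the universal identity literally agree, and the lemma follows. Your acknowledgement of the orientation bookkeeping as the one delicate point is accurate; since the same convention is used on both sides of the square above, no stray sign appears, which matches the sign-free statement used in the paper.
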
 

The next two propositions, also separated for later ease of reference, describe the relevant homotopy groups and generators. Note that some generators are not labelled as they will not be needed later. The statements and notation are taken from \cite{toda}, except that we use \(\widehat{x}\) to denote the 2-primary component of a homotopy class \(x\).

\begin{prop}[Toda]\label{prop:nu_n}
    For \(n>4\) there are group isomorphisms
    \[ 
        \pi_{n+3}(S^n) \cong \Z/24\langle\nu_n\rangle \cong\Z/8\langle\widehat{\nu_n}\rangle\oplus\Z/3\langle\alpha_1(n)\rangle, 
    \]
    where the second isomorphism comes from writing \(\nu_7\simeq\widehat{\nu_7}+\alpha_1(7)\). \qed
\end{prop}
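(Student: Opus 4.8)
The plan is to observe that the statement lies entirely in the stable range and then quote Toda's calculation of the $3$-stem, so there is really nothing to prove beyond a citation and a bookkeeping remark.

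First I would note that for $n>4$ we have $n+3<2n-1$, so the Freudenthal suspension theorem gives that $E\colon\pi_{n+3}(S^n)\rightarrow\pi_{n+4}(S^{n+1})$ is an isomorphism; iterating, $\pi_{n+3}(S^n)$ is the stable $3$-stem, independent of $n$ for $n\geq 5$. The hypothesis $n>4$ is sharp here, since $\pi_7(S^4)\cong\Z\oplus\Z/12$ is not yet stable. Second, I would invoke Toda's computation \cite{toda} that this stable group is $\Z/24$, generated by $\nu_n$, the suspension to dimension $n$ of the quaternionic Hopf map; this is recorded in Chapter V of \cite{toda} (Proposition 5.6 and the accompanying tables), where the generator is precisely the class the paper has denoted $\nu_n$.

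For the second isomorphism I would simply take the primary decomposition $\Z/24\cong\Z/8\oplus\Z/3$: the $2$-primary component is the cyclic group of order $8$ generated by $\widehat{\nu_n}$, the $2$-local part of $\nu_n$ in the sense already fixed in the text, and the $3$-primary component is $\Z/3$ generated by Toda's element $\alpha_1(n)$, the bottom element of the $3$-primary $\alpha$-family in this stem. The relation $\nu_7\simeq\widehat{\nu_7}+\alpha_1(7)$ is then just the Chinese Remainder Theorem expression for the chosen generator in terms of generators of the two factors, pinned down by Toda's conventions; all higher $\nu_n$, $\widehat{\nu_n}$, $\alpha_1(n)$ are obtained by suspension, which is an isomorphism in this range.

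Since the proposition is a direct quotation of \cite{toda}, there is essentially no mathematical obstacle. The only point requiring any care is making sure the notation $\nu_n$, $\widehat{\nu_n}$ and $\alpha_1(n)$ is consistent with Toda's, and with the conventions already set up in the preceding paragraphs — which is why the statement is phrased, and cited, exactly as it is.
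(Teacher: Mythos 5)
Your proposal is correct and takes essentially the same route as the paper, which states this as a direct citation to Toda with no written proof: the hypothesis $n>4$ places the group in the stable range by Freudenthal, Toda computes the stable $3$-stem to be $\Z/24\langle\nu\rangle$, and the second isomorphism is just the primary decomposition $\Z/24\cong\Z/8\oplus\Z/3$ with $\widehat{\nu_n}$ the $2$-primary component (as the paper's notation stipulates) and $\alpha_1(n)$ the $3$-primary generator, the relation $\nu_7\simeq\widehat{\nu_7}+\alpha_1(7)$ being the CRT expression pinned down by Toda's conventions. Your remark that $\pi_7(S^4)\cong\Z\oplus\Z/12$ fails to be stable is a nice sanity check showing $n>4$ is sharp.
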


\begin{prop}[Toda] \label{prop:G4HP2_htpygps}
    There are group isomorphisms:
    \begin{enumerate}
        \item[(i)] \(\pi_7(S^4) \cong\Z\langle\nu_4\rangle\oplus\Z/4\langle\Sigma\nu'\rangle\oplus\Z/3\);
        \item[(ii)] \(\pi_{10}(S^4) \cong \Z/8\langle\nu_4\circ\widehat{\nu_7}\rangle\oplus\Z/3\langle\nu_4\circ\alpha_1(7)\rangle\oplus\Z/3\). \qed
    \end{enumerate}
\end{prop}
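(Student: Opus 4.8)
Since the proposition merely restates, in the notation fixed above, Toda's determinations of \(\pi_7(S^4)\) and \(\pi_{10}(S^4)\), the plan is to record how each decomposition falls out of the homotopy exact sequence of the quaternionic Hopf fibration \(S^3\hookrightarrow S^7\xrightarrow{\nu_4}S^4\), fed with Toda's values for \(\pi_*(S^3)\) and \(\pi_*(S^7)\). The starting point is that the fibre inclusion \(S^3\hookrightarrow S^7\) is null homotopic, since \(\pi_3(S^7)=0\); it therefore induces the zero map on all homotopy groups, and the long exact sequence breaks into short exact sequences
\[0\longrightarrow\pi_i(S^7)\xrightarrow{\ \nu_4\circ(-)\ }\pi_i(S^4)\xrightarrow{\ \partial\ }\pi_{i-1}(S^3)\longrightarrow0\]
for every \(i\); in particular \(\nu_4\circ(-)\) is always injective and \(\partial\) is always surjective.

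For (i), take \(i=7\). Since \(\pi_7(S^7)\cong\Z\) and \(\pi_6(S^3)\cong\Z/12\) (Toda), this reads \(0\to\Z\langle\nu_4\rangle\to\pi_7(S^4)\to\Z/12\to0\). I would split off the free summand using the Hopf-invariant homomorphism \(H\colon\pi_7(S^4)\to\pi_7(S^7)\cong\Z\), which satisfies \(H(\nu_4)=\pm1\) and so restricts to an isomorphism on \(\Z\langle\nu_4\rangle\); this yields \(\pi_7(S^4)\cong\Z\langle\nu_4\rangle\oplus T\) with \(T\cong\Z/12\cong\Z/4\oplus\Z/3\), after which one identifies \(T\) with the (injective) suspension image of \(\pi_6(S^3)\) and reads off Toda's generators to name the summands \(\Z/4\langle\Sigma\nu'\rangle\) and \(\Z/3\langle\alpha_1(4)\rangle\), giving (i). For (ii), take \(i=10\). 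With \(\pi_{10}(S^7)\cong\Z/24\langle\nu_7\rangle\) (Proposition~\ref{prop:nu_n}) and \(\pi_9(S^3)\) cyclic of order \(3\), the sequence becomes \(0\to\Z/24\langle\nu_4\circ\nu_7\rangle\to\pi_{10}(S^4)\to\Z/3\to0\). The \(2\)-primary part is then immediately \(\Z/8\), generated by \(\nu_4\circ\widehat{\nu_7}\), using the splitting \(\nu_7\simeq\widehat{\nu_7}+\alpha_1(7)\) of Proposition~\ref{prop:nu_n}; the \(3\)-primary part sits in an extension \(0\to\Z/3\to\bigl(\pi_{10}(S^4)\bigr)_{(3)}\to\Z/3\to0\) whose resolution in split form, with the two \(\Z/3\) summands generated by \(\nu_4\circ\alpha_1(7)\) and a lift of the generator of \(\pi_9(S^3)\), is exactly the content supplied by Toda's composition calculations. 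Assembling the pieces gives (ii).

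The step I expect to carry the real weight --- and the reason this is quoted from \cite{toda} rather than reproved --- is the resolution of the extensions together with the precise orders and names of the torsion generators: that suspension is injective on \(\pi_6(S^3)\), so the class \(\Sigma\nu'\) behaves as claimed, and that the \(3\)-primary extension in \(\pi_{10}(S^4)\) is split, i.e.\ \(\Z/3\oplus\Z/3\) rather than \(\Z/9\). Both facts are established in \cite{toda} by composition methods, so in the write-up I would simply cite them there.
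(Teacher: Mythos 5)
The paper offers no proof here: the proposition is labelled ``[Toda]'' and closed with a $\square$, meaning it is simply quoted from Toda's book. Your reconstruction via the quaternionic Hopf fibration is sound and matches the underlying structure of Toda's own computation: the fibre inclusion $S^3\hookrightarrow S^7$ is indeed null since $\pi_3(S^7)=0$, so the long exact sequence collapses to short exact sequences $0\to\pi_i(S^7)\to\pi_i(S^4)\to\pi_{i-1}(S^3)\to 0$; the Hopf-invariant homomorphism retracts $\nu_4\circ(-)$ and so splits off the free (or $\Z/24$) summand; and what remains, as you correctly flag, is the extension problem and the naming of torsion generators, which is precisely the content of Toda's composition methods. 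Your observation that $E\colon\pi_6(S^3)\to\pi_7(S^4)$ is injective is right (it follows from the EHP sequence together with the vanishing of $[\iota_3,\iota_3]$ since $S^3$ is an $H$-space), and the $3$-primary extension in $\pi_{10}(S^4)$ being split rather than $\Z/9$ is again something one must take from Toda. So your route is more explicit than the paper's (which is pure citation), but the load-bearing steps are still ultimately outsourced to Toda --- which is exactly what the authors intend by attributing the proposition to him without proof. The one thing I would tighten in a write-up: when you say the $\Z/4$ summand of $\pi_7(S^4)$ is ``$\Z/4\langle\Sigma\nu'\rangle$'' you should be careful about whether $\nu'$ denotes the full order-$12$ generator of $\pi_6(S^3)$ or its $2$-primary part; the paper's notation implicitly takes the latter reading for the purposes of that summand, and it is worth making that explicit rather than leaving it to the reader.
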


We now move to Step 1 of Strategy \ref{strategy}.

\begin{lem}\label{lem:where is tau-bar? HP2}
    For any \(\tau\in\pi_{3}(\mathrm{SO}(8))\) there exist integers \(a_1\) and \(a_2\), modulo 8 and 3 respectively, such that \(\nu_4\circ\overline{\tau}\simeq a_1\cdot(\nu_4\circ\widehat{\nu_7})+a_2\cdot(\nu_4\circ\alpha_1(7))\).
\end{lem}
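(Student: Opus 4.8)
The plan is to recognise which homotopy group $\overline{\tau}$ inhabits, write it in terms of the standard generators, and then transport that expression through left-composition with $\nu_4$; this is precisely Step~1 of Strategy~\ref{strategy} in the case $(m,k)=(4,4)$.

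First I would observe that with $m=4$ and $k=4$ the map $\overline{\tau}$ from~(\ref{taubardef}) lies in $\pi_{2m+k-2}(S^{2m-1}) = \pi_{10}(S^7)$. Since $7>4$, Proposition~\ref{prop:nu_n} applies with $n=7$ and yields a group isomorphism $\pi_{10}(S^7)\cong\mathbb{Z}/8\langle\widehat{\nu_7}\rangle\oplus\mathbb{Z}/3\langle\alpha_1(7)\rangle$. Consequently there are integers $a_1$ modulo $8$ and $a_2$ modulo $3$ with $\overline{\tau}\simeq a_1\cdot\widehat{\nu_7}+a_2\cdot\alpha_1(7)$.

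Second, I would left-compose with $\nu_4$. As $S^{10}$ is a suspension it is a co-$H$-space, so left-composition by $\nu_4$ distributes over sums in $\pi_{10}(S^7)$, giving
\[
\nu_4\circ\overline{\tau}\;\simeq\;\nu_4\circ\big(a_1\cdot\widehat{\nu_7}+a_2\cdot\alpha_1(7)\big)\;\simeq\;a_1\cdot(\nu_4\circ\widehat{\nu_7})+a_2\cdot(\nu_4\circ\alpha_1(7)),
\]
which is the asserted form. There is no real obstacle in this lemma: the only point needing care is that left-distributivity is legitimate, which holds because the domain $S^{10}$ is a suspension. One may additionally note that by Proposition~\ref{prop:G4HP2_htpygps}(ii) the elements $\nu_4\circ\widehat{\nu_7}$ and $\nu_4\circ\alpha_1(7)$ still have orders $8$ and $3$, so these are the natural reductions of $a_1$ and $a_2$, but this refinement is not needed for the existence statement. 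The substantive work in the $k=4$ case comes afterwards, namely pinning down which classes actually occur as $\overline{\tau}$ via $\Sigma\overline{\tau}\simeq J(\tau)$ (Proposition~\ref{prop:jhom}) and then carrying out Step~2 of Strategy~\ref{strategy}.
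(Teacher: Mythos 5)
Your proof is correct and follows exactly the paper's argument: identify $\overline{\tau}\in\pi_{10}(S^7)$ via~(\ref{taubardef}), decompose using Proposition~\ref{prop:nu_n}, and apply left-distributivity of composition with $\nu_4$. The closing remarks about the orders of $\nu_4\circ\widehat{\nu_7}$ and $\nu_4\circ\alpha_1(7)$ and about the later use of the $J$-homomorphism are accurate but extraneous to the lemma itself.
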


\begin{proof} 
   With $k=m=4$, by~(\ref{taubardef}) we have $\overline{\tau}\in\pi_{10}(S^{7})$. By Proposition \ref{prop:nu_n} we obtain
    \[
        \overline{\tau}\in\pi_{10}(S^7) \cong \Z/24\langle\nu_7\rangle\cong\Z/8\langle\widehat{\nu_7}\rangle\oplus\Z/3\langle\alpha_1(7)\rangle.
    \] 
    Therefore $\overline{\tau}=a_{1}\cdot\widehat{\nu}_{7}+a_{2}\cdot\alpha_{1}(7)$ for some integers $a_{1}$ and $a_{2}$ modulo $8$ and $3$ respectively. The statement of the lemma follows by left distributivity. 
\end{proof}

We also have the following homotopies, due to \cite{toda}*{Propositions 5.8 and 5.11}, which will be crucial for the computations for this case: 
\begin{equation}\label{eq:Todarel1} 
\Sigma^2\nu'\simeq 2\cdot\widehat{\nu_5}\text{\; and \;}\Sigma\nu'\circ\nu_7\simeq\ast.
\end{equation} 

\noindent The following lemma constitutes Step 2 of Strategy \ref{strategy}.

\begin{lem} \label{lem:lambda_pi7(S4)}
There exist \(\lambda_1,\lambda_2\in\pi_7(S^4)\) such that:
    \begin{enumerate}
        \item[(i)] \(\lambda_1\circ\nu_7+[\iota_4,\lambda_1]\simeq\nu_4\circ\widehat{\nu_7}\);
        \item[(ii)] \(\lambda_2\circ\nu_7+[\iota_4,\lambda_2]\simeq\nu_4\circ\alpha_1(7)\).
    \end{enumerate}
\end{lem}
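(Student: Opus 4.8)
The plan is to produce explicit classes $\lambda_1,\lambda_2\in\pi_7(S^4)$ and verify the two identities by a direct computation in $\pi_{10}(S^4)$, using the decompositions of Proposition~\ref{prop:G4HP2_htpygps}. Since $\Sigma^{k-1}f=\Sigma^3\nu_4=\nu_7$, the quantity to be controlled is $\lambda\circ\nu_7+[\iota_4,\lambda]$, and the structural inputs I would marshal are: the classical identity $[\iota_4,\iota_4]\simeq\pm(2\nu_4-\Sigma\nu')$; Lemma~\ref{lem:whitehead_prods_and_susps}, which (writing $\iota_4=\iota_4\circ\Sigma\iota_3$ and $\mu=\iota_4\circ\Sigma\mu'$) rewrites $[\iota_4,\mu]$ as a composite of $[\iota_4,\iota_4]$ with a suspension of $\mu$ whenever $\mu$ is itself a suspension; bilinearity of the Whitehead product together with the graded Jacobi identity (giving $[\iota_4,[\iota_4,\iota_4]]\simeq\ast$); Lemma~\ref{lem:coprime_order}, so that composites of a $2$-primary with a $3$-primary suspension vanish; and the Toda relations $\Sigma^2\nu'\simeq2\widehat{\nu_5}$ and $\Sigma\nu'\circ\nu_7\simeq\ast$ from~(\ref{eq:Todarel1}) together with $\nu_7\simeq\widehat{\nu_7}+\alpha_1(7)$ from Proposition~\ref{prop:nu_n} (whence also $\Sigma\nu'\circ\widehat{\nu_7}\simeq\ast$, by distributing and applying Lemma~\ref{lem:coprime_order} to the surviving term).

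For part~(ii) I would take $\lambda_2$ to be a multiple of the generator $\alpha_1(4)$ of the $\Z/3$ summand of $\pi_7(S^4)$, which is a suspension. Lemma~\ref{lem:whitehead_prods_and_susps} then gives $[\iota_4,\alpha_1(4)]\simeq[\iota_4,\iota_4]\circ\alpha_1(7)\simeq2(\nu_4\circ\alpha_1(7))-(\Sigma\nu'\circ\alpha_1(7))$; the last composite dies by coprimality, and since $\nu_4\circ\alpha_1(7)$ has order $3$ this yields $[\iota_4,\alpha_1(4)]\simeq-\nu_4\circ\alpha_1(7)$. Provided $\alpha_1(4)\circ\nu_7\simeq\ast$ — equivalently, after writing $\nu_7\simeq\widehat{\nu_7}+\alpha_1(7)$ and killing the $2$–$3$ cross term, that $\alpha_1(4)\circ\alpha_1(7)\simeq\ast$ — the choice $\lambda_2=-\alpha_1(4)$ works. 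For part~(i) a suspension alone cannot suffice: one computes $[\iota_4,\Sigma\nu']\simeq[\iota_4,\iota_4]\circ\Sigma^4\nu'\simeq[\iota_4,\iota_4]\circ2\widehat{\nu_7}\simeq4(\nu_4\circ\widehat{\nu_7})$, so suspensions realize only even multiples of $\nu_4\circ\widehat{\nu_7}$; hence $\lambda_1$ must involve the Hopf class, and I would set $\lambda_1=c\cdot\nu_4$. Writing $2\nu_4\simeq\Sigma\nu'+[\iota_4,\iota_4]$ and using bilinearity with the Jacobi vanishing gives $2[\iota_4,\nu_4]\simeq[\iota_4,\Sigma\nu']\simeq4(\nu_4\circ\widehat{\nu_7})$, so $[\iota_4,\nu_4]\simeq2s\cdot(\nu_4\circ\widehat{\nu_7})$ for some odd $s$. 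Then $c\nu_4\circ\nu_7+[\iota_4,c\nu_4]\simeq c(1+2s)(\nu_4\circ\widehat{\nu_7})+c(\nu_4\circ\alpha_1(7))$, and choosing $c$ with $c\equiv0\pmod 3$ and $c(1+2s)\equiv1\pmod 8$ (possible since $1+2s$ is a unit mod $8$) collapses this to $\nu_4\circ\widehat{\nu_7}$, proving~(i). (If Toda's tables pin down $s$, one can name $c$ explicitly, e.g. $c=3$ when $s=1$.)

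The main obstacle will be establishing $\alpha_1(4)\circ\alpha_1(7)\simeq\ast$ in $\pi_{10}(S^4)$: this composite of the two order-$3$ classes is not formal, and one must verify both that it contributes nothing to the $\Z/3\langle\nu_4\circ\alpha_1(7)\rangle$ summand and that it does not detect the unnamed $\Z/3$ summand of $\pi_{10}(S^4)$ appearing in Proposition~\ref{prop:G4HP2_htpygps}(ii); this forces a direct appeal to Toda's composition tables. A lesser technical point is the $2$-primary bookkeeping around $\nu'$ used to evaluate $[\iota_4,\Sigma\nu']$, and hence $[\iota_4,\nu_4]$ up to the sign ambiguity that the final congruence argument absorbs. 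Once these are in hand, assembling $\lambda_1$ and $\lambda_2$ and checking the two identities is a routine distributivity computation of the kind already carried out in Section~\ref{sec:exgystab}.
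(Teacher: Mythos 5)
Your treatment of part (i) runs on the same rails as the paper's argument: both take $\lambda_1$ to be a multiple of $\nu_4$ and resolve congruences in $\Z/8\oplus\Z/3$. One caveat: the graded Jacobi identity gives only $3\cdot[\iota_4,[\iota_4,\iota_4]]\simeq\ast$, not vanishing, so it does not by itself force $[\iota_4,\nu_4]$ to be purely $2$-primary; the paper sidesteps this by citing Toda and Behrens--Goodwillie. In your set-up this is harmless, since your final step demands $c\equiv0\pmod3$, which absorbs any stray $3$-primary term in $[\iota_4,\nu_4]$, so the conclusion for (i) survives.

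Part (ii) is a genuine departure from the paper, and it is where the proposal breaks. You take $\lambda_2$ proportional to $\alpha_1(4)$ and correctly flag $\alpha_1(4)\circ\alpha_1(7)\simeq\ast$ as the load-bearing unresolved point, but the composite is \emph{not} null. At $p=3$ the Serre splitting $\Omega S^4_{(3)}\simeq S^3_{(3)}\times\Omega S^7_{(3)}$ makes the suspension $E\colon\pi_9(S^3)_{(3)}\to\pi_{10}(S^4)_{(3)}$ a split injection; $\pi_9(S^3)_{(3)}\cong\Z/3$ is generated by $\alpha_1(3)\circ\alpha_1(6)$, and $\alpha_1(4)\circ\alpha_1(7)=\Sigma\big(\alpha_1(3)\circ\alpha_1(6)\big)$ is therefore a generator of precisely the unnamed $\Z/3$ summand in Proposition~\ref{prop:G4HP2_htpygps}(ii). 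Hence $\lambda_2\circ\nu_7+[\iota_4,\lambda_2]\simeq-\alpha_1(4)\circ\alpha_1(7)+\nu_4\circ\alpha_1(7)$ has a nonzero component in the unnamed summand and cannot equal $\nu_4\circ\alpha_1(7)$. No adjustment by multiples of $\alpha_1(4)$ or by $2$-primary classes can cancel that component, so $\alpha_1(4)$ is simply the wrong starting point. The paper avoids this entirely by exploiting the fact that the Hopf class sees the $3$-primary part of $\pi_{10}(S^7)$: since $4\nu_4\circ\nu_7\simeq4\nu_4\circ\widehat{\nu_7}+\nu_4\circ\alpha_1(7)$, the choice $\lambda_2=4\nu_4+\Sigma\nu'$ produces exactly $\nu_4\circ\alpha_1(7)$ once the $2$-primary contributions cancel. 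That observation, that $\nu_4\circ\alpha_1(7)$ must be sourced from the Hopf class rather than from the $3$-torsion generator of $\pi_7(S^4)$, is the missing idea in your proposal.
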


\begin{proof}
    Before finding the asserted homotopy classes we consider an element \(\Lambda\in\pi_7(S^4)\) which, via Proposition \ref{prop:G4HP2_htpygps}(ii), can be written as
    \[
        \Lambda=x\cdot(\nu_4)+y\cdot(\Sigma\nu')
    \] 
    for integers \(x\) and \(y\), with \(y\) considered modulo 4. Both the composite $\Lambda\circ\nu_{7}$ and the Whitehead product \([\iota_4,\Lambda]\) lie in the homotopy group \(\pi_{10}(S^4)\); the first aim is to express these in terms of the generators given in Proposition \ref{prop:G4HP2_htpygps}(ii). 
    
    Take \(\Lambda\circ\nu_7\). Since $\nu_{7}$ is a suspension, right-distributivity gives 
    \(\Lambda\circ\nu_7\simeq x\cdot(\nu_4\circ\nu_7)+y\cdot(\Sigma\nu'\circ\nu_7)\).  
    By~(\ref{eq:Todarel1}),
    \(\Sigma\nu'\circ\nu_7\simeq\ast\). Recalling that \(\nu_7\simeq\widehat{\nu_7}+\alpha_1(7)\), by left-distributivity we therefore have 
    \begin{equation}\label{eq:lambda-nu7 - k=4}
    \Lambda\circ\nu_7\simeq x\cdot(\nu_4\circ\widehat{\nu_7})+x\cdot(\nu_4\circ\alpha_1(7)).
    \end{equation} 
    
    Next consider \([\iota_4,\Lambda]\). The Whitehead product is additive, so we obtain 
    \[[\iota_4,\Lambda]\simeq x\cdot[\iota_4,\nu_4]+y\cdot[\iota_4,\Sigma\nu'].\]
    Applying Lemma \ref{lem:whitehead_prods_and_susps} to \([\iota_4,\Sigma\nu']\) and using $2\cdot\widehat{\nu}_{7}\simeq\Sigma^{4}\nu'$ from~(\ref{eq:Todarel1}), there are homotopies
    \begin{equation}\label{eq:Todarel2} [\iota_4,\Sigma\nu']\simeq[\iota_4,\iota_4]\circ\Sigma^{4}\nu'\simeq2\cdot([\iota_4,\iota_4]\circ\widehat{\nu_7}).\end{equation} 
    By \cite{toda}*{(5.8)}, there is a homotopy \([\iota_4,\iota_4]\simeq2\cdot\nu_4-\Sigma\nu'\) so it follows from (\ref{eq:Todarel1}) and~(\ref{eq:Todarel2}) that
    \[
    [\iota_4,\Sigma\nu']\simeq4\cdot(\nu_4\circ\widehat{\nu_7}).
    \] 
    For \([\iota_4,\nu_4]\), by \cite{toda}*{(5.13)} or \cite{behrens_goodwillie}*{Proposition 3.6.1}, we have 
    \([\iota_4,\nu_4]\simeq\pm2\cdot(\nu_4\circ\nu_7)\).
    Putting this together gives  
    \begin{equation}\label{eq:lambda-whitehead - k=4}
    [\iota_4,\Lambda]\simeq(\pm2x+4y)\cdot(\nu_4\circ\nu_7).
    \end{equation}
    Combining (\ref{eq:lambda-nu7 - k=4}) and (\ref{eq:lambda-whitehead - k=4}) therefore gives the homotopy
    \[
        \Lambda\circ\nu_7+[\iota_4,\Lambda]\simeq(x\pm2x+4y)\cdot(\nu_4\circ\widehat{\nu_7})+x\cdot(\nu_4\circ\alpha_1(7)).
    \]    
    
    For part (i), to obtain \(\lambda_1\circ\nu_7+[\iota_4,\lambda_1]\simeq\nu_4\circ\widehat{\nu_7}\) we require \(x\pm2x+4y\equiv1\text{ (mod 8)}\) and \(x\equiv0\text{ (mod 3)}\). There are two cases depending on the sign $\pm$: in the `\(+\)' case take \(x=3\) and \(y=0\), giving \(\lambda_1=3\cdot\nu_4\), and in the `\(-\)' case take \(x=y=3\), giving \(\lambda_1=3\cdot\nu_4+3\cdot\Sigma\nu'\). 
    \smallskip 
    
    As for part (ii), to obtain \(\lambda_2\circ\nu_7+[\iota_4,\lambda_2]\simeq\nu_4\circ\alpha_1(7)\) we require that \(x\pm2x+4y\equiv0\text{ (mod 8)}\) and that \(x\equiv1\text{ (mod 3)}\). In the `\(+\)' case take \(x=4\) and \(y=1\), giving \(\lambda_2=4\cdot\nu_4+\Sigma\nu'\), and in the `\(-\)' case take \(x=4\) and \(y=3\), giving \(\lambda_2=4\cdot\nu_4+3\cdot\Sigma\nu'\). 
\end{proof}

Executing Step 3 of Strategy~\ref{strategy} provides the following result for \(\mathcal{G}^4\)-stability of \(\H P^2\).

\begin{thm} \label{thm:g4hp2}
    \(\H P^2\) is \(\mathcal{G}^4\)-stable, i.e.~\(\mathcal{G}_\tau^4(\H P^2)\simeq\mathcal{G}_\omega^4(\H P^2)\) for all twistings \(\tau,\omega\in\pi_3(\mathrm{SO}(8))\). 
\end{thm}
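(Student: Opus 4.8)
The plan is to apply Strategy~\ref{strategy} with $m=4$, $k=4$ and $f=\nu_4$, using the preparatory results already assembled. By Proposition~\ref{prop:lambda_delta}, it suffices to show that for any two twistings $\tau,\omega\in\pi_3(\mathrm{SO}(8))\cong\Z$ there exists $\lambda\in\pi_7(S^4)$ with $\nu_4\circ\overline{\tau}+\lambda\circ\nu_7+[\iota_4,\lambda]\simeq\nu_4\circ\overline{\omega}$ (the sign ambiguity will not even be needed). First I would invoke Lemma~\ref{lem:where is tau-bar? HP2} to write $\nu_4\circ\overline{\tau}\simeq a_1\cdot(\nu_4\circ\widehat{\nu_7})+a_2\cdot(\nu_4\circ\alpha_1(7))$ and $\nu_4\circ\overline{\omega}\simeq b_1\cdot(\nu_4\circ\widehat{\nu_7})+b_2\cdot(\nu_4\circ\alpha_1(7))$ for integers $a_i,b_i$ modulo $8$ and $3$ respectively; here $x_1=\widehat{\nu_7}$, $x_2=\alpha_1(7)$ generate $\pi_{10}(S^7)$ and injectivity of $\nu_4\circ-$ is Step~1.

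For Step~2, Lemma~\ref{lem:lambda_pi7(S4)} already produces explicit classes $\lambda_1,\lambda_2\in\pi_7(S^4)$ with $\lambda_1\circ\nu_7+[\iota_4,\lambda_1]\simeq\nu_4\circ\widehat{\nu_7}$ and $\lambda_2\circ\nu_7+[\iota_4,\lambda_2]\simeq\nu_4\circ\alpha_1(7)$. Then Step~3 is to set $\lambda=(b_1-a_1)\cdot\lambda_1+(b_2-a_2)\cdot\lambda_2$; since $\nu_7$ is a suspension, right-distributivity gives $\lambda\circ\nu_7\simeq(b_1-a_1)\cdot(\lambda_1\circ\nu_7)+(b_2-a_2)\cdot(\lambda_2\circ\nu_7)$, and additivity of the Whitehead product gives $[\iota_4,\lambda]\simeq(b_1-a_1)\cdot[\iota_4,\lambda_1]+(b_2-a_2)\cdot[\iota_4,\lambda_2]$. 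Combining these, $\lambda\circ\nu_7+[\iota_4,\lambda]\simeq(b_1-a_1)\cdot(\nu_4\circ\widehat{\nu_7})+(b_2-a_2)\cdot(\nu_4\circ\alpha_1(7))$, so adding $\nu_4\circ\overline{\tau}$ yields exactly $b_1\cdot(\nu_4\circ\widehat{\nu_7})+b_2\cdot(\nu_4\circ\alpha_1(7))\simeq\nu_4\circ\overline{\omega}$. By Proposition~\ref{prop:lambda_delta} this gives $\mathcal{G}_\tau^4(\H P^2)\simeq\mathcal{G}_\omega^4(\H P^2)$.

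There is essentially no obstacle here: all the real work (identifying the homotopy groups, computing the relevant Whitehead products and compositions via Toda's relations, and solving the congruences $x\pm 2x+4y\equiv\varepsilon_1\,(\mathrm{mod}\,8)$, $x\equiv\varepsilon_2\,(\mathrm{mod}\,3)$) has been front-loaded into Lemmas~\ref{lem:where is tau-bar? HP2} and~\ref{lem:lambda_pi7(S4)}. The one point requiring a word of care is that $\pi_7(S^4)$ has a free $\Z\langle\nu_4\rangle$ summand, so the coefficient $x$ in $\Lambda=x\cdot\nu_4+y\cdot\Sigma\nu'$ is a genuine integer rather than a residue; but since we only need $x$ in prescribed residue classes mod~$8$ and mod~$3$, integer representatives suffice, and the two cases arising from the sign in $[\iota_4,\nu_4]\simeq\pm 2\cdot(\nu_4\circ\nu_7)$ are both handled in Lemma~\ref{lem:lambda_pi7(S4)}. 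Thus the proof is just the bookkeeping of Step~3, and I would keep it to a few lines.
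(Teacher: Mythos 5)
Your proof is correct and follows essentially the same route as the paper: invoke Lemma~\ref{lem:where is tau-bar? HP2} for Step~1, Lemma~\ref{lem:lambda_pi7(S4)} for Step~2, then set $\lambda=(b_1-a_1)\cdot\lambda_1+(b_2-a_2)\cdot\lambda_2$ and conclude via Step~3 of Strategy~\ref{strategy}. (You even write the coefficient of $\lambda_2$ correctly as $b_2-a_2$, whereas the printed proof contains the typo $b_2-b_1$.)
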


\begin{proof}
    By Lemma \ref{lem:where is tau-bar? HP2}, for any twistings \(\tau\) and \(\omega\) we have 
    \[\nu_4\circ\overline{\tau}\simeq a_1\cdot(\nu_4\circ\widehat{\nu_7})+a_2\cdot(\nu_4\circ\alpha_1(7))\text{\; and \;}\nu_4\circ\overline{\omega}\simeq b_1\cdot(\nu_4\circ\widehat{\nu_7})+b_2\cdot(\nu_4\circ\alpha_1(7))\] 
    for some integers \(a_1\) and \(b_1\) modulo 8, and \(a_2\) and \(b_2\) modulo 3. Take \(\lambda=(b_1-a_1)\cdot\lambda_1+(b_2-b_1)\cdot\lambda_2\), where $\lambda_{1}$ and $\lambda_{2}$ are as in Lemma \ref{lem:lambda_pi7(S4)}. Then Step~$3$ of Strategy~\ref{strategy} implies that $\mathcal{G}^{4}_{\tau}(\mathbb{H}P^{2})\simeq\mathcal{G}^{4}_{\omega}(\mathbb{H}P^{2})$. This holds for any choice of $\tau$ and $\omega$, so $\mathbb{H}P^{2}$ is \(\mathcal{G}^4\)-stable. 
\end{proof}

\section{Computations for \(\mathcal{G}_\tau^k(\mathbb{O}P^2)\) when \(k\leq8\)}
\label{sec:G(op2)-k=<8}

For \(\O P^2\) fix $m=8$ and \(f=\sigma_8\), and consider $k$ in the range $2\leq k\leq 14$. Proposition \ref{prop:certaink} implies $\mathcal{G}^{k}$-stability when $k=3,5,6,7,11,13$ or $14$, so we are left to investigate \(k=2,4,8,9,10\text{ and }12\). In this section we treat the first three of these cases. 

Here and in Section  \ref{sec:G(op2)-9to14}, following Toda we use the notation \(\sigma_n=\Sigma^{n-8}\sigma_8\) for \(n>8\) and note there is a homotopy class \(\sigma'\in\pi_{14}(S^7)\) that satisfies the identity \(\Sigma^2\sigma'\simeq2\cdot\widehat{\sigma_9}\).
\medskip 

\noindent 
\textbf{The $k=2$ case.} 
As \(\pi_1(\mathrm{SO}(16))\cong\Z/2\), there two gyrations, which (similar to previous sections) will be denoted by \(\mathcal{G}_0^2(\mathbb{O}P^2)\) and \(\mathcal{G}_1^2(\mathbb{O}P^2)\). Furthermore, this also implies that Step 1 of the general strategy reduces to the statement that for any twisting \(\tau\in\pi_1(\mathrm{SO}(16))\) either \(\sigma_8\circ\overline{\tau}\simeq\ast\) or \(\sigma_8\circ\overline{\tau}\simeq\sigma_8\circ\eta_{15}\). We will show that Step 2 fails, via the following lemma. 

\begin{lem} \label{lem:g2op2nonhomotopy} 
\(\eta_8\circ\sigma_9+[\iota_8,\eta_8]\not\simeq\sigma_8\circ\eta_{15}\). 
\end{lem}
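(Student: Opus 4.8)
The plan is to compute all three terms $\eta_8\circ\sigma_9$, $[\iota_8,\eta_8]$ and $\sigma_8\circ\eta_{15}$ as explicit elements of $\pi_{16}(S^8)$ using Toda's tables, and then observe that the sum on the left cannot equal the right. First I would recall from Toda that $\pi_{16}(S^8)$ has a known decomposition, with the relevant $2$-primary part containing summands generated by $\sigma_8\circ\eta_{15}$, $\eta_8\circ\sigma_9$, and the Whitehead product $[\iota_8,\iota_8]$ (equivalently, classes detected by the Hopf invariant). The key structural fact I would invoke is the classical relation expressing $[\iota_8,\iota_8]$ in terms of $\sigma_8$ and $\Sigma\sigma'$ (analogous to $[\iota_4,\iota_4]\simeq 2\nu_4-\Sigma\nu'$ used in the $\mathbb H P^2$ case, and to the EHP-sequence description of Whitehead squares of elements of Hopf invariant one). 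From this, $[\iota_8,\eta_8]\simeq[\iota_8,\iota_8]\circ\eta_{15}$ (since $\eta_8=\iota_8\circ\eta_8$ and using Lemma~\ref{lem:whitehead_prods_and_susps}-style bilinearity, or directly) can be rewritten as a combination of $\sigma_8\circ\eta_{15}$ and $\Sigma\sigma'\circ\eta_{15}$.

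Next I would pin down $\Sigma\sigma'\circ\eta_{15}$ and $\eta_8\circ\sigma_9$. Here I expect to use Toda's identities in the range around the $15$- and $16$-stems: there is an identity of the shape $\Sigma^2\sigma'\simeq 2\widehat{\sigma_9}$ (stated in the excerpt) together with relations among $\eta\sigma$, $\sigma\eta$, $\bar\nu$, $\varepsilon$ and $\mu$ in the relevant stems. The upshot I would aim to extract is that $\eta_8\circ\sigma_9+[\iota_8,\eta_8]$ lands in a proper subgroup of (or differs by a nonzero Hopf-invariant-type class from) the cyclic summand generated by $\sigma_8\circ\eta_{15}$ — most cleanly, that the coefficient of $\sigma_8\circ\eta_{15}$ in $\eta_8\circ\sigma_9+[\iota_8,\eta_8]$ is even (hence zero mod $2$), while $\sigma_8\circ\eta_{15}$ itself has that coefficient equal to $1$. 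Since $\sigma_8\circ\eta_{15}$ is nontrivial of order $2$ (it is a nonzero element of a $\Z/2$ summand of $\pi_{16}(S^8)$, being a composite of Hopf-invariant-one maps with $\eta$), the two sides cannot be homotopic.

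Concretely, the argument reduces to: (1) write $[\iota_8,\eta_8]\simeq[\iota_8,\iota_8]\circ\eta_{15}$ and substitute the EHP-expression $[\iota_8,\iota_8]\simeq \pm 2\sigma_8 \mp \Sigma\sigma' + (\text{odd torsion})$, so that $[\iota_8,\eta_8]\simeq \mp(\Sigma\sigma'\circ\eta_{15})$ because $2\sigma_8\circ\eta_{15}\simeq\ast$ and the odd-torsion part dies on $\eta$; (2) identify $\Sigma\sigma'\circ\eta_{15}$ and $\eta_8\circ\sigma_9$ within the $2$-primary part of $\pi_{16}(S^8)$ via Toda's relations, observing that neither involves an odd multiple of $\sigma_8\circ\eta_{15}$; (3) conclude that $\eta_8\circ\sigma_9+[\iota_8,\eta_8]$ has zero $\sigma_8\circ\eta_{15}$-component while $\sigma_8\circ\eta_{15}$ does not, giving the non-homotopy. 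The main obstacle will be step (2): keeping Toda's and Mimura–Toda's generators straight in $\pi_{16}(S^8)$ (which sits just above the stable range boundary for $S^8$, so one must be careful about unstable contributions and the exact form of $[\iota_8,\iota_8]$), and checking that the bookkeeping of coefficients mod $2$ genuinely separates the left-hand side from $\sigma_8\circ\eta_{15}$ rather than merely failing to prove equality.
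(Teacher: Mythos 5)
Your proposal is correct in outline but takes a genuinely different and more computational route than the paper. The paper's proof is a clean "suspension versus non-suspension" dichotomy: it cites Toda's~(5.15) for the fact that \(\sigma_8\circ\eta_{15}\) is \emph{not} a suspension, observes that \(\eta_8\circ\sigma_9=\Sigma(\eta_7\circ\sigma_8)\) is a suspension, and then notes that \([\iota_8,\eta_8]\) lies in \(\ker(E\colon\pi_{16}(S^8)\to\pi_{17}(S^9))=\Z/2\langle\Sigma\sigma'\circ\eta_{15}\rangle\), whose generator is itself a suspension — so the left-hand side is a suspension while the right-hand side is not, and the two cannot be homotopic. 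No explicit decomposition of any class in \(\pi_{16}(S^8)\) is needed, and Lemma~\ref{lem:[i_8,i_8]} plays no role. You, by contrast, expand everything in Toda's generators of \(\pi_{16}(S^8)\cong(\Z/2)^4\), rewrite \([\iota_8,\eta_8]\simeq[\iota_8,\iota_8]\circ\eta_{15}\simeq(2\sigma_8-\Sigma\sigma')\circ\eta_{15}\simeq\Sigma\sigma'\circ\eta_{15}\), and try to show that the \(\sigma_8\circ\eta_{15}\)-coefficient of the left-hand side vanishes. This does work — and the step you flag as the obstacle (that \(\eta_8\circ\sigma_9\) carries no \(\sigma_8\circ\eta_{15}\)-component) can be justified either by quoting the same Toda fact the paper uses (that \(\sigma_8\circ\eta_{15}\) is not a suspension, so the subgroup of suspensions is complementary to \(\Z/2\langle\sigma_8\circ\eta_{15}\rangle\)) or by applying \(E\) and using \(\eta_9\circ\sigma_{10}\simeq\overline\nu_9+\varepsilon_9\ne\sigma_9\circ\eta_{16}\) together with the computation of \(\ker E\). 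In other words, the facts you need in step~(2) are precisely the facts the paper uses directly; your route just packages them as coefficient bookkeeping rather than a one-line detection. What your approach buys is that it avoids having to observe up front that \(\sigma_8\circ\eta_{15}\) is non-suspensive, and instead follows the same template as the paper's \(\H P^2\) computation (Lemmas~\ref{lem:eta-nu} and~\ref{lem:lambda_pi7(S4)}), which makes it more uniform with the surrounding arguments; what the paper's approach buys is brevity and avoidance of the delicate decomposition of \(\eta_8\circ\sigma_9\) in an unstable group. A small caution on your step~(1): Lemma~\ref{lem:[i_8,i_8]} gives \([\iota_8,\iota_8]\simeq 2\sigma_8-\Sigma\sigma'\) on the nose, with no extra odd-torsion correction; the extra odd torsion you hedge against is not present (and in any case would die on composition with the order-two class \(\eta_{15}\)), so that caveat is harmless but unnecessary.
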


\begin{proof} 
    First note that by \cite{toda}*{(5.15)}, the composite \(\sigma_8\circ\eta_{15}\) is not homotopic to a suspension. On the other hand, the composite $\eta_{8}\circ\sigma_{9}$ is a suspension, being homotopic to $\Sigma(\eta_{7}\circ\sigma_{8})$. 
    By \cite{toda}*{p.~63} the kernel of the suspension map \(E:\pi_{16}(S^8)\rightarrow\pi_{17}(S^9)\) is generated by \(\Sigma\sigma'\circ\eta_{15}\). As this kernel is generated by a suspension, the Whitehead product \([\iota_8,\eta_8]\), being an element of this kernel, is also a suspension. This implies that \(\eta_8\circ\sigma_9+[\iota_8,\eta_8]\) is a suspension since it is a sum of suspensions. Thus \(\eta_8\circ\sigma_9+[\iota_8,\eta_8]\not\simeq\sigma_8\circ\eta_{15}\) since the left side is a suspension while right side is not. 
\end{proof} 

\begin{thm} \label{thm:g2op2}
    \(\O P^2\) is not \(\mathcal{G}^2\)-stable, i.e., \(\mathcal{G}_0^2(\mathbb{O}P^2)\not\simeq\mathcal{G}_1^2(\mathbb{O}P^2)\).
\end{thm}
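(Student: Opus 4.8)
The plan is to follow the template of Theorem~\ref{thm:g2hp2}, reducing via Proposition~\ref{prop:lambda_delta} and Strategy~\ref{strategy} to the failure of Step~2. Here $m=8$, $k=2$ and $f=\sigma_8$, so $\Sigma^{k-1}f=\sigma_9$ and $\pi_{m+k-1}(S^m)=\pi_9(S^8)\cong\Z/2\langle\eta_8\rangle$; moreover $\pi_1(\mathrm{SO}(16))\cong\Z/2$, so there are exactly two twistings: the trivial one $\omega$, for which $\overline{\omega}\simeq\ast$ by Lemma~\ref{twisting12}(iii), and a non-trivial one $\tau$, for which Corollary~\ref{cor:tau-bar_non-trivial} (applicable since $2\leq k\leq 2m-2$ and $k\equiv 2\pmod 8$) gives that $\overline{\tau}$ is non-trivial, hence $\overline{\tau}\simeq\eta_{15}$ in $\pi_{16}(S^{15})\cong\Z/2\langle\eta_{15}\rangle$ and $\sigma_8\circ\overline{\tau}\simeq\sigma_8\circ\eta_{15}$. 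Since the right-hand side $\pm\sigma_8\circ\overline{\omega}$ of~(\ref{eq:delta_def}) is null homotopic for the trivial twisting $\omega$, the ambient sign $\pm$ is irrelevant, and Proposition~\ref{prop:lambda_delta} says that $\mathcal{G}_0^2(\mathbb{O}P^2)\simeq\mathcal{G}_1^2(\mathbb{O}P^2)$ would force the existence of a class $\lambda\in\pi_9(S^8)$ with
\[
\sigma_8\circ\eta_{15}+\lambda\circ\sigma_9+[\iota_8,\lambda]\simeq\ast.
\]

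Then I would run through the two possibilities $\lambda\simeq\ast$ and $\lambda\simeq\eta_8$ permitted by $\pi_9(S^8)\cong\Z/2$. If $\lambda\simeq\ast$, the displayed relation forces $\sigma_8\circ\eta_{15}\simeq\ast$, contradicting the non-triviality of $\sigma_8\circ\eta_{15}$ recorded in~\cite{toda}*{(5.15)}. If $\lambda\simeq\eta_8$, then since $\sigma_8\circ\eta_{15}$ has order $2$ the relation rearranges to $\eta_8\circ\sigma_9+[\iota_8,\eta_8]\simeq\sigma_8\circ\eta_{15}$, which is exactly what Lemma~\ref{lem:g2op2nonhomotopy} forbids. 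Hence no admissible $\lambda$ exists, and Proposition~\ref{prop:lambda_delta} yields $\mathcal{G}_0^2(\mathbb{O}P^2)\not\simeq\mathcal{G}_1^2(\mathbb{O}P^2)$.

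The genuine content is Lemma~\ref{lem:g2op2nonhomotopy}, already established, whose proof turns on the ``suspension versus non-suspension'' dichotomy in $\pi_{16}(S^8)$: $\sigma_8\circ\eta_{15}$ is not a suspension, whereas $\eta_8\circ\sigma_9+[\iota_8,\eta_8]$ is, using that the kernel of $E\colon\pi_{16}(S^8)\to\pi_{17}(S^9)$ is generated by the suspension $\Sigma\sigma'\circ\eta_{15}$. Granting that lemma, the present proof is essentially immediate; the only minor points needing care are the identification $\overline{\tau}\simeq\eta_{15}$ and the observation that the $\pm$ in Proposition~\ref{prop:lambda_delta}, together with the negation arising in the $\lambda\simeq\eta_8$ case, can be absorbed because everything in sight has order $2$. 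I do not expect any serious obstacle beyond what Lemma~\ref{lem:g2op2nonhomotopy} already supplies.
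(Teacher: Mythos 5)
Your proof is correct and follows essentially the same route as the paper: reduce via Proposition~\ref{prop:lambda_delta} to the non-existence of $\lambda\in\pi_9(S^8)\cong\Z/2\langle\eta_8\rangle$ with $\sigma_8\circ\eta_{15}+\lambda\circ\sigma_9+[\iota_8,\lambda]\simeq\ast$, then dispatch $\lambda\simeq\ast$ by Toda's nontriviality of $\sigma_8\circ\eta_{15}$ and $\lambda\simeq\eta_8$ by Lemma~\ref{lem:g2op2nonhomotopy}. You merely make explicit the order-two rearrangement and the sign-absorption that the paper leaves implicit.
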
 

\begin{proof} 
    We will show that there is no \(\lambda\in\pi_{9}(S^{8})\cong\mathbb{Z}/2\langle\eta_{8}\rangle\) satisfying Proposition \ref{prop:lambda_delta}. Proving \(\mathcal{G}^2\)-instability therefore reduces to checking that there exists no \(\lambda\) such that 
    \begin{equation}\label{g2op2eqn}
        \sigma_{8}\circ\eta_{15}+\lambda\circ\sigma_9+[\iota_8,\lambda]\simeq\ast.
    \end{equation} 
    If $\lambda\simeq\ast$ then~(\ref{g2op2eqn}) cannot hold since, by~\cite{toda}*{Theorem 7.1}, $\sigma_{8}\circ\eta_{15}\not\simeq\ast$. If $\lambda\simeq\eta_{8}$ then Lemma~\ref{lem:g2op2nonhomotopy} shows that~(\ref{g2op2eqn}) cannot hold. Therefore there is no $\lambda$ such that~(\ref{g2op2eqn}) holds, as required.  
\end{proof}

\noindent 
\textbf{The $k=4$ case}: There is a gyration $\mathcal{G}^{4}_{\tau}(\mathbb{O}P^{2})$ for each $\tau\in\pi_{3}(SO(16))\cong\mathbb{Z}$. We first give three results describing the relevant homotopy groups, generators and relations. 

\begin{prop}[Toda]\label{prop:sigma_n}
    For \(n>8\) there are group isomorphisms
    \[
        \pi_{n+7}(S^n) \cong \Z/240\langle\sigma_n\rangle \cong \Z/16\langle\widehat{\sigma}_n\rangle\oplus\Z/5\langle\widetilde{\alpha_1}(n)\rangle\oplus\Z/3\langle\alpha_2(n)\rangle
    \]
    where the second isomorphism comes from writing \(\sigma_{n}\simeq\widehat{\sigma}_{n}+\widetilde{\alpha}_1(n)+\alpha_2(n)\). The notation \(\widetilde{\alpha_1}(n)\) is used to distinguish the 5-torsion class from the 3-torsion class \(\alpha_1(n)\). \qed
\end{prop}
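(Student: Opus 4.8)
Since this is a result of Toda, the argument is essentially a matter of citation and bookkeeping. First I would observe that for $n > 8$ the group $\pi_{n+7}(S^n)$ lies in the stable range: the inequality $n+7 \leq 2n-2$ holds precisely when $n \geq 9$, so by the Freudenthal suspension theorem the suspension $\pi_{n+7}(S^n) \to \pi_{n+8}(S^{n+1})$ is an isomorphism for all $n \geq 9$, and the common value is the seventh stable stem $\pi_7^s$. Toda's computation (see \cite{toda}) identifies $\pi_7^s \cong \Z/240$, a generator being $\sigma_n = \Sigma^{n-8}\sigma_8$, the iterated suspension of the Hopf map $\sigma_8 \colon S^{15} \to S^8$; this gives the first isomorphism.

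For the second isomorphism I would factor $240 = 2^4 \cdot 3 \cdot 5$ and invoke the Chinese Remainder Theorem, which splits $\Z/240$ as the direct sum of its $2$-, $3$- and $5$-primary components, cyclic of orders $16$, $3$ and $5$ respectively. Following Toda's conventions one then names generators $\widehat{\sigma}_n$ (order $16$, the $2$-primary component of $\sigma_n$), $\alpha_2(n)$ (order $3$), and $\widetilde{\alpha}_1(n)$ (order $5$, the tilde chosen to distinguish this $5$-torsion class from the $3$-torsion class $\alpha_1(n)$ used elsewhere in the paper). Taking these to be the primary components of $\sigma_n$ itself yields $\sigma_n \simeq \widehat{\sigma}_n + \widetilde{\alpha}_1(n) + \alpha_2(n)$, as asserted.

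There is no genuine obstacle here, since every ingredient is classical; the only point needing care is to fix the names $\widehat{\sigma}_n$, $\widetilde{\alpha}_1(n)$ and $\alpha_2(n)$ compatibly with the entries in Toda's tables, because these specific classes — together with the relation $\Sigma^2\sigma' \simeq 2\cdot\widehat{\sigma}_9$ recorded just before the proposition — are exactly what the subsequent computations for $\mathcal{G}^4_\tau(\O P^2)$ will rely on.
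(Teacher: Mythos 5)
Your reconstruction is correct and matches the paper's approach, which is simply to cite Toda and record the primary decomposition: the paper gives no proof (the $\qed$ appears immediately after the statement). The stable-range observation via Freudenthal, the identification with $\pi_7^s\cong\Z/240$, and the CRT splitting into $2$-, $3$- and $5$-primary components with Toda's generator names are exactly the standard bookkeeping being invoked.
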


\begin{prop}[Toda]\label{prop:G4OP2_htpygps}
    There is a group isomorphism
    \[
        \pushQED{\qed} 
        \pi_{18}(S^8) \cong \Z/8\langle\sigma_8\circ\widehat{\nu}_{15}\rangle\oplus\Z/8\langle\widehat{\nu_8}\circ\widehat{\sigma}_{11}\rangle\oplus\Z/3\langle\sigma_8\circ\alpha_1(15)\rangle\oplus\Z/3\langle\beta_1(8)\rangle\oplus\Z/2
        \qedhere\popQED
    \]
\end{prop}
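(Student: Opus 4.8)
The plan is to reconstruct this as the classical EHP (James fibration) computation, which is how Toda obtains it in \cite{toda}; one works one prime at a time, using $\pi_{18}(S^8)\cong\bigoplus_{p}\pi_{18}(S^8)_{(p)}$. For each odd prime $p$, James's $p$-local splitting $\Omega S^{8}\simeq_{(p)}S^{7}\times\Omega S^{15}$ gives $\pi_{18}(S^8)_{(p)}\cong\pi_{17}(S^7)_{(p)}\oplus\pi_{18}(S^{15})_{(p)}$, where the second summand is carried into $\pi_{18}(S^8)$ by composition with the Hopf-invariant-one map $\sigma_8$. Both $\pi_{17}(S^7)$ and $\pi_{18}(S^{15})$ lie in (or just below) the stable range at odd primes, so their $p$-components are read off from $\pi_{10}^{s}$ and $\pi_{3}^{s}$: at $p=3$ this yields $\Z/3\langle\beta_1(7)\rangle$ and $\Z/3\langle\alpha_1(15)\rangle$, which suspend to $\beta_1(8)$ and $\sigma_8\circ\alpha_1(15)$, while for $p\ge 5$ both groups vanish since $\pi_{10}^{s}$ and $\pi_{3}^{s}$ have no $p$-torsion. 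This accounts for the summand $\Z/3\langle\sigma_8\circ\alpha_1(15)\rangle\oplus\Z/3\langle\beta_1(8)\rangle$ and for the absence of any $\Z/5$ or higher factor.

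The prime $2$ is the substantive step. Here one uses the James exact sequence
\[
\cdots\to\pi_{19}(S^{15})\xrightarrow{P}\pi_{17}(S^7)\xrightarrow{E}\pi_{18}(S^8)\xrightarrow{H}\pi_{18}(S^{15})\xrightarrow{P}\pi_{16}(S^7)\to\cdots,
\]
feeding in the $2$-primary components $\pi_{18}(S^{15})\supset\Z/8\langle\widehat{\nu}_{15}\rangle$ together with $\pi_{17}(S^7)$, $\pi_{16}(S^7)$ and $\pi_{19}(S^{15})$ from Toda's tables. The key point is to evaluate the Whitehead-product homomorphism $P$ on $\widehat{\nu}_{15}$ — this is where the relation $\Sigma^{2}\sigma'\simeq 2\cdot\widehat{\sigma}_9$ from the preamble and the known description of $[\iota_8,\eta_8]$ come in — which pins down $\mathrm{im}(H)$ and hence $\mathrm{ker}(E)=\mathrm{im}(P)$. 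One then checks that $\sigma_8\circ\widehat{\nu}_{15}$ is detected by $H$ (mapping onto the $\Z/8$ coming from $\pi_{18}(S^{15})$), whereas $\widehat{\nu_8}\circ\widehat{\sigma}_{11}$, whose Hopf invariant vanishes since $\nu_8$ desuspends, and the remaining $\Z/2$ class arise from $E(\pi_{17}(S^7))$; finally one resolves the extension problems (whether the two $\Z/8$ factors and the $\Z/2$ split off) using the composition relations among $\eta$, $\nu_8$, $\sigma_8$ and $\sigma_{11}$ recorded in \cite{toda}. Assembling the $2$-, $3$- and vanishing higher-primary pieces gives the stated isomorphism.

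The main obstacle is this $2$-primary extension problem: once the orders of the images of $E$ and $H$ are known, determining the actual group structure of $\pi_{18}(S^8)_{(2)}$ still requires secondary data (Toda brackets and the fine composition relations in the $8$-, $9$- and $10$-stems), which is precisely the bookkeeping carried out in Toda's monograph. For that reason we take the proposition directly from \cite{toda}.
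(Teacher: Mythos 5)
The paper gives no proof of this proposition: it is stated with the attribution ``[Toda]'' and a terminal $\qed$, deferring entirely to Toda's monograph. Your sketch correctly identifies the EHP/James-splitting machinery Toda uses and, like the paper, ultimately defers to \cite{toda} for the $2$-primary bookkeeping, so your proposal is in line with the paper's treatment. One small correction to the sketch: in the James sequence $\pi_{19}(S^{15})\to\pi_{17}(S^7)\xrightarrow{E}\pi_{18}(S^8)\xrightarrow{H}\pi_{18}(S^{15})\xrightarrow{P}\pi_{16}(S^7)$ the boundary $P$ is governed by Whitehead products on $S^7$ (e.g.\ $P(\Sigma^{8}\nu_7)=[\iota_7,\nu_7]$), so the relevant Whitehead-product data concerns $[\iota_7,\,\cdot\,]$ rather than $[\iota_8,\eta_8]$, which belongs to the $k=2$ calculation for $\O P^2$ elsewhere in the paper.
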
 

\begin{lem}[Toda, Lemma 5.14]\label{lem:[i_8,i_8]}
    There is a homotopy \([\iota_8,\iota_8]\simeq2\cdot\sigma_8-\Sigma\sigma'\). \qed
\end{lem}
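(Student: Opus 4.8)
The plan is to quote this directly from Toda — it is Lemma~5.14 of \cite{toda} — since in the paper it serves only as an input; no reproof is needed. For completeness, here is the shape of the argument I would reproduce were a self-contained proof wanted.

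The crucial point is that $\sigma_8\in\pi_{15}(S^8)$ has Hopf invariant one (it is the attaching map of the top cell of $\mathbb{O}P^2$, coming from the octonionic Hopf fibration). Hence, in the EHP sequence
\[
 \pi_{14}(S^7)\xrightarrow{\ E\ }\pi_{15}(S^8)\xrightarrow{\ H\ }\pi_{15}(S^{15}),
\]
the James--Hopf invariant $H$ sends $\sigma_8$ to a generator $\iota_{15}$ of $\pi_{15}(S^{15})\cong\Z$. Since the Hopf invariant of the Whitehead square of the fundamental class of an even-dimensional sphere is $\pm 2$, we get $H([\iota_8,\iota_8])=\pm 2\,\iota_{15}=\pm 2\, H(\sigma_8)$, so $[\iota_8,\iota_8]\mp 2\sigma_8\in\ker H=\operatorname{im}E$ by exactness. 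Thus $[\iota_8,\iota_8]\simeq\pm 2\sigma_8+\Sigma\gamma$ for some $\gamma\in\pi_{14}(S^7)$.

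It then remains to fix the sign and to identify $\gamma$ with $\sigma'$. Here I would use that every Whitehead product suspends trivially, so applying $\Sigma$ to the relation above constrains $\Sigma^2\gamma\in\pi_{16}(S^9)$; comparing this with the relation $\Sigma^2\sigma'\simeq 2\widehat{\sigma}_9$ recorded before the statement, together with Toda's determination of $\pi_{14}(S^7)$, $\pi_{15}(S^8)$, of $\ker\!\big(E\colon\pi_{14}(S^7)\to\pi_{15}(S^8)\big)$, and of the relevant odd-primary summands, leads to $[\iota_8,\iota_8]\simeq 2\sigma_8-\Sigma\sigma'$. This final identification is the main obstacle: the coset $[\iota_8,\iota_8]+2\sigma_8$ of $\operatorname{im}E$ contains many classes, and isolating the representative $-\Sigma\sigma'$ requires exactly the kind of careful composition-method bookkeeping carried out in Chapter~V of \cite{toda}. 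Since all of this is already in the literature, in the paper I would not reproduce it and would cite \cite{toda}*{Lemma~5.14} directly.
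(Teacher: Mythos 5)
Your plan is exactly what the paper does: the lemma is stated with an attribution to Toda and a \(\qed\), i.e.\ the paper treats it as a quoted input and gives no proof. Your optional EHP-sequence sketch (Hopf invariant one for \(\sigma_8\), Hopf invariant \(\pm 2\) for the Whitehead square, exactness to land in the image of \(E\), then pin down the suspension class) is a sound outline of the argument behind Toda's Lemma~5.14, and you correctly flag that identifying the class in \(\operatorname{im}E\) as \(-\Sigma\sigma'\) is the nontrivial bookkeeping best left to the citation.
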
 

Step 1 of Strategy \ref{strategy} is given by the following lemma.

\begin{lem}\label{lem:G4OP2_tau-bar}
    For any \(\tau\in\pi_{3}(\mathrm{SO}(16))\) there exist integers \(a_1\) and \(a_2\), modulo 8 and 3 respectively, such that \(\sigma_8\circ\overline{\tau}\simeq a_1\cdot(\sigma_8\circ\widehat{\nu}_{15})+a_2\cdot(\sigma_8\circ\alpha_1(15))\).
\end{lem}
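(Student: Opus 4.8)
The plan is to mirror the argument used for Lemma~\ref{lem:where is tau-bar? HP2} in the quaternionic case, the only difference being a shift in the relevant dimensions. First I would pin down which homotopy group contains $\overline{\tau}$: with $m=8$ and $k=4$, the definition~(\ref{taubardef}) places $\overline{\tau}$ in $\pi_{2m+k-2}(S^{2m-1})=\pi_{18}(S^{15})$. Observing that $18 = 15+3$, this is precisely a $3$-stem homotopy group of a sphere in the stable range.

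Next I would invoke Proposition~\ref{prop:nu_n} with $n=15$, which gives the isomorphisms $\pi_{18}(S^{15})\cong\Z/24\langle\nu_{15}\rangle\cong\Z/8\langle\widehat{\nu}_{15}\rangle\oplus\Z/3\langle\alpha_1(15)\rangle$. Consequently there are integers $a_1$ modulo $8$ and $a_2$ modulo $3$ with $\overline{\tau}\simeq a_1\cdot\widehat{\nu}_{15}+a_2\cdot\alpha_1(15)$. Post-composing with $\sigma_8$ and using that the sum of maps out of a suspension distributes on the left when composed with a fixed map then yields $\sigma_8\circ\overline{\tau}\simeq a_1\cdot(\sigma_8\circ\widehat{\nu}_{15})+a_2\cdot(\sigma_8\circ\alpha_1(15))$, which is the assertion.

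There is essentially no obstacle in this argument; the only points requiring care are the index bookkeeping — confirming that $2m+k-2=18$ and $2m-1=15$ so that Proposition~\ref{prop:nu_n} genuinely applies — and checking that $k=4$ lies in the admissible range $2\le k\le 2m-2=14$ so that the description of $\overline{\tau}$ coming from Part~I (via~(\ref{taubardef}) and Lemma~\ref{twisting12}) is valid. Both are immediate.
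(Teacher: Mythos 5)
Your argument is correct and coincides with the paper's proof: both identify $\overline{\tau}\in\pi_{18}(S^{15})$ via~(\ref{taubardef}), apply Proposition~\ref{prop:nu_n} to decompose this group as $\Z/8\langle\widehat{\nu}_{15}\rangle\oplus\Z/3\langle\alpha_1(15)\rangle$, and finish by left-distributivity after post-composing with $\sigma_8$. The index bookkeeping you flag is exactly what the paper verifies implicitly, so there is nothing to add.
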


\begin{proof} 
    With $k=4$ and $m=8$, by~(\ref{taubardef}) we have $\overline{\tau}\in\pi_{18}(S^{15})$. By Proposition~\ref{prop:nu_n} we obtain 
    \[
        \overline{\tau}\in\pi_{18}(S^{15}) \cong \Z/8\langle\widehat{\nu}_{15}\rangle\oplus\Z/3\langle\alpha_1(15)\rangle.
    \] 
    Therefore $\overline{\tau}=a_{1}\cdot\widehat{\nu}_{15}+a_{2}\cdot \alpha_{1}(15)$ for some integers $a_{1}$ and $a_{2}$ modulo $8$ and $3$ respectively. The statement of the lemma follows by left distributivity.
\end{proof}

Now we turn to Step 2 of Strategy \ref{strategy}. For \(\lambda\in\pi_{11}(S^8)\), by Proposition \ref{prop:nu_n} we have 
\[
    \lambda\simeq x\cdot\widehat{\nu}_8+y\cdot\alpha_1(8)
\] 
for integers \(x\) and \(y\) considered modulo 8 and 3, respectively. 

\begin{lem}\label{lem:xyxi} 
    There is an odd integer $\xi$ such that 
    \[\lambda\circ\sigma_{11}+[\iota_8,\lambda]\simeq2x\cdot(\sigma_8\circ\widehat{\nu}_{15})+(x-x\xi)\cdot(\widehat{\nu}_8\circ\widehat{\sigma}_{11})+2y\cdot(\sigma_8\circ\alpha_1(15)).\] 
\end{lem}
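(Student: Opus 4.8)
The plan is to expand $\lambda\circ\sigma_{11}$ and $[\iota_8,\lambda]$ separately using $\lambda\simeq x\cdot\widehat{\nu}_8+y\cdot\alpha_1(8)$ and then collect terms against the basis of $\pi_{18}(S^8)$ from Proposition~\ref{prop:G4OP2_htpygps}. Since $\sigma_{11}=\Sigma^{3}\sigma_8$ is a suspension, right-distributivity gives $\lambda\circ\sigma_{11}\simeq x\cdot(\widehat{\nu}_8\circ\sigma_{11})+y\cdot(\alpha_1(8)\circ\sigma_{11})$, while the additivity of the Whitehead product in its second variable gives $[\iota_8,\lambda]\simeq x\cdot[\iota_8,\widehat{\nu}_8]+y\cdot[\iota_8,\alpha_1(8)]$. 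The remaining work is to identify each of the four pieces in $\pi_{18}(S^8)$.

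For the compositions with $\sigma_{11}$, substitute $\sigma_{11}\simeq\widehat{\sigma}_{11}+\widetilde{\alpha_1}(11)+\alpha_2(11)$ from Proposition~\ref{prop:sigma_n}. As $\widehat{\nu}_8$ has order a power of $2$ while $\widetilde{\alpha_1}(11)$ and $\alpha_2(11)$ have orders $5$ and $3$ and are suspensions, Lemma~\ref{lem:coprime_order} kills the odd-primary terms and $\widehat{\nu}_8\circ\sigma_{11}\simeq\widehat{\nu}_8\circ\widehat{\sigma}_{11}$. Likewise $\alpha_1(8)$ has order $3$, so $\alpha_1(8)\circ\widehat{\sigma}_{11}$ (orders $3$ and $16$) and $\alpha_1(8)\circ\widetilde{\alpha_1}(11)$ (orders $3$ and $5$) vanish by Lemma~\ref{lem:coprime_order}, leaving $\alpha_1(8)\circ\sigma_{11}\simeq\alpha_1(8)\circ\alpha_2(11)$, which is null homotopic by \cite{toda} (the $3$-primary part of this stem). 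Hence $\lambda\circ\sigma_{11}\simeq x\cdot(\widehat{\nu}_8\circ\widehat{\sigma}_{11})$.

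For the Whitehead products, $\widehat{\nu}_8=\Sigma\widehat{\nu}_7$ and $\alpha_1(8)=\Sigma\alpha_1(7)$ are suspensions, so Lemma~\ref{lem:whitehead_prods_and_susps} gives $[\iota_8,\widehat{\nu}_8]\simeq[\iota_8,\iota_8]\circ\widehat{\nu}_{15}$ and $[\iota_8,\alpha_1(8)]\simeq[\iota_8,\iota_8]\circ\alpha_1(15)$. Applying Lemma~\ref{lem:[i_8,i_8]}, namely $[\iota_8,\iota_8]\simeq2\cdot\sigma_8-\Sigma\sigma'$, and right-distributing over the suspensions $\widehat{\nu}_{15}$ and $\alpha_1(15)$, yields $[\iota_8,\widehat{\nu}_8]\simeq2\cdot(\sigma_8\circ\widehat{\nu}_{15})-(\Sigma\sigma'\circ\widehat{\nu}_{15})$ and $[\iota_8,\alpha_1(8)]\simeq2\cdot(\sigma_8\circ\alpha_1(15))-(\Sigma\sigma'\circ\alpha_1(15))$; the last term is null homotopic by Lemma~\ref{lem:coprime_order}, since $\Sigma\sigma'$ has $2$-power order while $\alpha_1(15)$ has order $3$ and is a suspension.

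The main obstacle is the remaining term $\Sigma\sigma'\circ\widehat{\nu}_{15}\in\pi_{18}(S^8)$. It is $2$-primary, hence lies in $\Z/8\langle\sigma_8\circ\widehat{\nu}_{15}\rangle\oplus\Z/8\langle\widehat{\nu}_8\circ\widehat{\sigma}_{11}\rangle\oplus\Z/2$, and one invokes Toda's $2$-primary calculation \cite{toda} to identify it as $\xi\cdot(\widehat{\nu}_8\circ\widehat{\sigma}_{11})$ for an odd integer $\xi$, with vanishing $\sigma_8\circ\widehat{\nu}_{15}$- and $\Z/2$-components. This is precisely the point where only the parity of $\xi$ is available: the relation $\Sigma^{2}\sigma'\simeq2\cdot\widehat{\sigma}_9$ together with the stable equality of $\sigma\circ\nu$ and $\nu\circ\sigma$ constrains $\Sigma\sigma'\circ\widehat{\nu}_{15}$ after one suspension, but does not on its own pin down the $\widehat{\nu}_8\circ\widehat{\sigma}_{11}$-coefficient on the nose, so extracting that it is odd (and that the $\sigma_8\circ\widehat{\nu}_{15}$-coefficient vanishes) requires Toda's finer computation. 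Granting this, I would assemble
\[
\lambda\circ\sigma_{11}+[\iota_8,\lambda]\simeq x\cdot(\widehat{\nu}_8\circ\widehat{\sigma}_{11})+x\cdot\big(2\cdot(\sigma_8\circ\widehat{\nu}_{15})-\xi\cdot(\widehat{\nu}_8\circ\widehat{\sigma}_{11})\big)+2y\cdot(\sigma_8\circ\alpha_1(15)),
\]
and regroup to obtain $2x\cdot(\sigma_8\circ\widehat{\nu}_{15})+(x-x\xi)\cdot(\widehat{\nu}_8\circ\widehat{\sigma}_{11})+2y\cdot(\sigma_8\circ\alpha_1(15))$, as claimed.
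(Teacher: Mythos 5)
Your proof is correct and takes essentially the same route as the paper: expand $\lambda$ against the generators, use Lemma~\ref{lem:coprime_order} to kill odd/even-primary mixed composites, apply Lemma~\ref{lem:whitehead_prods_and_susps} and Lemma~\ref{lem:[i_8,i_8]} to the Whitehead product, and appeal to Toda for $\Sigma\sigma'\circ\widehat{\nu}_{15}\simeq\xi\cdot(\widehat{\nu}_8\circ\widehat{\sigma}_{11})$ with $\xi$ odd (the paper's reference is \cite{toda}*{(7.19)}). One caution on a side remark: your parenthetical justification that $\alpha_1(8)\circ\alpha_2(11)$ is null homotopic ``by the $3$-primary part of this stem'' reads as if that part vanishes, but it does not --- $\pi_{18}(S^8)$ contains the nonzero $3$-torsion class $\beta_1(8)$; the actual reason, which the paper uses, is \cite{toda}*{Lemma~13.8}, giving $\alpha_1(8)\circ\alpha_2(11)\simeq -3\cdot\beta_1(8)$, which is null because $\beta_1(8)$ has order $3$.
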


\begin{proof}  
    As in Proposition \ref{prop:sigma_n}, write \(\sigma_{11}\simeq\widehat{\sigma}_{11}+\widetilde{\alpha}_1(11)+\alpha_2(11)\). Since all these homotopy classes are suspensions we may distribute on the left. Using this and repeated applications of Lemma \ref{lem:coprime_order} we obtain
    \begin{align*}
        \lambda\circ\sigma_{11} & \simeq (x\cdot\widehat{\nu}_8+y\cdot\alpha_1(8))\circ(\widehat{\sigma}_{11}+\widetilde{\alpha_1}(11)+\alpha_2(11)) \\
        & \simeq x\cdot(\widehat{\nu}_8\circ\widehat{\sigma}_{11})+y\cdot(\alpha_1(8)\circ\alpha_2(15)).
    \end{align*}
    By \cite{toda}*{Lemma 13.8}, the composite \(\alpha_1(8)\circ\alpha_2(15)\) is homotopic to \((-3)\cdot\beta_1(8)\). By Proposition~\ref{prop:G4OP2_htpygps} the class \(\beta_1(8)\) has order 3, so we obtain 
    \begin{equation}\label{eq:lambda_sigma - k=4}
        \lambda\circ\sigma_{11}\simeq x\cdot(\widehat{\nu}_8\circ\widehat{\sigma}_{11}).
    \end{equation}
    
    For \([\iota_8,\lambda]\), Proposition \ref{prop:nu_n} implies that \(\lambda\) is suspension, so applying Lemma \ref{lem:whitehead_prods_and_susps} gives \[[\iota_8,\lambda]\simeq[\iota_8,\iota_8]\circ\Sigma^7\lambda.\] By Lemma \ref{lem:[i_8,i_8]} we have \([\iota_8,\iota_8]\simeq2\cdot\sigma_8-\Sigma\sigma'\). Further, \cite{toda}*{(7.19)} implies that there exists an odd integer \(\xi\) such that \(\Sigma\sigma'\circ\widehat{\nu}_{15}\simeq\xi\cdot\widehat{\nu}_8\circ\widehat{\sigma}_{11}\). Therefore  
    \begin{equation}\label{eq:[iota8,lamda] - k=4}
        [\iota_8,\lambda]\simeq(2\cdot\sigma_8-\Sigma\sigma')\circ(x\cdot\widehat{\nu}_{15}+y\cdot\alpha_1(15))\simeq 2x\cdot(\sigma_8\circ\widehat{\nu}_{15})-x\xi\cdot(\widehat{\nu}_8\circ\widehat{\sigma}_{11})+2y\cdot(\sigma_8\circ\alpha_1(15))
    \end{equation}
    since \(\Sigma\sigma'\circ\alpha_1(15)\) is null homotopic by Lemma \ref{lem:coprime_order}.
    
    Combining (\ref{eq:lambda_sigma - k=4}) and  (\ref{eq:[iota8,lamda] - k=4}) implies that 
    \[\lambda\circ\sigma_{11}+[\iota_8,\lambda]\simeq2x\cdot(\sigma_8\circ\widehat{\nu}_{15})+(x-x\xi)\cdot(\widehat{\nu}_8\circ\widehat{\sigma}_{11})+2y\cdot(\sigma_8\circ\alpha_1(15))\] 
    as asserted. 
\end{proof} 

\begin{rem}\label{rem:xi_postpone} 
In this case, Step 3 of Strategy~\ref{strategy} aims for a homotopy 
\begin{equation} \label{xiremarkeqn} 
    \sigma_{8}\circ\overline{\tau}+\lambda\circ\sigma_{11}+[\iota_{8},\lambda]\simeq\pm\sigma_{8}\circ\overline{\omega}. 
\end{equation} 
Both sides of the homotopy are elements in $\pi_{18}(S^{8})$, which by Lemma~\ref{lem:G4OP2_tau-bar} has generators that include $\sigma_{8}\circ\overline{\nu}_{15}$, $\widehat{\nu}_{8}\circ\overline{\sigma}_{11}$ and $\sigma_{8}\circ\alpha_{1}(15)$. By Lemma~\ref{lem:xyxi}, the left side of~(\ref{xiremarkeqn}) involves $\widehat{\nu}_{8}\circ\widehat{\sigma}_{11}$ with coefficient $(x-x\xi)$. By Lemma~\ref{lem:G4OP2_tau-bar}, the right side of~(\ref{xiremarkeqn}) involves $\widehat{\nu}_{8}\circ\widehat{\sigma}_{11}$ with coefficient $0$. Therefore it must be the case that \(x-x\xi\equiv0\text{ (mod 8)}\) if such a homotopy holds. If \(\xi\equiv1\text{ (mod 8)}\) then \(x\) may take any value, but otherwise \(x\) is forced to be even. 
\end{rem}

\begin{prop}\label{prop:xi}
    Let \(\xi\) be the odd integer of Lemma \ref{lem:xyxi} and let \(\tau\in\pi_3(\mathrm{SO}(16))\) be an arbitrary twisting.
    \begin{enumerate}
        \item[(i)] If \(\xi\equiv1\text{ (mod 8)}\) then \(\mathcal{G}_\tau^4(\O P^2)\) can take exactly two possible homotopy types.
        \item[(ii)] If \(\xi\equiv5\text{ (mod 8)}\) then \(\mathcal{G}_\tau^4(\O P^2)\) can take exactly three possible homotopy types.
        \item[(iii)] If \(\xi\equiv3\text{ or }7\text{ (mod 8)}\) then \(\mathcal{G}_\tau^4(\O P^2)\) can take exactly five possible homotopy types.
    \end{enumerate}
\end{prop}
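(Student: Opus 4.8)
The plan is to invoke Proposition~\ref{prop:lambda_delta}, turn the existence of the required $\lambda\in\pi_{11}(S^8)$ into a system of congruences, and then count equivalence classes. Following Strategy~\ref{strategy}, write $\sigma_8\circ\overline{\tau}\simeq a_1\cdot(\sigma_8\circ\widehat{\nu}_{15})+a_2\cdot(\sigma_8\circ\alpha_1(15))$ and $\sigma_8\circ\overline{\omega}\simeq b_1\cdot(\sigma_8\circ\widehat{\nu}_{15})+b_2\cdot(\sigma_8\circ\alpha_1(15))$ as in Lemma~\ref{lem:G4OP2_tau-bar}, with $a_i,b_i$ taken modulo $8$ and $3$ respectively, and parametrise $\lambda\simeq x\cdot\widehat{\nu}_8+y\cdot\alpha_1(8)$. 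Substituting the formula of Lemma~\ref{lem:xyxi} into~(\ref{eq:delta_def}), and using Proposition~\ref{prop:G4OP2_htpygps} to compare coefficients on the direct summands $\Z/8\langle\sigma_8\circ\widehat{\nu}_{15}\rangle$, $\Z/8\langle\widehat{\nu}_8\circ\widehat{\sigma}_{11}\rangle$ and $\Z/3\langle\sigma_8\circ\alpha_1(15)\rangle$ of $\pi_{18}(S^8)$ — none of $\sigma_8\circ\overline{\tau}$, $\lambda\circ\sigma_{11}+[\iota_8,\lambda]$, $\pm\sigma_8\circ\overline{\omega}$ having a component on the $\Z/2$ or $\beta_1(8)$ summands — shows that $\mathcal{G}^4_\tau(\mathbb{O}P^2)\simeq\mathcal{G}^4_\omega(\mathbb{O}P^2)$ if and only if there exist an $x$ modulo $8$, a $y$ modulo $3$, and a sign $\pm$ with
\[ x(1-\xi)\equiv 0 \pmod 8, \qquad a_1+2x\equiv \pm b_1 \pmod 8, \qquad a_2+2y\equiv\pm b_2 \pmod 3. \]
The last congruence is always solvable for $y$ since $2$ is invertible modulo $3$, so the obstruction is concentrated in the $2$-primary part.

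Next I would check that, as $\tau$ runs over $\pi_3(\mathrm{SO}(16))\cong\Z$, the pair $(a_1,a_2)$ attains every value of $\Z/8\times\Z/3$. By Proposition~\ref{prop:jhom} we have $\Sigma\overline{\tau}\simeq J(\tau)$, and for $k=4$ formula~(\ref{imJ}) gives $im(J)\cong\Z/d_1\cong\Z/24$, which is all of $\pi_{19}(S^{16})\cong\pi^s_3$; since $\pi_{18}(S^{15})\cong\Z/24$ as well and suspension is an isomorphism in this stable range, $\overline{\tau}$ realises every class of $\pi_{18}(S^{15})\cong\Z/8\oplus\Z/3$, hence every $(a_1,a_2)$. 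Consequently the number of homotopy types of $\mathcal{G}^4_\tau(\mathbb{O}P^2)$ equals the number of orbits of $\Z/8$ under the relation $a_1\sim b_1$ defined by $b_1\equiv\pm(a_1+2x)\pmod 8$ for some $x$ admissible for the given $\xi$ (distinct orbits giving non-equivalent gyrations by the ``only if'' direction of Proposition~\ref{prop:lambda_delta}, and each orbit being realised by the preceding sentence).

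The case analysis then reduces to solving $x(1-\xi)\equiv 0\pmod 8$ for odd $\xi$. If $\xi\equiv1\pmod 8$ then $x$ is unrestricted and $2x$ ranges over all even residues, producing the two orbits $\{0,2,4,6\}$ and $\{1,3,5,7\}$. If $\xi\equiv5\pmod 8$ then $x$ must be even, so $2x\in\{0,4\}$, producing the three orbits $\{0,4\}$, $\{2,6\}$, $\{1,3,5,7\}$. If $\xi\equiv3$ or $7\pmod 8$ then $x\equiv0\pmod 4$, so $2x\equiv0$ and the relation becomes $b_1\equiv\pm a_1$, producing the five orbits $\{0\}$, $\{4\}$, $\{1,7\}$, $\{2,6\}$, $\{3,5\}$. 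This gives the counts $2$, $3$, $5$ respectively, and they are exact by the realisability and injectivity remarks above.

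The main obstacle is the first paragraph: one must be sure that comparing coefficients is legitimate — this uses Proposition~\ref{prop:G4OP2_htpygps} to exhibit the relevant classes as generators of distinct direct summands, together with the injectivity of $\sigma_8\circ(-)$ from Strategy~\ref{strategy} — and that the single global sign $\pm$, arising from the degree of a self-map of $S^{18}$, is propagated consistently through both the $2$-primary and $3$-primary congruences. The orbit bookkeeping of the third paragraph is elementary but still needs care to confirm that the listed orbits are exhaustive and pairwise distinct.
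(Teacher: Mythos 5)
Your proof follows the paper's argument closely: the same parametrisation of $\lambda$, the same congruence conditions extracted from Lemma~\ref{lem:xyxi} and Remark~\ref{rem:xi_postpone}, and the same case-by-case orbit count giving $2$, $3$ and $5$. The one thing you add that the paper leaves implicit is the realisability check -- that as $\tau$ ranges over $\pi_3(\mathrm{SO}(16))\cong\Z$, the coefficient pair $(a_1,a_2)$ attains every value in $\Z/8\times\Z/3$, deduced from Proposition~\ref{prop:jhom} together with surjectivity of $J$ onto $\pi_{19}(S^{16})\cong\Z/24$ -- and this is genuinely needed to justify the word ``exactly'' in the statement.
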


\begin{proof}
    By Lemma \ref{lem:G4OP2_tau-bar}, for twistings \(\tau,\omega\in\pi_3(\mathrm{SO}(16))\) we may write \[\sigma_8\circ\overline{\tau}\simeq a_1\cdot(\sigma_8\circ\widehat{\nu}_{15})+a_2\cdot(\sigma_8\circ\alpha_1(15))\text{\; and \;}\sigma_{8}\circ\overline{\omega}\simeq b_1\cdot(\sigma_8\circ\widehat{\nu}_{15})+b_2\cdot(\sigma_8\circ\alpha_1(15))\] for some integers \(a_1\) and \(b_1\) modulo 8, and \(a_2\) and \(b_2\) modulo 3. By Proposition \ref{prop:lambda_delta}, there is a homotopy equivalence \(\mathcal{G}_\tau^4(\O P^2)\simeq\mathcal{G}_\omega^4(\O P^2)\) if and only if there exists a \(\lambda\in\pi_{11}(S^8)\) that gives rise to a homotopy \(\sigma_8\circ\overline{\tau}+\lambda\circ\sigma_{11}+[\iota_8,\lambda]\simeq\pm\sigma_8\circ\overline{\omega}\). By Lemma~\ref{lem:xyxi}, writing \(\lambda\simeq x\cdot\widehat{\nu}_8+y\cdot\alpha_1(8)\) and applying the congruence \(x-x\xi\equiv0\text{ (mod 8)}\) in Remark \ref{rem:xi_postpone}, we obtain 
    \[
        \sigma_8\circ\overline{\tau}+\lambda\circ\sigma_{11}+[\iota_8,\lambda] \simeq(a_1+2x)\cdot(\sigma_8\circ\widehat{\nu}_{15})+(a_2+2y)\cdot(\sigma_8\circ\alpha_1(15)).
    \] 
    Therefore there is a homotopy equivalence \(\mathcal{G}_\tau^4(\O P^2)\simeq\mathcal{G}_\omega^4(\O P^2)\) if and only if
    \[(a_1+2x)\cdot(\sigma_8\circ\widehat{\nu}_{15})+(a_2+2y)\cdot(\sigma_8\circ\alpha_1(15))\simeq b_1\cdot(\sigma_8\circ\widehat{\nu}_{15})+b_2\cdot(\sigma_8\circ\alpha_1(15)). 
    \]
    Thus a homotopy equivalence exists if and only if $a_{1}+2x\equiv \pm b_1\, \text{(mod 8)}$ and $a_{2}+2y\equiv \pm b_2\, \text{(mod 3)}$. Further, given any \(a_2\) and \(b_2\) modulo $3$, there always exists a \(y\) such that \(a_2+2y\equiv\pm b_2\pmod{3}\). Hence a homotopy equivalence exists if and only if $a_{1}+2x\equiv \pm b_1\, \text{(mod 8)}$.
    
    First observe that $a_{1}+2x\equiv \pm b_1\, \text{(mod 8)}$ implies that $a_{1}\equiv b_1\, \text{(mod 2)}$. So if $a_{1}\not\equiv b_1\, \text{(mod 2)}$ then \(\mathcal{G}_\tau^4(\O P^2)\not\simeq\mathcal{G}_\omega^4(\O P^2)\). In particular, if $a_{1}$ is even and $b_{1}$ is odd then \(\mathcal{G}_\tau^4(\O P^2)\not\simeq\mathcal{G}_\omega^4(\O P^2)\). This implies that \(\O P^2\) is not \(\mathcal{G}^4\)-stable and hence directly answers GSI in the negative. 

    We now turn to GSII and enumerating the possible homotopy types for \(\mathcal{G}^4_\tau(\O P^2)\). This depends on the possible choices of $x$ that give $a_{1}+2x\equiv \pm b_1\, \text{(mod 8)}$, while Remark \ref{rem:xi_postpone} implies that $x$ must also satisfy \(x-x\xi\equiv0\text{ (mod 8)}\). Since $a_{1}$ is an integer modulo 8 there are at most eight possible homotopy types; we label each one by the value of \(a_1\) and write \(\mathcal{G}_0^4(\O P^2),\mathcal{G}_1^4(\O P^2),\dots,\mathcal{G}_7^4(\O P^2)\). There are three cases, depending on \(\xi\) modulo~8.
    
    \textit{Part (i):} if \(\xi\equiv1\pmod{8}\) then \(x-x\xi\equiv0\pmod{8}\) holds for all \(x\). Taking $x=1$, we obtain $a_{1}+2\equiv \pm b_1\, \text{(mod 8)}$ if and only if $a_{1}\equiv b_1\, \text{(mod 2)}$. Thus there are homotopy equivalences: \[\mathcal{G}_0^4(\O P^2)\simeq\mathcal{G}_2^4(\O P^2)\simeq\mathcal{G}_4^4(\O P^2)\simeq\mathcal{G}_6^4(\O P^2)\text{\; and \;}\mathcal{G}_1^4(\O P^2)\simeq\mathcal{G}_3^4(\O P^2)\simeq\mathcal{G}_5^4(\O P^2)\simeq\mathcal{G}_7^4(\O P^2).\] 
    On the other hand, we have already seen that if $a_{1}\not\equiv b_1\, \text{(mod 2)}$ then \(\mathcal{G}_\tau^4(\O P^2)\not\simeq\mathcal{G}_\omega^4(\O P^2)\). Thus there are exactly two homotopy types in this case.

    \textit{Parts (ii) and (iii):} if \(\xi\not\equiv1\pmod{8}\) then we are in one of two situations. If \(\xi\equiv5\pmod{8}\) then the condition \(x-x\xi\equiv0\text{ (mod 8)}\) occurs if and only if \(x\) is even.  Taking $x=2$, we obtain  $a_{1}+4\equiv \pm b_1\, \text{(mod 8)}$ if and only if \(a_1\equiv \pm b_1\pmod{4}\). Thus there are homotopy equivalences
    \[
    \mathcal{G}_0^4(\O P^2)\simeq\mathcal{G}_4^4(\O P^2)\text{, \;}\mathcal{G}_1^4(\O P^2)\simeq\mathcal{G}_3^4(\O P^2)\simeq\mathcal{G}_5^4(\O P^2)\simeq\mathcal{G}_7^4(\O P^2)\text{\; and \;}\mathcal{G}_2^4(\O P^2)\simeq\mathcal{G}_6^4(\O P^2).
    \] 
    On the other hand, no other value of $x$ will result in additional homotopy equivalences between these three homotopy types. This proves (ii). 
    If \(\xi\equiv3\text{ or }7\pmod{8}\) then the condition \(x-x\xi\equiv0\text{ (mod 8)}\) occurs if and only if \(x\equiv0\text{ or }4\pmod{8}\). This implies that \(a_1+2x\equiv a_1\pmod{8}\) and therefore there is a congruence $a_{1}+2x\equiv \pm b_1\, \text{(mod 8)}$ if and only if \(a_1\equiv \pm b_1\pmod{8}\). Thus there are homotopy equivalences
    \[
        \mathcal{G}_0^4(\O P^2)\text{, \;}\mathcal{G}_1^4(\O P^2)\simeq\mathcal{G}_7^4(\O P^2)\text{, \;}\mathcal{G}_2^4(\O P^2)\simeq\mathcal{G}_6^4(\O P^2)\text{, \;}\mathcal{G}_3^4(\O P^2)\simeq\mathcal{G}_5^4(\O P^2)\text{\; and \;}\mathcal{G}_4^4(\O P^2).
    \] 
    This proves (iii). 
\end{proof}

Proposition \ref{prop:xi} shows that for \(k=4\) the answer to GSII for \(\O P^2\) is at least 2, so it immediately implies the following.

\begin{thm}\label{thm:g4op2}
    \(\O P^2\) is not \(\mathcal{G}^4\)-stable. \qed
\end{thm}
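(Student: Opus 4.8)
The plan is to read the theorem off directly from Proposition~\ref{prop:xi}. By definition (question GSI), $\mathbb{O}P^2$ is $\mathcal{G}^4$-stable precisely when the spaces $\mathcal{G}_\tau^4(\mathbb{O}P^2)$, for $\tau \in \pi_3(\mathrm{SO}(16)) \cong \mathbb{Z}$, comprise a single homotopy type. Proposition~\ref{prop:xi} shows that, whatever the value modulo $8$ of the odd integer $\xi$ from Lemma~\ref{lem:xyxi}, there are exactly $2$, $3$, or $5$ such homotopy types --- in every case more than one. Hence $\mathbb{O}P^2$ is not $\mathcal{G}^4$-stable, and the answer to GSI is ``no''.

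It is worth noting that a self-contained argument needs only a fragment of the proof of Proposition~\ref{prop:xi}: writing $\sigma_8 \circ \overline{\tau} \simeq a_1 \cdot (\sigma_8 \circ \widehat{\nu}_{15}) + a_2 \cdot (\sigma_8 \circ \alpha_1(15))$ with $a_1$ an integer modulo $8$ (Lemma~\ref{lem:G4OP2_tau-bar}), the computation there shows that any homotopy equivalence $\mathcal{G}_\tau^4(\mathbb{O}P^2) \simeq \mathcal{G}_\omega^4(\mathbb{O}P^2)$ forces $a_1 \equiv b_1 \pmod 2$. Thus the parity of $a_1$ is a homotopy invariant of the gyration. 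Since $\overline{\tau}$ realises every element of $\pi_{18}(S^{15}) \cong \mathbb{Z}/24$ as $\tau$ varies --- by Proposition~\ref{prop:jhom}, the surjectivity of $J \colon \pi_3(\mathrm{SO}(16)) \to \pi_{19}(S^{16}) \cong \mathbb{Z}/24$ recorded in~(\ref{imJ}), and the fact that the suspension $\pi_{18}(S^{15}) \to \pi_{19}(S^{16})$ is an isomorphism in this stable range --- both parities of $a_1$ occur. Taking $\tau$ the trivial twisting (so $a_1 = 0$) and $\omega$ a twisting with $b_1$ odd then gives $\mathcal{G}_\tau^4(\mathbb{O}P^2) \not\simeq \mathcal{G}_\omega^4(\mathbb{O}P^2)$.

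There is no real obstacle left to clear: the difficult parts --- the reduction to a congruence problem via Proposition~\ref{prop:lambda_delta} and Strategy~\ref{strategy}, the identification of $\lambda \circ \sigma_{11} + [\iota_8, \lambda]$ in Lemma~\ref{lem:xyxi} (in particular the appearance of the odd integer $\xi$), and the case analysis of Proposition~\ref{prop:xi} --- are all already in hand. The only point deserving explicit mention is that inequivalent gyrations genuinely arise, i.e.\ that the parity constraint on $a_1$ is non-vacuous, and this is immediate from the computation of $\mathrm{im}(J)$.
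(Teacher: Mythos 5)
Your proposal is correct and follows the paper's route exactly: both read the theorem off Proposition~\ref{prop:xi}, whose proof already establishes $\mathcal{G}^4$-instability of $\O P^2$ en route (if $a_1$ is even and $b_1$ is odd the gyrations are inequivalent, whatever the residue of $\xi$). Your second paragraph usefully makes explicit, via the surjectivity of $J$ onto $\pi_{19}(S^{16})\cong\mathbb{Z}/24$ and the stable-range isomorphism $\pi_{18}(S^{15})\cong\pi_{19}(S^{16})$, a point the paper's proof of Proposition~\ref{prop:xi} leaves implicit --- namely that both parities of $a_1$ are actually realised as $\tau$ ranges over $\pi_3(\mathrm{SO}(16))\cong\mathbb{Z}$ --- so the classification there is non-vacuous.
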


\noindent 
\textbf{The $k=8$ case}. 
There is a gyration $\mathcal{G}^{8}_{\tau}(\mathbb{O}P^{2})$ for each $\tau\in\pi_{7}(SO(16))\cong\mathbb{Z}$. The next two statements describe the relevant homotopy groups, generators and relations.

\begin{prop}[Toda]\label{prop:G8OP2_htpygps}
    There are group isomorphisms:
    \begin{enumerate}
        \item[(i)] \(\pi_{15}(S^8) \cong \Z\langle\sigma_8\rangle\oplus\Z/8\langle\Sigma\sigma'\rangle\oplus\Z/5\oplus\Z/3\);
        \item[(ii)] \(\pi_{22}(S^8) \cong \Z/16\langle\sigma_8\circ\widehat{\sigma}_{15}\rangle\oplus\Z/8\langle\Sigma\sigma'\circ\widehat{\sigma}_{15}\rangle\oplus\Z/5\langle\sigma_8\circ\widetilde{\alpha}_1(15)\rangle\oplus\Z/3\langle\sigma_8\circ\alpha_2(15)\rangle\oplus\Z/4\oplus\Z/3\). \qed
    \end{enumerate}
\end{prop}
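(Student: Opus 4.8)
The plan is to assemble both isomorphisms from Toda's calculations in \cite{toda}, handling the prime \(2\) and the odd primes separately and recording how the stated generators arise along the way; the only part that is not essentially formal is the \(2\)-primary component of \(\pi_{22}(S^8)\), which I would simply quote.

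For part (i) I would run the long exact sequence of the octonionic Hopf fibration \(S^7\xrightarrow{j}S^{15}\xrightarrow{\sigma_8}S^8\). Around degree \(15\) it reads
\[
    \pi_{15}(S^7)\xrightarrow{j_*}\pi_{15}(S^{15})\xrightarrow{(\sigma_8)_*}\pi_{15}(S^8)\xrightarrow{\partial}\pi_{14}(S^7)\longrightarrow\pi_{14}(S^{15})=0,
\]
and \(j_*\) vanishes on \(\pi_{15}(S^7)\) because that group is finite whereas \(\pi_{15}(S^{15})\cong\Z\); hence there is a short exact sequence \(0\to\Z\langle\sigma_8\rangle\to\pi_{15}(S^8)\xrightarrow{\partial}\pi_{14}(S^7)\to 0\). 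It splits, since the Hopf invariant \(H\colon\pi_{15}(S^8)\to\Z\) carries \(\sigma_8\) to \(\pm 1\), so \(\pm H\) retracts \(\pi_{15}(S^8)\) onto \(\Z\langle\sigma_8\rangle\); thus \(\pi_{15}(S^8)\cong\Z\langle\sigma_8\rangle\oplus\pi_{14}(S^7)\). Feeding in Toda's value \(\pi_{14}(S^7)\cong\Z/8\langle\sigma'\rangle\oplus\Z/5\oplus\Z/3\) gives (i), the \(\Z/8\) being generated by \(\Sigma\sigma'\in\ker H\); its order is at least \(8\) by \(\Sigma^2\sigma'\simeq2\cdot\widehat{\sigma}_9\) (equivalently by Lemma~\ref{lem:[i_8,i_8]}, \([\iota_8,\iota_8]\simeq2\cdot\sigma_8-\Sigma\sigma'\)), hence exactly \(8\).

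For part (ii) the odd-primary summands come from the same fibration. Localised at an odd prime \(p\) there is a James equivalence \(\Omega S^8\simeq S^7\times\Omega S^{15}\), so \(\pi_{22}(S^8)_{(p)}\cong\pi_{21}(S^7)_{(p)}\oplus\pi_{22}(S^{15})_{(p)}\). Since \((\sigma_8)_*\) is composition with \(\sigma_8\) and \(\sigma_{15}\simeq\widehat{\sigma}_{15}+\widetilde{\alpha}_1(15)+\alpha_2(15)\) by Proposition~\ref{prop:sigma_n}, the images of \(\pi_{22}(S^{15})_{(5)}\cong\Z/5\) and \(\pi_{22}(S^{15})_{(3)}\cong\Z/3\) produce the summands \(\Z/5\langle\sigma_8\circ\widetilde{\alpha}_1(15)\rangle\) and \(\Z/3\langle\sigma_8\circ\alpha_2(15)\rangle\), while the residual \(\Z/3\) is a copy of \(\pi_{21}(S^7)_{(3)}\). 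For the \(2\)-primary part the fibration no longer splits; here I would read off \(\pi_{22}(S^8)_{(2)}\cong\Z/16\langle\sigma_8\circ\widehat{\sigma}_{15}\rangle\oplus\Z/8\langle\Sigma\sigma'\circ\widehat{\sigma}_{15}\rangle\oplus\Z/4\) from the relevant table of \cite{toda}, observing only that the first two summands are the composites of \(\widehat{\sigma}_{15}\) with the generators \(\sigma_8\) and \(\Sigma\sigma'\) of \(\pi_{15}(S^8)_{(2)}\) identified in (i), and that the \(\Z/4\) records the \(2\)-primary contribution of the fibre \(S^7\).

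The main obstacle is precisely that \(2\)-primary input to (ii): proving that \(\sigma_8\circ\widehat{\sigma}_{15}\) and \(\Sigma\sigma'\circ\widehat{\sigma}_{15}\) retain orders \(16\) and \(8\), and that the extension of \(\pi_{22}(S^8)_{(2)}\) by the image of \((\sigma_8)_*\) is the stated one, is not formal and requires the \(EHP\)-type bookkeeping and composition relations carried out in \cite{toda}; I would cite it rather than reproduce it.
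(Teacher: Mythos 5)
The paper offers no proof of this proposition: it is attributed to Toda and closed with a terminal \(\square\) inside the statement, so the paper's ``proof'' is simply a citation. Your sketch therefore follows a more explicit route, rebuilding both isomorphisms from the octonionic Hopf fibration \(S^7\to S^{15}\xrightarrow{\sigma_8}S^8\) (whose fibre inclusion is null, giving split short exact sequences via the James--Hopf invariant because \(H(\sigma_8)=\pm1\)) and the odd-primary James splitting \(\Omega S^8\simeq S^7\times\Omega S^{15}\), quoting Toda only for \(\pi_{14}(S^7)\) and for the \(2\)-primary part of (ii). This is a sound way to justify the statement and adds context the paper omits, at the cost of still depending on the same source for the hard \(2\)-primary input.

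One bookkeeping slip should be corrected. In (ii) you attribute both \(\mathbb{Z}/16\langle\sigma_8\circ\widehat{\sigma}_{15}\rangle\) and \(\mathbb{Z}/8\langle\Sigma\sigma'\circ\widehat{\sigma}_{15}\rangle\) to composites with \(\widehat{\sigma}_{15}\) and then say the \(\mathbb{Z}/4\) ``records the \(2\)-primary contribution of the fibre \(S^7\)''. But \(\pi_{22}(S^{15})_{(2)}\cong\mathbb{Z}/16\langle\widehat{\sigma}_{15}\rangle\) is cyclic, so the image of \((\sigma_8)_*\colon\pi_{22}(S^{15})\to\pi_{22}(S^8)\) accounts for only the \(\mathbb{Z}/16\) summand; the class \(\Sigma\sigma'\circ\widehat{\sigma}_{15}\simeq\Sigma(\sigma'\circ\widehat{\sigma}_{14})\) is a suspension and therefore lies in the complementary summand coming from \(\pi_{21}(S^7)\), which thus contributes \(\mathbb{Z}/8\oplus\mathbb{Z}/4\oplus\mathbb{Z}/3\) rather than only \(\mathbb{Z}/4\oplus\mathbb{Z}/3\). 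Since the \(2\)-primary value is read off Toda's tables in any case, this does not affect your conclusion, but the description of where each summand arises should be amended.
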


\begin{lem}[Toda, p.101]\label{lem:[iota,sigma]}
    There is a homotopy \([\iota_8,\sigma_8]\simeq\pm\Big(2\cdot(\sigma_8\circ\widehat{\sigma}_{15})-(\Sigma\sigma'\circ\widehat{\sigma}_{15})\Big)\). \qed
\end{lem}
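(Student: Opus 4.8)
The plan is to extract the formula from the known value of the Whitehead square $[\iota_8,\iota_8]\simeq 2\cdot\sigma_8-\Sigma\sigma'$ of Lemma~\ref{lem:[i_8,i_8]}, using biadditivity of the Whitehead product. The obstruction to a direct application of Lemma~\ref{lem:whitehead_prods_and_susps} is that $\sigma_8$ is not a suspension; to get around this I would apply the additive operation $[\iota_8,-]\colon\pi_{15}(S^8)\to\pi_{22}(S^8)$ to the relation of Lemma~\ref{lem:[i_8,i_8]}, which gives
\[
    [\iota_8,[\iota_8,\iota_8]]\simeq 2\cdot[\iota_8,\sigma_8]-[\iota_8,\Sigma\sigma'],
\]
so that it suffices to compute $[\iota_8,\Sigma\sigma']$ and the triple Whitehead product $[\iota_8,[\iota_8,\iota_8]]$, and then to divide by $2$.

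The term $[\iota_8,\Sigma\sigma']$ is the concrete ingredient. Since $\Sigma\sigma'$ is a suspension we may apply Lemma~\ref{lem:whitehead_prods_and_susps} with $\alpha=\beta=\iota_8$, $\gamma=\iota_7$, $\delta=\sigma'$, which gives $[\iota_8,\Sigma\sigma']\simeq[\iota_8,\iota_8]\circ\Sigma(\iota_7\wedge\sigma')\simeq[\iota_8,\iota_8]\circ\Sigma^{8}\sigma'$. Applying the identity $\Sigma^{2}\sigma'\simeq 2\cdot\widehat{\sigma}_{9}$ repeatedly yields $\Sigma^{8}\sigma'\simeq 2\cdot\widehat{\sigma}_{15}$; then left-distributing over $2\cdot\widehat{\sigma}_{15}$ (legitimate, as $\widehat{\sigma}_{15}$ is a suspension) and substituting Lemma~\ref{lem:[i_8,i_8]} gives
\[
    [\iota_8,\Sigma\sigma']\simeq 2\cdot\big([\iota_8,\iota_8]\circ\widehat{\sigma}_{15}\big)\simeq 2\cdot\big(2\cdot(\sigma_8\circ\widehat{\sigma}_{15})-\Sigma\sigma'\circ\widehat{\sigma}_{15}\big),
\]
which is already twice the asserted value.

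The remaining work is to handle $[\iota_8,[\iota_8,\iota_8]]$ and then to pass from $2\cdot[\iota_8,\sigma_8]$ back to $[\iota_8,\sigma_8]$; this is where the main difficulty lies, because the relevant summands of $\pi_{22}(S^8)$ (Proposition~\ref{prop:G8OP2_htpygps}) have even order, so cancelling a factor of $2$ determines $[\iota_8,\sigma_8]$ only up to a class of order at most $2$. The triple product is constrained by the graded Jacobi identity for Whitehead products, which forces $3\cdot[\iota_8,[\iota_8,\iota_8]]\simeq\ast$ and hence makes it $2$-primarily trivial; its $3$-primary triviality, needed so that $[\iota_8,\sigma_8]$ has no odd torsion, is a further fact to be quoted, and it is instructive to note the contrast with the quaternionic case, where the analogous triple product is nonzero and correspondingly $[\iota_4,\nu_4]\simeq\pm2\cdot(\nu_4\circ\nu_7)$ does carry $3$-torsion. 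Finally, since Whitehead products suspend trivially, $[\iota_8,\sigma_8]$ lies in $\ker\!\big(E\colon\pi_{22}(S^8)\to\pi_{23}(S^9)\big)$, and describing that kernel via the EHP sequence together with Toda's computation of the relevant part of the $7$-stem rules out the leftover order-$2$ ambiguity, leaving exactly $[\iota_8,\sigma_8]\simeq\pm\big(2\cdot(\sigma_8\circ\widehat{\sigma}_{15})-\Sigma\sigma'\circ\widehat{\sigma}_{15}\big)$, with the sign the genuine indeterminacy. All the steps of this final kind are carried out in \cite{toda}*{p.~101}, from which the statement is quoted.
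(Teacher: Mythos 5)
The paper does not prove this lemma: it is stated with a terminal $\qed$ as a result quoted verbatim from Toda (p.~101), so there is no internal argument to compare against. Your reconstruction has the right shape, and its concrete half is correct: applying $[\iota_8,-]$ to Lemma~\ref{lem:[i_8,i_8]} gives $2\cdot[\iota_8,\sigma_8]\simeq[\iota_8,\Sigma\sigma']+[\iota_8,[\iota_8,\iota_8]]$, and your evaluation of $[\iota_8,\Sigma\sigma']$ via Lemma~\ref{lem:whitehead_prods_and_susps} together with $\Sigma^8\sigma'\simeq 2\cdot\widehat{\sigma}_{15}$ and Lemma~\ref{lem:[i_8,i_8]} reproduces exactly the identity $[\iota_8,\Sigma\sigma']\simeq 4\cdot(\sigma_8\circ\widehat{\sigma}_{15})-2\cdot(\Sigma\sigma'\circ\widehat{\sigma}_{15})$ that the paper itself computes later, inside the proof of Lemma~\ref{lem:lambdacomps - k=8}. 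The Jacobi-identity observation that $3\cdot[\iota_8,[\iota_8,\iota_8]]\simeq\ast$, confining the triple product to the $3$-torsion part of $\pi_{22}(S^8)$, is also valid.

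What remains, though, are genuine gaps that your sketch identifies but does not close. You need $[\iota_8,[\iota_8,\iota_8]]$ to vanish $3$-locally, and this is a substantive claim: $\pi_{22}(S^8)$ contains a $\Z/3\oplus\Z/3$ summand (Proposition~\ref{prop:G8OP2_htpygps}(ii)), and the quaternionic analogue $[\iota_4,[\iota_4,\iota_4]]$ is nonzero $3$-torsion, so the vanishing must be computed rather than deduced from formal properties. Even granting this, dividing the relation by $2$ determines $[\iota_8,\sigma_8]$ only modulo the subgroup of $\pi_{22}(S^8)$ annihilated by $2$, which is sizable; your appeal to $\ker\bigl(E\colon\pi_{22}(S^8)\to\pi_{23}(S^9)\bigr)$ and the EHP sequence is the right kind of argument, but it requires Toda's detailed description of that kernel, which is precisely what is being cited. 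So as a reconstruction of how the formula is plausibly obtained your proposal is sound and informative, but the two hard inputs you flag are the content of the quoted result, not consequences of the machinery already established in Part~I.
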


Step 1 of Strategy \ref{strategy} is given by the following lemma.

\begin{lem}\label{lem:G8OP2_tau-bar}
    For any \(\tau\in\pi_{7}(\mathrm{SO}(16))\) there exist integers \(a_1\), \(a_2\) and \(a_3\), modulo 16, 5 and 3 respectively, such that \(\sigma_8\circ\overline{\tau}\simeq a_1\cdot(\sigma_8\circ\widehat{\sigma}_{15})+a_2\cdot(\sigma_8\circ\widetilde{\alpha}_1(15))+a_3\cdot(\sigma_8\circ\alpha_2(15))\).    
\end{lem}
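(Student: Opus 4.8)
The plan is to follow the same template as Lemmas \ref{lem:where is tau-bar? HP2} and \ref{lem:G4OP2_tau-bar}, which handled Step 1 of Strategy \ref{strategy} in the earlier cases. First I would pin down the homotopy group in which \(\overline{\tau}\) lives: with \(m=8\) and \(k=8\), formula \eqref{taubardef} gives \(\overline{\tau}\in\pi_{2m+k-2}(S^{2m-1})=\pi_{22}(S^{15})\). Since \(15>8\), Proposition \ref{prop:sigma_n} applies with \(n=15\) and yields the group isomorphism
\[
    \pi_{22}(S^{15}) \cong \Z/240\langle\sigma_{15}\rangle \cong \Z/16\langle\widehat{\sigma}_{15}\rangle\oplus\Z/5\langle\widetilde{\alpha}_1(15)\rangle\oplus\Z/3\langle\alpha_2(15)\rangle.
\]

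Next I would simply decompose \(\overline{\tau}\) along this direct sum: there exist integers \(a_1\), \(a_2\), \(a_3\), taken modulo \(16\), \(5\), \(3\) respectively, so that \(\overline{\tau}\simeq a_1\cdot\widehat{\sigma}_{15}+a_2\cdot\widetilde{\alpha}_1(15)+a_3\cdot\alpha_2(15)\). Finally, composing on the left with \(\sigma_8\) and using left-distributivity of composition over sums gives
\[
    \sigma_8\circ\overline{\tau}\simeq a_1\cdot(\sigma_8\circ\widehat{\sigma}_{15})+a_2\cdot(\sigma_8\circ\widetilde{\alpha}_1(15))+a_3\cdot(\sigma_8\circ\alpha_2(15)),
\]
which is exactly the claimed identity.

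There is essentially no obstacle here: the lemma is a purely bookkeeping consequence of the known structure of \(\pi_{22}(S^{15})\) together with the identification of \(\overline{\tau}\)'s ambient group from Part I. The only minor point to be careful about is that the coefficients \(a_i\) are only well-defined modulo the orders of the respective summands, which is why the statement phrases them as integers modulo \(16\), \(5\) and \(3\); the left-distributivity step does not require the composites \(\sigma_8\circ\widehat{\sigma}_{15}\) etc.\ to retain those same orders, since we are only asserting existence of such a representation, not uniqueness. (The actual orders of these composites in \(\pi_{22}(S^8)\) are recorded in Proposition \ref{prop:G8OP2_htpygps}(ii) and will be used later when solving the congruences in Step 3.)
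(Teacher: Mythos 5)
Your proof is correct and follows essentially the same route as the paper's: identify $\overline{\tau}\in\pi_{22}(S^{15})$ via \eqref{taubardef}, decompose using the direct-sum structure, then apply left-distributivity after composing with $\sigma_8$. In fact you cite the right source (Proposition~\ref{prop:sigma_n}, the $\pi_{n+7}(S^n)$ result), whereas the paper's own proof cites Proposition~\ref{prop:nu_n} (the $\pi_{n+3}(S^n)$ result), which appears to be a minor citation slip — the displayed isomorphism the paper writes out is the one from Proposition~\ref{prop:sigma_n}, exactly as you have it.
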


\begin{proof} 
    With $k=m=8$, by~(\ref{taubardef}) we have $\overline{\tau}\in\pi_{22}(S^{15})$. By Proposition~\ref{prop:nu_n} we obtain
    \[
        \overline{\tau}\in\pi_{22}(S^{15}) \cong \Z/16\langle\widehat{\sigma}_{15}\rangle\oplus\Z/5\langle\widetilde{\alpha}_1(15)\rangle\oplus\Z/3\langle\alpha_2(15)\rangle.
    \]
    Therefore $\overline{\tau}=a_{1}\cdot\widehat{\sigma}_{15}+a_{2}\cdot \widehat{\alpha}_{1}(15)+a_{3}\cdot\alpha_{1}(15)$ for some integers $a_{1}, a_{2}$ and $a_{3}$ modulo $16$, $5$ and $3$ respectively. The statement of the lemma follows by left distributivity.
\end{proof}

The following lemma constitutes Step 2. We will argue similarly to Lemma \ref{lem:lambda_pi7(S4)}.

\begin{lem}\label{lem:lambdacomps - k=8}
    There exist \(\lambda_1,\lambda_2,\lambda_3\in\pi_{15}(S^8)\) such that:
    \begin{enumerate}
        \item[(i)] \(\lambda_1\circ\sigma_{15}+[\iota_8,\lambda_1]\simeq\sigma_8\circ\widehat{\sigma}_{15}\);
        \item[(ii)] \(\lambda_2\circ\sigma_{15}+[\iota_8,\lambda_2]\simeq\sigma_8\circ\widetilde{\alpha}_1(15)\);
        \item[(iii)] \(\lambda_3\circ\sigma_{15}+[\iota_8,\lambda_3]\simeq\sigma_8\circ\alpha_2(15).\)
    \end{enumerate}
\end{lem}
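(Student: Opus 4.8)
The plan is to run the argument of Lemma~\ref{lem:lambda_pi7(S4)} essentially verbatim, now with $m=8$, $f=\sigma_{8}$ and $\Sigma^{k-1}f=\sigma_{15}$. First I would take a general element of $\pi_{15}(S^{8})$ and compute the effect of post-composing it with $\sigma_{15}$ and of forming its Whitehead product with $\iota_{8}$. By Proposition~\ref{prop:G8OP2_htpygps}(i) it suffices to consider $\Lambda=x\cdot\sigma_{8}+y\cdot\Sigma\sigma'$ with $x\in\Z$ and $y$ taken modulo $8$; the $\Z/5$ and $\Z/3$ summands of $\pi_{15}(S^{8})$ will turn out to be unnecessary, since the torsion part of the target $\pi_{22}(S^{8})$ is already reached through the $x\cdot\sigma_{8}$ term after composition with $\sigma_{15}$.

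For $\Lambda\circ\sigma_{15}$, right-distributivity (valid because $\sigma_{15}$ is a suspension) together with the splitting $\sigma_{15}\simeq\widehat{\sigma}_{15}+\widetilde{\alpha}_{1}(15)+\alpha_{2}(15)$ of Proposition~\ref{prop:sigma_n} gives $\Lambda\circ\sigma_{15}\simeq x\cdot(\sigma_{8}\circ\sigma_{15})+y\cdot(\Sigma\sigma'\circ\sigma_{15})$, and Lemma~\ref{lem:coprime_order} kills $\Sigma\sigma'\circ\widetilde{\alpha}_{1}(15)$ and $\Sigma\sigma'\circ\alpha_{2}(15)$ for order reasons, so the second term reduces to $y\cdot(\Sigma\sigma'\circ\widehat{\sigma}_{15})$. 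For $[\iota_{8},\Lambda]\simeq x\cdot[\iota_{8},\sigma_{8}]+y\cdot[\iota_{8},\Sigma\sigma']$ I would use Lemma~\ref{lem:[iota,sigma]} for the first summand, and for the second apply Lemma~\ref{lem:whitehead_prods_and_susps} to write $[\iota_{8},\Sigma\sigma']\simeq[\iota_{8},\iota_{8}]\circ\Sigma^{8}\sigma'$, then use the relation $\Sigma^{2}\sigma'\simeq 2\cdot\widehat{\sigma}_{9}$ recorded at the start of this section (hence $\Sigma^{8}\sigma'\simeq 2\cdot\widehat{\sigma}_{15}$) and $[\iota_{8},\iota_{8}]\simeq 2\cdot\sigma_{8}-\Sigma\sigma'$ from Lemma~\ref{lem:[i_8,i_8]}. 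Collecting terms, $\Lambda\circ\sigma_{15}+[\iota_{8},\Lambda]$ becomes a $\Z$-linear combination of the generators $\sigma_{8}\circ\widehat{\sigma}_{15}$, $\Sigma\sigma'\circ\widehat{\sigma}_{15}$, $\sigma_{8}\circ\widetilde{\alpha}_{1}(15)$ and $\sigma_{8}\circ\alpha_{2}(15)$ of $\pi_{22}(S^{8})$, with no component in the remaining $\Z/4\oplus\Z/3$ summand from Proposition~\ref{prop:G8OP2_htpygps}(ii); explicitly the coefficient of $\sigma_{8}\circ\widehat{\sigma}_{15}$ is $3x+4y$ (or $-x+4y$, according to the sign in Lemma~\ref{lem:[iota,sigma]}), that of $\Sigma\sigma'\circ\widehat{\sigma}_{15}$ is $\mp x-y$, and those of $\sigma_{8}\circ\widetilde{\alpha}_{1}(15)$ and $\sigma_{8}\circ\alpha_{2}(15)$ are both $x$, read modulo $16$, $8$, $5$, $3$ respectively.

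It then remains to solve three small systems of congruences. For (i) one needs $x\equiv 0$ mod $15$, $\mp x-y\equiv 0$ mod $8$, and $3x+4y\equiv 1$ (resp. $-x+4y\equiv 1$) mod $16$; substituting $y\equiv \mp x$ reduces the last to a single congruence in $x$ which is solvable simultaneously with $x\equiv 0$ mod $15$ by the Chinese Remainder Theorem (for instance $\lambda_{1}=15\cdot\sigma_{8}+\Sigma\sigma'$ works in one sign case and $\lambda_{1}=75\cdot\sigma_{8}+3\cdot\Sigma\sigma'$ in the other). For (ii) and (iii) one instead forces the $\sigma_{8}\circ\widehat{\sigma}_{15}$- and $\Sigma\sigma'\circ\widehat{\sigma}_{15}$-coefficients to vanish and the relevant torsion coefficient to equal $1$; here $y=0$ suffices and $x$ is pinned down by the Chinese Remainder Theorem (for instance $\lambda_{2}=96\cdot\sigma_{8}$ and $\lambda_{3}=160\cdot\sigma_{8}$). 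The main obstacle — really the only nonroutine point — is the Whitehead-product bookkeeping: correctly evaluating $[\iota_{8},\Sigma\sigma']$ via Lemmas~\ref{lem:whitehead_prods_and_susps} and~\ref{lem:[i_8,i_8]}, tracking the sign ambiguity of Lemma~\ref{lem:[iota,sigma]} through both cases, and verifying that neither $\Lambda\circ\sigma_{15}$ nor $[\iota_{8},\Lambda]$ meets the exotic $\Z/4\oplus\Z/3$ part of $\pi_{22}(S^{8})$, so that comparing coefficients of the four listed generators is legitimate. Once the linear system is in place, the rest is elementary number theory.
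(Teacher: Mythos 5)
Your proposal is correct and follows the paper's proof essentially verbatim: you parametrize $\Lambda$ over the $\Z\oplus\Z/8$ part of $\pi_{15}(S^8)$, expand $\Lambda\circ\sigma_{15}$ using the splitting $\sigma_{15}\simeq\widehat{\sigma}_{15}+\widetilde{\alpha}_1(15)+\alpha_2(15)$ together with Lemma~\ref{lem:coprime_order}, evaluate $[\iota_8,\Lambda]$ via Lemmas~\ref{lem:whitehead_prods_and_susps}, \ref{lem:[i_8,i_8]} and \ref{lem:[iota,sigma]}, and solve the resulting congruence systems by CRT, arriving at the same coefficients and the same witnesses $\lambda_1=15\sigma_8+\Sigma\sigma'$ (resp.\ $75\sigma_8+3\Sigma\sigma'$), $\lambda_2=96\sigma_8$ and $\lambda_3=160\sigma_8$. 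The only difference is notational: your $(x,y)$ play the role of the paper's $(w,x)$.
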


\begin{proof}
    Before finding the asserted homotopy classes we consider an element \(\Lambda\in\pi_{15}(S^8)\) which, by Proposition \ref{prop:G8OP2_htpygps}(i), may be written as 
    \[
        \Lambda\simeq w\cdot\sigma_8+x\cdot\Sigma\sigma'
    \] 
    for integers \(w\) and \(x\), with \(x\) considered modulo 8. The composite $\Lambda\circ\sigma_{15}$ and the Whitehead product \([\iota_8,\Lambda]\) both lie in the homotopy group \(\pi_{22}(S^8)\); the first aim is to express these in terms of the generators given in Proposition \ref{prop:G8OP2_htpygps}(ii). 

    First consider \(\Lambda\circ\sigma_{15}\). Since $\sigma_{15}$ is a suspension, right-distributivity gives 
    \[\Lambda\circ\sigma_{15}\simeq w\cdot(\sigma_8\circ\sigma_{15})+x\cdot(\Sigma\sigma'\circ\sigma_{15}).\]
    Writing \(\sigma_{15}\simeq\widehat{\sigma}_{15}+\widetilde{\alpha}_1(15)+\alpha_2(15)\), repeated application of Lemma \ref{lem:coprime_order} thus gives a homotopy
    \begin{equation}\label{eq:lambda_sigma - k=8}
    \Lambda\circ\sigma_{15} \simeq w\cdot(\sigma_8\circ\widehat{\sigma}_{15})+w\cdot(\sigma_8\circ\widetilde{\alpha}_1(15))+w\cdot(\sigma_8\circ\alpha_2(15))+x\cdot(\Sigma\sigma'\circ\widehat{\sigma}_{15}). 
    \end{equation} 

    Next, consider \([\iota_8,\Lambda]\). By additivity we may consider $[\iota_{8},\Sigma\sigma']$ and $[\iota_{8},\sigma_{8}]$ separately. Since \(\Sigma\sigma'\) is a suspension, applying Lemma \ref{lem:whitehead_prods_and_susps} and using the fact that \(\Sigma^8\sigma'\simeq2\cdot\widehat{\sigma}_{15}\) gives 
    \begin{equation*}    
    [\iota_8,\Sigma\sigma'] \simeq [\iota_8,\iota_8]\circ\Sigma^8\sigma'\simeq2\cdot([\iota_8,\iota_8]\circ\widehat{\sigma}_{15}).
    \end{equation*}
    Further, Lemma \ref{lem:[i_8,i_8]} then shows 
    \[[
        \iota_8,\Sigma\sigma']\simeq 4\cdot(\sigma_8\circ\widehat{\sigma}_{15})-2\cdot(\Sigma\sigma'\circ\widehat{\sigma}_{15}).
    \]
    Hence, applying Lemma \ref{lem:[iota,sigma]} for \([\iota_8,\sigma_8]\), we obtain
    \begin{equation}\label{eq:lambda_whitehead - k=8}
        [\iota_8,\Lambda]\simeq(\pm2w+4x)\cdot(\sigma_8\circ\widehat{\sigma}_{15})-(\pm w+2x)\cdot(\Sigma\sigma'\circ\widehat{\sigma}_{15}).
    \end{equation}
    Now combining (\ref{eq:lambda_sigma - k=8}) and (\ref{eq:lambda_whitehead - k=8}), we have
    \begin{align}\label{eq:general Lambda - m=8 k=4}
    \begin{split}
        \Lambda\circ\sigma_{15}+[\iota_8,\Lambda] \simeq &~(w\pm2w+4x)\cdot(\sigma_8\circ\widehat{\sigma}_{15})-(\pm w+x)\cdot(\Sigma\sigma'\circ\widehat{\sigma}_{15}) \\ &~ +w\cdot(\sigma_8\circ\widetilde{\alpha}_1(15))+w\cdot(\sigma_8\circ\alpha_2(15)).
    \end{split}
    \end{align}    
    
    For part (i) there are two cases, depending on the `\(\pm\)' signs in (\ref{eq:lambda_whitehead - k=8}). In the `\(+\)' case, (\ref{eq:general Lambda - m=8 k=4}) becomes
    \[
        \lambda_1\circ\sigma_{15}+[\iota_8,\lambda_1] \simeq(3w+4x)\cdot(\sigma_8\circ\widehat{\sigma}_{15})+(-w-x)\cdot(\Sigma\sigma'\circ\widehat{\sigma}_{15}) + w\cdot(\sigma_8\circ\widetilde{\alpha}_1(15))+w\cdot(\sigma_8\circ\alpha_2(15)).
    \] 
    So to obtain \(\lambda_1\circ\sigma_{15}+[\iota_8,\lambda_1]\simeq\sigma_8\circ\widehat{\sigma}_{15}\) as in (i), we must have \(3w+4x\equiv1\text{ (mod 16)}\), \(x\equiv-w\text{ (mod 8)}\), \(w\equiv0\text{ (mod 5)}\) and \(w\equiv0\text{ (mod 5)}\). Taking \(w=15\) and \(x=1\) solves this system, giving \(\lambda_1= 15\cdot\sigma_8+\Sigma\sigma'\). 
    In the `\(-\)' case (\ref{eq:general Lambda - m=8 k=4}) becomes
    \[
        \lambda_1\circ\sigma_{15}+[\iota_8,\lambda_1]\simeq(-w+4x)\cdot(\sigma_8\circ\widehat{\sigma}_{15})+(w-x)\cdot(\Sigma\sigma'\circ\widehat{\sigma}_{15})+w\cdot(\sigma_8\circ\widetilde{\alpha}_1(15))+w\cdot(\sigma_8\circ\alpha_2(15)).
    \] 
    So to obtain \(\lambda_1\circ\sigma_{15}+[\iota_8,\lambda_1]\simeq\sigma_8\circ\widehat{\sigma}_{15}\) as in (i), we must have \(-w+4x\equiv1\text{ (mod 16)}\), \(x\equiv w\text{ (mod 8)}\), \(w\equiv0\text{ (mod 5)}\) and \(w\equiv0\text{ (mod 3)}\). Taking \(w=75\) and \(x=3\) solves this system, giving \(\lambda_1=75\cdot\sigma_8+3\cdot\Sigma\sigma'\). Thus, in either case, there is a \(\lambda_1\) that satisfies (i). 
    \smallskip 
    
    We now move to part (ii). To obtain \(\lambda_2\circ\sigma_{15}+[\iota_8,\lambda_2]\simeq\sigma_8\circ\widetilde{\alpha}_1(15)\), with the left side written as in~(\ref{eq:general Lambda - m=8 k=4}), we need a solution to the system of congruences given by 
    \[w\pm2w+4x\equiv0\text{ (mod 16)}, \pm w+x\equiv0\text{ (mod 8)}, w\equiv1\text{ (mod 5)}\text{\; and \;}w\equiv0\text{ (mod 3)}.\] Taking \(w=96\) and \(x=0\) solves the system, giving \(\lambda_2=96\cdot\sigma_8\). 
    \smallskip 
    
    Finally, we consider part (iii). To obtain \(\lambda_3\circ\sigma_{15}+[\iota_8,\lambda_3]\simeq\sigma_8\circ\alpha_2(15)\), with the left side written as in (\ref{eq:general Lambda - m=8 k=4}), we need to solve \[w\pm2w+4x\equiv0\text{ (mod 16)}, \pm w+x\equiv0\text{ (mod 8)}, w\equiv0\text{ (mod 5)}\text{\; and \;}w\equiv1\text{ (mod 3)}.\] Taking \(w=160\) and \(x=0\) solves the system, giving \(\lambda_3=160\cdot\sigma_8\). 
\end{proof}

Finally, we proceed to Step 3 of Strategy \ref{strategy}.

\begin{thm}\label{thm:g8op2}
    \(\O P^2\) is \(\mathcal{G}^8\)-stable, i.e.~\(\mathcal{G}_\tau^8(\O P^2)\simeq\mathcal{G}_\omega^8(\O P^2)\) for all twistings \(\tau,\omega\in\pi_7(\mathrm{SO}(16))\).
\end{thm}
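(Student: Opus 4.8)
The plan is to apply Strategy~\ref{strategy} essentially verbatim, in exactly the same way as the proof of Theorem~\ref{thm:g4hp2}. With $m=8$ and $k=8$ we have $2\leq k\leq 2m-2=14$, so Proposition~\ref{prop:lambda_delta} applies: for twistings $\tau,\omega\in\pi_7(\mathrm{SO}(16))$ there is a homotopy equivalence $\mathcal{G}_\tau^8(\O P^2)\simeq\mathcal{G}_\omega^8(\O P^2)$ if and only if there exists $\lambda\in\pi_{15}(S^8)$ with $\sigma_8\circ\overline{\tau}+\lambda\circ\sigma_{15}+[\iota_8,\lambda]\simeq\pm\sigma_8\circ\overline{\omega}$. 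It therefore suffices to produce, for an arbitrary pair $\tau,\omega$, such a $\lambda$ realising the ``$+$'' case.

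First I would invoke Step~1 in the form of Lemma~\ref{lem:G8OP2_tau-bar}, writing $\sigma_8\circ\overline{\tau}\simeq a_1\cdot(\sigma_8\circ\widehat{\sigma}_{15})+a_2\cdot(\sigma_8\circ\widetilde{\alpha}_1(15))+a_3\cdot(\sigma_8\circ\alpha_2(15))$ and likewise $\sigma_8\circ\overline{\omega}\simeq b_1\cdot(\sigma_8\circ\widehat{\sigma}_{15})+b_2\cdot(\sigma_8\circ\widetilde{\alpha}_1(15))+b_3\cdot(\sigma_8\circ\alpha_2(15))$ for integers $a_i,b_i$ modulo $16$, $5$, $3$ respectively. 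Step~2 is already carried out in Lemma~\ref{lem:lambdacomps - k=8}, which supplies classes $\lambda_1,\lambda_2,\lambda_3\in\pi_{15}(S^8)$ with $\lambda_i\circ\sigma_{15}+[\iota_8,\lambda_i]$ equal to $\sigma_8\circ\widehat{\sigma}_{15}$, $\sigma_8\circ\widetilde{\alpha}_1(15)$, $\sigma_8\circ\alpha_2(15)$ respectively.

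For Step~3 I would set $\lambda=(b_1-a_1)\cdot\lambda_1+(b_2-a_2)\cdot\lambda_2+(b_3-a_3)\cdot\lambda_3$. Since $\sigma_{15}=\Sigma^7\sigma_8$ is a suspension, right-distributivity applies to $\lambda\circ\sigma_{15}$, and the Whitehead product $[\iota_8,-]$ is additive, so $\lambda\circ\sigma_{15}+[\iota_8,\lambda]\simeq(b_1-a_1)\cdot(\sigma_8\circ\widehat{\sigma}_{15})+(b_2-a_2)\cdot(\sigma_8\circ\widetilde{\alpha}_1(15))+(b_3-a_3)\cdot(\sigma_8\circ\alpha_2(15))$. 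Adding $\sigma_8\circ\overline{\tau}$ cancels the $a_i$-terms and leaves $b_1\cdot(\sigma_8\circ\widehat{\sigma}_{15})+b_2\cdot(\sigma_8\circ\widetilde{\alpha}_1(15))+b_3\cdot(\sigma_8\circ\alpha_2(15))\simeq\sigma_8\circ\overline{\omega}$, which is the ``$+$'' case of Proposition~\ref{prop:lambda_delta}. Hence $\mathcal{G}_\tau^8(\O P^2)\simeq\mathcal{G}_\omega^8(\O P^2)$ for all $\tau,\omega$, so $\O P^2$ is $\mathcal{G}^8$-stable.

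There is no genuine obstacle left at this point: all of the delicate input — the Toda relations for $\pi_{22}(S^8)$, the values of $[\iota_8,\iota_8]$ and $[\iota_8,\sigma_8]$, the identity $\Sigma^8\sigma'\simeq 2\cdot\widehat{\sigma}_{15}$, and the solvability of the congruence systems producing $\lambda_1,\lambda_2,\lambda_3$ — has been absorbed into Lemma~\ref{lem:lambdacomps - k=8}, which is where the real work (the analogue of the ``main obstacle'') lies. The only point worth a line of checking is that $\sigma_8\circ\overline{\tau}$, $\sigma_8\circ\overline{\omega}$ and each $\lambda_i\circ\sigma_{15}+[\iota_8,\lambda_i]$ all lie inside the subgroup of $\pi_{22}(S^8)$ spanned by $\sigma_8\circ\widehat{\sigma}_{15}$, $\sigma_8\circ\widetilde{\alpha}_1(15)$, $\sigma_8\circ\alpha_2(15)$, so that the comparison of coefficients in Step~3 is unaffected by the remaining $\Sigma\sigma'\circ\widehat{\sigma}_{15}$, $\Z/4$ and $\Z/3$ summands of Proposition~\ref{prop:G8OP2_htpygps}(ii); this is immediate from Lemmas~\ref{lem:G8OP2_tau-bar} and~\ref{lem:lambdacomps - k=8}.
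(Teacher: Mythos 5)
Your proof is correct and follows the paper's own argument essentially verbatim: Lemma~\ref{lem:G8OP2_tau-bar} for Step~1, Lemma~\ref{lem:lambdacomps - k=8} for Step~2, and the linear combination $\lambda=(b_1-a_1)\lambda_1+(b_2-a_2)\lambda_2+(b_3-a_3)\lambda_3$ in Step~3 (your coefficients actually correct a pair of typos in the paper's published formula, which writes $(b_2-b_1)$ and $(a_3-b_3)$). The justification via right-distributivity of composition with the suspension $\sigma_{15}$ and additivity of the Whitehead product is exactly what Step~3 of Strategy~\ref{strategy} requires.
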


\begin{proof}
    By Lemma \ref{lem:G8OP2_tau-bar} for any two twistings \(\tau\) and \(\omega\) we may write 
    \[
        \sigma_8\circ\overline{\tau}\simeq a_1\cdot(\sigma_8\circ\widehat{\sigma}_{15})+a_2\cdot(\sigma_8\circ\widetilde{\alpha}_1(15))+a_3\cdot(\sigma_8\circ\alpha_2(15))
    \]
    and
    \[
        \sigma_8\circ\overline{\omega}\simeq b_1\cdot(\sigma_8\circ\widehat{\sigma}_{15})+b_2\cdot(\sigma_8\circ\widetilde{\alpha}_1(15))+b_3\cdot(\sigma_8\circ\alpha_2(15))
    \] 
    for some integers \(a_1\) and \(b_1\) considered modulo 16, \(a_2\) and \(b_2\) modulo 5, and \(a_3\) and \(b_3\) modulo 3. Take \(\lambda=(b_1-a_1)\cdot\lambda_1+(b_2-b_1)\cdot\lambda_2+(a_3-b_3)\cdot\lambda_3\), where $\lambda_{1}$, $\lambda_{2}$ and $\lambda_{3}$ are as in Lemma \ref{lem:lambdacomps - k=8}. Then Step 3 of Strategy~\ref{strategy} implies that $\mathcal{G}^{8}_{\tau}(\mathbb{O}P^{2})\simeq\mathcal{G}^{8}_{\omega}(\mathbb{O}P^{2})$ holds for any choice of $\tau$ and $\omega$, so $\mathbb{O}P^{2}$ is \(\mathcal{G}^8\)-stable.
\end{proof}

\section{Computations for \(\mathcal{G}_\tau^k(\mathbb{O}P^2)\) when \(9 \leq k \leq 14\)}
\label{sec:G(op2)-9to14}

We now turn to the three remaining cases when~\(k=9\), \(k=10\) and \(k=12\).
\medskip 

\noindent 
\textbf{The $k=9$ case.} There is a gyration $\mathcal{G}^{9}_{\tau}(\mathbb{O}P^{2})$ for each $\tau\in\pi_{8}(SO(16))\cong\mathbb{Z}/2$. The next two statements describe the relevant homotopy groups, generators, and relations. 

\begin{prop}[Toda]\label{prop:G9OP2_htpygps}
    There are group isomorphisms:
    \begin{enumerate}
        \item[(i)] \(\pi_{23}(S^{15})\cong\Z/2\langle\overline{\nu}_{15}\rangle\oplus\Z/2\langle\varepsilon_{15}\rangle\);
        \item[(ii)] \(\pi_{16}(S^8) \cong \Z/2\langle\sigma_8\circ\eta_{15}\rangle\oplus\Z/2\langle\Sigma\sigma'\circ\eta_{15}\rangle\oplus\Z/2\langle\overline{\nu}_8\rangle\oplus\Z/2\langle\varepsilon_8\rangle\);
        \item[(iii)] \(\pi_{23}(S^8) \cong \Z/2\langle\sigma_8\circ\overline{\nu}_{15}\rangle\oplus\Z/2\langle\sigma_8\circ\varepsilon_{15}\rangle\oplus\Z/2\langle\Sigma\sigma'\circ\overline{\nu}_{15}\rangle\oplus\Z/2\langle\Sigma\sigma'\circ\varepsilon_{15}\rangle\oplus\Z/2\oplus\Z/120\). \qed
    \end{enumerate}
\end{prop}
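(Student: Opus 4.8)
The plan is to extract all three isomorphisms from Toda's calculations \cite{toda}, using the Freudenthal suspension theorem and the \(2\)-local EHP sequence to organise the bookkeeping and then identifying the generators via known composition relations. Part (i) is the easiest: \(\pi_{23}(S^{15})\) already lies in the stable range, since \(2\cdot 15\geq 23+2\), so iterated suspension gives an isomorphism \(\pi_{23}(S^{15})\cong\pi_{8}^{s}\). Toda's computation of the eighth stable stem gives \(\pi_{8}^{s}\cong\Z/2\oplus\Z/2\), with \(\overline{\nu}\) and \(\varepsilon\) forming a basis (satisfying \(\eta\sigma\simeq\overline{\nu}+\varepsilon\)); pulling these back along the stabilisation isomorphism yields the stated generators \(\overline{\nu}_{15}\) and \(\varepsilon_{15}\).

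For part (ii), \(\pi_{16}(S^{8})\) sits just below the stable range, so I would run the \(2\)-local EHP sequence of the James fibration \(S^{8}\to\Omega S^{9}\to\Omega S^{17}\),
\[
\pi_{18}(S^{17})\xrightarrow{P}\pi_{16}(S^{8})\xrightarrow{E}\pi_{17}(S^{9})\xrightarrow{H}\pi_{17}(S^{17})\xrightarrow{P}\pi_{15}(S^{8}).
\]
Since \(P(\iota_{17})\simeq[\iota_{8},\iota_{8}]\simeq 2\cdot\sigma_{8}-\Sigma\sigma'\) has infinite order in \(\pi_{15}(S^{8})\) by Lemma~\ref{lem:[i_8,i_8]} and Proposition~\ref{prop:G8OP2_htpygps}(i), the right-hand \(P\) is injective, so \(H\) vanishes and \(E\) is surjective. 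Its kernel is the image of \(P\colon\pi_{18}(S^{17})\cong\Z/2\langle\eta_{17}\rangle\to\pi_{16}(S^{8})\), and \(P(\eta_{17})\simeq[\iota_{8},\iota_{8}]\circ\eta_{15}\simeq(2\cdot\sigma_{8}-\Sigma\sigma')\circ\eta_{15}\simeq\Sigma\sigma'\circ\eta_{15}\), using that \(2\cdot(\sigma_{8}\circ\eta_{15})\simeq\ast\); this is exactly the nontrivial kernel of suspension recorded at \cite{toda}*{p.~63}. Thus there is a short exact sequence \(0\to\Z/2\langle\Sigma\sigma'\circ\eta_{15}\rangle\to\pi_{16}(S^{8})\xrightarrow{E}\pi_{17}(S^{9})\to 0\), and feeding in Toda's value \(\pi_{17}(S^{9})\cong\Z/2\langle\sigma_{9}\circ\eta_{16}\rangle\oplus\Z/2\langle\overline{\nu}_{9}\rangle\oplus\Z/2\langle\varepsilon_{9}\rangle\), which splits since each generator lifts to an order-two class, gives the decomposition of \(\pi_{16}(S^{8})\) with the four asserted generators.

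For part (iii), \(\pi_{23}(S^{8})\) lies deep in the unstable range, and I would obtain it as Toda does, by an iterated EHP analysis carried out one prime at a time. The four order-two classes \(\sigma_{8}\circ\overline{\nu}_{15}\), \(\sigma_{8}\circ\varepsilon_{15}\), \(\Sigma\sigma'\circ\overline{\nu}_{15}\), \(\Sigma\sigma'\circ\varepsilon_{15}\) are the composites of the Hopf-invariant-one class \(\sigma_{8}\), respectively its companion \(\Sigma\sigma'\in\pi_{15}(S^{8})\), with the two generators of \(\pi_{23}(S^{15})\cong\pi_{8}^{s}\) from part (i), and one checks against Toda's table that they are linearly independent of order \(2\); the remaining \(\Z/2\oplus\Z/120\) summand, whose generators are not needed in the sequel and so are left unnamed, is read directly off Toda's table for \(\pi_{23}(S^{8})\). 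The main obstacle is precisely this last point: confirming the full group structure of \(\pi_{23}(S^{8})\) — in particular the \(\Z/120\) factor and the independence of the listed \(2\)-torsion composites — is not a short self-contained computation but relies on Toda's complete EHP bookkeeping, which is why the statement is credited to him.
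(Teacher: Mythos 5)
The paper provides no proof of this proposition: it is stated with a \qed and attributed directly to Toda, as befits a recalled calculation. Your sketch supplies the missing provenance and is sound as far as it goes.

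Part (i) is exactly right: \(\pi_{23}(S^{15})\) is stable (\(23\leq 2\cdot15-2\)), so it is the stable \(8\)-stem \((\Z/2)^2\) on \(\overline{\nu}\) and \(\varepsilon\), matching Toda's Lemma 6.4 (cited in the paper as Lemma \ref{lem:eta_sigma}) where \(\eta\sigma\simeq\overline{\nu}+\varepsilon\). Part (ii) is also correct and is a nice reconstruction: \(\pi_{16}(S^{8})\) is metastable (\(16=2\cdot8\)), and your EHP segment
\(\pi_{18}(S^{17})\xrightarrow{P}\pi_{16}(S^{8})\xrightarrow{E}\pi_{17}(S^{9})\xrightarrow{H}\pi_{17}(S^{17})\xrightarrow{P}\pi_{15}(S^{8})\)
is the right one; since \(P(\iota_{17})=[\iota_{8},\iota_{8}]=2\sigma_{8}-\Sigma\sigma'\) is nonzero in the free part of \(\pi_{15}(S^{8})\), the right-hand \(P\) is injective, forcing \(H=0\) and \(E\) surjective, and \(P(\eta_{17})=[\iota_{8},\eta_{8}]=[\iota_{8},\iota_{8}]\circ\eta_{15}=\Sigma\sigma'\circ\eta_{15}\) (using that \(\sigma_{8}\circ\eta_{15}\) has order \(2\)) gives the kernel, in agreement with the fact from \cite{toda}*{p.~63} that the paper itself invokes in Lemma \ref{lem:g2op2nonhomotopy}. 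The identification of the short exact sequence and its splitting is fine. For part (iii) you are honest that the full group structure of \(\pi_{23}(S^{8})\) has to be read off Toda's tables, and that is indeed how the paper treats it: the unnamed \(\Z/2\oplus\Z/120\) summand plays no role downstream, which is why the authors did not record generators for it. In short, the paper's \emph{proof} is just ``Toda,'' and your EHP account of (i) and (ii) is a correct and useful reconstruction of what lies behind that citation.
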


\begin{lem}[\cite{toda}*{Lemma 6.4}]\label{lem:eta_sigma}
    There is a homotopy \(\eta_9\circ\sigma_{10}\simeq\overline{\nu}_9+\varepsilon_9\), and for \(n\geq10\) there are homotopies \(\eta_n\circ\sigma_{n+1}\simeq\sigma_n\circ\eta_{n+7}\simeq\overline{\nu}_n+\varepsilon_n\). \qed
\end{lem}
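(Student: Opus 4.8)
The plan is to recognise that this is \cite{toda}*{Lemma~6.4}, so in the paper itself one simply cites it; what follows is the shape the argument takes if one wants to prove it. The classes $\overline{\nu}_n,\varepsilon_n\in\pi_{n+8}(S^n)$ are defined, following Toda, as representatives of secondary composition operations (Toda brackets) assembled from $\nu$ and $\eta$, so the asserted identity is really a comparison of a primary composite $\eta_n\circ\sigma_{n+1}$ with a sum of bracket representatives. The natural tools are therefore the EHP sequence together with Toda's shuffle and Jacobi formulas for brackets.

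First I would reduce everything to the single case $n=9$. Since $\eta_n=\Sigma\eta_{n-1}$ and $\sigma_{n+1}=\Sigma\sigma_n$ for $n\geq 9$, we have $\eta_n\circ\sigma_{n+1}=\Sigma^{\,n-9}(\eta_9\circ\sigma_{10})$ and $\overline{\nu}_n+\varepsilon_n=\Sigma^{\,n-9}(\overline{\nu}_9+\varepsilon_9)$; and for $n\geq 10$ the group $\pi_{n+8}(S^n)$ is stable, where composition is graded-commutative, so $\sigma_n\circ\eta_{n+7}=(-1)^{7}\,\eta_n\circ\sigma_{n+1}=\eta_n\circ\sigma_{n+1}$, the sign being absorbed because every class in sight is $2$-torsion. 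Thus the whole lemma follows once $\eta_9\circ\sigma_{10}\simeq\overline{\nu}_9+\varepsilon_9$ is known, and the reason $n=9$ must be separated out is that $\pi_{17}(S^9)$ still carries an extra $\Z/2$ summand generated by $\sigma_9\circ\eta_{16}$, independent of $\overline{\nu}_9$ and $\varepsilon_9$, which only collapses onto $\overline{\nu}_{10}+\varepsilon_{10}$ after one further suspension.

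To establish the case $n=9$ I would drop one more dimension and work in $\pi_{16}(S^8)$, using $\eta_9\circ\sigma_{10}=\Sigma(\eta_8\circ\sigma_9)$. Proposition~\ref{prop:G9OP2_htpygps}(ii) supplies the basis $\{\sigma_8\circ\eta_{15},\,\Sigma\sigma'\circ\eta_{15},\,\overline{\nu}_8,\,\varepsilon_8\}$ of $\pi_{16}(S^8)$, and the suspension $E\colon\pi_{16}(S^8)\to\pi_{17}(S^9)$ has kernel generated by $\Sigma\sigma'\circ\eta_{15}$; this last fact comes from the EHP sequence, since $P(\iota_{17})=[\iota_8,\iota_8]=2\sigma_8-\Sigma\sigma'$ (Lemma~\ref{lem:[i_8,i_8]}) has infinite order in $\pi_{15}(S^8)$, which forces the James--Hopf invariant out of $\pi_{17}(S^9)$ to vanish and hence $E$ to be surjective. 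It therefore suffices to show that $\eta_8\circ\sigma_9$ equals $\overline{\nu}_8+\varepsilon_8$ modulo $\Sigma\sigma'\circ\eta_{15}$, after which applying $E$ removes the ambiguity and yields the desired identity. That congruence is exactly where the bracket machinery is needed: one rewrites $\eta_8\circ\sigma_9$ in terms of the defining Toda brackets of $\overline{\nu}_8$ and $\varepsilon_8$, using $[\iota_8,\iota_8]=2\sigma_8-\Sigma\sigma'$ to trade the Hopf class $\sigma_8$ for suspended and bracket data and applying the shuffle/Jacobi identities to move the leading $\eta_8$ inside the brackets.

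The hard part is precisely this final step. Identifying the Toda-bracket descriptions of $\overline{\nu}_8$ and $\varepsilon_8$, computing their indeterminacies, and checking the shuffle identities requires fine control of compositions in the stable stems through degree~$8$ — of $\nu'$, $\nu^2$, $\eta\varepsilon$ and their mutual relations — which is exactly the body of work carried out in Toda's Chapter~VI. Since that material is available in the literature in precisely the form quoted, the right move in the present paper is to cite \cite{toda}*{Lemma~6.4} rather than reconstruct the secondary-composition bookkeeping; the discussion above is only meant to explain why the statement has the form it does and to flag which earlier facts in this paper ($[\iota_8,\iota_8]=2\sigma_8-\Sigma\sigma'$, $\Sigma^2\sigma'\simeq 2\widehat{\sigma}_9$, and the computation of $\ker E$) feed into it.
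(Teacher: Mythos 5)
The paper gives no proof of this lemma --- it is cited directly from Toda with only a \(\square\) marker --- and you correctly conclude that citing \cite{toda}*{Lemma 6.4} is the appropriate treatment here. Your supplementary sketch (reduction to the \(n=9\) case by suspension and stable graded-commutativity, passing to \(\pi_{16}(S^8)\) and identifying the kernel of suspension via the EHP sequence and \([\iota_8,\iota_8]\simeq 2\sigma_8-\Sigma\sigma'\)) is a sound account of why the statement has this form, but as you yourself acknowledge it is explanatory scaffolding rather than a proof, since the decisive congruence in \(\pi_{16}(S^8)\) is precisely Toda's secondary-composition computation.
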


Step 1 of Strategy \ref{strategy} is given by the following lemma.

\begin{lem}\label{lem:G9OP2_tau-bar}
    Given \(\tau\in\pi_{8}(\mathrm{SO}(16))\), if \(\tau\) is non-trivial then \(\sigma_{8}\circ\overline{\tau}\simeq\sigma_{8}\circ\overline{\nu}_{15}+\sigma_{8}\circ\varepsilon_{15}\).
\end{lem}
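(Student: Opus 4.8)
The plan is to determine the homotopy class $\overline{\tau}\in\pi_{23}(S^{15})$ precisely and then left-compose with $\sigma_8$. Since $k=9\equiv 1\pmod 8$ and $2\le 9\le 2m-2=14$, Corollary~\ref{cor:tau-bar_non-trivial} gives that $\overline{\tau}$ is non-trivial, so by Proposition~\ref{prop:G9OP2_htpygps}(i) it is one of $\overline{\nu}_{15}$, $\varepsilon_{15}$, or $\overline{\nu}_{15}+\varepsilon_{15}$, and the task is to exclude the first two. Both $\pi_{23}(S^{15})$ and $\pi_{24}(S^{16})$ lie in the stable range, so the suspension $\Sigma\colon\pi_{23}(S^{15})\to\pi_{24}(S^{16})$ is an isomorphism sending $\overline{\nu}_{15},\varepsilon_{15}$ to $\overline{\nu}_{16},\varepsilon_{16}$; hence it suffices to identify $\Sigma\overline{\tau}$.

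By Proposition~\ref{prop:jhom}, which applies since $2\le k\le 2m-2$, we have $\Sigma\overline{\tau}\simeq J(\tau)$. As $\pi_8(\mathrm{SO}(16))\cong\mathbb{Z}/2$ and, by~(\ref{imJ}), $\mathrm{im}(J)\cong\mathbb{Z}/2$ for $k\equiv 1\pmod 8$, the $J$-homomorphism is an isomorphism onto its image, so $J(\tau)$ is the unique non-zero element of $\mathrm{im}(J)\subset\pi_{24}(S^{16})$. To name this generator, let $\rho$ generate $\pi_7(\mathrm{SO}(16))\cong\mathbb{Z}$; Bott periodicity gives that $\eta\cdot\rho$ generates $\pi_8(\mathrm{SO}(16))$, while by~(\ref{imJ}) the map $J$ is onto $\pi_{23}(S^{16})\cong\mathbb{Z}/240$, so $J(\rho)$ is a generator of this group and hence an odd multiple of $\sigma_{16}$. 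Using that $J$ is linear over $\pi_*^s$, we get $J(\tau)\simeq J(\eta\cdot\rho)\simeq\eta\cdot J(\rho)$, an odd multiple of $\eta\sigma$, which (having order $2$) equals $\eta\sigma$; thus $J(\tau)\simeq\eta_{16}\circ\sigma_{17}\simeq\sigma_{16}\circ\eta_{23}$. Lemma~\ref{lem:eta_sigma} (with $n=16$) then gives $J(\tau)\simeq\overline{\nu}_{16}+\varepsilon_{16}$.

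Putting these together, $\Sigma\overline{\tau}\simeq\overline{\nu}_{16}+\varepsilon_{16}=\Sigma(\overline{\nu}_{15}+\varepsilon_{15})$, and injectivity of $\Sigma$ on $\pi_{23}(S^{15})$ forces $\overline{\tau}\simeq\overline{\nu}_{15}+\varepsilon_{15}$. Since $\overline{\nu}_{15}$ and $\varepsilon_{15}$ are maps out of the suspension $S^{23}$, left-distributivity yields $\sigma_8\circ\overline{\tau}\simeq\sigma_8\circ\overline{\nu}_{15}+\sigma_8\circ\varepsilon_{15}$, which is the assertion. The main obstacle is the identification of the generator of $\mathrm{im}(J)$ in the $8$-stem with $\overline{\nu}+\varepsilon$; if one prefers not to invoke the $\pi_*^s$-module structure of $J$, this identification can instead be read off from the computation of the image of $J$ in Toda~\cite{toda} or Adams~\cite{adamsIV}.
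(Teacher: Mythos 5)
Your proof is correct and follows essentially the same route as the paper's: both identify $\Sigma\overline{\tau}$ with $J(\tau)$ via Proposition~\ref{prop:jhom}, identify the generator of $\mathrm{im}(J)\subset\pi_{24}(S^{16})$ with $\eta_{16}\circ\sigma_{17}$, apply Lemma~\ref{lem:eta_sigma} to rewrite this as $\overline{\nu}_{16}+\varepsilon_{16}$, desuspend in the stable range, and finish by left-distributivity. The one place you diverge is in how you name the generator of $\mathrm{im}(J)$: the paper simply cites \cite{ravenel_cobordism}*{Theorem 1.1.13}, while you re-derive it from the $\pi_*^s$-module structure of the $J$-homomorphism together with the fact that the generator of $\pi_8(\mathrm{SO})$ is $\eta$ times the generator of $\pi_7(\mathrm{SO})$. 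This is valid, though the latter fact is really about the multiplicative structure of $KO_*$ (it is not a formal consequence of the periodicity isomorphism alone), so calling it ``Bott periodicity'' is a slight shorthand; you already note that one could instead just read the answer off from the standard tables, which is the cleaner option and is what the paper does.
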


\begin{proof}
     If \(\tau\) is non-trivial, then Corollary \ref{cor:tau-bar_non-trivial} implies that \(\overline{\tau}\) is also non-trivial. With $k=9$ and $m=8$, by its definition in~(\ref{taubardef}), we have $\overline{\tau}\in\pi_{23}(S^{15})$. By Proposition \ref{prop:jhom} we have \(\Sigma\overline{\tau}\in im(J)\), which by \cite{ravenel_cobordism}*{Theorem 1.1.13}\footnote{Stated citing \cite{adamsIV} and \cite{quillen_adamsconj}} is generated by \(\eta_{16}\circ\sigma_{17}\). By Lemma \ref{lem:eta_sigma}, \(\eta_{16}\circ\sigma_{17} \simeq\overline{\nu}_{16}+\varepsilon_{16}\). As we are in the stable range, we may desuspend, and thus
    \begin{equation}\label{imJ_in_pi23(S15)} 
    \overline{\tau}\in\Z/2\langle\overline{\nu}_{15}+\varepsilon_{15}\rangle\subset\pi_{23}(S^{15}).   
    \end{equation}
    The result then follows immediately by left-distributivity.
\end{proof}

We now move to Step 2. 

\begin{lem}\label{lem:lambdas - k=9}
    There exists \(\lambda_1\in\pi_{16}(S^8)\) such that \(\lambda_1\circ\sigma_{16}+[\iota_8,\lambda_1]\simeq\sigma_8\circ\overline{\nu}_{15}+\sigma_8\circ\varepsilon_{15}\).
\end{lem}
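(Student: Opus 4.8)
The plan is to exhibit \(\lambda_1\) explicitly as the composite \(\lambda_1=(\sigma_8+\Sigma\sigma')\circ\eta_{15}\in\pi_{16}(S^8)\), which is a legitimate element by Proposition \ref{prop:G8OP2_htpygps}(i), and then to verify the asserted homotopy by computing the two summands \(\lambda_1\circ\sigma_{16}\) and \([\iota_8,\lambda_1]\) separately in \(\pi_{23}(S^8)\), whose structure is recorded in Proposition \ref{prop:G9OP2_htpygps}(iii). For the composite: since \(\sigma_{16}=\Sigma^8\sigma_8\) is a suspension, right- and then left-distributivity give \(\lambda_1\circ\sigma_{16}\simeq\sigma_8\circ(\eta_{15}\circ\sigma_{16})+\Sigma\sigma'\circ(\eta_{15}\circ\sigma_{16})\), and Lemma \ref{lem:eta_sigma} with \(n=15\) yields \(\eta_{15}\circ\sigma_{16}\simeq\overline{\nu}_{15}+\varepsilon_{15}\), so
\[
\lambda_1\circ\sigma_{16}\simeq(\sigma_8\circ\overline{\nu}_{15}+\sigma_8\circ\varepsilon_{15})+(\Sigma\sigma'\circ\overline{\nu}_{15}+\Sigma\sigma'\circ\varepsilon_{15}).
\]

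For the Whitehead product, writing \(\eta_{15}=\Sigma\eta_{14}\) and \(\iota_8=\Sigma\iota_7\), Lemma \ref{lem:whitehead_prods_and_susps} gives \([\iota_8,\lambda_1]\simeq[\iota_8,\sigma_8+\Sigma\sigma']\circ\eta_{22}\), where \(\eta_{22}=\Sigma(\iota_7\wedge\eta_{14})\). By additivity of the Whitehead product this splits as \([\iota_8,\sigma_8]\circ\eta_{22}+[\iota_8,\Sigma\sigma']\circ\eta_{22}\). I would then substitute Lemma \ref{lem:[iota,sigma]} for \([\iota_8,\sigma_8]\) and the identity \([\iota_8,\Sigma\sigma']\simeq 4\cdot(\sigma_8\circ\widehat{\sigma}_{15})-2\cdot(\Sigma\sigma'\circ\widehat{\sigma}_{15})\) established within the proof of Lemma \ref{lem:lambdacomps - k=8} (itself coming from Lemma \ref{lem:[i_8,i_8]} and \(\Sigma^8\sigma'\simeq 2\widehat{\sigma}_{15}\)). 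Composing with \(\eta_{22}\) needs \(\widehat{\sigma}_{15}\circ\eta_{22}\): Lemma \ref{lem:eta_sigma} gives \(\sigma_{15}\circ\eta_{22}\simeq\overline{\nu}_{15}+\varepsilon_{15}\), and since the \(3\)- and \(5\)-primary summands \(\alpha_2(15)\), \(\widetilde{\alpha}_1(15)\) of \(\sigma_{15}\) compose trivially with the \(2\)-torsion class \(\eta_{22}\) by Lemma \ref{lem:coprime_order}, we obtain \(\widehat{\sigma}_{15}\circ\eta_{22}\simeq\overline{\nu}_{15}+\varepsilon_{15}\). Since \(\sigma_8\circ\overline{\nu}_{15}\), \(\sigma_8\circ\varepsilon_{15}\), \(\Sigma\sigma'\circ\overline{\nu}_{15}\) and \(\Sigma\sigma'\circ\varepsilon_{15}\) all generate \(\mathbb{Z}/2\) summands of \(\pi_{23}(S^8)\), the coefficients \(2\) and \(4\) annihilate the \([\iota_8,\Sigma\sigma']\)-contribution entirely and kill the \(\sigma_8\circ(\overline{\nu}_{15}+\varepsilon_{15})\)-part of the \([\iota_8,\sigma_8]\)-contribution, leaving
\[
[\iota_8,\lambda_1]\simeq\Sigma\sigma'\circ\overline{\nu}_{15}+\Sigma\sigma'\circ\varepsilon_{15}
\]
(the sign in Lemma \ref{lem:[iota,sigma]} being immaterial here by \(2\)-torsion).

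Adding the two displays, the \(\Sigma\sigma'\)-terms occur twice and so cancel in the \(\mathbb{Z}/2\)-summands of \(\pi_{23}(S^8)\), leaving precisely \(\sigma_8\circ\overline{\nu}_{15}+\sigma_8\circ\varepsilon_{15}\); this is the required homotopy. The main obstacle is the Whitehead product step: it depends on importing the exact relations \([\iota_8,\iota_8]\simeq 2\sigma_8-\Sigma\sigma'\), \(\Sigma^8\sigma'\simeq 2\widehat{\sigma}_{15}\) and \(\widehat{\sigma}_{15}\circ\eta_{22}\simeq\overline{\nu}_{15}+\varepsilon_{15}\), and on knowing that every relevant target class in \(\pi_{23}(S^8)\) has order \(2\), which is exactly what makes the stray \(\Sigma\sigma'\)-term produced by \([\iota_8,\lambda_1]\) absorbable by building the correction \(\Sigma\sigma'\circ\eta_{15}\) into \(\lambda_1\). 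The rest is bookkeeping with left- and right-distributivity.
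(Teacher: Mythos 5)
Your argument is correct, but you take a genuinely different route from the paper. You choose $\lambda_1=(\sigma_8+\Sigma\sigma')\circ\eta_{15}=\sigma_8\circ\eta_{15}+\Sigma\sigma'\circ\eta_{15}$, whereas the paper takes $\lambda_1=(\sigma_8\circ\eta_{15})+\overline{\nu}_8+\varepsilon_8$. With the paper's choice, $\lambda_1\circ\sigma_{16}$ alone already equals $\sigma_8\circ\overline{\nu}_{15}+\sigma_8\circ\varepsilon_{15}$ (using \cite{toda}*{Lemma 10.7} to kill $\overline{\nu}_8\circ\sigma_{16}$ and $\varepsilon_8\circ\sigma_{16}$), and $[\iota_8,\lambda_1]$ vanishes outright because the $\Sigma\sigma'\circ\overline{\nu}_{15}$ and $\Sigma\sigma'\circ\varepsilon_{15}$ contributions from the three summands of $\lambda_1$ pair up and cancel. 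With your choice, neither piece is clean in isolation: $\lambda_1\circ\sigma_{16}$ produces the target plus the two extra $\Sigma\sigma'$-classes, while $[\iota_8,\lambda_1]$ is nonzero and produces exactly those same two $\Sigma\sigma'$-classes (the $[\iota_8,\Sigma\sigma']$-part dies because its coefficients are $4$ and $2$), so the cancellation happens in the final sum. Both choices work because $\lambda_1$ is only determined up to elements $\mu$ with $\mu\circ\sigma_{16}+[\iota_8,\mu]\simeq\ast$, and your $\lambda_1$ and the paper's differ by such an element. Your route is arguably a bit heavier in bookkeeping (you need $[\iota_8,\sigma_8]$ and $[\iota_8,\Sigma\sigma']$ explicitly and must track the cancellation across the two displays), while the paper's choice is engineered so that the Whitehead product is identically zero, at the cost of importing the Toda null-homotopies for $\overline{\nu}_8\circ\sigma_{16}$ and $\varepsilon_8\circ\sigma_{16}$. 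The ingredients you invoke --- Lemma~\ref{lem:whitehead_prods_and_susps}, Lemma~\ref{lem:[iota,sigma]}, Lemma~\ref{lem:[i_8,i_8]}, $\Sigma^8\sigma'\simeq2\widehat{\sigma}_{15}$, Lemma~\ref{lem:eta_sigma} plus Lemma~\ref{lem:coprime_order} to reduce $\widehat{\sigma}_{15}\circ\eta_{22}$ to $\overline{\nu}_{15}+\varepsilon_{15}$, and the $\mathbb{Z}/2$-structure of $\pi_{23}(S^8)$ from Proposition~\ref{prop:G9OP2_htpygps}(iii) --- are all available in the paper and are applied correctly.
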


\begin{proof}
    We prove the asserted homotopy for \(\lambda_1=(\sigma_8\circ\eta_{15})+\overline{\nu}_8+\varepsilon_8\in\pi_{16}(S^8)\). Consider the composite \(\lambda_1\circ\sigma_{16}\). By \cite{toda}*{Lemma 10.7} there are null homotopies for \(\overline{\nu}_8\circ\sigma_{16}\) and \(\varepsilon_8\circ\sigma_{16}\), and so 
    \[
        \lambda_1\circ\sigma_{16}\simeq(\sigma_8\circ\eta_{15}\circ\sigma_{16})+(\overline{\nu}_8\circ\sigma_{16})+(\varepsilon_8\circ\sigma_{16})\simeq(\sigma_8\circ\eta_{15}\circ\sigma_{16})+\ast+\ast.
    \] 
    By Lemma \ref{lem:eta_sigma}, there is a homotopy \(\sigma_8\circ\eta_{15}\circ\sigma_{16}\simeq\sigma_8\circ\overline{\nu}_{15}+\sigma_8\circ\varepsilon_{15}\) and hence 
    \begin{equation*}
        \lambda_1\circ\sigma_{16}\simeq
        \sigma_8\circ\overline{\nu}_{15}+\sigma_8\circ\varepsilon_{15}.
    \end{equation*}
    
    It therefore remains to show that the Whitehead product \([\iota_8,\lambda_1]\) is null homotopic. By additivity we consider each of the three summands of \(\lambda_1\) separately. First, since the elements \(\overline{\nu}_8\) and \(\varepsilon_8\) are both suspensions, by Lemma \ref{lem:whitehead_prods_and_susps} we have 
    \[
        [\iota_8,\overline{\nu}_8]\simeq[\iota_8,\iota_8]\circ\overline{\nu}_{15}\text{\; and \;}[\iota_8,\varepsilon_8]\simeq[\iota_8,\iota_8]\circ\varepsilon_{15}.
    \]
    Recalling Lemma \ref{lem:[i_8,i_8]}, and noting that both $\sigma_{8}\circ\overline{\nu}_{15}$ and $\sigma_{8}\circ\varepsilon_{15}$ have order $2$ by Proposition \ref{prop:G9OP2_htpygps}(iii), we obtain
    \begin{equation}\label{whiteheads1 - k=9}
        [\iota_8,\overline{\nu}_8]\simeq\Sigma\sigma'\circ\overline{\nu}_{15}\text{\; and \;}[\iota_8,\varepsilon_8]\simeq\Sigma\sigma'\circ\varepsilon_{15}.
    \end{equation}
    On the other hand, for \([\iota_8,\sigma_8\circ\eta_{15}]\), by Lemma \ref{lem:whitehead_prods_and_susps} we have  \[[\iota_8,\sigma_8\circ\eta_{15}]\simeq[\iota_8,\sigma_8]\circ\eta_{22}.\] Once again applying Lemma \ref{lem:[iota,sigma]} for $[\iota_{8},
    \sigma_{8}]$ and noting that $\eta_{22}$ has order $2$, we obtain 
    \[[\iota_{8},\sigma_{8}]\circ\eta_{22}\simeq\Sigma\sigma'\circ\sigma_{15}\circ\eta_{22}.\] 
    By Lemma \ref{lem:eta_sigma}, $\sigma_{15}\circ\eta_{22}\simeq\overline{\nu}_{15}+\varepsilon_{15}$. Thus, putting this together and using left distributivity gives
    \begin{equation}\label{whiteheads2 - k=9}
        [\iota_8,\sigma_8\circ\eta_{15}]\simeq\Sigma\sigma'\circ\sigma_{15}\circ\eta_{22}\simeq \Sigma\sigma'\circ\overline{\nu}_{15}+\Sigma\sigma'\circ\varepsilon_{15}.
    \end{equation}
    Combining (\ref{whiteheads1 - k=9}) and (\ref{whiteheads2 - k=9}) gives \[[\iota_8,\lambda_1]\simeq[\iota_8,\sigma_8\circ\eta_{15}]+[\iota_8,\overline{\nu}_8]+[\iota_8,\varepsilon_8]\simeq2\cdot(\Sigma\sigma'\circ\overline{\nu}_{15})+2\cdot(\Sigma\sigma'\circ\varepsilon_{15})\simeq\ast\] where the null homotopy comes from both classes having order two.  
\end{proof}

Step 3 of Strategy \ref{strategy} follows swiftly.

\begin{thm}\label{thm:g9op2}
    \(\O P^2\) is \(\mathcal{G}^9\)-stable, i.e.~\(\mathcal{G}_\tau^9(\O P^2)\simeq\mathcal{G}_\omega^9(\O P^2)\) for all twistings \(\tau,\omega\in\pi_8(\mathrm{SO}(16))\).
\end{thm}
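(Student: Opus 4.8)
The plan is to apply Strategy \ref{strategy} with $m=8$ and $k=9$; its hypotheses hold since $2\le 9\le 14=2m-2$, and Steps 1 and 2 have already been carried out in Lemmas \ref{lem:G9OP2_tau-bar} and \ref{lem:lambdas - k=9}. So only the bookkeeping of Step 3 remains, exactly as in the proofs of Theorems \ref{thm:g4hp2} and \ref{thm:g8op2}.

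First I would note that, because $\pi_8(\mathrm{SO}(16))\cong\Z/2$, Lemma \ref{lem:G9OP2_tau-bar} together with the computation (\ref{imJ_in_pi23(S15)}) shows that the generating set for $\pi_{23}(S^{15})$ relevant to Step 1 can be taken to consist of the single class $x_1=\overline{\nu}_{15}+\varepsilon_{15}$ (the desuspension of the generator $\eta_{16}\circ\sigma_{17}$ of $\mathrm{im}(J)$), since $\overline{\tau}$ necessarily lies in $\Z/2\langle x_1\rangle$. Thus for twistings $\tau,\omega$ I would write $\sigma_8\circ\overline{\tau}\simeq a_1\cdot(\sigma_8\circ x_1)$ and $\sigma_8\circ\overline{\omega}\simeq b_1\cdot(\sigma_8\circ x_1)$ with $a_1,b_1\in\{0,1\}$. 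With $\lambda_1\in\pi_{16}(S^8)$ the class of Lemma \ref{lem:lambdas - k=9}, which satisfies $\lambda_1\circ\sigma_{16}+[\iota_8,\lambda_1]\simeq\sigma_8\circ x_1$, I would set $\lambda=(b_1-a_1)\cdot\lambda_1$ and use right-distributivity of the suspension $\sigma_{16}=\Sigma^{8}\sigma_8$ together with bilinearity of the Whitehead product to obtain
\[
\sigma_8\circ\overline{\tau}+\lambda\circ\sigma_{16}+[\iota_8,\lambda]\simeq a_1\cdot(\sigma_8\circ x_1)+(b_1-a_1)\cdot(\sigma_8\circ x_1)\simeq\sigma_8\circ\overline{\omega}.
\]
Since every homotopy group in play is elementary abelian $2$-torsion there is no distinction between the $+$ and $-$ cases of (\ref{eq:delta_def}), so Proposition \ref{prop:lambda_delta} yields $\mathcal{G}^9_\tau(\O P^2)\simeq\mathcal{G}^9_\omega(\O P^2)$ for all $\tau,\omega\in\pi_8(\mathrm{SO}(16))$, which is the assertion.

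I do not expect any real obstacle: the substance has been front-loaded into Lemmas \ref{lem:G9OP2_tau-bar} and \ref{lem:lambdas - k=9}. The only point requiring care is recognising that a single auxiliary class $\lambda_1$ suffices here---rather than the two or three needed when $k=4$ or $k=8$---which is forced by the fact that $\overline{\tau}$ is constrained to the one-dimensional image of the $J$-homomorphism, so that Step 2 need only be verified for the one generator $x_1$.
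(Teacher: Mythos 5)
Your proof is correct and takes essentially the same route as the paper: both reduce to Lemmas \ref{lem:G9OP2_tau-bar} and \ref{lem:lambdas - k=9} and then apply Proposition \ref{prop:lambda_delta}; the paper simply phrases Step 3 concretely (since $\pi_8(\mathrm{SO}(16))\cong\Z/2$, it checks the one nontrivial case and observes $2\cdot(\sigma_8\circ\overline{\nu}_{15}+\sigma_8\circ\varepsilon_{15})\simeq\ast$), whereas you record the bookkeeping abstractly via $\lambda=(b_1-a_1)\cdot\lambda_1$. One small inaccuracy: $\pi_{23}(S^8)$ is not elementary abelian $2$-torsion (it has a $\Z/120$ summand by Proposition \ref{prop:G9OP2_htpygps}(iii)), but this is harmless — you have already exhibited the ``$+$'' case of (\ref{eq:delta_def}), so the ``$-$'' case is simply not needed.
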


\begin{proof}
    Since \(\pi_8(\mathrm{SO}(16))\cong\Z/2\), there are two choices of twisting, so to prove gyration stability in this case we need only check when \(\tau\) is non-trivial and \(\omega\) is trivial. By Lemma \ref{lem:G9OP2_tau-bar} for $\overline{\tau}$ and Lemma~\ref{twisting12}~(iii) for $\overline{\omega}$, we may write 
    \[
        \sigma_8\circ\overline{\tau}\simeq \sigma_{8}\circ\overline{\nu}_{15}+\sigma_{8}\circ\varepsilon_{15}
        \text{\; and \;} \sigma_8\circ\overline{\omega}\simeq \ast.
    \] 
    It follows that taking \(\lambda_1\) as in Lemma \ref{lem:lambdas - k=9} gives 
    \[
        \sigma_8\circ\overline{\tau}+\lambda_1\circ\sigma_{16}+[\iota_8,\lambda_1] \simeq 2\cdot(\sigma_8\circ\overline{\nu}_{15}+\sigma_8\circ\varepsilon_{15})\simeq\ast\simeq\sigma_8\circ\overline{\omega}
    \] 
    thus proving \(\mathcal{G}^9\)-stability for \(\O P^2\), by Proposition \ref{prop:lambda_delta}.
\end{proof}

\noindent 

\textbf{The $k=10$ case}. There is a gyration $\mathcal{G}^{10}_{\tau}(\mathbb{O}P^{2})$ for each $\tau\in\pi_{9}(SO(16))\cong\mathbb{Z}/2$. 
The next two statements describe the relevant homotopy groups, generators and relations. We write \(\nu_n^3\) to denote the composite \(\nu_n\circ\nu_{n+3}\circ\nu_{n+6}\) for \(n\geq8\), and similarly let \(\eta_n^2\) be \(\eta_n\circ\eta_{n+1}\) for \(n\geq2\).

\begin{prop}[Toda]\label{prop:G10(OP2)_htpygps}
    There are group isomorphisms:
    \begin{enumerate}
        \item[(i)] \(\pi_{24}(S^{15})\cong\Z/2\langle\nu_{15}^3\rangle\oplus\Z/2\langle\eta_{15}\circ\varepsilon_{16}\rangle\oplus\Z/2\);
        \item[(ii)] \(\pi_{17}(S^8) \cong \Z/2\langle\sigma_8\circ\eta_{15}^2\rangle\oplus\Z/2\langle\nu_8^3\rangle\oplus\Z/2\langle\eta_8\circ\varepsilon_9\rangle\oplus\Z/2\oplus\Z/2\);
        \item[(iii)] \(\pi_{24}(S^8) \cong  \Z/2\langle\sigma_8\circ\nu_{15}^3\rangle\oplus\Z/2\langle\sigma_8\circ\eta_{15}\circ\varepsilon_{16}\rangle\oplus\Z/2\oplus\Z/2\oplus\Z/2\oplus\Z/2\oplus\Z/2\). \qed
    \end{enumerate}
\end{prop}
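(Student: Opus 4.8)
The plan is to read all three isomorphisms off Toda's book \cite{toda}: the proposition is a pure statement about homotopy groups of spheres, so no new argument is needed, only a careful reconciliation of the generators named here with Toda's notation.

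First I would settle part~(i). Since \(15\geq 9+2\), the group \(\pi_{24}(S^{15})\) lies in the stable range, hence is isomorphic to the ninth stable stem. Toda records this as \((\Z/2)^{3}\), with generators the images of \(\nu^{3}\), \(\eta\circ\varepsilon\) and the \(\mu\)-family element \(\mu\); desuspending to \(S^{15}\) these become \(\nu_{15}^{3}\), \(\eta_{15}\circ\varepsilon_{16}\) and \(\mu_{15}\), the last being the unlabelled \(\Z/2\) summand in the statement. This step is immediate once stability is checked.

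Next I would treat parts~(ii) and~(iii) by consulting the tables in \cite{toda} for \(\pi_{17}(S^{8})\) and \(\pi_{24}(S^{8})\) directly; both turn out to be elementary abelian \(2\)-groups, \((\Z/2)^{5}\) and \((\Z/2)^{7}\) respectively, so the only content beyond the isomorphism type is identifying the named generators. The useful reduction here is the same injectivity of \(\sigma_{8}\circ-\) on the relevant summands that is exploited throughout Strategy~\ref{strategy}: it guarantees that \(\sigma_{8}\circ\nu_{15}^{3}\) and \(\sigma_{8}\circ\eta_{15}\circ\varepsilon_{16}\) are nonzero and independent in \(\pi_{24}(S^{8})\), so they split off a \((\Z/2)^{2}\), and that \(\sigma_{8}\circ\eta_{15}^{2}\) survives in \(\pi_{17}(S^{8})\); the classes \(\nu_{8}^{3}\) and \(\eta_{8}\circ\varepsilon_{9}\) are then matched against Toda's listed generators in the metastable range, and the remaining \(\Z/2\) summands are whatever Toda's tables leave after removing these images.

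The hard part will be purely bookkeeping: Toda states his tables in his own notation and frequently specifies generators only implicitly, through relations and EHP or secondary compositions, so the work is to match each summand — especially for \(\pi_{24}(S^{8})\), which sits far outside the stable range — with the composites written here, and to confirm that each \(\sigma_{8}\)-composite has order exactly \(2\) and is linearly independent of the unnamed summands. No homotopy-theoretic input beyond \cite{toda} should be required.
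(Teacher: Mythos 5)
Your plan is correct and essentially matches the paper's approach: the proposition is stated with an immediate \(\qed\) and no proof, i.e.\ it is presented purely as a citation of Toda's tables, which is exactly what you propose to verify. The one small caveat is that your appeal to the injectivity of \(\sigma_{8}\circ-\) is itself borrowed from Strategy~\ref{strategy} (where the paper asserts it without proof), so it does not add independent justification; but for a bookkeeping check against \cite{toda}, where the generators and group orders are already tabulated, that is immaterial.
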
 

\begin{lem}[\cite{toda}*{Lemma 6.3}]\label{lem:eta_nu-bar}
    For \(n\geq6\) there are homotopies \(\overline{\nu}_n\circ\eta_{n+8}\simeq\eta_n\circ\overline{\nu}_{n+1}\simeq\nu_n^3\). \qed
\end{lem}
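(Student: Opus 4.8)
The plan is to recall the argument of Toda's Lemma 6.3 \cite{toda}, which is an exercise with secondary compositions (Toda brackets). The input is Toda's description of $\overline{\nu}_n$: for $n\geq 6$ it is a representative of a three–fold Toda bracket built from the Hopf classes, of the shape $\overline{\nu}_n\in\langle\nu_n,\eta_{n+3},\nu_{n+4}\rangle$. This bracket is legitimate precisely in this range, since the composites $\nu_n\circ\eta_{n+3}$ and $\eta_{n+3}\circ\nu_{n+4}$ then lie in the stable range and equal $\nu\eta=0$ and $\eta\nu=0$; moreover its indeterminacy is carried by groups that reduce to $\pi_4^s=\pi_5^s=0$, so $\overline{\nu}_n$ is a single well–defined class. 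A first simplification: once $n$ is large enough to be stable, $\lvert\overline{\nu}\rvert=8$ is even, so the composition product is graded–commutative on these classes and $\overline{\nu}_n\circ\eta_{n+8}=\eta_n\circ\overline{\nu}_{n+1}$ automatically; hence there is really only one composite to compute in the stable range, the equality of the two unstable composites with $\nu_n^3$ for small $n$ being a separate matter.

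\medskip

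The heart of the matter is the juggle. Pre– or post–composing $\overline{\nu}$ with $\eta$ and applying the standard slide relations for secondary compositions (\cite{toda}*{Chapter I}) transports the extra copy of $\eta$ to the opposite end of the bracket; all the null–compositions demanded by the slide are again of the form $\eta\nu\simeq\ast$ or $\nu\eta\simeq\ast$ and so are available for $n\geq 6$. After the move one is left with $\nu_n\circ\langle\eta,\nu,\eta\rangle$ on one side and $\langle\eta,\nu,\eta\rangle\circ\nu_n$ on the other (at the appropriate sphere levels), and the three–fold bracket $\langle\eta,\nu,\eta\rangle$ is the classical relation $\langle\eta,\nu,\eta\rangle\ni\nu^2$ (once more a single class, the indeterminacy vanishing). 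Substituting this in gives $\nu_n\circ\nu_{n+3}^2=\nu_n^3$ and $\nu_n^2\circ\nu_{n+6}=\nu_n^3$ respectively, the sign being immaterial because $\nu_n^3$ has order two. This establishes the statement in the stable range.

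\medskip

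It remains to push the identity down to $n=6$. Here one is outside the stable range of $S^6$ — the relevant group is $\pi_{15}(S^6)$ — so one must either rerun the bracket juggle in its unstable form, tracking suspensions in the hypotheses of the slide lemmas and controlling the (now possibly nontrivial) unstable indeterminacies, or else compute $\overline{\nu}_6\circ\eta_{14}$, $\eta_6\circ\overline{\nu}_7$ and $\nu_6^3$ in $\pi_{15}(S^6)$ directly via the EHP sequence and Toda's composition tables; the statement for all $n\geq 6$ then follows by suspension. I expect this last, genuinely unstable, step to be the main obstacle: the conceptual content is entirely in the bracket juggle, but making that juggle rigorous in low dimensions — and having on hand the low–dimensional homotopy of spheres and bracket values it calls on — is precisely the work carried out in \cite{toda}, which is why we quote the result rather than reproduce it.
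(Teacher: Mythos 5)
The paper gives no proof of this lemma: it is stated as a direct citation of \cite{toda}*{Lemma 6.3}, terminated with a box and no argument, and is then used throughout as a black box. There is therefore no proof in the paper against which to compare your sketch.

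As a reconstruction of what Toda actually does, your account is broadly on the mark: Toda realises \(\overline{\nu}\) as an element of a secondary composition of the shape \(\{\nu,\eta,\nu\}\) and derives its composition relations with \(\eta\) by bracket juggling. Two caveats are worth recording, though. The graded-commutativity shortcut giving \(\overline{\nu}_n\circ\eta_{n+8}\simeq\eta_n\circ\overline{\nu}_{n+1}\) is only literally available in the stable range \(n\geq 11\), so for \(6\leq n\leq 10\) the two composites must be treated by genuinely unstable means, as you acknowledge. And the key identity \(\langle\eta,\nu,\eta\rangle=\nu^2\) on which your juggle rests needs an independent input: it cannot be read off backwards from \(\eta\overline{\nu}=\nu^3\) together with the juggle, since that would be circular. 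Since the paper treats the lemma purely as a citation, these are observations on your reconstruction of Toda's argument rather than a discrepancy with anything in the text.
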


We begin with Step 1 of Strategy \ref{strategy}. 

\begin{lem}\label{lem:G10(OP2)_tau-bar}
    Given \(\tau\in\pi_{9}(\mathrm{SO}(16))\), if \(\tau\) is non-trivial then \(\sigma_{8}\circ\overline{\tau}\simeq(\sigma_{8}\circ\nu_{15}^3)+(\sigma_{8}\circ\eta_{15}\circ\varepsilon_{16})\).
\end{lem}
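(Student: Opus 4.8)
The plan is to follow the template established by Lemmas~\ref{lem:G9OP2_tau-bar} and \ref{lem:G4OP2_tau-bar}, namely to identify the subgroup of $\pi_{24}(S^{15})$ in which $\overline{\tau}$ must lie by appealing to Proposition~\ref{prop:jhom}, which tells us that $\Sigma\overline{\tau}$ lies in the image of the $J$-homomorphism. Since $k=10$ and $m=8$, the definition~(\ref{taubardef}) gives $\overline{\tau}\in\pi_{24}(S^{15})$, which is in the stable range for $k\le 2m-2$, so it suffices to pin down $\Sigma\overline{\tau}\in\pi_{25}(S^{16})$ and then desuspend.

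First I would invoke Corollary~\ref{cor:tau-bar_non-trivial}; however that corollary is stated only for $k\equiv 1,2\pmod 8$, and here $k=10\equiv 2\pmod 8$, so it applies directly: if $\tau$ is non-trivial then $\overline{\tau}$ is non-trivial, and moreover $\Sigma\overline{\tau}$ generates $im(J)\cong\mathbb{Z}/2$ in $\pi_{25}(S^{16})$ by~(\ref{imJ}) (with $k=10\equiv 2\pmod 8$ giving $im(J)\cong\mathbb{Z}/2$) together with Proposition~\ref{prop:jhom}. Next I would identify this generator of $im(J)$ explicitly. By the standard description of the stable image of $J$ in this degree (citing Adams~\cite{adamsIV} and Quillen~\cite{quillen_adamsconj}, as in the $k=9$ argument via~\cite{ravenel_cobordism}), the generator is $\eta_{16}\circ(\overline{\nu}_{16}+\varepsilon_{16})$, which by Lemma~\ref{lem:eta_nu-bar} equals $\nu_{16}^3 + \eta_{16}\circ\varepsilon_{17}$. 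Desuspending, this means $\overline{\tau}$ lies in the subgroup $\mathbb{Z}/2\langle \nu_{15}^3 + \eta_{15}\circ\varepsilon_{16}\rangle \subset \pi_{24}(S^{15})$, using Proposition~\ref{prop:G10(OP2)_htpygps}(i) to identify the summands. Since $\tau$ is non-trivial, $\overline{\tau}\simeq \nu_{15}^3 + \eta_{15}\circ\varepsilon_{16}$, and then left-distributivity of $\sigma_8\circ-$ gives the claimed identity $\sigma_8\circ\overline{\tau}\simeq(\sigma_8\circ\nu_{15}^3)+(\sigma_8\circ\eta_{15}\circ\varepsilon_{16})$.

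The main obstacle I anticipate is correctly identifying the generator of $im(J)$ in $\pi_{25}^s \cong \pi_{25}(S^{16})$ and ensuring that Lemma~\ref{lem:eta_nu-bar} (or the relevant Toda relation $\eta\circ\overline{\nu}\simeq\nu^3$ together with $\eta$-composites of $\varepsilon$) correctly rewrites it in terms of the generators $\nu_{15}^3$ and $\eta_{15}\circ\varepsilon_{16}$ appearing in Proposition~\ref{prop:G10(OP2)_htpygps}(i), rather than in some other basis; one must be careful that $\Sigma\overline\tau$ generating a $\mathbb Z/2$ inside a larger group does not lose information upon desuspension, but since $k=10\le 2m-2=14$ we are in the stable range so the suspension $\pi_{24}(S^{15})\to\pi_{25}(S^{16})$ is an isomorphism and no ambiguity arises. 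Once the generator is correctly named, the remaining steps are immediate.
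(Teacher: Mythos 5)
Your proposal follows essentially the same route as the paper: apply Corollary~\ref{cor:tau-bar_non-trivial} (valid here since $k=10\equiv 2\pmod 8$), then Proposition~\ref{prop:jhom} to place $\Sigma\overline{\tau}$ in $im(J)\cong\Z/2$, identify the generator via \cite{ravenel_cobordism}, rewrite it using Toda relations, desuspend, and finish with left-distributivity. This is exactly the paper's argument.

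One bookkeeping slip: you write the generator of $im(J)\subset\pi_{25}(S^{16})$ as $\eta_{16}\circ(\overline{\nu}_{16}+\varepsilon_{16})$, but since $\overline{\nu}_{16},\varepsilon_{16}\in\pi_{24}(S^{16})$ that composite doesn't type-check; you mean $\eta_{16}\circ(\overline{\nu}_{17}+\varepsilon_{17})$, which is what your next line $\nu_{16}^3 + \eta_{16}\circ\varepsilon_{17}$ already assumes. Also, Ravenel gives the generator as $\eta^2\sigma$ (unstably $\eta_{15}^2\circ\sigma_{17}$), and the paper explicitly invokes Lemma~\ref{lem:eta_sigma} ($\eta_n\circ\sigma_{n+1}\simeq\overline{\nu}_n+\varepsilon_n$) to pass from that form to $\eta\circ(\overline{\nu}+\varepsilon)$ before applying Lemma~\ref{lem:eta_nu-bar}; you jump directly to the latter form, though you do flag the relevant relations in your last paragraph. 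Neither point is a gap, just missing intermediate indices and an uncited lemma.
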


\begin{proof}
    If \(\tau\) is non-trivial, then Corollary \ref{cor:tau-bar_non-trivial} implies that \(\overline{\tau}\) is also non-trivial. With $k=10$ and $m=8$, by its definition in~(\ref{taubardef}), we have $\overline{\tau}\in\pi_{24}(S^{15})$. By Proposition \ref{prop:jhom} we have \(\Sigma\overline{\tau}\in im(J)\), which by \cite{ravenel_cobordism}*{Theorem 1.1.13} is generated by \(\eta_{15}^2\circ\sigma_{17}\). In turn, by Lemmas \ref{lem:eta_sigma} and \ref{lem:eta_nu-bar}, this composite is homotopic to the class \(\nu_{15}^3+ \eta_{15}\circ\varepsilon_{16}\). As we are in the stable range we may de-suspend, and thus
    \begin{equation}\label{eq:imJ_in_pi24(S15)}
        \overline{\tau}\in\Z/2\langle\nu_{15}^3+ \eta_{15}\circ\varepsilon_{16}\rangle\subset\pi_{24}(S^{15}).
    \end{equation}
    The result then follows immediately by left-distributivity.
\end{proof}

We move on to Step 2 of Strategy \ref{strategy}.

\begin{lem}\label{lem:lambda - k=10}
    There exists \(\lambda_1\in\pi_{17}(S^8)\) such that \(\lambda_1\circ\sigma_{17}+[\iota_8,\lambda_1]\simeq(\sigma_8\circ\nu_{15}^3)+(\sigma_8\circ\eta_{15}\circ\varepsilon_{16})\).
\end{lem}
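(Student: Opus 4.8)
The plan is to follow the template of Lemma~\ref{lem:lambdas - k=9}: exhibit an explicit $\lambda_1$, compute $\lambda_1\circ\sigma_{17}$ and $[\iota_8,\lambda_1]$ separately, and add. Using Proposition~\ref{prop:G10(OP2)_htpygps}(ii) as coordinates on $\pi_{17}(S^8)$, the natural candidate is the sum of the three named $\Z/2$-generators,
\[
\lambda_1=(\sigma_8\circ\eta_{15}^2)+\nu_8^3+(\eta_8\circ\varepsilon_9).
\]

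For $\lambda_1\circ\sigma_{17}$, since $\sigma_{17}$ is a suspension I would distribute on the right to get a sum of three composites. For $(\sigma_8\circ\eta_{15}^2)\circ\sigma_{17}$, associativity and Lemma~\ref{lem:eta_sigma} (with $n=16$) give $\eta_{16}\circ\sigma_{17}\simeq\overline{\nu}_{16}+\varepsilon_{16}$, whence $\eta_{15}^2\circ\sigma_{17}\simeq\eta_{15}\circ\overline{\nu}_{16}+\eta_{15}\circ\varepsilon_{16}$, and then Lemma~\ref{lem:eta_nu-bar} identifies $\eta_{15}\circ\overline{\nu}_{16}\simeq\nu_{15}^3$, so this composite is exactly $(\sigma_8\circ\nu_{15}^3)+(\sigma_8\circ\eta_{15}\circ\varepsilon_{16})$. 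For the other two composites I would invoke the relevant null homotopies from \cite{toda} (the analogue of the facts quoted as \cite{toda}*{Lemma 10.7} in the proof of Lemma~\ref{lem:lambdas - k=9}) to conclude $\nu_8^3\circ\sigma_{17}\simeq\ast$ and $(\eta_8\circ\varepsilon_9)\circ\sigma_{17}\simeq\ast$. Hence $\lambda_1\circ\sigma_{17}\simeq(\sigma_8\circ\nu_{15}^3)+(\sigma_8\circ\eta_{15}\circ\varepsilon_{16})$.

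For $[\iota_8,\lambda_1]$ I would show it is null homotopic. By additivity this splits into $[\iota_8,\sigma_8\circ\eta_{15}^2]$, $[\iota_8,\nu_8^3]$ and $[\iota_8,\eta_8\circ\varepsilon_9]$, and all three summands of $\lambda_1$ are suspensions, so Lemma~\ref{lem:whitehead_prods_and_susps} applies. The last two reduce to $[\iota_8,\iota_8]$ precomposed with (suspensions of) $\nu^3$ and $\eta\varepsilon$; using $[\iota_8,\iota_8]\simeq2\cdot\sigma_8-\Sigma\sigma'$ from Lemma~\ref{lem:[i_8,i_8]} and the fact that every class of $\pi_{24}(S^8)$ has order $2$ by Proposition~\ref{prop:G10(OP2)_htpygps}(iii), the even terms vanish and these become $\Sigma\sigma'\circ\nu_{15}^3$ and $\Sigma\sigma'\circ\eta_{15}\circ\varepsilon_{16}$. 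For $[\iota_8,\sigma_8\circ\eta_{15}^2]\simeq[\iota_8,\sigma_8]\circ\eta_{22}^2$, Lemma~\ref{lem:[iota,sigma]} and the same order-$2$ observation reduce it to $\Sigma\sigma'\circ\widehat{\sigma}_{15}\circ\eta_{22}^2$; a second use of Lemmas~\ref{lem:eta_sigma} and~\ref{lem:eta_nu-bar}, together with the standard composition identities of \cite{toda} in this stem, rewrites $\widehat{\sigma}_{15}\circ\eta_{22}^2\simeq\nu_{15}^3+\eta_{15}\circ\varepsilon_{16}$, so $[\iota_8,\sigma_8\circ\eta_{15}^2]\simeq\Sigma\sigma'\circ\nu_{15}^3+\Sigma\sigma'\circ\eta_{15}\circ\varepsilon_{16}$. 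Adding the three contributions, everything cancels modulo $2$, so $[\iota_8,\lambda_1]\simeq\ast$, and combining with the previous paragraph yields the claimed homotopy.

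The step I expect to be the main obstacle is the Whitehead-product computation: one has to keep careful track of which suspensions of $\eta$, $\nu$, $\varepsilon$ and $\sigma'$ occur, verify that Lemmas~\ref{lem:eta_sigma} and~\ref{lem:eta_nu-bar} are being applied with indices genuinely in their stated ranges, and --- crucially --- confirm that the correction terms $\nu_8^3$ and $\eta_8\circ\varepsilon_9$ really do cancel $[\iota_8,\sigma_8\circ\eta_{15}^2]$ rather than merely shifting it; locating the precise relations in \cite{toda} that force $\nu_8^3\circ\sigma_{17}$ and $(\eta_8\circ\varepsilon_9)\circ\sigma_{17}$ to be null is the other point requiring care.
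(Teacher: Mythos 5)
Your proposal matches the paper's proof essentially verbatim: you choose the same $\lambda_1=(\sigma_8\circ\eta_{15}^2)+\nu_8^3+(\eta_8\circ\varepsilon_9)$, compute $\lambda_1\circ\sigma_{17}$ via the identities $\eta_{16}\circ\sigma_{17}\simeq\overline{\nu}_{16}+\varepsilon_{16}$ and $\eta_{15}\circ\overline{\nu}_{16}\simeq\nu_{15}^3$ together with the Toda null homotopies (the paper further rewrites $\nu_8^3\simeq\eta_8\circ\overline{\nu}_9$ before applying $\overline{\nu}_9\circ\sigma_{17}\simeq\ast$, which is the precise move you anticipated needing), and kill $[\iota_8,\lambda_1]$ through the same split into three Whitehead products, Lemmas~\ref{lem:whitehead_prods_and_susps}, \ref{lem:[i_8,i_8]}, \ref{lem:[iota,sigma]}, and the order-$2$ cancellation. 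The obstacles you flag are exactly the ones the paper addresses, and they all resolve as you expect.
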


\begin{proof}
    We prove the identity for the class \(\lambda_1=(\sigma_8\circ\eta_{15}^2)+\nu_8^3+(\eta_8\circ\varepsilon_9)\in\pi_{17}(S^8)\). Consider \(\lambda_1\circ\sigma_{17}\). By \cite{toda}*{Lemma 10.7} there are null homotopies for the composites \(\varepsilon_9\circ\sigma_{17}\) and \(\overline{\nu}_9\circ\sigma_{17}\), and by Lemma \ref{lem:eta_nu-bar} we have \(\eta_8\circ\overline{\nu}_9\simeq\nu_8^3\). Therefore 
    \[
    \eta_8\circ\varepsilon_9\circ\sigma_{17}\simeq\ast\text{\; and \;}\nu_8^3\circ\sigma_{17}\simeq\eta_8\circ\overline{\nu}_9\circ\sigma_{17}\simeq\ast,
    \] implying that \(\lambda_1\circ\sigma_{17}\simeq\sigma_8\circ\eta_{15}^2\circ\sigma_{17}\). By Lemma \ref{lem:eta_sigma}, $\eta_{16}\circ\sigma_{17}\simeq\overline{\nu}_{16}+\varepsilon_{16}$. This together with $\eta_{15}\circ\overline{\nu}_{16}\simeq\nu_{15}^{3}$ gives
    \begin{equation}\label{eq:lambda1_comp_sigma - k=10}
        \lambda_1\circ\sigma_{17}\simeq \sigma_{8}\circ\eta^{2}_{15}\circ\sigma_{17}\simeq(\sigma_{8}\circ\eta_{15}\circ\overline{\nu}_{16})+(\sigma_{8}\circ\eta_{15}\circ\varepsilon_{16})\simeq(\sigma_8\circ\nu_{15}^3) + (\sigma_8\circ\eta_{15}\circ\varepsilon_{16}).
    \end{equation} 
    Thus if $[\iota_{8},\lambda_{1}]$ is null homotopic then the homotopy asserted by the lemma holds. 
    
    It remains to show that $[\iota_{8},\lambda_{1}]$ is null homotopic. By additivity we consider each of the three summands of \(\lambda_1\) separately. Both \(\nu_8^3\) and \(\eta_8\circ\varepsilon_9\) are suspensions, so by Lemma \ref{lem:whitehead_prods_and_susps} we obtain 
    \[
        [\iota_8,\nu_8^3] \simeq [\iota_8,\iota_8]\circ\nu_{15}^3 \text{\; and \;} [\iota_8,\eta_8\circ\varepsilon_9] \simeq [\iota_8,\iota_8]\circ\eta_{15}\circ\varepsilon_{16}.
    \]
    By Lemma \ref{lem:[i_8,i_8]}, and noting that both $\sigma_{8}\circ\nu^{3}_{15}$ and $\sigma_{8}\circ\eta_{15}\circ\varepsilon_{16}$ have order $2$ by Proposition \ref{prop:G10(OP2)_htpygps}~(iii), we obtain 
    \begin{equation}\label{eq:whitehead_susps - k=10}
        [\iota_8,\nu_8^3] \simeq \Sigma\sigma'\circ\nu_{15}^3 \text{\; and \;} [\iota_8,\eta_8\circ\varepsilon_9] \simeq \Sigma\sigma'\circ\eta_{15}\circ\varepsilon_{16}.
    \end{equation}
    For \([\iota_8,\sigma_8\circ\eta_{15}^2]\) we again use Lemma \ref{lem:whitehead_prods_and_susps} to obtain 
     \[[\iota_8,\sigma_8\circ\eta_{15}^2]\simeq[\iota_8,\sigma_8]\circ\eta_{22}^2.\] 
    Once again applying Lemma \ref{lem:[iota,sigma]} for $[\iota_{8}, \sigma_{8}]$, and since $\eta^{2}_{22}$ has order $2$, we obtain 
    \[
        [\iota_8,\sigma_8\circ\eta^{2}_{15}]\simeq\Sigma\sigma'\circ\sigma_{15}\circ\eta_{22}^2. 
    \] 
    By Lemma \ref{lem:eta_sigma}, $\sigma_{15}\circ\eta_{22}\simeq\overline{\nu}_{15}+\varepsilon_{15}$. Left distributivity then gives 
    \[\Sigma\sigma'\circ\sigma_{15}\circ\eta_{22}^2\simeq
        \Sigma\sigma'\circ\overline{\nu}_{15}\circ\eta_{23}+ \Sigma\sigma'\circ\varepsilon_{15}\circ\eta_{23}.
    \]
    Next, Lemma \ref{lem:eta_nu-bar} gives \(\overline{\nu}_{15}\circ\eta_{23}\simeq\nu_{15}^3\), 
    and as $\varepsilon_{15}\circ\eta_{23}$ is in the stable range it is homotopic to the composite $\eta_{15}\circ\varepsilon_{16}$. Therefore, stringing homotopies together, we obtain 
    \begin{equation}\label{eq:whitehead_sigma8comp - k=10}
        [\iota_8,\sigma_8\circ\eta_{15}^2] \simeq 
        \Sigma\sigma'\circ\nu^{3}_{15}+\Sigma\sigma'\circ\eta_{15}\circ\varepsilon_{16}. 
    \end{equation} 
    Combining (\ref{eq:whitehead_susps - k=10}) and (\ref{eq:whitehead_sigma8comp - k=10}) then gives   
    \[[\iota_{8},\lambda_1]\simeq 2\cdot(\Sigma\sigma'\circ\nu^{3}_{15})+2\cdot(\Sigma\sigma'\circ\eta_{15})\circ\varepsilon_{16}.\] 
    Both $\nu^{3}_{15}$ and $\eta_{15}$ have order $2$, so $[\iota_{8},\lambda_{1}]$ is null homotopic, as required.
\end{proof}

\begin{thm}\label{thm:g10op2}
    \(\O P^2\) is \(\mathcal{G}^{10}\)-stable, i.e.~\(\mathcal{G}_\tau^{10}(\O P^2)\simeq\mathcal{G}_\omega^{10}(\O P^2)\) for all twistings \(\tau,\omega\in\pi_9(\mathrm{SO}(16))\).
\end{thm}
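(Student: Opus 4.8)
The plan is to run Strategy~\ref{strategy} in exactly the form already used for the $k=9$ case (Theorem~\ref{thm:g9op2}), exploiting that $\pi_9(\mathrm{SO}(16))\cong\Z/2$ contains only two classes, so a single comparison suffices.

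First I would reduce to that comparison. By Lemma~\ref{lem:easycheck} homotopic twistings yield homotopy equivalent gyrations, so it is enough to prove $\mathcal{G}^{10}_\tau(\O P^2)\simeq\mathcal{G}^{10}_\omega(\O P^2)$ when $\omega$ is the trivial twisting and $\tau$ is the unique non-trivial class in $\pi_9(\mathrm{SO}(16))$. By Proposition~\ref{prop:lambda_delta} this reduces to exhibiting a class $\lambda\in\pi_{17}(S^8)$ with
\[
\sigma_8\circ\overline{\tau}+\lambda\circ\sigma_{17}+[\iota_8,\lambda]\simeq\pm\sigma_8\circ\overline{\omega}.
\]

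Next I would assemble the three ingredients supplied by the preparatory lemmas. Step~1 for $\omega$: Lemma~\ref{twisting12}(iii) gives $\overline{\omega}\simeq\ast$, hence $\sigma_8\circ\overline{\omega}\simeq\ast$ and the sign is immaterial. Step~1 for $\tau$: Lemma~\ref{lem:G10(OP2)_tau-bar} gives $\sigma_8\circ\overline{\tau}\simeq(\sigma_8\circ\nu_{15}^{3})+(\sigma_8\circ\eta_{15}\circ\varepsilon_{16})$. Step~2: Lemma~\ref{lem:lambda - k=10} supplies $\lambda_1\in\pi_{17}(S^8)$ with $\lambda_1\circ\sigma_{17}+[\iota_8,\lambda_1]\simeq(\sigma_8\circ\nu_{15}^{3})+(\sigma_8\circ\eta_{15}\circ\varepsilon_{16})$. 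For Step~3 I would take $\lambda=\lambda_1$, so that, adding in $\pi_{24}(S^8)$,
\[
\sigma_8\circ\overline{\tau}+\lambda\circ\sigma_{17}+[\iota_8,\lambda]\simeq 2\cdot\bigl((\sigma_8\circ\nu_{15}^{3})+(\sigma_8\circ\eta_{15}\circ\varepsilon_{16})\bigr),
\]
which is null homotopic since both $\sigma_8\circ\nu_{15}^{3}$ and $\sigma_8\circ\eta_{15}\circ\varepsilon_{16}$ have order $2$ by Proposition~\ref{prop:G10(OP2)_htpygps}(iii). This matches $\sigma_8\circ\overline{\omega}\simeq\ast$, so Proposition~\ref{prop:lambda_delta} gives $\mathcal{G}^{10}_\tau(\O P^2)\simeq\mathcal{G}^{10}_\omega(\O P^2)$; as $\tau$ exhausts $\pi_9(\mathrm{SO}(16))$, this shows $\O P^2$ is $\mathcal{G}^{10}$-stable.

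The main obstacle has in effect already been dispatched: all the genuine input — pinning down $\overline{\tau}$ via $\Sigma\overline{\tau}\in\mathrm{im}(J)$ and the identification of that image with $\langle\nu_{15}^{3}+\eta_{15}\circ\varepsilon_{16}\rangle$, and constructing $\lambda_1$ by resolving the relevant composition and Whitehead-product relations in the stable stems near dimension $24$ — is packaged into Lemmas~\ref{lem:G10(OP2)_tau-bar} and~\ref{lem:lambda - k=10}. The only points needing care in the final assembly are invoking Lemma~\ref{twisting12}(iii) to handle the trivial twisting, and observing that, the target group being all $2$-torsion, the $\pm$ ambiguity in Proposition~\ref{prop:lambda_delta} is irrelevant here.
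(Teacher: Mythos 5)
Your proposal is correct and follows essentially the same route as the paper's proof: reduce to the single comparison of the non-trivial twisting against the trivial one, apply Lemma~\ref{lem:G10(OP2)_tau-bar}, Lemma~\ref{twisting12}(iii), and Lemma~\ref{lem:lambda - k=10}, then conclude via Proposition~\ref{prop:lambda_delta} using the $2$-torsion of the relevant classes. (Incidentally, you write $\sigma_{17}$ where the paper's displayed equation has $\sigma_{16}$; yours is the correct index for $\Sigma^{k-1}\sigma_8$ with $k=10$, so the paper has a typo there.)
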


\begin{proof}
    Since \(\pi_9(\mathrm{SO}(16))\cong\Z/2\), there are two choices of twisting, so to prove gyration stability in this case we need only check when \(\tau\) is non-trivial and \(\omega\) is trivial. By Lemma \ref{lem:G10(OP2)_tau-bar} for $\overline{\tau}$ and Lemma~\ref{twisting12}~(iii) for $\overline{\omega}$, we may write 
    \[
        \sigma_8\circ\overline{\tau}\simeq (\sigma_{8}\circ\nu_{15}^3)+(\sigma_{8}\circ\eta_{15}\circ\varepsilon_{16})
        \text{\; and \;} \sigma_8\circ\overline{\omega}\simeq \ast.
    \] 
    It follows that taking \(\lambda_1\) as in Lemma \ref{lem:lambda - k=10} gives 
    \[
        \sigma_8\circ\overline{\tau}+\lambda_1\circ\sigma_{16}+[\iota_8,\lambda_1] \simeq 2\cdot((\sigma_{8}\circ\nu_{15}^3)+(\sigma_{8}\circ\eta_{15}\circ\varepsilon_{16}))\simeq\ast\simeq\sigma_8\circ\overline{\omega}
    \] 
    thus proving \(\mathcal{G}^{10}\)-stability for \(\O P^2\), by Proposition \ref{prop:lambda_delta}.
\end{proof}

\noindent \textbf{The $k=12$ case}. There is a gyration $\mathcal{G}^{12}_{\tau}(\mathbb{O}P^{2})$ for each $\tau\in\pi_{11}(SO(16))\cong\mathbb{Z}$. We begin by listing the relevant homotopy groups and generators. Note that the $\alpha_{1}$-class of order 7 is denoted by \(\check{\alpha}_1(n)\) to distinguish it from the order 5 and the order 3 classes \(\widetilde{\alpha}_1(n)\) and \(\alpha_1(n)\), and we follow Toda in using $\alpha'_{3}(n)$ to denote the $3$-primary class with the property that $3\cdot\alpha'_{3}(n)\simeq\alpha_{3}(n)$.

\begin{prop}[Toda]\label{prop:G12(OP2)_htpygps}
    There are group isomorphisms:
    \begin{enumerate}
        \item[(i)] \(\pi_{26}(S^{15}) \cong \Z/8\langle\zeta_{15}\rangle\oplus\Z/9\langle\alpha_3'(15)\rangle\oplus\Z/7\langle\check{\alpha}_1(15)\rangle\);
        \item[(ii)] \(\pi_{19}(S^8) \cong \Z/8\langle\zeta_8\rangle\oplus\Z/9\langle\alpha_3'(8)\rangle\oplus\Z/7\langle\check{\alpha}_1(8)\rangle\oplus\Z/2\langle\overline{\nu}_8\circ\nu_{16}\rangle\);
        \item[(iii)] \(\pi_{26}(S^8) \cong \Z/8\langle\sigma_8\circ\zeta_{15}\rangle\oplus\Z/9\langle\sigma_8\circ\alpha_3'(15)\rangle\oplus\Z/7\langle\sigma_8\circ\check{\alpha}_1(15)\rangle\oplus\Z/8\langle\zeta_8\circ\widehat{\sigma}_{19}\rangle \oplus \newline \Z/3\langle\alpha_3'(8)\circ\alpha_2(19)\rangle\oplus\Z/2\). \qed
    \end{enumerate}
\end{prop}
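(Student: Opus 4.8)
The plan is to read all three groups off Toda's tables \cite{toda}, which determine $\pi_{n+k}(S^{n})$ for every $k\le 19$, making the $p$-primary decompositions explicit, and to invoke Oda's computation of the unstable $2$-primary homotopy groups of spheres \cite{oda_unstable_spheres} wherever a generator needs to be pinned down inside the unstable range. I would organize the argument by increasing instability: $\pi_{26}(S^{15})$ is stable, $\pi_{19}(S^{8})$ is metastable, and $\pi_{26}(S^{8})$ is genuinely unstable.

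For (i), since $26\le 2\cdot 15-2$ the Freudenthal suspension theorem places $\pi_{26}(S^{15})$ in the stable range, so it is the stable $11$-stem $\pi_{11}^{s}$, which is cyclic of order $504=2^{3}\cdot 3^{2}\cdot 7$ (it coincides with its image-of-$J$ subgroup); the primary decomposition of finite cyclic groups then gives $\Z/8\oplus\Z/9\oplus\Z/7$, and I would name the generators $\zeta_{15}$, $\alpha_{3}'(15)$ and $\check{\alpha}_{1}(15)$ following Toda's table of $\pi_{n+11}(S^{n})$. For (ii), $\pi_{19}(S^{8})=\pi_{8+11}(S^{8})$ sits one dimension above the stable range, so it is controlled by the $2$-local James fibration $S^{8}\to\Omega S^{9}\to\Omega S^{17}$; the relevant stretch of its $EHP$ sequence is $\pi_{21}(S^{17})\to\pi_{19}(S^{8})\xrightarrow{E}\pi_{20}(S^{9})\xrightarrow{H}\pi_{20}(S^{17})$, and since $\pi_{21}(S^{17})$ is the stable $4$-stem and vanishes, $E$ is injective, so comparison with the stable value shows the only new summand is a single $\Z/2$ generated by the unstable composite $\overline{\nu}_{8}\circ\nu_{16}$ (detected by $H$ and suspending trivially). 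Tracking the primary parts at $p=2,3,7$ and taking $\zeta_{8},\alpha_{3}'(8),\check{\alpha}_{1}(8)$ to be Toda's chosen lifts of the classes in (i) yields the stated decomposition; both (i) and (ii) are entries of Toda's table.

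For (iii), $\pi_{26}(S^{8})=\pi_{8+18}(S^{8})$ is the $18$-stem at $S^{8}$ and is the substantive case. The odd-primary parts are comparatively painless: the $3$-primary summand $\Z/9\langle\sigma_{8}\circ\alpha_{3}'(15)\rangle\oplus\Z/3\langle\alpha_{3}'(8)\circ\alpha_{2}(19)\rangle$ and the $7$-primary summand $\Z/7\langle\sigma_{8}\circ\check{\alpha}_{1}(15)\rangle$ follow from the known odd stems composed with the Hopf class $\sigma_{8}$, together with the metastable $EHP$ sequence. The $2$-primary part $\Z/8\langle\sigma_{8}\circ\zeta_{15}\rangle\oplus\Z/8\langle\zeta_{8}\circ\widehat{\sigma}_{19}\rangle\oplus\Z/2$ I would extract from Toda's computation of $\pi_{8+18}(S^{8})$, with \cite{oda_unstable_spheres} confirming the identification of generators: the two order-$8$ summands are the two ways of composing the $11$-stem class $\zeta$ with the Hopf class $\sigma$, namely $\sigma_{8}\circ\zeta_{15}$ and $\zeta_{8}\circ\widehat{\sigma}_{19}$, and the leftover $\Z/2$ is the remaining table entry.

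The main obstacle is (iii): one must splice together several $EHP$ sequences (for $S^{8}$ through $S^{17}$), resolve the resulting group extensions, and correctly match generators to Toda's named elements --- and this is precisely the dimensional range in which the undetermined odd integers recorded in the introduction (cf.\ Propositions~\ref{prop:xi} and~\ref{prop:theta}) enter the subsequent composition analysis. A genuinely self-contained proof would therefore amount to reproducing the relevant pages of \cite{toda} and \cite{oda_unstable_spheres}; the honest short proof is a citation to those sources, supplemented by the $p$-primary bookkeeping described above.
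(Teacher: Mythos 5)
The paper offers no proof beyond the attribution to Toda: the $\qed$ immediately following the statement signals that all three groups are quoted directly from Toda's tables. You reach the same conclusion in your final paragraph, so your proposal and the paper agree on the essential approach, which is citation.

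That said, two claims in your sketch of part (ii) are false and worth flagging. First, $\pi_{19}(S^{8}) = \pi_{8+11}(S^{8})$ is not ``one dimension above the stable range'': for the $11$-stem the stable range begins at $n = 13$, so $S^{8}$ is five dimensions below it (and $\pi_{20}(S^{9})$, to which you suspend, is also still unstable, so ``comparison with the stable value'' is not a single-step check). Second, the parenthetical that $\overline{\nu}_{8}\circ\nu_{16}$ is ``detected by $H$ and suspending trivially'' contradicts what you have just argued: if $E\colon\pi_{19}(S^{8})\to\pi_{20}(S^{9})$ is injective, as you deduce from $\pi_{21}(S^{17})=0$, then $\overline{\nu}_{8}\circ\nu_{16}$ does not suspend trivially at this step; and exactness of the $EHP$ sequence gives $H\circ E=0$, so an element in the image of $E$ is never detected by the $H$ you wrote down. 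Neither slip affects the truth of the proposition, which is looked up rather than rederived, but either would sink an actual $EHP$-sequence derivation of (ii) if you tried to carry the sketch through.
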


Step 1 is given by the next lemma.

\begin{lem}\label{lem:G12OP2_tau-bar}
    For any \(\tau\in\pi_{11}(\mathrm{SO}(16))\) there exist integers \(a_1\), \(a_2\) and \(a_3\), modulo 8, 9 and 7 respectively, such that \(\sigma_8\circ\overline{\tau}\simeq a_1\cdot(\sigma_8\circ\zeta_{15})+a_2\cdot(\sigma_8\circ\alpha_3'(15))+a_3\cdot(\sigma_8\circ\check{\alpha}_1(15))\).    
\end{lem}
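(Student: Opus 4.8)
The plan is to follow the same template as the earlier Step~1 lemmas, most closely Lemma~\ref{lem:G8OP2_tau-bar}. First I would locate the class $\overline{\tau}$: taking $m=8$ and $k=12$, its defining composite in~(\ref{taubardef}) shows that $\overline{\tau}\in\pi_{2m+k-2}(S^{2m-1})=\pi_{26}(S^{15})$. Since $26\leq 2\cdot 15-2$ we are in the stable range, so Proposition~\ref{prop:G12(OP2)_htpygps}(i) supplies the isomorphism
\[
\pi_{26}(S^{15})\cong\Z/8\langle\zeta_{15}\rangle\oplus\Z/9\langle\alpha_3'(15)\rangle\oplus\Z/7\langle\check{\alpha}_1(15)\rangle,
\]
so that $\overline{\tau}\simeq a_1\cdot\zeta_{15}+a_2\cdot\alpha_3'(15)+a_3\cdot\check{\alpha}_1(15)$ for integers $a_1,a_2,a_3$ taken modulo $8$, $9$ and $7$ respectively.

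Next I would compose on the left with $\sigma_8$. Left-distributivity of composition over a sum of maps out of a suspension, used throughout the paper, then gives
\[
\sigma_8\circ\overline{\tau}\simeq a_1\cdot(\sigma_8\circ\zeta_{15})+a_2\cdot(\sigma_8\circ\alpha_3'(15))+a_3\cdot(\sigma_8\circ\check{\alpha}_1(15)),
\]
which is the claimed identity. It is worth noting, for consistency of the coefficient moduli, that $\sigma_8\circ\zeta_{15}$, $\sigma_8\circ\alpha_3'(15)$ and $\sigma_8\circ\check{\alpha}_1(15)$ occur among the generators of $\pi_{26}(S^8)$ listed in Proposition~\ref{prop:G12(OP2)_htpygps}(iii) with orders $8$, $9$ and $7$; this reflects the injectivity of $\sigma_8\circ-$ recorded in Step~1 of Strategy~\ref{strategy}, so no information is lost on applying $\sigma_8\circ-$.

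I do not anticipate a genuine obstacle in this lemma: the argument is pure bookkeeping against Toda's tables, and the only care required is in matching generators and their orders between parts (i) and (iii) of Proposition~\ref{prop:G12(OP2)_htpygps}. One may also observe that, in contrast to the $k=9,10$ cases, no constraint on $\overline{\tau}$ comes from Proposition~\ref{prop:jhom}: here $k=4s$ with $s=3$, and~(\ref{imJ}) gives $\mathrm{im}(J)\cong\Z/504\cong\Z/8\oplus\Z/9\oplus\Z/7$, which is all of the stable group $\pi_{26}(S^{15})$, so every class is realised by some twisting and the coefficients $a_i$ are genuinely unrestricted. The substantive difficulty of the $k=12$ case will lie in Step~2 of Strategy~\ref{strategy} — producing suitable $\lambda_i\in\pi_{19}(S^8)$ — rather than in this statement.
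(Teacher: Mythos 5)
Your proof is correct and follows the paper's argument exactly: locate $\overline{\tau}\in\pi_{26}(S^{15})$ via~(\ref{taubardef}), decompose it against the generators of Proposition~\ref{prop:G12(OP2)_htpygps}(i), and push forward by $\sigma_8\circ{-}$ using left distributivity. Your closing observation that $\mathrm{im}(J)\cong\Z/504$ exhausts $\pi_{26}(S^{15})$ goes beyond what the lemma requires, but it is accurate and is in fact the tacit justification for the coefficients $a_i$ being unrestricted when Proposition~\ref{prop:theta} later counts homotopy types.
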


\begin{proof}  
    With $k=12$ and $m=8$, by~(\ref{taubardef}) we have \(\overline{\tau}\in\pi_{26}(S^{15})\). So by Proposition \ref{prop:G12(OP2)_htpygps}~(i),
    \[          \overline{\tau}\in\Z/8\langle\zeta_{15}\rangle\oplus\Z/9\langle\alpha_3'(15)\rangle\oplus\Z/7\langle\check{\alpha}_1(15)\rangle.
    \] 
    Therefore $\overline{\tau}\simeq a_{1}\cdot\zeta_{15}+a_{2}\cdot\alpha'_{3}(15)+a_{3}\cdot\check{\alpha}_{1}(15)$ for some integers $a_{1}$, $a_{2}$ and $a_{3}$ modulo $8$, $9$ and $7$ respectively. The statement of the lemma follows by left distributivity.  
\end{proof}

Now we turn to Step 2. For \(\lambda\in\pi_{19}(S^8)\), by Proposition \ref{prop:G12(OP2)_htpygps}~(ii) we have  
\begin{equation}\label{k=12lambda}
    \lambda\simeq w\cdot\zeta_8+x\cdot\alpha_3'(8)+y\cdot\alpha_1(8)+z\cdot(\overline{\nu}_8\circ\nu_{16})
\end{equation}   
for integers \(w,x,y\) and \(z\) modulo 8, 9, 7 and 2 respectively. 

\begin{lem}\label{lem:step2fork=12} 
   There is an odd integer $\vartheta$ such that 
   \begin{gather*}
    \begin{split}
    \lambda\circ\sigma_{19}+[\iota_8,\lambda] \simeq &~ 2w\cdot(\sigma_8\circ\zeta_{15})+2x\cdot(\sigma_8\circ\alpha_3'(15)) +2y\cdot(\sigma_8\circ\check{\alpha}_1(15)) \\ &~ +(w-\vartheta w)\cdot(\zeta_8\circ\widehat{\sigma}_{19}) + x\cdot(\alpha'_3(8)\circ\alpha_2(19)).
    \end{split}
   \end{gather*} 
\end{lem}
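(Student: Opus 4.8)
The plan is to follow the pattern of Lemmas~\ref{lem:xyxi} and~\ref{lem:lambdacomps - k=8}: compute the composite $\lambda\circ\sigma_{19}$ and the Whitehead product $[\iota_8,\lambda]$ separately as elements of $\pi_{26}(S^8)$, using the decomposition of $\pi_{26}(S^8)$ in Proposition~\ref{prop:G12(OP2)_htpygps}(iii), and then add. For the composite, write $\sigma_{19}\simeq\widehat{\sigma}_{19}+\widetilde{\alpha}_1(19)+\alpha_2(19)$ as in Proposition~\ref{prop:sigma_n}; each summand is a suspension, so we may distribute on the left, and we may also distribute over the four summands of $\lambda$ given by~(\ref{k=12lambda}). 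Of the resulting terms, $\zeta_8\circ\widetilde{\alpha}_1(19)$, $\zeta_8\circ\alpha_2(19)$, $\alpha_3'(8)\circ\widehat{\sigma}_{19}$, $\alpha_3'(8)\circ\widetilde{\alpha}_1(19)$ and $\check{\alpha}_1(8)\circ\sigma_{19}$ vanish by Lemma~\ref{lem:coprime_order}, since in each case the orders of the two factors are coprime; the composite $\alpha_3'(8)\circ\alpha_2(19)$, a generator of a $\Z/3$ summand of $\pi_{26}(S^8)$, survives; and a short computation with Toda's composition relations shows that the term $z\cdot(\overline{\nu}_8\circ\nu_{16}\circ\sigma_{19})$ is null homotopic. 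This leaves $\lambda\circ\sigma_{19}\simeq w\cdot(\zeta_8\circ\widehat{\sigma}_{19})+x\cdot(\alpha_3'(8)\circ\alpha_2(19))$.

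For the Whitehead product, each generator appearing in~(\ref{k=12lambda}) is a suspension, so by additivity of $[\iota_8,-]$ and Lemma~\ref{lem:whitehead_prods_and_susps} we obtain $[\iota_8,\lambda]\simeq[\iota_8,\iota_8]\circ\Sigma^7\lambda$, and Lemma~\ref{lem:[i_8,i_8]} gives $[\iota_8,\iota_8]\simeq2\cdot\sigma_8-\Sigma\sigma'$. Writing $\Sigma^7\lambda\simeq w\cdot\zeta_{15}+x\cdot\alpha_3'(15)+y\cdot\check{\alpha}_1(15)+z\cdot(\overline{\nu}_{15}\circ\nu_{23})$ in the stable range, the $2\cdot\sigma_8$ part contributes $2w\cdot(\sigma_8\circ\zeta_{15})+2x\cdot(\sigma_8\circ\alpha_3'(15))+2y\cdot(\sigma_8\circ\check{\alpha}_1(15))$, the $z$-term being killed by the factor $2$ as the relevant class has order $2$. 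In the $-\Sigma\sigma'$ part, the composites $\Sigma\sigma'\circ\alpha_3'(15)$ and $\Sigma\sigma'\circ\check{\alpha}_1(15)$ vanish by Lemma~\ref{lem:coprime_order} ($8$ against $9$ and $7$), and $\Sigma\sigma'\circ(\overline{\nu}_{15}\circ\nu_{23})$ vanishes by Toda's relations.

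The remaining composite $\Sigma\sigma'\circ\zeta_{15}$ is the crux. Both classes are $2$-primary of order $8$, so it need not vanish; the step that makes the lemma work is to extract from the $2$-primary computations of $\pi_{26}$ by Toda~\cite{toda} and Oda~\cite{oda_unstable_spheres} a relation $\Sigma\sigma'\circ\zeta_{15}\simeq\vartheta\cdot(\zeta_8\circ\widehat{\sigma}_{19})$ for an integer $\vartheta$ that those computations pin down only up to parity, namely as odd. This is precisely the $k=12$ analogue of the integer $\xi$ and the relation~\cite{toda}*{(7.19)} used in Lemma~\ref{lem:xyxi}, and isolating this relation (and observing that $\vartheta$ cannot be read off modulo $8$ from the literature) is the main obstacle; it is also what will force the later enumeration of homotopy types to split according to $\vartheta\bmod 8$. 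Granting it, the $-\Sigma\sigma'$ part contributes $-\vartheta w\cdot(\zeta_8\circ\widehat{\sigma}_{19})$, so adding the two computations and collecting the coefficients of the linearly independent classes $\sigma_8\circ\zeta_{15}$, $\sigma_8\circ\alpha_3'(15)$, $\sigma_8\circ\check{\alpha}_1(15)$, $\zeta_8\circ\widehat{\sigma}_{19}$ and $\alpha_3'(8)\circ\alpha_2(19)$ in $\pi_{26}(S^8)$ yields the asserted identity, with coefficient $w-\vartheta w$ on $\zeta_8\circ\widehat{\sigma}_{19}$.
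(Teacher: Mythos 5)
Your proposal is correct and follows essentially the same approach as the paper: decompose $\sigma_{19}$ and $\Sigma^7\lambda$, eliminate compositions of coprime orders via Lemma~\ref{lem:coprime_order}, use Lemmas~\ref{lem:whitehead_prods_and_susps} and~\ref{lem:[i_8,i_8]} for the Whitehead product, and isolate the relation $\Sigma\sigma'\circ\zeta_{15}\simeq\vartheta\cdot(\zeta_8\circ\widehat{\sigma}_{19})$ with $\vartheta$ odd (the paper cites \cite{toda}*{Lemma 12.12} for this, and \cite{toda}*{(7.20)} and (7.22) for the null composites $\nu_{16}\circ\widehat{\sigma}_{19}$ and $\overline{\nu}_{15}\circ\nu_{23}$, which you could pin down for precision).
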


\begin{proof} 
First, consider the composite \(\lambda\circ\sigma_{19}\). 
Writing \(\sigma_{19}\simeq\widehat{\sigma}_{19}+\widetilde{\alpha}_1(19)+\alpha_2(19)\) and using Lemma \ref{lem:coprime_order} to eliminate compositions of elements of coprime orders, we obtain 
\[
    \lambda\circ\sigma_{19}\simeq \lambda \circ (\widehat{\sigma}_{19}+\widetilde{\alpha}_1(19)+\alpha_2(19)) \simeq w\cdot(\zeta_8\circ\widehat{\sigma}_{19}) + x\cdot(\alpha'_3(8)\circ\alpha_2(19))+ z\cdot(\overline{\nu}_8\circ\nu_{16}\circ\widehat{\sigma}_{19}).
\]
By \cite{toda}*{(7.20)}, \(\nu_{16}\circ\widehat{\sigma}_{19}\) is null homotopic. Therefore 
\begin{equation}\label{eq:lambdacomp - k=12}
    \lambda\circ\sigma_{19}\simeq w\cdot(\zeta_8\circ\widehat{\sigma}_{19}) + x\cdot(\alpha'_3(8)\circ\alpha_2(19)).
\end{equation}

Next consider the Whitehead product \([\iota_8,\lambda]\). Each of the generators in Proposition \ref{prop:G12(OP2)_htpygps}(ii) is a suspension, implying that \(\lambda\) is a suspension. Therefore Lemma \ref{lem:whitehead_prods_and_susps} implies that 
\[[\iota_8,\lambda] \simeq [\iota_8,\iota_8]\circ\Sigma^7\lambda.\]
Applying Lemma \ref{lem:[i_8,i_8]} and using the expression for $\lambda$ in~(\ref{k=12lambda}) gives
\begin{align*}
    \begin{split}
    [\iota_8,\lambda] \simeq &~ (2\cdot\sigma_8-\Sigma\sigma')\circ(w\cdot\zeta_{15}+x\cdot\alpha_3'(15)+y\cdot\check{\alpha}_1(15)+z\cdot(\overline{\nu}_{15}\circ\nu_{23})) \\
    \simeq &~ 2w\cdot(\sigma_8\circ\zeta_{15})+2x\cdot(\sigma_8\circ\alpha_3'(15))+2y\cdot(\sigma_8\circ\check{\alpha}_1(15))+2z\cdot(\sigma_8\circ\overline{\nu}_{15}\circ\nu_{23}) \\ &~ -w\cdot(\Sigma\sigma'\circ\zeta_{15})-x\cdot(\Sigma\sigma'\circ\alpha_3'(15))-y\cdot(\Sigma\sigma'\circ\check{\alpha}_1(15))-z\cdot(\Sigma\sigma'\circ\overline{\nu}_{15}\circ\nu_{23}). 
    \end{split}
\end{align*}
Applying Lemma \ref{lem:coprime_order} to eliminate compositions of coprime elements, and observing that \(\overline{\nu}_{15}\circ\nu_{23}\) is null homotopic by \cite{toda}*{(7.22)}, gives
\begin{equation}\label{eq:lambdawhitehead - k=12}
     [\iota_8,\lambda] \simeq 2w\cdot(\sigma_8\circ\zeta_{15})+2x\cdot(\sigma_8\circ\alpha_3'(15)) +2y\cdot(\sigma_8\circ\check{\alpha}_1(15)) -w\cdot(\Sigma\sigma'\circ\zeta_{15}).
\end{equation}
Now, by \cite{toda}*{Lemma 12.12} there exists an odd integer \(\vartheta\) such that \(\Sigma\sigma'\circ\zeta_{15}\simeq\vartheta\cdot(\zeta_8\circ\widehat{\sigma}_{19})\) and so (\ref{eq:lambdacomp - k=12}) and (\ref{eq:lambdawhitehead - k=12}) combine to give
\begin{gather}\label{eq:lambercomp+lambdawhitehead - k=12}
    \begin{split}
    \lambda\circ\sigma_{19}+[\iota_8,\lambda] \simeq &~ 2w\cdot(\sigma_8\circ\zeta_{15})+2x\cdot(\sigma_8\circ\alpha_3'(15)) +2y\cdot(\sigma_8\circ\check{\alpha}_1(15)) \\ &~ +(w-\vartheta w)\cdot(\zeta_8\circ\widehat{\sigma}_{19}) + x\cdot(\alpha'_3(8)\circ\alpha_2(19))
    \end{split}
\end{gather} 
as asserted. 
\end{proof} 

Lemma \ref{lem:step2fork=12} has two immediate consequences, giving the following supplementary lemma.

\begin{lem}\label{lem:k=12_2consequences}
    Let \(\tau,\omega\in\pi_{11}(\mathrm{SO}(16))\) and let \(\lambda\in\pi_{19}(\mathrm{SO}(16))\) be as in (\ref{k=12lambda}). If there is a homotopy \(\sigma_8\circ\overline{\tau}+\lambda\circ\sigma_{19}+[\iota_8,\lambda]\simeq\pm\sigma_8\circ\overline{\omega}\) then the following congruences hold
    \begin{enumerate}
        \item[(i)] \(x\equiv0\text{ (mod 3)}\);
        \item[(ii)] \(w-\vartheta w\equiv0\text{ (mod 8)}\),
    \end{enumerate}
    where \(\vartheta\) is the odd integer or Lemma \ref{lem:step2fork=12}.
\end{lem}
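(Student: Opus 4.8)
The plan is to substitute the two expansions already in hand — Lemma~\ref{lem:G12OP2_tau-bar} for $\sigma_8\circ\overline{\tau}$ and $\sigma_8\circ\overline{\omega}$, and Lemma~\ref{lem:step2fork=12} for $\lambda\circ\sigma_{19}+[\iota_8,\lambda]$ — into the hypothesised homotopy and then read off coefficients in the internal direct sum decomposition of $\pi_{26}(S^8)$ supplied by Proposition~\ref{prop:G12(OP2)_htpygps}(iii). First I would use Lemma~\ref{lem:G12OP2_tau-bar} to write
\[
\sigma_8\circ\overline{\tau}\simeq a_1\cdot(\sigma_8\circ\zeta_{15})+a_2\cdot(\sigma_8\circ\alpha_3'(15))+a_3\cdot(\sigma_8\circ\check{\alpha}_1(15))
\]
and similarly $\sigma_8\circ\overline{\omega}$ with coefficients $b_1,b_2,b_3$ modulo $8$, $9$, $7$. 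The key point is that both $\sigma_8\circ\overline{\tau}$ and $\pm\sigma_8\circ\overline{\omega}$ lie in the subsum $\Z/8\langle\sigma_8\circ\zeta_{15}\rangle\oplus\Z/9\langle\sigma_8\circ\alpha_3'(15)\rangle\oplus\Z/7\langle\sigma_8\circ\check{\alpha}_1(15)\rangle$, and so have trivial component in each of the remaining summands $\Z/8\langle\zeta_8\circ\widehat{\sigma}_{19}\rangle$, $\Z/3\langle\alpha_3'(8)\circ\alpha_2(19)\rangle$ and $\Z/2$ of Proposition~\ref{prop:G12(OP2)_htpygps}(iii).

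Next I would add the expansion of Lemma~\ref{lem:step2fork=12} to the first display, obtaining that the left-hand side $\sigma_8\circ\overline{\tau}+\lambda\circ\sigma_{19}+[\iota_8,\lambda]$ equals
\[
(a_1+2w)\cdot(\sigma_8\circ\zeta_{15})+(a_2+2x)\cdot(\sigma_8\circ\alpha_3'(15))+(a_3+2y)\cdot(\sigma_8\circ\check{\alpha}_1(15))+(w-\vartheta w)\cdot(\zeta_8\circ\widehat{\sigma}_{19})+x\cdot(\alpha_3'(8)\circ\alpha_2(19)).
\]
Since the generators in Proposition~\ref{prop:G12(OP2)_htpygps}(iii) are linearly independent, the hypothesised homotopy $\sigma_8\circ\overline{\tau}+\lambda\circ\sigma_{19}+[\iota_8,\lambda]\simeq\pm\sigma_8\circ\overline{\omega}$ can hold only if the $\zeta_8\circ\widehat{\sigma}_{19}$- and $\alpha_3'(8)\circ\alpha_2(19)$-components of the left-hand side vanish, the corresponding components on the right-hand side being zero. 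The first condition is the congruence $w-\vartheta w\equiv 0\pmod 8$, which is (ii); the second is $x\equiv 0\pmod 3$, which is (i) — here one uses that the summand $\langle\alpha_3'(8)\circ\alpha_2(19)\rangle$ has order $3$, even though $x$ was a priori taken modulo $9$.

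I do not expect a serious obstacle here: all the delicate homotopy-theoretic input, in particular the extraction of the odd integer $\vartheta$ and of the coefficient $w-\vartheta w$ of $\zeta_8\circ\widehat{\sigma}_{19}$, has already been carried out in Lemma~\ref{lem:step2fork=12}, while Lemma~\ref{lem:G12OP2_tau-bar} and Proposition~\ref{prop:G12(OP2)_htpygps}(iii) take care of the bookkeeping. The only points deserving a line of care are to note that Proposition~\ref{prop:G12(OP2)_htpygps}(iii) is an internal direct sum, so that equating the two sides of the homotopy is equivalent to equating coefficients summand by summand, and that the sign ambiguity $\pm$ is irrelevant because $-\sigma_8\circ\overline{\omega}$ still has zero component in the two relevant summands.
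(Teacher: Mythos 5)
Your proof is correct and follows exactly the same route as the paper: substitute the expansions from Lemma~\ref{lem:G12OP2_tau-bar} and Lemma~\ref{lem:step2fork=12}, compare coefficients in the direct sum decomposition of $\pi_{26}(S^8)$ given by Proposition~\ref{prop:G12(OP2)_htpygps}(iii), and conclude from the vanishing of the $\zeta_8\circ\widehat{\sigma}_{19}$- and $\alpha_3'(8)\circ\alpha_2(19)$-components on the right-hand side. Your remark that the summand $\langle\alpha_3'(8)\circ\alpha_2(19)\rangle$ has order~$3$, so the constraint on $x$ (a priori mod~$9$) is only mod~$3$, is precisely the point the paper makes.
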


\begin{proof}
    Consider the homotopy \(\sigma_8\circ\overline{\tau}+\lambda\circ\sigma_{19}+[\iota_8,\lambda]\simeq\pm\sigma_8\circ\overline{\omega}\). By Lemma \ref{lem:step2fork=12} the left side has \(\alpha'_3(8)\circ\alpha_2(19)\) with coefficient \(x\) and \(\zeta_8\circ\widehat{\sigma}_{19}\) with coefficient \((w-\vartheta w)\), whereas the right side has both of these with coefficient 0 by Lemma \ref{lem:G12OP2_tau-bar}. Therefore it must be the case that \(x\equiv0\text{ (mod 3)}\) and \(w-\vartheta w\equiv0\text{ (mod 8)}\), since by Proposition \ref{prop:G12(OP2)_htpygps}~(iii) these classes are of order 3 and 8, respectively.
\end{proof}

\begin{rem}\label{rem:vartheta}  
    This is a similar, slightly more complicated, situation to that of the \(k=4\) case (cf. Remark~\ref{rem:xi_postpone}). Although the precise value of the odd integer \(\vartheta\) is not determined, the congruences of Lemma \ref{lem:k=12_2consequences} result in the following analogue to Proposition \ref{prop:xi}.
\end{rem}

\begin{prop}\label{prop:theta}
    Let \(\vartheta\) be the odd integer of Lemma \ref{lem:step2fork=12} and \(\tau\in\pi_{11}(\mathrm{SO}(16))\) be an arbitrary twisting.
    \begin{enumerate}
        \item[(i)] If \(\vartheta\equiv1\text{ (mod 8)}\) then \(\mathcal{G}_\tau^{12}(\O P^2)\) can take exactly four possible homotopy types.
        \item[(ii)] If \(\vartheta\equiv5\text{ (mod 8)}\) then \(\mathcal{G}_\tau^{12}(\O P^2)\) can take exactly six possible homotopy types.
        \item[(iii)] If \(\vartheta\equiv3\text{ or }7\text{ (mod 8)}\) then \(\mathcal{G}_\tau^{12}(\O P^2)\) can take exactly ten possible homotopy types.
    \end{enumerate}
\end{prop}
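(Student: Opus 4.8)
The plan is to mirror the structure of the proof of Proposition~\ref{prop:xi}, now using the homotopy-group data from Proposition~\ref{prop:G12(OP2)_htpygps} and the composition formula of Lemma~\ref{lem:step2fork=12}. First I would set up the bookkeeping: by Lemma~\ref{lem:G12OP2_tau-bar}, for twistings $\tau,\omega\in\pi_{11}(\mathrm{SO}(16))$ we may write
\[
\sigma_8\circ\overline{\tau}\simeq a_1\cdot(\sigma_8\circ\zeta_{15})+a_2\cdot(\sigma_8\circ\alpha_3'(15))+a_3\cdot(\sigma_8\circ\check{\alpha}_1(15))
\]
and similarly for $\overline{\omega}$ with coefficients $b_1,b_2,b_3$ modulo $8,9,7$ respectively. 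By Proposition~\ref{prop:lambda_delta}, $\mathcal{G}_\tau^{12}(\O P^2)\simeq\mathcal{G}_\omega^{12}(\O P^2)$ if and only if there is a $\lambda\in\pi_{19}(S^8)$ as in~(\ref{k=12lambda}) with $\sigma_8\circ\overline{\tau}+\lambda\circ\sigma_{19}+[\iota_8,\lambda]\simeq\pm\sigma_8\circ\overline{\omega}$. Lemma~\ref{lem:k=12_2consequences} forces $x\equiv0\pmod 3$ and $w-\vartheta w\equiv0\pmod 8$ for such a homotopy to exist, and then Lemma~\ref{lem:step2fork=12} shows that under these constraints
\[
\sigma_8\circ\overline{\tau}+\lambda\circ\sigma_{19}+[\iota_8,\lambda]\simeq (a_1+2w)\cdot(\sigma_8\circ\zeta_{15})+(a_2+2x)\cdot(\sigma_8\circ\alpha_3'(15))+(a_3+2y)\cdot(\sigma_8\circ\check{\alpha}_1(15)).
\]

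Next I would dispose of the $\alpha_3'$ and $\check{\alpha}_1$ coordinates. Since $2$ is invertible modulo $9$ and modulo $7$, for any $a_2,b_2$ there is an $x$ (now unconstrained modulo $9$ after choosing it $\equiv0\pmod 3$, which is possible since $x$ runs modulo $9$ and the subgroup $3\Z/9\Z$ is nontrivial — actually one must be slightly careful here, see below) with $a_2+2x\equiv\pm b_2\pmod 9$, and similarly a $y$ with $a_3+2y\equiv\pm b_3\pmod 7$. Wait — the constraint is $x\equiv 0\pmod 3$, so $x\in\{0,3,6\}$ modulo $9$, giving $2x\in\{0,6,12\equiv3\}$, i.e.\ $a_2+2x$ ranges over $a_2+\{0,3,6\}$, a coset of $3\Z/9\Z$. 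So the $\alpha_3'$-coordinate is an invariant only modulo $3$: $\mathcal{G}_\tau\simeq\mathcal{G}_\omega$ requires $a_2\equiv\pm b_2\pmod 3$, and conversely this can be arranged. Hence the $\alpha_3'$ coordinate contributes a factor counting $\pm$-classes in $\Z/3$, namely $\{0\},\{1,2\}$, i.e.\ $2$ classes; the $\check{\alpha}_1$ coordinate contributes no constraint at all since $y$ is free modulo $7$ and $2$ is a unit. So the homotopy type is governed by the pair $(a_1\bmod 8,\ a_2\bmod 3)$ up to the sign action and the available translations of $a_1$ by $2w$ with $w-\vartheta w\equiv0\pmod 8$ — exactly the $\Z/8$ analysis of Proposition~\ref{prop:xi}, now multiplied by the $\Z/3$-with-sign factor of size $2$.

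Then I would run the three cases on $\vartheta\bmod 8$ exactly as in Proposition~\ref{prop:xi}. If $\vartheta\equiv1\pmod8$ the constraint on $w$ is vacuous, so $a_1$ is an invariant only modulo $2$ up to sign: $2$ classes in the $a_1$-coordinate, hence $2\times 2=4$ homotopy types. If $\vartheta\equiv5\pmod8$ then $w$ must be even, so $a_1$ is invariant modulo $4$ up to sign, giving $3$ classes ($\{0\},\{1,3\},\{2\}$ — treating the $\pm$ action on $\Z/4$), hence $3\times2=6$ types. If $\vartheta\equiv3$ or $7\pmod8$ then $w-\vartheta w\equiv0\pmod8$ forces $2w\equiv0\pmod 8$, i.e.\ $w\equiv0$ or $4\pmod8$, so $a_1$ is a full invariant modulo $8$ up to sign, giving $5$ classes ($\{0\},\{1,7\},\{2,6\},\{3,5\},\{4\}$), hence $5\times2=10$ types. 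To make each count an exact enumeration (``exactly'' rather than ``at most'') I would, as in Proposition~\ref{prop:xi}, exhibit for each residue class a concrete twisting $\tau$ realizing it — this follows since the $J$-homomorphism $\pi_{11}(\mathrm{SO}(16))\cong\Z\to im(J)\cong\Z/504$ is surjective onto a group containing the relevant $\Z/8\oplus\Z/9\oplus\Z/7$ summand, via Proposition~\ref{prop:jhom} and~(\ref{imJ}) (here $k=12=4\cdot3$, $d_3=504$), so every value of $(a_1,a_2,a_3)$ is attained by some $\overline{\tau}$. The main obstacle is the careful modular arithmetic in combining the $\Z/8$-factor (sensitive to $\vartheta\bmod 8$) with the $\Z/3$-factor (sensitive to the residual $x\equiv0\pmod3$ constraint) and verifying that the sign ambiguity $\pm$ in Proposition~\ref{prop:lambda_delta} does not collapse classes further than claimed — in particular checking the fixed points and orbits of negation on $\Z/8$ and on $\Z/3$ to get the exact counts $5$ and $2$, and confirming no cross-term interaction between the two coordinates produces extra identifications.
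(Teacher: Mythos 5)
Your proposal follows the paper's argument step by step, and both contain the same counting error in parts (ii) and (iii). The flagged subtlety at the end of your proposal — "confirming no cross-term interaction between the two coordinates produces extra identifications" — is exactly where the argument breaks, and you dismiss it without carrying out the check. The sign $\pm$ in Proposition~\ref{prop:lambda_delta} is a \emph{single} sign applied to the whole of $\sigma_8\circ\overline{\omega}$ (it comes from the single degree-$\pm1$ map on the top cell in Lemma~\ref{lem:blakersmasseyargument}), so the three coordinate congruences
\[
a_1+2w\equiv s\,b_1\pmod 8,\qquad a_2+2x\equiv s\,b_2\pmod 9,\qquad a_3+2y\equiv s\,b_3\pmod 7
\]
must hold for one and the same $s\in\{\pm1\}$. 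Your tally ($5\times 2 = 10$, resp.\ $3\times 2 = 6$) implicitly allows the sign in the $\Z/8$-coordinate to be chosen independently of the sign in the $\Z/3$-coordinate. The correct enumeration is the number of orbits of \emph{simultaneous} negation (composed with the allowed $2w$-translations on the first coordinate) on $(\Z/8)\times(\Z/3)$, which is not multiplicative. By Burnside, for $\vartheta\equiv 3,7\pmod 8$ the translation part is trivial and the count is $\tfrac12\bigl(24 + |\mathrm{Fix}(-1)|\bigr)=\tfrac12(24+2)=13$, not $10$; for $\vartheta\equiv5\pmod 8$ the count is $7$, not $6$. Part (i) is unaffected because parity in $\Z/8$ is insensitive to sign, so either sign can always be used for the first congruence.

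Concretely, take $\vartheta\equiv3$ or $7\pmod 8$ and compare $(a_1,a_2)=(1,1)$ with $(b_1,b_2)=(1,2)$ (with $a_3,b_3$ arbitrary). With $s=+1$ one needs $1+2w\equiv 1\pmod 8$ with $w\equiv0\pmod4$ (fine), and $1+2x\equiv 2\pmod 9$ with $x\equiv 0\pmod 3$, which forces $x\equiv 5\pmod 9$ — impossible. With $s=-1$ one needs $1+2w\equiv 7\pmod 8$, i.e.\ $w\equiv 3\pmod 4$, contradicting $w\equiv0\pmod 4$. So $(1,1)$ and $(1,2)$ give distinct homotopy types, whereas the label $\mathcal{G}^{12}_{1,\pm}$ in the argument (copied from the paper) treats them as one. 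Since the $J$-homomorphism $\pi_{11}(\mathrm{SO}(16))\to\pi_{27}(S^{16})\cong\Z/504$ is onto (you correctly note this, since $d_3=504$ and $\pi_{27}(S^{16})$ is stable), every triple $(a_1,a_2,a_3)$ is realized, so these extra classes genuinely occur. The upshot is that the statement's numbers $6$ and $10$ cannot be proved as written; a correct execution of this approach yields $4$, $7$, or $13$.
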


\begin{proof}
    By Lemma \ref{lem:G12OP2_tau-bar}, for twistings \(\tau,\omega\in\pi_{11}(\mathrm{SO}(16))\) we may write 
    \[\sigma_8\circ\overline{\tau}\simeq a_1\cdot(\sigma_8\circ\zeta_{15})+a_2\cdot(\sigma_8\circ\alpha_3'(15))+a_3\cdot(\sigma_8\circ\check{\alpha}_1(15))\]
    and
    \[\sigma_8\circ\overline{\omega}\simeq b_1\cdot(\sigma_8\circ\zeta_{15})+b_2\cdot(\sigma_8\circ\alpha_3'(15))+b_3\cdot(\sigma_8\circ\check{\alpha}_1(15))\] 
    for some integers \(a_1\) and \(b_1\) considered modulo 8, \(a_2\) and \(b_2\) modulo 9, and \(a_3\) and \(b_3\) modulo 7. By Proposition \ref{prop:lambda_delta}, there is a homotopy equivalence \(\mathcal{G}_\tau^{12}(\O P^2)\simeq\mathcal{G}_\omega^{12}(\O P^2)\) if and only if there exists a \(\lambda\in\pi_{19}(S^8)\) such that  \(\sigma_8\circ\overline{\tau}+\lambda\circ\sigma_{19}+[\iota_8,\lambda]\simeq\pm\sigma_8\circ\overline{\omega}\). By Lemma~\ref{lem:step2fork=12}, writing \(\lambda\) as 
    \[
        \lambda\simeq w\cdot\zeta_8+x\cdot\alpha_3'(8)+y\cdot\alpha_1(8)+z\cdot(\overline{\nu}_8\circ\nu_{16})
    \]
    as in (\ref{k=12lambda}) and applying the congruences of Lemma \ref{lem:k=12_2consequences} gives a homotopy 
    \begin{equation}\label{eq:delta12}
        \sigma_8\circ\overline{\tau}+\lambda\circ\sigma_{19}+[\iota_8,\lambda] \simeq (a_1+2w)\cdot(\sigma_8\circ\zeta_{15})+(a_2+2x)\cdot(\sigma_8\circ\alpha_3'(15)) + (a_3+2y)\cdot(\sigma_8\circ\check{\alpha}_1(15))
    \end{equation} 
    Comparing coefficients, this implies that \(\mathcal{G}_\tau^{12}(\O P^2)\simeq\mathcal{G}_\omega^{12}(\O P^2)\) if and only if we have the following congruences:
    \[
         a_1+2w\equiv\pm b_1\text{ (mod 8)}
         \text{, \;}
         a_2+2x\equiv\pm b_2\text{ (mod 9)}
         \text{\; and \;}
         a_3+2y\equiv\pm b_3\text{ (mod 7)}.
    \]
    Further, given any \(a_3\) and \(b_3\) modulo $7$, there always exists a \(y\) such that \(a_3+2y\equiv\pm b_3\pmod{7}\). Hence a homotopy equivalence exists if and only if the first two of the above congruences hold.
    
    First observe that $a_{1}+2w\equiv \pm b_1\, \text{(mod 8)}$ implies that $a_{1}\equiv b_1\, \text{(mod 2)}$. So if $a_{1}\not\equiv b_1\, \text{(mod 2)}$ then \(\mathcal{G}_\tau^{12}(\O P^2)\not\simeq\mathcal{G}_\omega^{12}(\O P^2)\). In particular, if $a_{1}$ is even and $b_{1}$ is odd then \(\mathcal{G}_\tau^{12}(\O P^2)\not\simeq\mathcal{G}_\omega^{12}(\O P^2)\). Moreover, the restriction that \(x\equiv0\pmod{3}\) implies that the second congruence reduces to \(a_2\equiv\pm b_2\pmod{3}\), so if \(a_2\equiv0\pmod{3}\) and \(b_2\equiv\pm1\pmod{3}\) then this would also give \(\mathcal{G}_\tau^{12}(\O P^2)\not\simeq\mathcal{G}_\omega^{12}(\O P^2)\). This implies that \(\O P^2\) is not \(\mathcal{G}^{12}\)-stable and hence directly answers GSI in the negative.

    We now turn to GSII and enumerating the possible homotopy types for \(\mathcal{G}^{12}_\tau(\O P^2)\). This depends on the possible choices of $w$ that give $a_{1}+2w\equiv \pm b_1 \pmod{8}$ and whether \(a_2\equiv0\text{ or }\pm1\pmod{3}\). Since $a_{1}$ is an integer modulo 8 this implies that are at most sixteen possible homotopy types; we label each one by the value of \(a_1\) when \(a_2\equiv0\text{ (mod 3)}\), which we write as \(\mathcal{G}_0^{12}(\O P^2),\mathcal{G}_1^{12}(\O P^2),\dots,\mathcal{G}_7^{12}(\O P^2)\), and the second eight for when \(a_2\equiv\pm1\text{ (mod 3)}\) are written as \(\mathcal{G}_{0,\pm}^{12}(\O P^2),\mathcal{G}_{1,\pm}^{12}(\O P^2),\dots,\mathcal{G}_{7,\pm}^{12}(\O P^2)\). There are three cases, which depend on the odd integer \(\vartheta\) modulo~8.

    \textit{Part (i):} if \(\vartheta\equiv1\text{ (mod 8)}\) then \(w-w\vartheta\equiv0\text{ (mod 8)}\) holds for all \(w\). Thus \(a_{1}+2w\equiv \pm b_1 \pmod{8}\) if and only if \(a_1\equiv b_1\text{ (mod 2)}\), so this case there are four possible homotopy types: 
    \begin{gather*}
        \mathcal{G}_0^{12}(\O P^2)\simeq\mathcal{G}_2^{12}(\O P^2)\simeq\mathcal{G}_4^{12}(\O P^2)\simeq\mathcal{G}_6^{12}(\O P^2), \\ \mathcal{G}_1^{12}(\O P^2)\simeq\mathcal{G}_3^{12}(\O P^2)\simeq\mathcal{G}_5^{12}(\O P^2)\simeq\mathcal{G}_7^{12}(\O P^2), \\ \mathcal{G}_{0,\pm}^{12}(\O P^2)\simeq\mathcal{G}_{2,\pm}^{12}(\O P^2)\simeq\mathcal{G}_{4,\pm}^{12}(\O P^2)\simeq\mathcal{G}_{6,\pm}^{12}(\O P^2), \\ \text{\; and \;}  \mathcal{G}_{1,\pm}^{12}(\O P^2)\simeq\mathcal{G}_{3,\pm}^{12}(\O P^2)\simeq\mathcal{G}_{5,\pm}^{12}(\O P^2)\simeq\mathcal{G}_{7,\pm}^{12}(\O P^2).
    \end{gather*}
    On the other hand, we have already seen that if $a_{1}\not\equiv b_1 \text{(mod 2)}$ or \(a_2\not\equiv \pm b_2\pmod{3}\) then \(\mathcal{G}_\tau^{12}(\O P^2)\not\simeq\mathcal{G}_\omega^{12}(\O P^2)\). Thus there are exactly four homotopy types in this case.

    \textit{Parts (ii) and (iii):} if \(\vartheta\not\equiv1\text{ (mod 8)}\) then we are in one of two situations. If \(\vartheta\equiv5\text{ (mod 8)}\), then the demand that \(w-w\vartheta\equiv0\text{ (mod 8)}\) implies \(4w\equiv0\text{ (mod 8)}\), forcing \(w\) to take only even values. Thus \(a_{1}+2w\equiv \pm b_1 \pmod{8}\) if and only if \(a_1\equiv \pm b_1\text{ (mod 4)}\) and hence \(\mathcal{G}^{12}_\tau(\O P^2)\) can assume six different homotopy types, represented by
    \begin{gather*}
    \mathcal{G}_0^{12}(\O P^2)\simeq\mathcal{G}_4^{12}(\O P^2)\text{, \;}\mathcal{G}_1^{12}(\O P^2)\simeq\mathcal{G}_3^{12}(\O P^2)\simeq\mathcal{G}_5^{12}(\O P^2)\simeq\mathcal{G}_7^{12}(\O P^2), 
    \\ 
    \mathcal{G}_{0,\pm}^{12}(\O P^2)\simeq\mathcal{G}_{4,\pm}^{12}(\O P^2)\text{, \;}\mathcal{G}_{1,\pm}^{12}(\O P^2)\simeq\mathcal{G}_{3,\pm}^{12}(\O P^2)\simeq\mathcal{G}_{5,\pm}^{12}(\O P^2)\simeq\mathcal{G}_{7,\pm}^{12}(\O P^2), 
    \\
    \mathcal{G}_2^{12}(\O P^2)\simeq\mathcal{G}_6^{12}(\O P^2)\text{\; and \;}\mathcal{G}_{2,\pm}^{12}(\O P^2)\simeq\mathcal{G}_{6,\pm}^{12}(\O P^2).
    \end{gather*}
    This proves (ii). If instead \(\vartheta\equiv3\text{ or }7\text{ (mod 8)}\) then \(w-w\vartheta\equiv0\text{ (mod 8)}\) implies that \(w\equiv0\text{ or }4\text{ (mod 8)}\), in which case \(a_{1}+2w\equiv \pm b_1 \pmod{8}\) if and only if \(a_1\equiv \pm b_1\text{ (mod 8)}\). So \(\mathcal{G}^{12}_\tau(\O P^2)\) can take ten possible homotopy types:
    \begin{gather*}
    \mathcal{G}_0^{12}(\O P^2)\text{, \;}\mathcal{G}_1^{12}(\O P^2)\simeq\mathcal{G}_7^{12}(\O P^2)\text{, \;}\mathcal{G}_2^{12}(\O P^2)\simeq\mathcal{G}_6^{12}(\O P^2)\text{, \;} \\ \mathcal{G}_3^{12}(\O P^2)\simeq\mathcal{G}_5^{12}(\O P^2)\text{, \;} \mathcal{G}_4^{12}(\O P^2), 
    \\
    \mathcal{G}_{0,\pm}^{12}(\O P^2)\text{, \;}\mathcal{G}_{1,\pm}^{12}(\O P^2)\simeq\mathcal{G}_{7,\pm}^{12}(\O P^2)\text{, \;}\mathcal{G}_{2,\pm}^{12}(\O P^2)\simeq\mathcal{G}_{6,\pm}^{12}(\O P^2)\text{, \;} \\ \mathcal{G}_{3,\pm}^{12}(\O P^2)\simeq\mathcal{G}_{5,\pm}^{12}(\O P^2)\text{\; and \;} \mathcal{G}_{4,\pm}^{12}(\O P^2). \qedhere
    \end{gather*}
    This proves (iii).
\end{proof}

Proposition \ref{prop:theta} shows that for \(k=12\) the answer to GSII for \(\O P^2\) is at least 4, so it immediately implies the following. 

\begin{thm}\label{thm:g12op2}
    \(\O P^2\) is not \(\mathcal{G}^{12}\)-stable. \qed
\end{thm}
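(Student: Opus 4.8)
The plan is to obtain this as an immediate consequence of Proposition \ref{prop:theta}. Recall from the Introduction that a manifold $M$ is $\mathcal{G}^{k}$-stable (the answer to GSI is ``yes'') if and only if the answer to GSII is ``one'', i.e.\ if and only if all $k$-gyrations $\mathcal{G}^{k}_{\tau}(M)$ share a single homotopy type. Proposition \ref{prop:theta} enumerates the possible homotopy types of $\mathcal{G}^{12}_{\tau}(\O P^{2})$ as $\tau$ ranges over $\pi_{11}(\mathrm{SO}(16))\cong\Z$: in each of the three cases, according to the residue of the odd integer $\vartheta$ modulo $8$, the number of distinct homotopy types is four, six, or ten. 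In particular it is always strictly greater than one. Hence there exist twistings $\tau,\omega\in\pi_{11}(\mathrm{SO}(16))$ with $\mathcal{G}^{12}_{\tau}(\O P^{2})\not\simeq\mathcal{G}^{12}_{\omega}(\O P^{2})$, which is precisely the failure of $\mathcal{G}^{12}$-stability.

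If one wanted a self-contained argument not invoking the full enumeration in Proposition \ref{prop:theta}, the plan would be to extract only the minimal ingredient: by Proposition \ref{prop:lambda_delta}, $\mathcal{G}^{12}_{\tau}(\O P^{2})\simeq\mathcal{G}^{12}_{\omega}(\O P^{2})$ if and only if there is a $\lambda\in\pi_{19}(S^{8})$ with $\sigma_{8}\circ\overline{\tau}+\lambda\circ\sigma_{19}+[\iota_{8},\lambda]\simeq\pm\sigma_{8}\circ\overline{\omega}$, and by Lemma \ref{lem:step2fork=12} the left-hand side has coefficient $a_{1}+2w$ on $\sigma_{8}\circ\zeta_{15}$ while the right-hand side has coefficient $b_{1}$; reducing modulo $2$ forces $a_{1}\equiv b_{1}\pmod 2$. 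So one only needs to exhibit twistings $\tau,\omega$ for which the corresponding $a_{1}$ (from Lemma \ref{lem:G12OP2_tau-bar}) is even and odd respectively; such twistings exist because $\overline{(\cdot)}$ together with $\sigma_{8}\circ(-)$ surjects onto the relevant cyclic summand. Their gyrations are then not homotopy equivalent.

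There is no genuine obstacle here: the entire substance of the result is already carried by Proposition \ref{prop:theta}, which in turn rests on Proposition \ref{prop:lambda_delta}, Lemma \ref{lem:step2fork=12}, and the identifications of the homotopy groups $\pi_{26}(S^{15})$, $\pi_{19}(S^{8})$ and $\pi_{26}(S^{8})$ drawn from Toda. The theorem itself requires only the observation that ``at least four homotopy types'' contradicts $\mathcal{G}^{12}$-stability, so no further computation is needed.
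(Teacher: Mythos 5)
Your proposal is correct and takes essentially the same approach as the paper: the theorem is stated immediately after Proposition \ref{prop:theta} as a corollary of the fact that the number of homotopy types of $\mathcal{G}^{12}_{\tau}(\O P^2)$ is always at least four, hence strictly greater than one. Your alternative ``self-contained'' argument in the second paragraph is also sound, but it is simply an unpacking of the parity obstruction already present in the proof of Proposition \ref{prop:theta}, so it does not constitute a genuinely different route.
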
 

\noindent 
\textbf{The classification of $\mathcal{G}^{2}$-homotopy types}.  
We conclude by combining several results to classify the homotopy types of the gyrations $\mathcal{G}^{k}_{\tau}(\mathbb{F}P^{2})$ for $\mathbb{F}$ one of $\mathbb{C}$, $\mathbb{H}$ or $\mathbb{O}$.

\begin{proof}[Proof of Theorem~\ref{thm:classification}] 
    Part (i) follows since \(\C P^2\) is \(\mathcal{G}^2\)-stable by Theorem \ref{thm:g2cp2}. For part (ii), if \(\tau\simeq\omega\) then $\mathcal{G}^{2}_{\tau}(\mathbb{H}P^{2})\simeq\mathcal{G}^{2}_{\omega}(\mathbb{H}P^{2})$ by Lemma \ref{lem:easycheck}. Conversely, as \(\pi_1(\mathrm{SO}(8))\cong\Z/2\), there are two distinct choices of twisting. Theorem \ref{thm:g2hp2} shows that if \(\tau\not\simeq\omega\) then \(\mathcal{G}_\tau^2(\H P^2)\not\simeq\mathcal{G}_\omega^2(\H P^2)\). The argument for part (iii) is the same as for part (ii), replacing Theorem~\ref{thm:g2hp2} with Theorem \ref{thm:g2op2}. 
\end{proof} 

\bibliographystyle{amsalpha}
\bibliography{bib}

\end{document}